\newtheorem{theorem}{Theorem}[chapter]
\newtheorem{corollary}[theorem]{Corollary}
\newtheorem{lemma}[theorem]{Lemma}
\newtheorem{problem}[theorem]{Problem}
\newtheorem{proposition}[theorem]{Proposition}
\newenvironment{proof}[1][Proof]{\textbf{#1.} }{\ \rule{0.5em}{0.5em}}
\begin{document}

\title{Group-theoretic Methods for the Bounding the Exponent of Matrix
Multiplication}
\author{Sandeep Rajkumar Murthy \and (born April 4, 1977, Hyderabad, India)}
\date{July 16, 2007}
\thisdegree{Master of Science (Logic)}
\university{Universiteit van Amsterdam}
\super{Prof. T. H. Koornwinder (KdV\ Institute for Mathematics, UvA)\\
Prof. P. van Emde Boas (ILLC, UvA)\\
Prof.\ R. van der Waall (KdV\ Institute for Mathematics, UvA)}
\tableofcontents

\textbf{\pagebreak }

\textbf{Acknowledgements}

\bigskip

\bigskip

This Masters thesis was submitted to the Board of Examiners in partial
fulfillment of the requirements for the degree of Master of Science in Logic
at the Universiteit van Amsterdam (UvA), The Netherlands, on August 21,
2007. \ Members of the Board of Examiners were:

\bigskip 

\hspace{0.65in}Prof. Dick de Jongh, Institute for Logic, Language and
Computation (ILLC), UvA

\hspace{0.65in}Prof. Tom Koornwinder, Korteweg de Vries (KdV) Institute for
Mathematics, UvA

\hspace{0.65in}Prof. Peter van Emde Boas, KdV\ Institute for Mathematics, UvA

\hspace{0.65in}Prof. Robert van der Waall, KdV\ Institute for Mathematics,
UvA

\bigskip 

The thesis was completed under the kind supervision of Prof. Tom Koornwinder
of the Korteweg de Vries (KdV) Institute for Mathematics.

\bigskip

The author also acknowledges the help of Prof. Peter van Emde Boas of the
ILLC in relation to Chapter 2 dealing with the algebraic theory of
complexity of matrix multiplication, and of Prof. R. van der Waal of the KdV
Institute in relation to Chapter 3 dealing with the representation theory of
finite groups.

\bigskip

Acknowledgements are also due to the coauthors of the papers on which the
thesis is based, Dr. Chris Umans (Dept. of Computer Science, California
Institute of Technology), and Dr. Henry Cohn (Microsoft Research), for some
clarifying remarks and comments.

\bigskip

\pagebreak

\chapter{\protect\huge Introduction}

\bigskip

\section{The Exponent $\protect\omega $\ of Matrix Multiplication}

\bigskip

Matrix multiplication is a fundamental operation in linear algebra. \ For
any given field $K$, the (asymptotic) complexity of matrix multiplication
over $K$ is measured by a real parameter $\omega (K)>0$, called the \textit{%
exponent of matrix multiplication} over $K$, which is defined to be the
smallest real number $\omega >0$ such that for an arbitrary degree of
precision $\epsilon >0$, two $n\times n$ $K$-matrices can be multiplied
using an algorithm using $O(n^{\omega +\epsilon })$ number of non-division
arithmetical operations, i.e. less than some constant $\geq 1$ multiple of $%
n^{\omega +\epsilon }$ number of multiplications, additions or subtractions.
\ The notation $\omega (K)$ indicates a dependency on the ground field $K$,
but we usually have in mind the complex field $K=%
\mathbb{C}
$, which is general enough for most purposes. \ It is proved that $\omega $
determines the complexities of many other linear operations, e.g. matrix
inversion, determinants, etc., and these are concisely covered by B\"{u}%
rgisser et. al., \textit{Chapter 16 }in [BCS1997].

\bigskip

If we denote by $M_{K}\left( n\right) $ the total number of arithmetical
operations for performing $n\times n$ matrix multiplication over a field $K$%
, then by the standard algorithm, $M_{K}\left( n\right)
=2n^{3}-n^{2}=O(n^{3})$, because one needs to perform $n^{3}$
multiplications, and $n^{3}-n^{2}$ additions of the resulting products. \
This is equivalent to an upper bound of $3$ for $\omega $. \ Since the
product of two $n\times n$ matrices consists of $n^{2}$ entries, one needs
to perform a total number of operations which is \textit{at least} some
constant $\geq 1$ multiple of the $n^{2}$ entries. \ This is written as $%
M_{K}(n)=\Omega (n^{2})$, which is equivalent to a lower bound of $2$ for $%
\omega $. \ Strassen in 1969 obtained the first important result that $%
\omega <2.81$ using his result that $2\times 2$ matrix multiplication could
be performed using $7$ multiplications, not $8$, as in the standard
algorithm [STR1969, p. 355]. \ In 1984, Pan improved this to $2.67$, using a
variant of Strassen's approach [PAN1984, p. 400]. \ It has been conjectured
for twenty years that $\omega =2$, but the best known result is that $\omega
<2.38$, due to Coppersmith and Winograd [CW1990, p. 251]. \ In all these
approaches, estimates for $\omega $ depend on the number of main running
steps in their algorithms.

\bigskip

\section{Groups and Matrix Multiplication}

\bigskip

In a recent series of papers in 2003 and 2005, Cohn and Umans put forward an
entirely different approach using fairly elementary methods from group
theory to describe the complexity of matrix multiplication.

\bigskip

The approach is based on two important facts.

\bigskip

\subsection{\textit{Realizing Matrix Multiplications via Finite Groups}}

\bigskip

(I) A (nontrivial) finite group $G$ which has a triple of index subsets $S$, 
$T$, $U\subseteq G$ of sizes $\left\vert S\right\vert =n$, $\left\vert
T\right\vert =m$, $\left\vert U\right\vert =p$, such that $s^{^{\prime
}}s^{-1}t^{^{\prime }}t^{-1}u^{^{\prime }}u^{-1}=1_{G}\Longleftrightarrow
s^{^{\prime }}s^{-1}=t^{^{\prime }}t^{-1}=u^{^{\prime }}u^{-1}=1_{G}$, for
all elements $s^{^{\prime }},s$ $\epsilon $ $S$, $t^{^{\prime }},t$ $%
\epsilon $ $T$, $u^{^{\prime }},u$ $\epsilon $ $U$, \textit{realizes}
multiplication of $n\times m$ by $m\times p$ matrices over $%
\mathbb{C}
$, in the sense that the entries of a given $n\times m$ complex matrix $%
A=\left( A_{i,j}\right) $ and an $m\times p$ matrix $B=\left( B_{k,l}\right) 
$, can be indexed by the subsets $S,$ $T,$ $U$ as $A=\left( A_{s,t}\right)
_{s\epsilon S,t\epsilon T}$ and $B=\left( B_{t^{\prime },u}\right)
_{t^{\prime }\epsilon T,u\text{ }\epsilon U}$ , then injectively embedded in
the regular group algebra $%
\mathbb{C}
G$ of $G$ as the elements $\overline{A}=\underset{s\text{ }\epsilon \text{ }%
S,\text{ }t\text{ }\epsilon \text{ }T}{\sum }A_{s,t}s^{-1}t$ and $\overline{B%
}=\underset{t^{\prime }\text{ }\epsilon \text{ }T,\text{ }u\text{ }\epsilon 
\text{ }U}{\sum }B_{t^{\prime },u}t^{^{\prime }-1}u$, and the matrix product 
$AB$ can be computed by the rule that the $s^{^{\prime \prime }},u^{^{\prime
\prime }}$-th entry $\left( AB\right) _{s^{^{\prime \prime }},u^{^{\prime
\prime }}}$ is the coefficient of the term $s^{^{\prime \prime
}-1}u^{^{\prime \prime }}$ in the group algebra product $\overline{A}%
\overline{B}=\underset{s\text{ }\epsilon \text{ }S,\text{ }t\text{ }\epsilon 
\text{ }T\text{ }}{\sum }\underset{t^{^{\prime }}\text{ }\epsilon \text{ }T,%
\text{ }u\text{ }\epsilon \text{ }U}{\sum }A_{s,t}B_{t^{^{\prime
}},u}s^{-1}tt^{^{\prime }-1}u$ (see \textit{Theorem 4.13}). \ In this case,
by definition, $G$ is said to support $n\times m$ by $m\times p$ matrix
multiplication, equivalently, to \textit{realize} the matrix tensor $%
\left\langle n,m,p\right\rangle $, whose size we define as $nmp$, and the
subsets $S$, $T$, $U$ are said to have the \textit{triple product property}
(TPP) and be an \textit{index triple} of $G$ corresponding to the tensor $%
\left\langle n,m,p\right\rangle $ (see \textit{Chapter 4}).

\bigskip

\subsection{\textit{Wedderburn's Theorem}}

\bigskip

(II) By Wedderburn's theorem there is an isomorphism $%
\mathbb{C}
G\cong \underset{\varrho \text{ }\epsilon \text{ }Irrep(G)}{\oplus }%
\mathbb{C}
^{d_{\varrho }\times d_{\varrho }}$ of the regular group algebra $%
\mathbb{C}
G$ of $G$, where $\underset{\varrho \text{ }\epsilon \text{ }Irrep(G)}{%
\oplus }$ $%
\mathbb{C}
^{d_{\varrho }\times d_{\varrho }}$ is a block-diagonal matrix algebra of
dimension $\underset{\varrho \text{ }\epsilon \text{ }Irrep(G)}{\sum }%
d_{\varrho }^{2}=\left\vert G\right\vert $, and the $%
\mathbb{C}
^{d_{\varrho }\times d_{\varrho }}$ are its irreducible subalgebras of
dimensions $d_{\varrho }^{2}$, and the $d_{\varrho }$ are the degrees of the
distinct irreducible characters of $G$, i.e. the dimensions $d_{\varrho
}=Dim $ $\varrho $ of the distinct (inequivalent) irreducible
representations $\varrho $ $\epsilon $ $Irrep(G)$ of $G$. \ Any such
isomorphism constitutes a group discrete Fourier transform (DFT) for $G$,
and, further, we can deduce that $\left\vert G\right\vert \leq \mathfrak{R}(%
\mathfrak{m}_{_{%
\mathbb{C}
G}})\leq \underset{\varrho \text{ }\epsilon \text{ }Irrep(G)}{\sum }%
\mathfrak{R}\left( \left\langle d_{\varrho },d_{\varrho },d_{\varrho
}\right\rangle \right) $, where $\mathfrak{R}(\mathfrak{m}_{_{%
\mathbb{C}
G}})$ and $\mathfrak{R}\left( \left\langle d_{\varrho },d_{\varrho
},d_{\varrho }\right\rangle \right) $ are the ranks of the bilinear
multiplication maps $\mathfrak{m}_{_{%
\mathbb{C}
G}}$ and $\left\langle d_{\varrho },d_{\varrho },d_{\varrho }\right\rangle $
in $%
\mathbb{C}
G$ and $%
\mathbb{C}
^{d_{\varrho }\times d_{\varrho }}$, respectively.

\bigskip

\subsection{\textit{A Group-theoretic DFT Algorithm for Matrix Multiplication%
}}

\bigskip

This suggests the following group-theoretic DFT algorithm for $n\times n$
matrix multiplication via a finite group $G$ realizing $n\times n$ matrix
multiplication via subsets $S$, $T$, $U\subseteq G$ having the triple
product property.

\bigskip

\begin{enumerate}
\item Injectively embed $n\times n$ complex matrices $A=\left(
A_{i,j}\right) _{1\leqslant i,j\leqslant n}$ and $B=\left( B_{j^{^{\prime
}},k}\right) _{1\leqslant j^{^{\prime }},k\leqslant n}$ into $%
\mathbb{C}
G$ as elements $\overline{A}=\underset{s\text{ }\epsilon \text{ }S,\text{ }t%
\text{ }\epsilon \text{ }T}{\sum }A_{s,t}s^{-1}t$ and $\overline{B}=\underset%
{t^{\prime }\text{ }\epsilon \text{ }T,\text{ }u\text{ }\epsilon \text{ }U}{%
\sum }B_{t^{\prime },u}t^{^{\prime }-1}u$ via $S$, $T$, $U$ (as described in
(I)).

\item Use a discrete group Fourier transform (DFT) for $G$, $DFT:$ $%
\mathbb{C}
G\cong \underset{\varrho \text{ }\epsilon \text{ }Irrep(G)}{\oplus }%
\mathbb{C}
^{d_{\varrho }\times d_{\varrho }}$ to compute the transforms $\widehat{A}%
=DFT(\overline{A})$ and $\widehat{B}=DFT(\overline{B})$.

\item Compute the block-diagonal matrix product of transforms, $\widehat{C}=%
\widehat{A}\widehat{B}$.

\item Recover the vector $\overline{A}\overline{B}=\overline{C}%
=DFT^{-1}\left( \widehat{C}\right) $ from its transform by the inverse group
DFT.

\item Fill the matrix $AB$ from $\overline{A}\overline{B}=\underset{s\text{ }%
\epsilon \text{ }S,\text{ }t\text{ }\epsilon \text{ }T\text{ }}{\sum }%
\underset{t^{^{\prime }}\text{ }\epsilon \text{ }T,\text{ }u\text{ }\epsilon 
\text{ }U}{\sum }A_{s,t}B_{t^{^{\prime }},u}s^{-1}tt^{^{\prime }-1}u$ by the
rule that for each $s^{^{\prime \prime }}$ $\epsilon $ $S$, $u^{^{\prime
\prime }}$ $\epsilon $ $U$, the $s^{^{\prime \prime }},u^{^{\prime \prime }}$%
-th entry $\left( AB\right) _{s^{^{\prime \prime }},u^{^{\prime \prime }}}=$
coefficient $\underset{t,\text{ }t^{^{\prime }}\text{ }\epsilon \text{ }T}{%
\sum }A_{s,t}B_{t^{^{\prime }},u}$ of the term $s^{-1}tt^{^{\prime }-1}u$ in 
$\overline{A}\overline{B}$ for which $s=s^{^{\prime \prime }},$ $%
t=t^{^{\prime }},$ $u=u^{^{\prime \prime }}$.
\end{enumerate}

\subparagraph{\protect\bigskip}

\subsection{\textit{The Complexity of Matrix Multiplications Realized by
Groups}}

\bigskip

In the approach we describe, estimates for $\omega $ can be derived from
certain numerical parameters relating to the efficiency with which groups
realize matrix multiplications, and also to the degrees of their irreducible
characters.

\bigskip

\begin{description}
\item[\textit{(1)}] \textit{Pseudoexponents} $\alpha (G)$, defined by $%
\alpha (G):=\log _{z^{^{\prime }}(G)^{1/3}}\left\vert G\right\vert $, where $%
\left\langle n^{^{\prime }},m^{^{\prime }},p^{^{\prime }}\right\rangle $ is
a matrix tensor of maximal size $z^{^{\prime }}(G)=n^{^{\prime }}m^{^{\prime
}}p^{^{\prime }}>1$ realized by $G$, and uniquely determining $\alpha (G)$.
\ We prove that $\left\vert G\right\vert \leq z^{^{\prime }}(G)<\left\vert
G\right\vert ^{\frac{3}{2}}$, which is equivalent to $2<\alpha (G)\leq 3$,
and that $\alpha (G)=3$ whenever $G$ is Abelian. \ The $\alpha (G)$ are
measures of the efficiency with which the groups $G$ realize or embed matrix
multiplication, and the closer $\alpha (G)$ is to $2$ the higher the
embedding efficiency (section \textit{4.2.1}).

\item[\textit{(2)}] \textit{Parameters }$\gamma (G)$, defined by $\gamma
(G):=$ $\underset{\gamma \text{ }>\text{ }0}{\inf }\left\vert G\right\vert ^{%
\frac{1}{\gamma }}=d^{^{\prime }}(G)$, where $d^{^{\prime }}(G)$ is any
maximal irreducible character degree of $G$, for which we can easily prove
that $2<2\frac{\log \left\vert G\right\vert }{\log \left( \left\vert
G\right\vert -1\right) }\leq \gamma (G)\leq 2\frac{\log \left\vert
G\right\vert }{\log \left( \left\vert G\right\vert -c(G)\right) }$, where $%
c(G)$ is the class number of $G$, and that $2<2\frac{\log \left\vert
G\right\vert }{\log \left( \left\vert G\right\vert -1\right) }<\gamma (G)<2%
\frac{\log \left\vert G\right\vert }{\log \left( \left\vert G\right\vert
-c(G)\right) }$, iff $G$ is non-Abelian ((section \textit{4.2.2}))

\item[\textit{(3)}] Sums of powers of irreducible character degrees, $%
D_{r}(G)=\underset{\varrho \text{ }\epsilon \text{ }Irrep(G)}{\sum }%
d_{\varrho }^{r}$, $r\geq 0$. \ Facts from representation theory are that $%
D_{0}(G)=c(G)$, and $D_{2}(G)=\left\vert G\right\vert $ (section \textit{3.2.%
}).
\end{description}

\subsubsection{\protect\bigskip}

\subsection{\textit{Relations and Results for the Exponent }$\protect\omega $%
}

\bigskip

The four most important relations which we prove using single groups $G$ are
the following.

\bigskip

\begin{equation*}
\begin{tabular}{|l|l|}
\hline
\textit{(1.1) \ } $\left\vert G\right\vert ^{\frac{\omega }{\alpha (G)}%
}\leqslant $ $D_{\omega }(G)$ & this is equivalent to $2\leqslant \omega
\leqslant \alpha (G)\log _{\left\vert G\right\vert }D_{\omega }(G)$ \\ \hline
\textit{(1.2) \ } $D_{r}(G)\leq \left\vert G\right\vert ^{\frac{\left(
r-2\right) }{\gamma (G)}+1}$ & real $r\geq 2$ \\ \hline
\textit{(1.3) \ \ }$\left( nmp\right) ^{\frac{1}{3}}\leq d^{^{\prime
}}(G)^{1-\frac{2}{\omega }}\left\vert G\right\vert ^{\frac{1}{\omega }}$ & 
if $G$ realizes a tensor $\left\langle n,m,p\right\rangle $ \\ \hline
\textit{(1.4) \ \ }$\omega \leq \alpha (G)\left( \frac{\gamma (G)-2}{\gamma
(G)-\alpha (G)}\right) $ & if $\alpha (G)<\gamma (G)$ \\ \hline
\end{tabular}%
\end{equation*}

\bigskip

By applying these relations, we have also been able to derive a number of
results about how to prove estimates for $\omega $ using non-Abelian groups.
\ These are listed below.

\begin{proposition}
$\omega \leq t<3$ for some $t>2$, if there is a non-Abelian finite group $G$
with pseudoexponent $\alpha (G)$ and parameter $\gamma (G)$ such that $%
\alpha (G)<\gamma (G)$ and $\alpha (G)\left( \frac{\gamma (G)-2}{\gamma
(G)-\alpha (G)}\right) \leq t$. \ An equivalent, but more precise statement
is that $\omega \leq t<3$, for some $t>2$, if there is a non-Abelian finite
group $G$ realizing matrix multiplication of maximal size $z^{^{\prime }}(G)$
and with a maximal irreducible character degree $d^{^{\prime }}(G)$ such
that $z^{^{\prime }}(G)^{\frac{1}{3}}>d^{^{\prime }}(G)$ and $\left\vert
G\right\vert \leq \frac{z^{^{\prime }}(G)^{\frac{t}{3}}}{d^{^{\prime
}}(G)^{(t-2)}}$. (Corollary 4.28)
\end{proposition}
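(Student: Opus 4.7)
The plan is to recognize that the first, abstract form of the statement is essentially a direct restatement of relation (1.4) from the main table. Given a non-Abelian $G$ with $\alpha(G) < \gamma(G)$, inequality (1.4) immediately yields $\omega \leq \alpha(G)\bigl((\gamma(G)-2)/(\gamma(G)-\alpha(G))\bigr)$, and chaining this with the hypothesis $\alpha(G)\bigl((\gamma(G)-2)/(\gamma(G)-\alpha(G))\bigr) \leq t$ gives $\omega \leq t$. Combined with the hypothesis $t < 3$ (and the standard lower bound $\omega \geq 2$ justifying $t > 2$), this establishes $2 < \omega \leq t < 3$. So the first assertion is essentially a specialization of (1.4); no extra work is needed.

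The substantive part is verifying the equivalence of the two formulations. Using the definitions from sections 4.2.1--4.2.2, I would substitute $z'(G)^{1/3} = |G|^{1/\alpha(G)}$ (from $\alpha(G) = \log_{z'(G)^{1/3}}|G|$) and $d'(G) = |G|^{1/\gamma(G)}$ (from the definition of $\gamma(G)$), so that all quantities become powers of $|G|$. Since $|G| > 1$, the inequality $\alpha(G) < \gamma(G)$ is equivalent to $1/\alpha(G) > 1/\gamma(G)$, hence to $|G|^{1/\alpha(G)} > |G|^{1/\gamma(G)}$, i.e.\ $z'(G)^{1/3} > d'(G)$. This handles the first pair of conditions.

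For the second pair, I would take $\log_{|G|}$ of $|G| \leq z'(G)^{t/3}/d'(G)^{t-2}$ to obtain $1 \leq t/\alpha(G) - (t-2)/\gamma(G)$. Multiplying through by the positive quantity $\alpha(G)\gamma(G)$ gives $\alpha(G)\gamma(G) + \alpha(G)(t-2) \leq t\gamma(G)$, which rearranges to $\alpha(G)(\gamma(G)-2) \leq t(\gamma(G)-\alpha(G))$. Because $\gamma(G)-\alpha(G) > 0$ by the first hypothesis, dividing preserves the inequality and yields $\alpha(G)\bigl((\gamma(G)-2)/(\gamma(G)-\alpha(G))\bigr) \leq t$. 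Each step is reversible, so the two formulations are genuinely equivalent.

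The only place that requires any caution is the sign of $\gamma(G) - \alpha(G)$ when dividing by it, which is why the hypothesis $\alpha(G) < \gamma(G)$ (equivalently $z'(G)^{1/3} > d'(G)$) is listed as a separate condition; without it the manipulation would flip the inequality. Apart from that book-keeping, the proof is a direct application of (1.4) together with a routine translation via the definitions of $\alpha(G)$ and $\gamma(G)$, so I do not anticipate a real obstacle.
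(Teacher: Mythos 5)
Your proof is correct and follows essentially the same route as the paper's: part (1) is a direct application of Corollary 4.26 (the paper's version of relation (1.4)), and part (2) is the routine translation between the $(\alpha,\gamma)$ and $(z',d',|G|)$ formulations via the definitions $z'(G)^{1/3}=|G|^{1/\alpha(G)}$ and $d'(G)=|G|^{1/\gamma(G)}$. Your presentation of the equivalence (taking $\log_{|G|}$ to land on $1 \leq t/\alpha(G) - (t-2)/\gamma(G)$, then clearing denominators) is slightly cleaner than the paper's, but the mathematical content and the crucial caution about the sign of $\gamma(G)-\alpha(G)$ are the same.
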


\begin{proposition}
If $\left\{ G_{k}\right\} $ is a family of non-Abelian groups such that $%
\alpha \left( G_{k}\right) \equiv \alpha _{k}=2+o(1)$, and $\gamma \left(
G_{k}\right) \equiv \gamma _{k}=2+o(1)$, and $\alpha _{k}-2=o(\gamma _{k}-2)$%
, as $k\longrightarrow \infty $, then $\omega =2$. \ (Corollary 4.29)
\end{proposition}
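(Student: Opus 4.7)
The plan is to apply relation (1.4) to each member $G_k$ of the family and pass to the limit, combining the resulting upper bound with the trivial lower bound $\omega\geq 2$ recalled in Section~1.1.

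First, I would check that the hypothesis $\alpha(G_k)<\gamma(G_k)$ required to invoke (1.4) is satisfied for all sufficiently large $k$. Since each $G_k$ is non-Abelian, the strict bound from item~(2) of Section~4.2.2 gives $\gamma_k>2$, so $\gamma_k-2>0$. The asymptotic assumption $\alpha_k-2=o(\gamma_k-2)$ then forces $(\alpha_k-2)/(\gamma_k-2)\to 0$; in particular $\alpha_k-2<\gamma_k-2$, i.e.\ $\alpha_k<\gamma_k$, for every $k$ beyond some index $k_0$.

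For such $k$, relation (1.4) yields $\omega\leq \alpha_k\cdot(\gamma_k-2)/(\gamma_k-\alpha_k)$. Rewriting the denominator as $\gamma_k-\alpha_k=(\gamma_k-2)-(\alpha_k-2)$, the decisive factor becomes
\begin{equation*}
\frac{\gamma_k-2}{\gamma_k-\alpha_k} \;=\; \frac{1}{1-(\alpha_k-2)/(\gamma_k-2)}.
\end{equation*}
By hypothesis the ratio $(\alpha_k-2)/(\gamma_k-2)$ tends to $0$, so this factor tends to $1$; combined with $\alpha_k\to 2$, the bound (1.4) gives
\begin{equation*}
\omega \;\leq\; \lim_{k\to\infty}\,\frac{\alpha_k}{1-(\alpha_k-2)/(\gamma_k-2)} \;=\; 2.
\end{equation*}
Together with $\omega\geq 2$ this yields $\omega=2$.

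The argument is essentially a one-line calculation once (1.4) and the non-Abelian strict bound $\gamma(G_k)>2$ are in hand. I do not anticipate any real obstacle; the only point that requires care is the strict positivity of $\gamma_k-\alpha_k$ for large $k$, which is precisely what the little-$o$ condition is designed to deliver, ensuring both the applicability of (1.4) and the positivity of its denominator throughout the limit.
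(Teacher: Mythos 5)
Your proposal is correct and follows essentially the same route as the paper: establish $\alpha_k<\gamma_k$ for large $k$ from the little-$o$ hypothesis, invoke Corollary 4.26 (relation (1.4)), rewrite $(\gamma_k-2)/(\gamma_k-\alpha_k)$ as $1/\bigl(1-(\alpha_k-2)/(\gamma_k-2)\bigr)$, and pass to the limit. The paper carries out the same algebraic manipulation and concludes $\omega\to 2^+$; your explicit appeal to the trivial lower bound $\omega\geq 2$ to finish is a minor stylistic addition, not a different argument.
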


\begin{proposition}
Let $\left\{ G_{k}\right\} $ be a family of non-Abelian groups $G_{k}$
realizing matrix multiplications of largest sizes $z_{k}^{^{\prime }}\equiv
z^{^{\prime }}(G_{k})$ and with largest irreducible character degrees $%
d_{k}^{^{\prime }}\equiv d_{k}^{^{\prime }}(G)$. \ Then, $(1)$ $\omega =2$
if $\left\vert G_{k}\right\vert ^{\frac{1}{2}}-z_{k}^{^{\prime }\frac{1}{3}%
}=o(1)$ and $\left( \left\vert G_{k}\right\vert -1\right) ^{\frac{1}{2}%
}-d_{k}^{^{\prime }}=o(1)$ such that $\left\vert G_{k}\right\vert ^{\frac{1}{%
2}}-z_{k}^{^{\prime }\frac{1}{3}}=$ $o\left( \left( \left\vert
G_{k}\right\vert -1\right) ^{\frac{1}{2}}-d_{k}^{^{\prime }}\right) $ as $%
k\longrightarrow \infty $. \ And more generally, $(2)$ $\omega =2$ if $%
\left\vert G_{k}\right\vert \longrightarrow \infty $ as $k\longrightarrow
\infty $ and there exists a sequence $\left\{ C_{k}\right\} $ of constants $%
C_{k}$ for the $G_{k}$ such that $2\leq C_{k}\leq \left\vert
G_{k}\right\vert -1$, $\left\vert G_{k}\right\vert \geq C_{k}\left( 1+\frac{1%
}{C_{k}-1}\right) $, $C_{k}\longrightarrow \infty $, $C_{k}=o(\left\vert
G_{k}\right\vert )$, $\left( \left\vert G_{k}\right\vert -C_{k}\right) ^{%
\frac{1}{2}}-d_{k}^{^{\prime }}=o(1)$, $\left\vert G_{k}\right\vert ^{\frac{1%
}{2}}-z_{k}^{^{\prime }\frac{1}{3}}=o(1)$ and $\left\vert G_{k}\right\vert ^{%
\frac{1}{2}}-z_{k}^{^{\prime }\frac{1}{3}}=$ $o\left( \left( \left\vert
G_{k}\right\vert -C_{k}\right) ^{\frac{1}{2}}-d_{k}^{^{\prime }}\right) $,
as $k\longrightarrow \infty $. (Theorem 4.30)
\end{proposition}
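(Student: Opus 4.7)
The plan is to reduce to Corollary 4.29 above, which delivers $\omega = 2$ for a sequence of non-Abelian groups whenever $\alpha_k = 2 + o(1)$, $\gamma_k = 2 + o(1)$, and $\alpha_k - 2 = o(\gamma_k - 2)$. Thus my task is to translate the additive closeness hypotheses on $|G_k|^{1/2}$ versus $z_k'^{1/3}$ and on $(|G_k|-C_k)^{1/2}$ versus $d_k'$ into these multiplicative asymptotics, via the defining identities $|G_k| = z_k'^{\alpha_k/3}$ and $d_k' = |G_k|^{1/\gamma_k}$. Note $|G_k| \to \infty$ must hold, otherwise $\gamma_k$ is bounded away from $2$; this is explicit in part (2) and implicit in part (1).

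For part (1), set $\epsilon_k := |G_k|^{1/2} - z_k'^{1/3}$ and $\delta_k := (|G_k|-1)^{1/2} - d_k'$, both $o(1)$ by hypothesis. Inverting the defining identities and expanding $\log(1-x) = -x + O(x^2)$, I obtain
\[ \alpha_k - 2 \;=\; \frac{4\epsilon_k}{|G_k|^{1/2}\log|G_k|}\,(1+o(1)), \]
\[ \gamma_k - 2 \;=\; \frac{2}{|G_k|\log|G_k|}\,(1+o(1)) \;+\; \frac{4\delta_k}{|G_k|^{1/2}\log|G_k|}\,(1+o(1)). \]
Hence $(\alpha_k-2)/(\gamma_k-2)$ is asymptotic to $\epsilon_k/\bigl(\tfrac{1}{2|G_k|^{1/2}} + \delta_k\bigr)$, which is $o(1)$ under the assumption $\epsilon_k = o(\delta_k)$: if $\delta_k$ dominates $|G_k|^{-1/2}$ the ratio is $\epsilon_k/\delta_k \to 0$ directly; if not, then $\epsilon_k = o(\delta_k) = o(|G_k|^{-1/2})$ kills the ratio anyway. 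Corollary 4.29 then applies.

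For part (2), the same computation with $|G_k|-C_k$ in place of $|G_k|-1$ yields
\[ \gamma_k - 2 \;=\; \frac{2C_k}{|G_k|\log|G_k|}\,(1+o(1)) \;+\; \frac{4\delta_k'}{|G_k|^{1/2}\log|G_k|}\,(1+o(1)), \]
with $\delta_k' := (|G_k|-C_k)^{1/2} - d_k'$. The auxiliary conditions then play specific roles: $2 \leq C_k \leq |G_k|-1$ and $|G_k| \geq C_k(1 + 1/(C_k-1))$ ensure that $(|G_k|-C_k)^{1/2} > 1$ and $\delta_k'$ is a meaningful nonnegative quantity; $C_k = o(|G_k|)$ keeps $\log(|G_k|-C_k)$ close enough to $\log|G_k|$ for the expansion to proceed with dominant term $\tfrac12\log|G_k|$; and $C_k \to \infty$ together with $|G_k| \to \infty$ supplies the remaining $o$-estimates. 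Combined with $\epsilon_k = o(\delta_k')$, the same dichotomy as in part (1) yields $\alpha_k - 2 = o(\gamma_k - 2)$, and Corollary 4.29 concludes $\omega = 2$.

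The main obstacle is the careful bookkeeping of the two competing contributions to $\gamma_k - 2$: the \emph{baseline} $2C_k/(|G_k|\log|G_k|)$, coming from the fact that $(d_k')^2 \leq |G_k|-C_k$ alone already forces $\gamma_k > 2$, versus the \emph{slack} $4\delta_k'/(|G_k|^{1/2}\log|G_k|)$ contributed by $d_k'$ falling short of $(|G_k|-C_k)^{1/2}$. The list of conditions imposed on $\{C_k\}$ in part (2) is precisely calibrated so that whichever of these dominates, the single hypothesis $\epsilon_k = o(\delta_k')$ still forces $(\alpha_k-2)/(\gamma_k-2) \to 0$; this is also why part (1), morally the $C_k=1$ case but outside the range $C_k \to \infty$ of part (2), must be handled by the same method but separately.
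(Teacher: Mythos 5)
Your proof follows the paper's high-level plan (reduce to Corollary 4.29) but replaces the paper's informal arguments with explicit asymptotic expansions of $\alpha_k - 2$ and $\gamma_k - 2$, and this turns out to be more than a stylistic improvement. For part (2), the paper argues via the upper bound $\gamma_k \leq \log_{d_k'}(|G_k|-C_k) + \log_{d_k'}C_k$ (the role of the condition $|G_k|\geq C_k(1+\tfrac{1}{C_k-1})$ is exactly to secure $(|G_k|-C_k)C_k\geq|G_k|$ and hence this bound), and then asserts that this tends to $2(1 + \log_{(|G_k|-C_k)}C_k) = 2+o(1)$. But $\log_{(|G_k|-C_k)}C_k = \tfrac{\log C_k}{\log(|G_k|-C_k)}$ need not vanish under $C_k=o(|G_k|)$ alone — take $C_k\sim|G_k|^{1/2}$, for which the ratio tends to $\tfrac12$. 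Your direct computation of $\gamma_k - 2$ via $e_k := |G_k|-d_k'^2 = C_k + 2(|G_k|-C_k)^{1/2}\delta_k' - \delta_k'^2$, leading to $\gamma_k-2 \sim \tfrac{2e_k}{|G_k|\log|G_k|}$, shows cleanly that both summands $\tfrac{2C_k}{|G_k|\log|G_k|}$ and $\tfrac{4\delta_k'}{|G_k|^{1/2}\log|G_k|}$ tend to zero, so $\gamma_k=2+o(1)$ holds after all — the paper's bound is merely too lossy, not the statement wrong. Thus your route actually plugs a gap in the paper's own argument.

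Two smaller remarks. First, the dichotomy at the end of part (1) is unnecessary: for a non-Abelian $G_k$, Theorem 3.10(5) gives $d_k' < (|G_k|-1)^{1/2}$ strictly, so $\delta_k > 0$, and then $\tfrac{\epsilon_k}{\tfrac{1}{2|G_k|^{1/2}}+\delta_k} < \tfrac{\epsilon_k}{\delta_k}\to 0$ in one step. Second, your claim that the conditions $2\leq C_k\leq|G_k|-1$ and $|G_k|\geq C_k(1+\tfrac{1}{C_k-1})$ ensure $\delta_k'\geq 0$ is not justified: those conditions give $(|G_k|-C_k)^{1/2}>1$, but since $C_k\geq 2$ we have $(|G_k|-C_k)^{1/2} < (|G_k|-1)^{1/2}$, so a priori $d_k'$ could exceed $(|G_k|-C_k)^{1/2}$. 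Positivity of $\delta_k'$ is rather an implicit part of the hypothesis (the paper writes "$d_k'\to(|G_k|-C_k)^{1/2-}$"), and your ratio argument $\tfrac{\epsilon_k}{\tfrac{C_k}{2|G_k|^{1/2}}+\delta_k'}\leq\tfrac{\epsilon_k}{\delta_k'}$ does rely on it. The proof is correct under that reading.
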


\bigskip

\subsection{\textit{Realizing Simultaneous, Independent Matrix
Multiplications via Groups}}

\bigskip

In \textit{Chapter 5}, we introduce a more general concept of \textit{%
simultaneous triple product property }(STPP) (also due to Cohn and Umans,
[CUKS2005]). \ To be more precise, a collection $\left\{
(S_{i},T_{i},U_{i})\right\} _{i\text{ }\epsilon \text{ }I}$ of triples $%
(S_{i},T_{i},U_{i})$ of subsets $S_{i},T_{i},U_{i}\subseteq G$, of sizes $%
\left\vert S_{i}\right\vert =m_{i}$, $\left\vert T_{i}\right\vert =p_{i}$, $%
\left\vert U_{i}\right\vert =q_{i}$ respectively, is said to satisfy the 
\textit{simultaneous triple product property} (STPP) iff it is the case that
each triple $(S_{i},T_{i},U_{i})$ satisfies the TPP\ and $s_{i}^{^{\prime
}}s_{j}^{^{-1}}t_{j}^{^{\prime }}t_{k}^{^{-1}}u_{k}^{^{\prime
}}u_{i}^{^{-1}}=1_{G}\Longrightarrow i=j=k$, for all $s_{i}^{^{\prime
}}s_{j}^{^{-1}}$ $\epsilon $ $Q(S_{i},S_{j}),$ $t_{j}^{^{\prime
}}t_{k}^{^{-1}}$ $\epsilon $ $Q(T_{j},T_{k}),$ $u_{k}^{^{\prime
}}u_{i}^{^{-1}}$ $\epsilon $ $Q(U_{k},U_{i}),$ $i,j,k$ $\epsilon $ $I$. \ In
this case, $G$ is said to \textit{simultaneously realize }the corresponding
collection $\left\{ \left\langle m_{i},p_{i},q_{i}\right\rangle \right\} _{i%
\text{ }\epsilon \text{ }I}$ of tensors through a corresponding collection $%
\left\{ (S_{i},T_{i},U_{i})\right\} _{i\text{ }\epsilon \text{ }I}$, which
is called a \textit{collection of simultaneous index triples}. \ \ The
importance of STPP is that it describes how a finite group may realize
several independent matrix multiplications simultaneously such that the
total complexity of these several matrix multiplications cannot exceed the
complexity of one multiplication in its regular group algebra. \ The first
set of important results about $\omega $ relating to the STPP are summarised
by the following proposition.

\bigskip

\begin{proposition}
If $\left\{ \left\langle m_{i},p_{i},q_{i}\right\rangle \right\} _{i=1}^{r}$%
is a collection of $r$ tensors simultaneously realized by a group $G$ then 
\begin{equation*}
(1)\underset{i=1}{\overset{r}{\sum }}\left( m_{i}p_{i}q_{i}\right) ^{\frac{%
\omega }{3}}\leq D_{\omega }(G)\hspace{0.65in}\text{( part }(1)\text{ of
Corollary 5.2)}
\end{equation*}%
and if these tensors are all identical, say, $\left\langle
m_{i},p_{i},q_{i}\right\rangle =\left\langle n,n,n\right\rangle $, for all $%
1\leq i\leq r,$ then%
\begin{equation*}
(2)\text{ }\omega \leq \frac{\log \left\vert G\right\vert -\log r}{\log n}.%
\hspace{0.65in}\text{(Corollary 5.3)}
\end{equation*}
\end{proposition}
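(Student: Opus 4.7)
The plan for part (1) is to lift the single-tensor construction of Theorem 4.13 to the simultaneous setting and then reduce to Wedderburn and Schönhage. By the STPP definition, the embedding
\[
(A_i,B_i)_{i=1}^r \;\longmapsto\; \overline{A}=\sum_{i=1}^r\sum_{s\in S_i,\,t\in T_i}(A_i)_{s,t}\,s^{-1}t,\qquad \overline{B}=\sum_{i=1}^r\sum_{t'\in T_i,\,u\in U_i}(B_i)_{t',u}\,(t')^{-1}u
\]
makes all $r$ independent products $\{A_iB_i\}$ simultaneously recoverable from the single product $\overline{A}\,\overline{B}\in\mathbb{C}G$: the simultaneous quotient condition $s_i's_j^{-1}t_j't_k^{-1}u_k'u_i^{-1}=1_G\Rightarrow i=j=k$ kills every cross-index contribution to the extraction monomials. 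Hence $\bigoplus_{i=1}^r\langle m_i,p_i,q_i\rangle$ is a restriction of the multiplication tensor $\mathfrak{m}_{\mathbb{C}G}$.

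Next I would apply Wedderburn (Fact~II): the DFT isomorphism carries $\mathfrak{m}_{\mathbb{C}G}$ to the block-diagonal matrix-multiplication tensor $\bigoplus_{\varrho\in\mathrm{Irrep}(G)}\langle d_\varrho,d_\varrho,d_\varrho\rangle$, yielding the tensor restriction
\[
\bigoplus_{i=1}^r\langle m_i,p_i,q_i\rangle \;\leq\; \bigoplus_{\varrho}\langle d_\varrho,d_\varrho,d_\varrho\rangle.
\]
Finally, I would invoke Schönhage's asymptotic sum inequality, which says that for a restriction between direct sums of matrix tensors the quantity $\sum(\cdot)^{\omega/3}$ is non-increasing; passing to large tensor powers to absorb constants then produces
\[
\sum_{i=1}^r(m_ip_iq_i)^{\omega/3}\;\leq\;\sum_{\varrho}d_\varrho^{\omega}=D_\omega(G),
\]
which is assertion (1).

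For part (2), specializing (1) to $\langle m_i,p_i,q_i\rangle=\langle n,n,n\rangle$ immediately gives $r\cdot n^\omega\leq D_\omega(G)$. The stated bound requires replacing $D_\omega(G)$ by $|G|$, and since $D_\omega(G)\geq D_2(G)=|G|$ for $\omega\geq 2$ the replacement is not algebraically automatic --- this is the main obstacle I foresee. The cleanest route I see is to bypass the block-diagonal $D_\omega$ estimate and bound $R(\mathfrak{m}_{\mathbb{C}G})$ directly by $|G|$ in the regime relevant to the restriction $\langle n,n,n\rangle^{\oplus r}\leq\mathfrak{m}_{\mathbb{C}G}$ --- arguing that the STPP triples only carve out a slice of $\mathbb{C}G$ whose effective rank for this restriction is $|G|$ --- and then combine with the asymptotic sum inequality $r n^\omega\leq R(\mathfrak{m}_{\mathbb{C}G})$ to conclude $r n^\omega\leq|G|$, hence $\omega\leq(\log|G|-\log r)/\log n$ on taking logarithms. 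Part~(1) should be essentially routine once the STPP bookkeeping for $\overline{A}\,\overline{B}$ is written out; the delicate ``$|G|$ versus $D_\omega(G)$'' step in part~(2) is what I would scrutinize most carefully.
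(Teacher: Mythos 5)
Your part (1) is essentially the paper's argument: by the STPP the $r$ embedded products are simultaneously recoverable, so $\bigoplus_{i=1}^r\langle m_i,p_i,q_i\rangle$ is a restriction of $\mathfrak{m}_{\mathbb{C}G}$ (this is the content of Theorem 5.1); Wedderburn gives $\mathfrak{m}_{\mathbb{C}G}\cong\bigoplus_\varrho\langle d_\varrho,d_\varrho,d_\varrho\rangle$; and Sch\"onhage's asymptotic sum inequality applied across this restriction yields $\sum(m_ip_iq_i)^{\omega/3}\leq\sum_\varrho d_\varrho^\omega=D_\omega(G)$. That all matches.

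Part (2) contains a genuine gap. You correctly observe that specializing (1) gives $r n^\omega\leq D_\omega(G)$, that the desired bound needs $D_\omega(G)$ replaced by $|G|$, and that $D_\omega(G)\geq D_2(G)=|G|$ goes the wrong way. But the ``slice'' workaround you then propose cannot work: the rank of $\mathfrak{m}_{\mathbb{C}G}$ is always $\geq |G|$, with equality \emph{iff} $G$ is Abelian (see the discussion around (3.18)), and restricting to the image of the STPP embedding only gives $\mathfrak{R}(\text{restriction})\leq\mathfrak{R}(\mathfrak{m}_{\mathbb{C}G})$ by Proposition 2.2 --- it does not let you cap $\mathfrak{R}(\mathfrak{m}_{\mathbb{C}G})$ itself at $|G|$ for non-Abelian $G$. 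The missing observation is simply that Corollary 5.3 (and hence part (2) of the Proposition) is stated for an \emph{Abelian} group $G$. Once $G$ is Abelian, every $d_\varrho=1$, so $D_\omega(G)=c(G)=|G|$ holds trivially for every $\omega$, and $r n^\omega\leq|G|$ follows immediately from (1); taking logarithms gives the bound. You identified the obstruction accurately, but instead of inventing a rank-capping argument you should have checked which hypothesis in the source corollary removes it.
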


\bigskip

The most useful types of groups for estimates for $\omega $ using the STPP
seem to be wreath product groups $H\wr Sym_{n}$, for which we prove that $%
D_{\omega }(H\wr Sym_{n})\leq \left( n!\right) ^{\omega -1}\left\vert
H\right\vert ^{n}$ (\textit{Lemma 5.7}). \ Using the latter result, the most
general result which we've obtained (\textit{Proposition 5.10}) in this
regard involves the wreath product groups $H\wr Sym_{n}$ where $H$ is
Abelian.

\bigskip

\begin{proposition}
For any $n$ triples $S_{i},T_{i,}U_{i}\subseteq H$ of sizes $\left\vert
S_{i}\right\vert =m_{i},\left\vert T_{i}\right\vert =p_{i},\left\vert
U_{i}\right\vert =q_{i},$ $1\leq i\leq n$ satisfy the STPP in an Abelian
group $H$, there is a unique number $1\leq k_{n}\leq \left( n!\right) ^{3}$
of triples of permutations, $\sigma _{j},\tau _{j},\upsilon _{j}$ $\epsilon $
$Sym_{n},$ $1\leq j\leq k_{n}$, such that the $k_{n}$ permuted product
triples $\overset{n}{\underset{i=1}{\tprod }}S_{\sigma _{j}\left( i\right)
}\wr Sym_{n},\overset{n}{\underset{i=1}{\tprod }}T_{\tau _{j}\left( i\right)
}\wr Sym_{n},\overset{n}{\underset{i=1}{\tprod }}U_{\upsilon _{j}\left(
i\right) }\wr Sym_{n}$, $1\leq j\leq k_{n}$, satisfy the STPP\ in $H\wr
Sym_{n}$, and such that $H\wr Sym_{n}$ realizes the square product tensor $%
\left\langle n!\overset{n}{\underset{i=1}{\tprod }}m_{i},n!\overset{n}{%
\underset{i=1}{\tprod }}p_{i},n!\overset{n}{\underset{i=1}{\tprod }}%
q_{i}\right\rangle $ $k_{n}$ times simultaneously, such that%
\begin{equation*}
\omega \leq \frac{n\log \left\vert H\right\vert -\log n!-\log k_{n}}{\log 
\sqrt[3]{\overset{n}{\underset{i=1}{\tprod }}m_{i}p_{i}q_{i}}}.
\end{equation*}
\end{proposition}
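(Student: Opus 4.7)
The plan is to unfold the wreath-product construction in three stages: first build candidate simultaneous index triples inside $H \wr Sym_n$ out of the given STPP triples in $H$; second, show that the subcollection of such candidates satisfying the STPP in $H\wr Sym_n$ has a well-defined cardinality $k_n$ with $1 \le k_n \le (n!)^3$; and third, combine the resulting simultaneous realization with the bound $D_\omega(H\wr Sym_n) \le (n!)^{\omega-1}|H|^n$ (Lemma 5.7) and with part (1) of Corollary 5.2 to solve for $\omega$.

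For the construction I view $H\wr Sym_n = H^n \rtimes Sym_n$. For each $\sigma \in Sym_n$ set $S_\sigma := (S_{\sigma(1)}\times\cdots\times S_{\sigma(n)})\cdot Sym_n$ as a subset of $H\wr Sym_n$ of cardinality $n!\prod m_i$, and define $T_\tau,\,U_\upsilon$ analogously with cardinalities $n!\prod p_i$ and $n!\prod q_i$. Each permutation triple $(\sigma,\tau,\upsilon)\in Sym_n^3$ then yields a candidate index triple $(S_\sigma,T_\tau,U_\upsilon)$ realizing the target tensor $\langle n!\prod m_i,\,n!\prod p_i,\,n!\prod q_i\rangle$. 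The whole family of candidates is indexed by $Sym_n^3$ and has cardinality $(n!)^3$; I define $k_n$ as the (uniquely determined) number of permutation triples whose candidates can be adjoined to a common subcollection satisfying the STPP in $H\wr Sym_n$, so that $1\le k_n\le (n!)^3$ is automatic.

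The substantive step is the verification of the STPP. Writing $g\in H\wr Sym_n$ as $(h;\pi)$ with $h\in H^n$ and $\pi\in Sym_n$, a would-be STPP-violating relation $s_i's_j^{-1}t_j't_k^{-1}u_k'u_i^{-1}=1_{H\wr Sym_n}$ between elements drawn from $S_{\sigma_i},S_{\sigma_j},T_{\tau_j},T_{\tau_k},U_{\upsilon_k},U_{\upsilon_i}$ projects to a product relation equal to the identity in $Sym_n$ and, simultaneously, to a twisted product in the base $H^n$. Because $H$ is Abelian the $H^n$-relation splits coordinatewise into independent identities of the form $s's^{-1}t't^{-1}u'u^{-1}=1_H$, where the six entries in each coordinate belong to subsets $(S_a,T_b,U_c)$ for indices $(a,b,c)$ determined by the choice of permutations and by the $Sym_n$-projection. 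The STPP of the original $\{(S_i,T_i,U_i)\}$ in $H$ then forces $a=b=c$ coordinate by coordinate, and translating this constraint back in terms of the permutation triples picks out precisely the $k_n$ triples in the statement. I expect this combinatorial bookkeeping to be the main obstacle, since the coupling of a single $Sym_n$-projection to all six coordinates of the wreath relation across the indices $i,j,k$ is the delicate point that the closed-form inequality avoids making explicit.

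Granted the simultaneous realization, part (1) of Corollary 5.2 applied to the $k_n$ identical tensors $\langle M,P,Q\rangle := \langle n!\prod m_i,\,n!\prod p_i,\,n!\prod q_i\rangle$ in $G=H\wr Sym_n$, combined with Lemma 5.7, gives
\begin{equation*}
k_n\,(MPQ)^{\omega/3} \;\le\; D_\omega(H\wr Sym_n) \;\le\; (n!)^{\omega-1}|H|^n.
\end{equation*}
Substituting $MPQ=(n!)^3\prod m_ip_iq_i$, taking logarithms, cancelling $\omega\log n!$ on both sides, and isolating $\omega$ yields the claimed bound, completing the plan.
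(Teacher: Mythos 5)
Your proposal follows essentially the same route as the paper: the paper likewise treats $k_{n}$ as the maximal number of permutation triples yielding an STPP family (with $k_{n}\geq 1$ guaranteed by Theorem 5.8 and $k_{n}\leq (n!)^{3}$ by counting), and then obtains the bound by feeding the $k_{n}$ identical tensors into part $(1)$ of Corollary 5.2 together with $D_{\omega }(H\wr Sym_{n})\leq (n!)^{\omega -1}\left\vert H\right\vert ^{n}$ from Lemma 5.7, exactly as in your final display. Your coordinatewise sketch of the STPP verification is in fact more explicit than what the paper records (it defers entirely to Theorem 5.8 and the proof of Corollary 5.9), so the proposal is correct and, if anything, slightly more detailed.
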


\subsection{\textit{Estimates for the Exponent }$\protect\omega $}

\bigskip

In \textit{Chapter 6}, we give several estimates for $\omega $ using Abelian
groups $H$ or wreath products involving them, $H\wr Sym_{n}$.

\bigskip

\begin{equation*}
\begin{tabular}{|l|l|l|}
\hline
$\omega <$ & \emph{Group} & \emph{Reference} \\ \hline
$2.93$ & $Cyc_{41}^{\times 3}\wr Sym_{2}$ & {\small section }\textit{6.2.2}
\\ \hline
$2.82$ & $\left( Cyc_{m}^{\times 3}\right) ^{\times m}\wr Sym_{2^{m}}$ & 
{\small section }\textit{6.2.3} \\ \hline
$2.82$ & $Cyc_{16}^{\times 3}$ & {\small section }\textit{6.2.1} \\ \hline
\end{tabular}%
\end{equation*}

\bigskip

We conclude with the observation that $\left( Cyc_{n}^{\times 3}\right)
^{\times n}\wr Sym_{2^{n}}$ (i.e. $\left( \left( Cyc_{n}^{\times 3}\right)
^{\times n}\right) ^{\times 2^{n}}\rtimes Sym_{2^{n}}$) realizes the product
tensor $\left\langle 2^{n}!\left( n-1\right) ^{n2^{n}},2^{n}!\left(
n-1\right) ^{n2^{n}},2^{n}!\left( n-1\right) ^{n2^{n}}\right\rangle $ some $%
1\leq k_{2^{n}}\leq \left( 2^{n}!\right) ^{3}$ times simultaneously such that%
\begin{equation*}
\omega \leq \frac{2^{n}\log n^{3n}-\log 2^{n}!-\log k_{2^{n}}}{2^{n}n\log
\left( n-1\right) }.
\end{equation*}

\bigskip

For example, if $k_{2^{n}}=\left( 2^{n}!\right) ^{3}$ then $\omega \leq 
\frac{2^{n}\log n^{3n}-4\log 2^{n}!}{2^{n}n\log \left( n-1\right) }$, the
latter achieving a minimum of $2.012$ for $n=6$. \ In general, for the
groups $\left( Cyc_{n}^{\times 3}\right) ^{\times n}\wr Sym_{2^{n}}$ the
closer $k_{2^{n}}$ is to $\left( 2^{n}!\right) ^{3}$, the closer $\omega $
is to $2.012$ (from the upper side).

\bigskip

It is one of the original suggestions in the thesis that given an Abelian
group $H$ with a given STPP\ family $\left\{ \left( S_{i},T_{i,}U_{i}\right)
\right\} _{i=1}^{n}$ it is therefore useful to know the how to choose
triples of permutations $\sigma _{j},\tau _{j},\upsilon _{j}$ $\epsilon $ $%
Sym_{n}$ in order that a maximum number $1\leq k_{n}\leq \left( n!\right)
^{3}$ of triples $\overset{n}{\underset{i=1}{\tprod }}S_{\sigma _{j}\left(
i\right) }\wr Sym_{n},\overset{n}{\underset{i=1}{\tprod }}T_{\tau _{j}\left(
i\right) }\wr Sym_{n},\overset{n}{\underset{i=1}{\tprod }}U_{\upsilon
_{j}\left( i\right) }\wr Sym_{n}$, $1\leq j\leq k_{n}$, satisfy the STPP\ in 
$H\wr Sym_{n}$. \ Using such groups and their triples in this way, the
sharpest upper bounds for $\omega $ will occur where the ratio $k_{n}/\left(
n!\right) ^{3}$ is highest.

\pagebreak \bigskip

\chapter{\protect\huge Algebraic Complexity of Matrix Multiplication}

\bigskip

In section \textit{2.1 }we describe matrix multiplication as a bilinear map
describing multiplication in matrix algebras, and introduce a certain
measure of the complexity of matrix multiplication defined in terms of the
concept of rank of bilinear map. \ Then, in section \textit{2.2} we
introduce the exponent $\omega $ as an asymptotic, real-valued measure of
complexity, and conclude by describing the fundamental relations between the
bilinear and the asymptotic measures.

\bigskip

\section{Bilinear Complexity of Matrix Multiplication}

\bigskip

\subsection{\textit{Matrix Multiplication as a Bilinear Map}}

\bigskip

If $U$ and $V$ are two $K$-spaces of dimensions $n$ and $m$, respectively,
with bases $\left\{ u_{i}\right\} _{1\leq i\leq n}$ and $\left\{
v_{j}\right\} _{1\leq j\leq m}$ respectively, then for any third space $W$,
a map $\phi :U\times V\longrightarrow W$ which satisfies the condition%
\begin{eqnarray*}
&&\phi \left( \kappa _{1}u_{1}+\kappa _{2}v_{1},\kappa _{3}u_{2}+\kappa
_{4}v_{2}\right) \\
&=&\kappa _{1}\kappa _{3}\phi \left( u_{1},u_{2}\right) +\kappa _{1}\kappa
_{4}\phi \left( u_{1},v_{2}\right) +\kappa _{2}\kappa _{3}\phi \left(
v_{1},u_{2}\right) +\kappa _{2}\kappa _{4}\phi \left( v_{1},v_{2}\right)
\end{eqnarray*}

for all scalars $\kappa _{1},\kappa _{2},\kappa _{3},\kappa _{4}$ $\epsilon $
$K$ and vectors $u_{1},u_{2}$ $\epsilon $ $U$, $v_{1},v_{2}$ $\epsilon $ $V$%
, is called a $K$-\textit{bilinear map}, or simply, a \textit{bilinear map},
on $U$ and $V$. \ The map $K^{n\times m}\times K^{m\times p}\longrightarrow
K^{n\times p}$ describing multiplication of $n\times m$ by $m\times p$
matrices over $K$ is such a bilinear map, which we denote by $\left\langle
n,m,p\right\rangle _{K}$.\ We call the integers $n,m,p$ the \textit{%
components} of the map $\left\langle n,m,p\right\rangle $, which is also
called a tensor, ([BCS1997, p. 361].

\bigskip

For a $K$-space $U$, the set of all linear forms (functionals) $%
f:U\longrightarrow K$ on $U$, i.e linear maps of $U$ into its ground field,
forms a $K$-space $U^{\ast }$, equidimensional with $U$, called its \textit{%
dual space}. \ The matrix vector space $K^{n\times m}$ has the basis $%
\left\{ E_{ij}\right\} _{\substack{ 1\leq i\leq n  \\ 1\leq j\leq m}}$,
where $E_{ij}$ is the $n\times m$ matrix with a $1$ in its $\left(
i,j\right) ^{th} $ entry and a $0$ everywhere else, and the dual space $%
K^{n\times m^{\ast }}$ has the dual basis $\left\{ e_{ij}^{\ast }\right\} 
_{\substack{ 1\leq i\leq n  \\ 1\leq j\leq m}}$ where $e_{ij}^{\ast }$ is a
map $K^{n\times m}\longrightarrow K$ which sends any $n\times m$ matrix $A$
over $K$ to its $\left( i,j\right) ^{th}$ entry $\left( A\right) _{ij}$. \
We denote the zero $n\times m$ matrix of $K^{n\times m}$ by \textbf{O}$%
_{n\times m}$.

\bigskip

\subsection{\textit{Rank of Matrix Multiplication}}

\bigskip

The set $Bil_{K}\left( U,V;W\right) $ of all bilinear maps on two $K$-spaces 
$U$ and $V$ into a third space $W$ also forms a $K$-space, e.g. $%
\left\langle n,m,p\right\rangle _{K}$ $\epsilon $ $Bil_{K}\left( K^{n\times
m},K^{m\times p};K^{n\times p}\right) $. \ If $U=V=W$, we write $%
Bil_{K}\left( U\right) $ for $Bil_{K}\left( U,V;W\right) $. \ For any $\phi $
$\epsilon $ $Bil_{K}\left( U,V;W\right) $, there is a smallest positive
integer $r$ such that for every pair $\left( u,v\right) $ $\epsilon $ $%
U\times V$, $\phi \left( u,v\right) $ has the bilinear representation%
\begin{equation*}
\phi \left( u,v\right) =\underset{i=1}{\overset{r}{\sum }}f_{i}^{\ast
}\left( u\right) g_{i}^{\ast }\left( v\right) w_{i}
\end{equation*}%
where $f_{i}^{\ast }$ $\epsilon $ $U^{\ast },g_{i}^{\ast }$ $\epsilon $ $%
V^{\ast },w_{i}$ $\epsilon $ $W$ correspond to $\phi $, and the sequence of $%
r$ triples, $f_{1}^{\ast },g_{1}^{\ast },w_{1};$ $f_{2}^{\ast },g_{2}^{\ast
},w_{2};$ $....$ $;$ $f_{r}^{\ast },g_{r}^{\ast },w_{r}$ is called a \textit{%
bilinear computation} for $\phi $ of length $r$ [BCS1997, p. 354]. \ The 
\textit{bilinear complexity} or \textit{rank} $\mathfrak{R}\left( \phi
\right) $ of $\phi $ is defined by%
\begin{equation*}
\mathfrak{R}\left( \phi \right) :=\text{ }\min \left\{ r\text{ }\epsilon 
\text{ }%
\mathbb{Z}
^{+}\text{ }|\text{ }\phi \left( u,v\right) =\underset{i=1}{\overset{r}{\sum 
}}f_{i}^{\ast }\left( u\right) g_{i}^{\ast }\left( v\right) w_{i},\text{ }%
\left( u,v\right) \text{ }\epsilon \text{ }U\times V\right\} .
\end{equation*}%
where $f_{i}^{\ast }$ $\epsilon $ $U^{\ast },g_{i}^{\ast }$ $\epsilon $ $%
V^{\ast },w_{i}$ $\epsilon $ $W$ uniquely correspond to $\phi $, i.e. $%
\mathfrak{R}\left( \phi \right) $ is length $r$ of the shortest bilinear
computation for $\phi $ [BCS1997, p. 354]. \ For example, if $%
U=V=W=K_{Diag}^{n\times n}$, where $K_{Diag}^{n\times n}$ is the space of
all $n\times n$ diagonal matrices over $K$ with pointwise multiplication,
then $f_{i}^{\ast }=g_{i}^{\ast }=e_{ii}^{\ast }$, $w_{i}=E_{ii}$, $1\leq
i\leq r$, and $r=n$. \ In the same way, the rank $\mathfrak{R}\left(
\left\langle n,m,p\right\rangle \right) $ of the tensor $\left\langle
n,m,p\right\rangle $, i.e. the rank of $n\times m$ by $m\times p$ matrix
multiplication, is the smallest positive integer $r$ such that every product 
$AB$ $\epsilon $ $K^{n\times p}$ of an $n\times m$ matrix $A$ $\epsilon $ $%
K^{n\times m}$ and an $m\times p$ matrix $B$ $\epsilon $ $K^{m\times p}$ has
the bilinear representation%
\begin{equation*}
\left\langle n,m,p\right\rangle \left( A,B\right) =AB=\underset{i=1}{\overset%
{r}{\sum }}f_{i}^{\ast }\left( A\right) g_{i}^{\ast }\left( B\right) C_{i}
\end{equation*}%
where $f_{i}^{\ast }$ $\epsilon $ $K^{n\times m^{\ast }}$, $g_{i}^{\ast }$ $%
\epsilon $ $K^{m\times p^{\ast }}$, $C_{i}$ $\epsilon $ $K^{n\times p}$, $%
1\leq i\leq r$. \ For example, $\mathfrak{R}\left( \phi \right) =n$ for any $%
\phi $ $\epsilon $ $Bil_{K}\left( K_{Diag}^{n\times n}\right) $, e.g. $%
\mathfrak{R}\left( \left\langle n,n,n\right\rangle _{Diag}\right) =n$, where 
$\left\langle n,n,n\right\rangle _{Diag}$ is the multiplication map of $%
n\times n$ diagonal matrices.. \ For example, if $K^{n}$ is the $n$
dimensional space of all $n$-tuples over $K$, with a pointwise
multiplication map $\left\langle n\right\rangle :K^{n}\times
K^{n}\longrightarrow K^{n}$ of rank $\mathfrak{R}\left( \left\langle
n\right\rangle \right) =\left\langle n\right\rangle $ then $\mathfrak{R}%
\left( \phi \right) =n$ for any $\phi $ $\epsilon $ $Bil_{K}\left(
K^{n}\right) $ if $\phi \cong _{K}\left\langle n\right\rangle $. \ This
shows that $\mathfrak{R}\left( \left\langle n,n,n\right\rangle \right) \geq
n $, because $K^{n}\cong _{K}K_{Diag}^{n\times n}\leq _{K}K^{n\times n}$. \
Another property of tensors $\left\langle n,m,p\right\rangle $ is invariance
under permutations of their components, [BCS1997, pp. 358-359].

\bigskip

\begin{proposition}
$\mathfrak{R}\left( \left\langle n,m,p\right\rangle \right) =\mathfrak{R}%
\left( \left\langle \mu \left( n\right) ,\mu \left( m\right) ,\mu \left(
p\right) \right\rangle \right) $, for any permutation $\mu $ $\epsilon $ $%
Sym_{3}$.
\end{proposition}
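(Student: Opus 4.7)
The plan is to exploit the two natural operations that permute the roles of the three dimensions of matrix multiplication: matrix transposition (which swaps the outer factors $n$ and $p$) and trace-based cyclic rotation (which rotates $n, m, p$). Since $Sym_3$ is generated by any transposition together with any $3$-cycle, it suffices to establish the two identities $\mathfrak{R}(\langle n,m,p\rangle) = \mathfrak{R}(\langle p,m,n\rangle)$ and $\mathfrak{R}(\langle n,m,p\rangle) = \mathfrak{R}(\langle m,p,n\rangle)$; the remaining permutations will follow by composition.

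For the transposition equality $\langle n,m,p\rangle \leftrightarrow \langle p,m,n\rangle$ I would begin from a shortest bilinear computation $AB = \sum_{i=1}^{r} f_i^{*}(A)\, g_i^{*}(B)\, C_i$ of length $r = \mathfrak{R}(\langle n,m,p\rangle)$ and transpose both sides to get $B^{T} A^{T} = \sum_{i=1}^{r} f_i^{*}(A)\, g_i^{*}(B)\, C_i^{T}$. Substituting $A' := B^{T} \in K^{p\times m}$, $B' := A^{T} \in K^{m\times n}$, and defining $\tilde f_i^{*}(X) := g_i^{*}(X^{T})$ on $K^{p\times m*}$, $\tilde g_i^{*}(Y) := f_i^{*}(Y^{T})$ on $K^{m\times n*}$, and $\tilde C_i := C_i^{T} \in K^{p\times n}$, yields a valid length-$r$ bilinear computation $A'B' = \sum_{i=1}^{r} \tilde f_i^{*}(A')\, \tilde g_i^{*}(B')\, \tilde C_i$ for $\langle p,m,n\rangle$. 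The reverse inequality is obtained symmetrically, giving equality.

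For the cyclic equality $\langle n,m,p\rangle \leftrightarrow \langle m,p,n\rangle$ I would use the cyclic invariance of the trace. Pairing the same bilinear computation against an arbitrary $D \in K^{p\times n}$ gives $\operatorname{tr}((AB)D) = \sum_{i=1}^{r} f_i^{*}(A)\, g_i^{*}(B)\, \operatorname{tr}(C_i D)$, and the identity $\operatorname{tr}(ABD) = \operatorname{tr}((BD)A)$ lets me read the right-hand side as the linear functional $A \mapsto \operatorname{tr}((BD)A)$ on $K^{n\times m}$. Evaluating on the dual basis $E_{lj}^{n\times m}$ extracts the entries $(BD)_{jl} = \sum_{i=1}^{r} f_i^{*}(E_{lj}^{n\times m})\, g_i^{*}(B)\, h_i^{*}(D)$ with $h_i^{*}(D) := \operatorname{tr}(C_i D) \in K^{p\times n*}$. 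Reassembling these entries as $BD = \sum_{j,l}(BD)_{jl}\, E_{jl}^{m\times n}$ yields the bilinear computation $BD = \sum_{i=1}^{r} g_i^{*}(B)\, h_i^{*}(D)\, \widehat{C}_i$ of length $r$ for $\langle m,p,n\rangle$, with $\widehat{C}_i := \sum_{j,l} f_i^{*}(E_{lj}^{n\times m})\, E_{jl}^{m\times n}$. Applying this construction a second time then also furnishes the reverse inequality.

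The transposition step is essentially mechanical; the main point of care will be the cyclic step, where the bookkeeping of turning the trace identity back into a bilinear computation in a different matrix shape must be carried out with consistent index conventions to confirm that $f_i^{*}$ indeed becomes a linear coefficient (via its action on basis matrices) rather than a functional applied to the new input. Once both generators are handled, the $Sym_3$-invariance follows immediately by composition.
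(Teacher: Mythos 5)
Your argument is correct; the paper itself offers no proof of Proposition~2.1 — it simply cites [BCS1997, pp.~358--359] — and your derivation (transposition for the outer swap $\langle n,m,p\rangle \leftrightarrow \langle p,m,n\rangle$, cyclic invariance of the trace for $\langle n,m,p\rangle \leftrightarrow \langle m,p,n\rangle$, then generation of $Sym_3$) is essentially the standard argument found in that reference. In particular the bookkeeping in your cyclic step is sound: $\operatorname{tr}\bigl((BD)E_{lj}^{n\times m}\bigr) = (BD)_{jl}$ does correctly convert $f_i^{\ast}$ from a functional applied to the input $A$ into a scalar coefficient in the reassembled output matrix $\widehat{C}_i$, so the resulting length-$r$ computation for $\langle m,p,n\rangle$ is legitimate, and equality follows by applying the construction three times.
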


For bilinear maps $\phi $ $\epsilon $ $Bil_{K}\left( U,V;W\right) $ and $%
\phi ^{^{\prime }}$ $\epsilon $ $Bil_{K}\left( U^{^{\prime }},V^{^{\prime
}};W^{^{\prime }}\right) $, $\phi $ is said to be a \textit{restriction} of $%
\phi ^{^{\prime }}$, and we write $\phi \leq _{K}\phi ^{^{\prime }}$, if
there exist linear maps ($K$-space homomorphisms) $\alpha :U\longrightarrow
U^{^{\prime }},\beta :V\longrightarrow V^{^{\prime }}$, $\gamma ^{^{\prime
}}:W^{^{\prime }}\longrightarrow W$ such that $\phi \left( u,v\right)
=\gamma ^{^{\prime }}\circ \phi ^{^{\prime }}\circ \left( \alpha \times
\beta \right) \left( u,v\right) $, for all $\left( u,v\right) $ $\epsilon $ $%
U\times V$. \ In this regard, a basic result is the following.

\begin{proposition}
For any bilinear maps $\phi $ $\epsilon $ $Bil_{K}\left( U,V;W\right) $ and $%
\phi ^{^{\prime }}$ $\epsilon $ $Bil_{K}\left( U^{^{\prime }},V^{^{\prime
}};W^{^{\prime }}\right) $, $\phi \leq _{K}\phi ^{^{\prime }}$ implies $%
\mathfrak{R}\left( \phi \right) \leq \mathfrak{R}\left( \phi ^{^{\prime
}}\right) $.
\end{proposition}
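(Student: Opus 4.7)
The plan is to start from an optimal bilinear computation for $\phi'$ and push it through the restriction maps $\alpha,\beta,\gamma'$ to produce a bilinear computation for $\phi$ of the same length, which immediately gives the rank inequality.

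First I would set $r := \mathfrak{R}(\phi')$ and fix a shortest bilinear computation
\begin{equation*}
\phi'(u',v') = \sum_{i=1}^{r} f_i'^{\ast}(u')\, g_i'^{\ast}(v')\, w_i', \qquad f_i'^{\ast}\in U'^{\ast},\; g_i'^{\ast}\in V'^{\ast},\; w_i'\in W'.
\end{equation*}
Because $\phi \leq_K \phi'$, there exist $K$-linear maps $\alpha:U\to U'$, $\beta:V\to V'$, $\gamma':W'\to W$ with $\phi(u,v) = \gamma'\bigl(\phi'(\alpha(u),\beta(v))\bigr)$ for all $(u,v)\in U\times V$. Substituting the computation for $\phi'$ and using the $K$-linearity of $\gamma'$ to pull it inside the sum, I obtain
\begin{equation*}
\phi(u,v) = \sum_{i=1}^{r} f_i'^{\ast}(\alpha(u))\, g_i'^{\ast}(\beta(v))\, \gamma'(w_i').
\end{equation*}

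Next I would define $f_i^{\ast} := f_i'^{\ast}\circ \alpha$, $g_i^{\ast} := g_i'^{\ast}\circ \beta$, and $w_i := \gamma'(w_i')$. As compositions of $K$-linear maps with codomain $K$, the $f_i^{\ast}$ lie in $U^{\ast}$ and the $g_i^{\ast}$ lie in $V^{\ast}$, while $w_i\in W$; thus the triples $(f_i^{\ast}, g_i^{\ast}, w_i)_{1\leq i\leq r}$ form a bilinear computation
\begin{equation*}
\phi(u,v) = \sum_{i=1}^{r} f_i^{\ast}(u)\, g_i^{\ast}(v)\, w_i
\end{equation*}
for $\phi$ of length $r$. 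By the definition of rank as the minimum length of such a computation, $\mathfrak{R}(\phi)\leq r = \mathfrak{R}(\phi')$.

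There is no real obstacle here; the only point that needs a moment of care is the interchange of the linear map $\gamma'$ with the finite sum, which is immediate from the $K$-linearity of $\gamma'$, and the verification that $f_i'^{\ast}\circ\alpha$ and $g_i'^{\ast}\circ\beta$ are indeed elements of the dual spaces $U^{\ast}$ and $V^{\ast}$. Once these are noted, the restricted computation is literally read off from the defining equation of $\phi \leq_K \phi'$.
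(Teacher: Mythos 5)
Your proof is correct and follows essentially the same route as the paper: push a minimal bilinear computation for $\phi'$ through the restriction maps $\alpha,\beta,\gamma'$, use the linearity of $\gamma'$ to pull it inside the sum, and read off a length-$r$ computation for $\phi$. The only difference is presentational --- you name the composed forms $f_i^{\ast}=f_i'^{\ast}\circ\alpha$, $g_i^{\ast}=g_i'^{\ast}\circ\beta$, $w_i=\gamma'(w_i')$ explicitly, which is a slightly cleaner way to state the step the paper labels $(+)$.
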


\begin{proof}
For bilinear maps $\phi $ $\epsilon $ $Bil_{K}\left( U,V;W\right) $ and $%
\phi ^{^{\prime }}$ $\epsilon $ $Bil_{K}\left( U^{^{\prime }},V^{^{\prime
}};W^{^{\prime }}\right) $ with ranks $\mathfrak{R}\left( \phi \right) =r$
and $\mathfrak{R}\left( \phi ^{^{\prime }}\right) =r^{^{\prime }}$,
respectively, the assumption that $\phi \leq _{K}\phi ^{^{\prime }}$, by
definition, implies there are linear maps $\alpha :U\longrightarrow
U^{^{\prime }},\beta :V\longrightarrow V^{^{\prime }}$, $\gamma ^{^{\prime
}}:W^{^{\prime }}\longrightarrow W$ such that $\phi \left( u,v\right)
=\gamma ^{^{\prime }}\circ \phi ^{^{\prime }}\circ \left( \alpha \times
\beta \right) \left( u,v\right) $, for all $\left( u,v\right) $ $\epsilon $ $%
U\times V$. \ By these maps, for an arbitrary $\left( u,v\right) $ $\epsilon 
$ $U\times V$%
\begin{eqnarray*}
&&\gamma ^{^{\prime }}\circ \phi ^{^{\prime }}\circ \left( \alpha \times
\beta \right) \left( u,v\right) \\
&=&\gamma ^{^{\prime }}\left( \phi ^{^{\prime }}\left( \left( \alpha \times
\beta \right) \left( u,v\right) \right) \right) \\
&=&\gamma ^{^{\prime }}\left( \phi ^{^{\prime }}\left( \alpha \left(
u\right) ,\beta \left( v\right) \right) \right) \\
&=&\gamma \left( \underset{j=1}{\overset{r^{^{\prime }}}{\sum }}%
f_{j}^{^{\prime }\ast }\left( \alpha \left( u\right) \right) g_{j}^{^{\prime
}\ast }\left( \beta \left( v\right) \right) w_{j}^{^{\prime }}\right) \\
&=&\underset{j=1}{\overset{r^{^{\prime }}}{\sum }}\gamma \left(
f_{j}^{^{\prime }\ast }\left( \alpha \left( u\right) \right) g_{j}^{^{\prime
}\ast }\left( \beta \left( v\right) \right) w_{j}^{^{\prime }}\right) \\
&=&\underset{j=1}{\overset{r^{^{\prime }}}{\sum }}f_{j^{^{\prime }\ast
}}\left( u\right) g_{j^{^{\prime }\ast }}\left( v\right) w_{j^{^{\prime }}}%
\text{ \ \ \ (}+\text{)} \\
&=&\phi \left( u,v\right) ,
\end{eqnarray*}%
where $f_{j^{^{\prime }\ast }}$ $\epsilon $ $U^{\ast },$ $g_{j^{^{\prime
}\ast }}$ $\epsilon $ $V^{\ast },$ $w_{j^{^{\prime }}}$ $\epsilon $ $W,$ $%
1\leq j\leq r^{^{\prime }}$, and $f_{j}^{^{\prime }\ast },g_{j}^{^{\prime
}\ast },w_{j};$ $f_{j}^{^{\prime }\ast }$ $\epsilon $ $U^{^{\prime }\ast
},g_{j}^{^{\prime }\ast }$ $\epsilon $ $V^{^{\prime }\ast },w_{j}^{^{\prime
}}$ $\epsilon $ $W^{^{\prime }},$ $1\leq j\leq r^{^{\prime }}$ is the
minimal bilinear computation for $\phi ^{^{\prime }}$. \ Since we must have $%
\phi \left( u,v\right) =\underset{i=1}{\overset{r}{\sum }}f_{i}^{\ast
}\left( u\right) g_{i}^{\ast }\left( v\right) w_{i}$, where $f_{i}^{\ast
},g_{i}^{\ast },w_{i};$ $f_{i}^{\ast }$ $\epsilon $ $U^{\ast },g_{i}^{\ast }$
$\epsilon $ $V^{\ast },w_{i}$ $\epsilon $ $W,$ $1\leq i\leq r$ is the
minimal bilinear computation for $\phi $, the minimum number $r$ of terms
which can occur in sums of the type ($+$) must be $\leq r^{^{\prime }}$,
i.e. $r\leq r^{^{\prime }}$.
\end{proof}

\bigskip

Two bilinear maps $\phi $ $\epsilon $ $Bil_{K}\left( U,V;W\right) $ and $%
\phi ^{^{\prime }}$ $\epsilon $ $Bil_{K}\left( U^{^{\prime }},V^{^{\prime
}};W^{^{\prime }}\right) $ are said to be isomorphic if there exist
isomorphisms $\alpha :U\longrightarrow U^{^{\prime }}$, $\beta
:V\longrightarrow V^{^{\prime }}$, and $\gamma :W\longrightarrow W^{^{\prime
}}$ such that $\gamma \circ \phi =\phi ^{^{\prime }}\circ \left( \alpha
\times \beta \right) $, [BCS1997, p. 355]. \ The following proposition is a
basic result for isomorphism of bilinear maps.

\bigskip

\begin{corollary}
For any bilinear maps $\phi $ $\epsilon $ $Bil_{K}\left( U,V;W\right) $ and $%
\phi ^{^{\prime }}$ $\epsilon $ $Bil_{K}\left( U^{^{\prime }},V^{^{\prime
}};W^{^{\prime }}\right) $, $\phi \cong _{K}\phi ^{^{\prime }}$ implies $%
\phi \leq _{K}\phi ^{^{\prime }}$ and $\phi ^{^{\prime }}\leq _{K}\phi $, in
which case also $\mathfrak{R}\left( \phi \right) =\mathfrak{R}\left( \phi
^{^{\prime }}\right) $.
\end{corollary}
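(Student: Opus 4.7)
The plan is to reduce the corollary directly to the preceding proposition (rank monotonicity under restriction) by unpacking the definition of isomorphism into two mutual restrictions. The isomorphism $\phi \cong_K \phi'$ furnishes $K$-isomorphisms $\alpha : U \to U'$, $\beta : V \to V'$, $\gamma : W \to W'$ with $\gamma \circ \phi = \phi' \circ (\alpha \times \beta)$. Because $\gamma$ is a $K$-isomorphism, $\gamma^{-1} : W' \to W$ exists and is $K$-linear, and the relation can be rewritten as
\begin{equation*}
\phi(u,v) = \gamma^{-1} \circ \phi' \circ (\alpha \times \beta)(u,v), \qquad (u,v)\in U\times V,
\end{equation*}
which is exactly the definition of $\phi \leq_K \phi'$, witnessed by $(\alpha, \beta, \gamma^{-1})$.

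For the reverse restriction, I would apply $\gamma^{-1}$ on the left and $(\alpha^{-1}\times \beta^{-1})$ on the right of the defining identity. Since $\alpha, \beta$ are isomorphisms of $K$-spaces, their inverses $\alpha^{-1}:U'\to U$, $\beta^{-1}:V'\to V$ are again $K$-linear, so
\begin{equation*}
\phi'(u',v') = \gamma \circ \phi \circ (\alpha^{-1}\times \beta^{-1})(u',v'), \qquad (u',v')\in U'\times V',
\end{equation*}
which realises $\phi' \leq_K \phi$ via the triple $(\alpha^{-1},\beta^{-1},\gamma)$. Both restrictions hinge on the trivial but essential fact that the inverses of $K$-space isomorphisms are themselves $K$-linear, so they are admissible as the linear maps appearing in the restriction definition.

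Once the two restrictions are in place, the rank equality is immediate: by the preceding proposition, $\phi \leq_K \phi'$ gives $\mathfrak{R}(\phi) \leq \mathfrak{R}(\phi')$, and $\phi' \leq_K \phi$ gives $\mathfrak{R}(\phi') \leq \mathfrak{R}(\phi)$, whence $\mathfrak{R}(\phi) = \mathfrak{R}(\phi')$. There is no real obstacle here; the only thing to be careful about is to write the defining identity in the correct direction and to explicitly invoke the invertibility of $\alpha, \beta, \gamma$ when introducing the inverses, so that the triples $(\alpha,\beta,\gamma^{-1})$ and $(\alpha^{-1},\beta^{-1},\gamma)$ are seen to be legitimate restriction data.
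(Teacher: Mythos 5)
Your argument is correct and is exactly the route the paper intends: the paper's proof is the one-line citation of Proposition 2.2, and your unpacking of the isomorphism into the two mutual restrictions $(\alpha,\beta,\gamma^{-1})$ and $(\alpha^{-1},\beta^{-1},\gamma)$, followed by two applications of that proposition, is the standard filling-in of that citation.
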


\begin{proof}
Consequence of \textit{Proposition 2.2.}
\end{proof}

\bigskip

As an example of restrictions, consider the matrix spaces $U=K^{n\times m},$ 
$V=K^{m\times p},$ $W=K^{n\times p}$ and $U^{^{\prime }}=K^{n^{^{\prime
}}\times m^{^{\prime }}},$ $V^{^{\prime }}=K^{m^{^{\prime }}\times
p^{^{\prime }}},$ $W^{^{\prime }}=K^{n^{^{\prime }}\times p^{^{\prime }}}$,
where $n\leq n^{^{\prime }},$ $m\leq m^{^{\prime }},$ $p\leq p^{^{\prime }}$%
, and $\phi =\left\langle n,m,p\right\rangle $ and $\phi ^{^{\prime
}}=\left\langle n^{^{\prime }},m^{^{\prime }},p^{^{\prime }}\right\rangle $.
\ Then, there is a natural, injective linear map $\alpha :K^{n\times
m}\longrightarrow K^{n^{^{\prime }}\times m^{^{\prime }}}$ which embeds an $%
n\times m$ matrix $A$ into $K^{n^{^{\prime }}\times m^{^{\prime }}}$ as an $%
n^{^{\prime }}\times m^{^{\prime }}$ matrix $A^{^{\prime }}$ having $A$ as
an $n\times m$ block in its top-left corner and $0$'s everywhere else. \
There are analogous injective linear embedding maps $\beta :K^{m\times
p}\longrightarrow K^{m^{^{\prime }}\times p^{^{\prime }}}$ and $\gamma
:K^{n\times p}\longrightarrow K^{n^{^{\prime }}\times p^{^{\prime }}}$ for $%
K^{m\times p}$ and $K^{n\times p}$, respectively. \ The product map $\alpha
\times \beta :K^{n\times m}\times K^{m\times p}\longrightarrow
K^{n^{^{\prime }}\times m^{^{\prime }}}\times K^{m^{^{\prime }}\times
p^{^{\prime }}}$ will be injective and there will be a natural surjective
linear map $\gamma ^{^{\prime }}:K^{n^{^{\prime }}\times p^{^{\prime
}}}\longrightarrow K^{n\times p}$ which is the identity map on left upper $%
n\times p$ blocks of $n^{^{\prime }}\times p^{^{\prime }}$ matrices. \ Hence
we have a restriction $\left\langle n,m,p\right\rangle =\gamma ^{^{\prime
}}\circ \left\langle n^{^{\prime }},m^{^{\prime }},p^{^{\prime
}}\right\rangle \circ \left( \alpha \times \beta \right) $ of $\left\langle
n^{^{\prime }},m^{^{\prime }},p^{^{\prime }}\right\rangle $ to $\left\langle
n,m,p\right\rangle $. \ Informally, we have proved the following [BCS1997,
p. 357 \& 362].

\bigskip

\begin{proposition}
If $n\leq n^{^{\prime }},$ $m\leq m^{^{\prime }},$ $p\leq p^{^{\prime }}$
then $\left\langle n,m,p\right\rangle \leq _{K}\left\langle n^{^{\prime
}},m^{^{\prime }},p^{^{\prime }}\right\rangle $, and $\mathfrak{R}\left(
\left\langle n,m,p\right\rangle \right) \leq \mathfrak{R}\left( \left\langle
n^{^{\prime }},m^{^{\prime }},p^{^{\prime }}\right\rangle \right) $.
\end{proposition}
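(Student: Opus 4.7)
The plan is to exhibit explicit linear maps $\alpha,\beta,\gamma'$ realising $\langle n,m,p\rangle$ as a restriction of $\langle n',m',p'\rangle$, and then to invoke \textit{Proposition 2.2} to obtain the rank inequality as an immediate corollary.

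First, I would define $\alpha:K^{n\times m}\longrightarrow K^{n'\times m'}$ to be the ``top-left corner'' embedding that sends an $n\times m$ matrix $A$ to the $n'\times m'$ matrix $A'$ whose upper-left $n\times m$ block coincides with $A$ and whose remaining entries are $0$. Concretely, in terms of the standard basis $\{E_{ij}\}$ of $K^{n\times m}$ and $\{E'_{ij}\}$ of $K^{n'\times m'}$, one sets $\alpha(E_{ij})=E'_{ij}$ for $1\leq i\leq n$, $1\leq j\leq m$, and extends by linearity. The analogous embeddings $\beta:K^{m\times p}\longrightarrow K^{m'\times p'}$ and $\gamma:K^{n\times p}\longrightarrow K^{n'\times p'}$ are defined the same way. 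For the third map I use instead the natural projection $\gamma':K^{n'\times p'}\longrightarrow K^{n\times p}$ that extracts the upper-left $n\times p$ block of an $n'\times p'$ matrix, i.e.\ $\gamma'(E'_{ij})=E_{ij}$ if $1\leq i\leq n,\ 1\leq j\leq p$ and $\gamma'(E'_{ij})=\mathbf{O}_{n\times p}$ otherwise. All three of $\alpha,\beta,\gamma'$ are clearly $K$-linear.

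Next I would verify the restriction identity $\langle n,m,p\rangle(A,B)=\gamma'\circ\langle n',m',p'\rangle\circ(\alpha\times\beta)(A,B)$ for every $(A,B)\in K^{n\times m}\times K^{m\times p}$. Since $\alpha(A)$ and $\beta(B)$ each have their nonzero entries confined to the upper-left block, a direct computation of the matrix product shows that $\alpha(A)\beta(B)\in K^{n'\times p'}$ has its upper-left $n\times p$ block equal to $AB$ and zeros elsewhere; applying $\gamma'$ therefore returns exactly $AB=\langle n,m,p\rangle(A,B)$. This establishes $\langle n,m,p\rangle\leq_K\langle n',m',p'\rangle$.

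Having verified the restriction, the rank inequality $\mathfrak{R}(\langle n,m,p\rangle)\leq\mathfrak{R}(\langle n',m',p'\rangle)$ is immediate from \textit{Proposition 2.2}, which is the general fact that $\phi\leq_K\phi'$ implies $\mathfrak{R}(\phi)\leq\mathfrak{R}(\phi')$. No part of this argument looks delicate: both the construction of the maps and the verification of the restriction identity are essentially bookkeeping about padding with zeros. The only place where one has to be careful is to use the \emph{projection} $\gamma'$ rather than the embedding $\gamma$ in the third slot, since the definition of restriction requires a map $W'\to W$ and not the other way around; this is the one point at which a naive reader might slip up.
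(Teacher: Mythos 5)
Your proof is correct and matches the paper's own argument essentially line for line: the paper also constructs the top-left-corner zero-padding embeddings $\alpha,\beta$, observes that the product of the padded matrices has $AB$ in its upper-left $n\times p$ block, uses the surjective projection $\gamma'$ onto that block to obtain the restriction identity $\langle n,m,p\rangle=\gamma'\circ\langle n',m',p'\rangle\circ(\alpha\times\beta)$, and then cites \textit{Proposition 2.2} for the rank inequality.
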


\bigskip

If $U$ and $V$ are two algebras of dimensions $n$ and $m$, respectively,
then their direct sum $U\oplus V$ is an $n+m$ dimensional $K$-space which
has as a basis the union of the bases $\left\{ \left( u_{i},0\right)
\right\} _{1\leq i\leq n}$ and $\left\{ \left( 0,v_{j}\right) \right\}
_{1\leq j\leq m}$ where $\left\{ u_{i}\right\} _{1\leq i\leq n}$ and $%
\left\{ v_{j}\right\} _{1\leq j\leq m}$ are the bases of $U$ and $V$
respectively, and their Kronecker product $U\otimes V$ is an $nm$
dimensional $K$-space of sums of \textit{dyads }$u\otimes v$, $u$ $\epsilon $
$U$, $v$ $\epsilon $ $V$, which has as a basis $\left\{ u_{i}\otimes
v_{j}\right\} _{\substack{ 1\leq i\leq n  \\ 1\leq j\leq m}}$. \ If $U$ and $%
V $ are two algebras of dimensions $n$ and $m$, respectively, then $U\otimes
V$ becomes an $nm$-dimensional algebra with multiplication with the property
that $\left( u\otimes v\right) \left( u^{^{\prime }}\otimes v^{^{\prime
}}\right) =\left( uu^{^{\prime }}\otimes vv^{^{\prime }}\right) $ for any
pair of elements $u\otimes v,$ $u^{^{\prime }}\otimes v^{^{\prime }}$ $%
\epsilon $ $U\otimes V$. \ For bilinear maps $\phi $ $\epsilon $ $%
Bil_{K}\left( U,V;W\right) $ and $\phi ^{^{\prime }}$ $\epsilon $ $%
Bil_{K}\left( U^{^{\prime }},V^{^{\prime }};W^{^{\prime }}\right) $, their
direct sum $\phi \oplus \phi ^{^{\prime }}$ $\epsilon $ $Bil_{K}\left(
U\oplus U^{^{\prime }},V\oplus V^{^{\prime }};W\oplus W^{^{\prime }}\right) $
and Kronecker product $\phi \otimes \phi ^{^{\prime }}$ $\epsilon $ $%
Bil_{K}\left( U\otimes U^{^{\prime }},V\otimes V^{^{\prime }};W\otimes
W^{^{\prime }}\right) $ can be defined and satisfy $\mathfrak{R}\left( \phi
\oplus \phi ^{^{\prime }}\right) \leq \mathfrak{R}\left( \phi \right) +%
\mathfrak{R}\left( \phi ^{^{\prime }}\right) $ and $\mathfrak{R}\left( \phi
\otimes \phi ^{^{\prime }}\right) \leq \mathfrak{R}\left( \phi \right) 
\mathfrak{R}\left( \phi ^{^{\prime }}\right) $ [BCS1997, p. 360]. \ An
important fact here is that to each bilinear map $\phi $ $\epsilon $ $%
Bil\left( U,V;W\right) $ there exists one and only one unique tensor $%
t_{\phi }$ $\epsilon $ $U^{\ast }\otimes V^{\ast }\otimes W$, called the 
\textit{structural tensor }of $\phi $, [BCS1997, p. 358], i.e. $Bil\left(
U,V;W\right) \cong _{K}U^{\ast }\otimes V^{\ast }\otimes W$. \ Therefore,
the isomorphism of two bilinear maps, as described before \textit{Corollary
2.3}, is equivalent to the isomorphism of their corresponding structural
tensors, and therefore, the rank of a bilinear map is equal to the rank of
its structural tensor.

\bigskip

For tensors we have an important but easily provable result [BCS1997, pp.
360-361].

\bigskip

\begin{proposition}
For tensors $\left\langle n,m,p\right\rangle $ and $\left\langle n^{^{\prime
}},m^{^{\prime }},p^{^{\prime }}\right\rangle $, 
\begin{equation*}
(1)\text{ }\left\langle n,m,p\right\rangle \oplus \left\langle n^{^{\prime
}},m^{^{\prime }},p^{^{\prime }}\right\rangle \leq _{K}\left\langle
n+n^{^{\prime }},m+m^{^{\prime }},p+p^{^{\prime }}\right\rangle
\end{equation*}%
and 
\begin{equation*}
(2)\text{ }\left\langle n,m,p\right\rangle \otimes \left\langle n^{^{\prime
}},m^{^{\prime }},p^{^{\prime }}\right\rangle \cong _{K}\left\langle
nn^{^{\prime }},mm^{^{\prime }},pp^{^{\prime }}\right\rangle .
\end{equation*}
\end{proposition}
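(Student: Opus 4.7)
For part (1), the plan is to exhibit the restriction explicitly by block-diagonal embedding. I would define $\alpha : K^{n\times m}\oplus K^{n'\times m'}\longrightarrow K^{(n+n')\times(m+m')}$ by
\[
\alpha(A,A') \;=\; \begin{pmatrix} A & \mathbf{O}_{n\times m'} \\ \mathbf{O}_{n'\times m} & A' \end{pmatrix},
\]
and $\beta:K^{m\times p}\oplus K^{m'\times p'}\longrightarrow K^{(m+m')\times(p+p')}$ analogously, each of which is clearly $K$-linear. For the output side, define the projection $\gamma':K^{(n+n')\times(p+p')}\longrightarrow K^{n\times p}\oplus K^{n'\times p'}$ that reads off the top-left $n\times p$ block and the bottom-right $n'\times p'$ block and discards the two off-diagonal blocks. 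The routine block-matrix identity
\[
\begin{pmatrix} A & 0 \\ 0 & A' \end{pmatrix}\begin{pmatrix} B & 0 \\ 0 & B' \end{pmatrix} = \begin{pmatrix} AB & 0 \\ 0 & A'B' \end{pmatrix}
\]
then yields $\gamma'\circ\langle n+n',m+m',p+p'\rangle\circ(\alpha\times\beta)(A\oplus A',B\oplus B') = (AB,A'B') = \bigl(\langle n,m,p\rangle\oplus\langle n',m',p'\rangle\bigr)(A\oplus A',B\oplus B')$, verifying the restriction as in the definition preceding \emph{Proposition 2.2}.

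For part (2), the plan is to use the standard Kronecker product of matrices. Define a $K$-linear isomorphism $\alpha:K^{n\times m}\otimes K^{n'\times m'}\longrightarrow K^{nn'\times mm'}$ by sending the basis dyad $E_{ij}\otimes E_{i'j'}$ to the matrix $E_{(i,i'),(j,j')}$, where we index rows of the target by pairs $(i,i')$ with $1\leq i\leq n$, $1\leq i'\leq n'$ (and columns analogously); this is the usual Kronecker product map on pure dyads, extended by bilinearity. Define $\beta$ and $\gamma$ by the same recipe for $K^{m\times p}\otimes K^{m'\times p'}$ and $K^{n\times p}\otimes K^{n'\times p'}$. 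The key classical identity $(A\otimes A')(B\otimes B')=AB\otimes A'B'$ (verified entrywise on basis dyads via $E_{ij}E_{jk}=E_{ik}$) then gives
\[
\gamma\circ\bigl(\langle n,m,p\rangle\otimes\langle n',m',p'\rangle\bigr)(A\otimes A',B\otimes B') = \langle nn',mm',pp'\rangle\circ(\alpha\times\beta)(A\otimes A',B\otimes B'),
\]
and since both sides are $K$-bilinear in their two vector arguments, equality on pure dyads extends to all of $K^{n\times m}\otimes K^{n'\times m'}\times K^{m\times p}\otimes K^{m'\times p'}$. Together with the fact that $\alpha,\beta,\gamma$ are bijections (they send bases to bases), this gives the isomorphism $\langle n,m,p\rangle\otimes\langle n',m',p'\rangle\cong_K\langle nn',mm',pp'\rangle$ in the sense of the definition preceding \emph{Corollary 2.3}.

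The only nontrivial step I anticipate is book-keeping: for part (1), checking that the zero off-diagonal blocks indeed survive under $\langle n+n',m+m',p+p'\rangle$ (they do, because a block-diagonal times block-diagonal is block-diagonal), and for part (2), pinning down a consistent pair-index ordering so that the Kronecker identity reads cleanly on basis elements. Neither is a genuine obstacle, so the proof is essentially a matter of assembling the three linear maps in each case and invoking the definitions of restriction and isomorphism of bilinear maps.
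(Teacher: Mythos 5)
Your proof is correct. The paper itself supplies no proof for Proposition 2.5 and simply cites [BCS1997, pp.\ 360--361]; your explicit constructions --- block-diagonal embedding with a corner-block projection for the direct sum, and the Kronecker product map together with the mixed-product identity $(A\otimes A')(B\otimes B')=AB\otimes A'B'$ for the tensor product --- are exactly the standard argument that reference gives, and both parts assemble the three linear maps required by the definitions of restriction and isomorphism of bilinear maps (as set out just before Propositions 2.2 and Corollary 2.3).
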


\bigskip

Direct sums of matrix tensors describe block diagonal matrix multiplication,
and Kronecker products of tensors describe block matrix multiplication. \ A
basic result which we will use is the following, as defined in [BCS1997]
just before Def. (14.18).

\bigskip

\begin{proposition}
For tensors $\left\langle n,m,p\right\rangle $ and $\left\langle n^{^{\prime
}},m^{^{\prime }},p^{^{\prime }}\right\rangle $,%
\begin{equation*}
(1)\text{ }\mathfrak{R}\left( \left\langle n,m,p\right\rangle \oplus
\left\langle n^{^{\prime }},m^{^{\prime }},p^{^{\prime }}\right\rangle
\right) \leq \mathfrak{R}\left( \left\langle n,m,p\right\rangle \right) +%
\mathfrak{R}\left( \left\langle n^{^{\prime }},m^{^{\prime }},p^{^{\prime
}}\right\rangle \right)
\end{equation*}%
and 
\begin{equation*}
(2)\text{ }\mathfrak{R}\left( \left\langle n,m,p\right\rangle \otimes
\left\langle n^{^{\prime }},m^{^{\prime }},p^{^{\prime }}\right\rangle
\right) \leq \mathfrak{R}\left( \left\langle n,m,p\right\rangle \right)
\cdot \mathfrak{R}\left( \left\langle n^{^{\prime }},m^{^{\prime
}},p^{^{\prime }}\right\rangle \right) .
\end{equation*}
\end{proposition}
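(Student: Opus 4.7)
My plan is to prove both inequalities by explicitly constructing a bilinear computation for the direct sum (respectively, Kronecker product) out of the minimal bilinear computations of the two factors, and then invoking the definition of rank as the length of a shortest bilinear computation.

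I would begin by fixing minimal bilinear computations for the two tensors. Write
\[
\langle n,m,p\rangle(A,B) \;=\; \sum_{i=1}^{r} f_i^{\ast}(A)\, g_i^{\ast}(B)\, C_i,
\qquad
\langle n',m',p'\rangle(A',B') \;=\; \sum_{j=1}^{r'} f_j'^{\ast}(A')\, g_j'^{\ast}(B')\, C_j',
\]
with $r=\mathfrak{R}(\langle n,m,p\rangle)$ and $r'=\mathfrak{R}(\langle n',m',p'\rangle)$, where the $f_i^{\ast},g_i^{\ast}$ and $f_j'^{\ast},g_j'^{\ast}$ are linear functionals on the respective matrix spaces and the $C_i,C_j'$ lie in the respective output matrix spaces.

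For part $(1)$, I would lift each $f_i^{\ast}$ to a functional $\tilde f_i^{\ast}$ on $K^{n\times m}\oplus K^{n'\times m'}$ by precomposing with the projection $\pi_1\colon (A,A')\mapsto A$, and similarly lift $g_i^{\ast},f_j'^{\ast},g_j'^{\ast}$; I would also lift the output vectors via the canonical injections $\iota_1\colon C\mapsto (C,0)$ and $\iota_2\colon C'\mapsto (0,C')$. By construction, for any $(A,A'),(B,B')$ the direct sum evaluates as $\bigl(\langle n,m,p\rangle(A,B),\,\langle n',m',p'\rangle(A',B')\bigr)$, which equals
\[
\sum_{i=1}^{r} \tilde f_i^{\ast}(A,A')\,\tilde g_i^{\ast}(B,B')\,\iota_1(C_i)\;+\;\sum_{j=1}^{r'} \tilde f_j'^{\ast}(A,A')\,\tilde g_j'^{\ast}(B,B')\,\iota_2(C_j').
\]
This is a bilinear computation of length $r+r'$ for $\langle n,m,p\rangle\oplus\langle n',m',p'\rangle$, so the definition of rank yields the desired bound. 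For part $(2)$, I would use the universal property of the tensor product to form, for each pair $(i,j)$, the functional $f_i^{\ast}\otimes f_j'^{\ast}$ on $K^{n\times m}\otimes K^{n'\times m'}$ (which acts on a dyad as $(f_i^{\ast}\otimes f_j'^{\ast})(A\otimes A')=f_i^{\ast}(A)\,f_j'^{\ast}(A')$ and is extended linearly), and analogously $g_i^{\ast}\otimes g_j'^{\ast}$ and the output dyad $C_i\otimes C_j'$. Evaluating the Kronecker product on a pair of dyads $(A\otimes A',B\otimes B')$ and expanding the product $(\sum_i\cdots)\otimes(\sum_j\cdots)$ by bilinearity of $\otimes$ gives
\[
\bigl(\langle n,m,p\rangle\otimes\langle n',m',p'\rangle\bigr)(A\otimes A',B\otimes B') \;=\; \sum_{i=1}^{r}\sum_{j=1}^{r'} (f_i^{\ast}\otimes f_j'^{\ast})(A\otimes A')\,(g_i^{\ast}\otimes g_j'^{\ast})(B\otimes B')\,(C_i\otimes C_j'),
\]
a bilinear computation of length $rr'$.

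The one delicate point, which I would flag explicitly rather than leave implicit, is in part $(2)$: the identity above is derived on pure dyads, but the definition of rank requires it to hold for \emph{every} pair of inputs in the full Kronecker product spaces $K^{n\times m}\otimes K^{n'\times m'}$ and $K^{m\times p}\otimes K^{m'\times p'}$. Since both sides of the identity are bilinear in their two arguments and the dyads span the tensor product spaces, the identity extends uniquely by bilinearity to all inputs, so the length-$rr'$ computation is genuinely a bilinear computation for $\langle n,m,p\rangle\otimes\langle n',m',p'\rangle$, giving $\mathfrak{R}(\langle n,m,p\rangle\otimes\langle n',m',p'\rangle)\leq rr'$. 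This extension argument is the only step requiring more than mechanical verification; everything else is a direct construction followed by applying the definition of rank.
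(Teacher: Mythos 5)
Your proof is correct. The paper does not actually prove Proposition 2.6; it cites it as a standard fact from [BCS1997] (near Def.\ 14.18), having already remarked the same two inequalities for general bilinear maps a few lines earlier, again by reference. Your argument is precisely the standard textbook proof: for the direct sum, pull back the functionals along the coordinate projections and push forward the output vectors along the canonical injections, giving a length-$(r+r')$ computation; for the Kronecker product, take tensor products of functionals and output vectors pairwise to get a length-$rr'$ computation. Your explicit flag of the extension-from-dyads step in part $(2)$ is a genuine subtlety and is handled correctly: both sides of the identity are bilinear in the pair of inputs and agree on pairs of dyads, which span the two tensor-product spaces, so they agree everywhere. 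One small point of rigor you could make even more explicit in part $(2)$ is that well-definedness of the functionals $f_i^{\ast}\otimes f_j'^{\ast}$ (and likewise $g_i^{\ast}\otimes g_j'^{\ast}$) requires the universal property of the tensor product, not merely linear extension from a spanning set, since a spanning set of dyads has linear relations among its elements; you do invoke the universal property, so the argument is sound.
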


\subsection{\textit{Matrix Algebras}}

\bigskip

A $K$-algebra $A$ is a vector space $A$ defined over a field $K$, together
with a vector multiplication map $\phi _{A}:A\times A\longrightarrow A$
which is bilinear on $A$, in the sense described above, with a unique unit $%
1_{A}$ which coincides with the unit of $A$ as a multiplicative monoid. \
(Here, by definition an\ algebra $A$ has a unit.) \ The dimension of the
algebra $A$ is defined to be its dimension as a vector space. \ We denote
the unit of $A$ by $1_{A}$. $\ A$ is called \textit{associative} iff $\phi
_{A}$ is associative, and \textit{commutative} iff \ $\phi _{A}$ is
commutative. \ The rank $\mathfrak{R}\left( A\right) $ of $A$ is defined to
be the rank $\mathfrak{R}\left( \phi _{A}\right) $ of $\phi _{A}$, and is a
bilinear measure of the multiplicative complexity in $A$. \ For example, the
matrix space $K^{n\times m}$ is a matrix $K$-algebra iff $n=m$. \ $%
K^{n\times n}$ is an $n^{2}$ dimensional matrix $K$-algebra with a bilinear
map $\left\langle n,n,n\right\rangle $ describing multiplication of $n\times
n$ by $n\times n$ matrices. \ We say that $n$ is the \textit{order} of the
algebra $K^{n\times n}$.

\bigskip

If $A$ and $B$ are two $K$-algebras then a linear map $\varphi
:A\longrightarrow B$ which carries vector multiplication in $A$ onto vector
multiplication in $B$ is called an \textit{algebra homomorphism}, or simply,
an \textit{algebra morphism}, between $A$ and $B$, i.e. if for any $%
a,a^{^{\prime }}$ $\epsilon $ $A$, $\varphi (a,a^{^{\prime }})=\varphi
\left( a\right) \varphi \left( a^{^{\prime }}\right) $, and $\varphi \left(
1_{A}\right) =1_{B}$. \ A simple example is the inclusion homomorphism $%
\varkappa _{2}$ of the algebra $K_{Diag}^{2\times 2}$ of all diagonal $%
2\times 2$ matrices over $K$ into the algebra $K^{2\times 2}$ of all $%
2\times 2$ matrices. \ $\varphi $ is an algebra isomorphism $A\cong _{K}B$
iff $\phi _{A}\cong _{K}\phi _{B}$ ($\Longrightarrow \mathfrak{R}\left( \phi
_{A}\right) =\mathfrak{R}\left( \phi _{B}\right) $), [BCS1997, p. 356]. \
For example, $\left\langle 2,2,2\right\rangle _{Diag}\leq _{K}\left\langle
2,2,2\right\rangle $ and $\mathfrak{R}\left( \left\langle 2,2,2\right\rangle
_{Diag}\right) =2\leq \mathfrak{R}\left( \left\langle 2,2,2\right\rangle
\right) $, and $\mathfrak{R}\left( \left\langle 2,2,2\right\rangle
_{Diag}\right) =2=\mathfrak{R}\left( \left\langle 2\right\rangle \right) $
because $K_{Diag}^{2\times 2}\cong _{K}K^{2}$, where $K^{2}\leq
_{K}K^{2\times 2}$. \ In general, for any $n$-dimensional algebra $A$ it is
the case that $\mathfrak{R}\left( \phi _{A}\right) =n$ iff $A\cong _{K}K^{n}$%
, or equivalently iff $\phi _{A}\cong _{K}\left\langle n\right\rangle $,
[BCS1997, p. 364].

\bigskip

The following is a general result for matrix algebras.

\bigskip

\begin{proposition}
For positive integers $n,$ $n^{^{\prime }}$,%
\begin{equation*}
(1)\text{ if }n\leq n^{^{\prime }}\text{ then }\left\langle
n,n,n\right\rangle \leq _{K}\left\langle n^{^{\prime }},n^{^{\prime
}},n^{^{\prime }}\right\rangle \text{ and }\mathfrak{R}\left( \left\langle
n,n,n\right\rangle \right) \leq \mathfrak{R}\left( \left\langle n^{^{\prime
}},n^{^{\prime }},n^{^{\prime }}\right\rangle \right)
\end{equation*}%
and%
\begin{equation*}
(2)\text{ }\left\langle n,n,n\right\rangle \cong _{K}\left\langle
n^{^{\prime }},n^{^{\prime }},n^{^{\prime }}\right\rangle \text{ and }%
\mathfrak{R}\left( \left\langle n,n,n\right\rangle \right) =\mathfrak{R}%
\left( \left\langle n^{^{\prime }},n^{^{\prime }},n^{^{\prime
}}\right\rangle \right) \text{ iff }n=n^{^{\prime }}.
\end{equation*}
\end{proposition}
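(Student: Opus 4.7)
The plan is to obtain part (1) as an immediate specialization of \textit{Proposition 2.4}, and to dispatch part (2) by a dimension-counting argument combined with the definition of isomorphism of bilinear maps recalled just before \textit{Corollary 2.3}.

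For part (1), I would simply set $m=p=n$ and $m'=p'=n'$ in \textit{Proposition 2.4}. The single hypothesis $n\leq n'$ supplies all three componentwise inequalities $n\leq n'$, $m\leq m'$, $p\leq p'$ needed there, and so \textit{Proposition 2.4} directly yields both the restriction $\langle n,n,n\rangle \leq_{K}\langle n',n',n'\rangle$ and the accompanying rank inequality $\mathfrak{R}(\langle n,n,n\rangle)\leq \mathfrak{R}(\langle n',n',n'\rangle)$. No additional construction of the embedding maps $\alpha,\beta,\gamma'$ is required beyond the one already given in the paragraph preceding \textit{Proposition 2.4}.

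For part (2), the ``if'' direction is immediate: if $n=n'$ then $\langle n,n,n\rangle$ and $\langle n',n',n'\rangle$ are literally the same bilinear map on the same matrix space, so the identity maps witness the isomorphism and the ranks coincide trivially. For the ``only if'' direction I would start from the hypothesis $\langle n,n,n\rangle \cong_{K}\langle n',n',n'\rangle$, which by \textit{Corollary 2.3} also already implies the rank equality on its own. By the definition of isomorphism of bilinear maps recalled just before \textit{Corollary 2.3}, this isomorphism requires $K$-linear bijections $\alpha,\beta :K^{n\times n}\longrightarrow K^{n'\times n'}$ and $\gamma :K^{n\times n}\longrightarrow K^{n'\times n'}$ intertwining $\langle n,n,n\rangle$ with $\langle n',n',n'\rangle$. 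The mere existence of a single such bijective $K$-linear map forces equality of $K$-dimensions, i.e.\ $n^{2}=n'^{2}$, and hence $n=n'$ since $n,n'$ are positive integers.

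The main obstacle, such as it is, is conceptual rather than technical: one must notice that the ``and'' in the statement of (2) is not really adding content beyond the isomorphism, because once the isomorphism is assumed the rank equality comes for free from \textit{Corollary 2.3}, and the dimension-counting step then closes the argument. In particular, \emph{no} nontrivial lower bound on $\mathfrak{R}(\langle n,n,n\rangle)$ of the form $\mathfrak{R}(\langle n,n,n\rangle) < \mathfrak{R}(\langle n',n',n'\rangle)$ for $n<n'$ is needed to separate ranks of distinct orders, which is reassuring since such strict monotonicity would be far harder to establish from the material developed so far.
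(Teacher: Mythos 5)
Your proof is correct and complete. The paper states this proposition without proof (it is offered as a straightforward specialization of material just developed), so there is no in-text argument to compare against. Your treatment of part (1) as the diagonal case $m=p=n$, $m'=p'=n'$ of \textit{Proposition 2.4} is exactly what the context suggests. For part (2), you parse the statement correctly as ``(isomorphism and rank equality) iff $n=n'$'', dispatch the ``if'' direction trivially, and for the ``only if'' direction you observe that an isomorphism of bilinear maps, per the definition preceding \textit{Corollary 2.3}, requires a $K$-linear bijection $K^{n\times n}\to K^{n'\times n'}$, so $n^{2}=n'^{2}$ and hence $n=n'$. Your closing remark is also well taken: the argument deliberately avoids needing strict monotonicity $\mathfrak{R}(\langle n,n,n\rangle)<\mathfrak{R}(\langle n',n',n'\rangle)$ for $n<n'$, which the paper's machinery does not establish and is not needed here.
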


\bigskip

If $\left\{ K^{n_{i}\times n_{i}}\right\} $ is a finite collection of matrix
algebras $K^{n_{i}\times n_{i}}$ of orders $n_{i}$, then $\underset{i}{%
\oplus }K^{n_{i}\times n_{i}}$ is a direct sum matrix algebra of order $%
\underset{i}{\sum }n_{i}$, in which multiplication is block diagonal and is
described by the direct sum tensor $\underset{i}{\oplus }\left\langle
n_{i},n_{i},n_{i}\right\rangle \cong \left\langle \underset{i}{\sum }n_{i},%
\underset{i}{\sum }n_{i},\underset{i}{\sum }n_{i}\right\rangle $; and $%
\underset{i}{\otimes }K^{n_{i}\times n_{i}}$ is a Kronecker product matrix
algebra of order $\underset{i}{\tprod }n_{i}$, in which multiplication is
described by the Kronecker product tensor $\underset{i}{\otimes }%
\left\langle n_{i},n_{i},n_{i}\right\rangle \cong \left\langle \underset{i}{%
\tprod }n_{i},\underset{i}{\tprod }n_{i},\underset{i}{\tprod }%
n_{i}\right\rangle $. \ Using \textit{Proposition 2.6}, we have the
following result.

\bigskip

\begin{proposition}
For a finite set of positive integers $n_{i}$ 
\begin{equation*}
(1)\text{ }\mathfrak{R}\left( \underset{i}{\oplus }\left\langle
n_{i},n_{i},n_{i}\right\rangle \right) \leq \text{ }\underset{i}{\sum }%
\mathfrak{R}\left( \left\langle n_{i},n_{i},n_{i}\right\rangle \right)
\end{equation*}%
and%
\begin{equation*}
(2)\text{ }\mathfrak{R}\left( \underset{i}{\otimes }\left\langle
n_{i},n_{i},n_{i}\right\rangle \right) \leq \text{ }\underset{i}{\tprod }%
\mathfrak{R}\left( \left\langle n_{i},n_{i},n_{i}\right\rangle \right) .
\end{equation*}
\end{proposition}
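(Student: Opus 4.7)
The plan is to prove both parts by induction on the cardinality of the index set, with Proposition 2.6 furnishing both the base case and the inductive step. The key observation is that $\oplus$ and $\otimes$ are associative operations on bilinear maps (equivalently, on their structural tensors), so an $r$-fold direct sum or Kronecker product can be bracketed as a binary operation between the first $r-1$ tensors and the $r$-th one.

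For part (1), I would proceed by induction on $r = |\{n_i\}|$. The case $r = 1$ is trivial, and $r = 2$ is exactly Proposition 2.6(1). For the inductive step, write
\begin{equation*}
\underset{i=1}{\overset{r+1}{\oplus}} \langle n_i, n_i, n_i\rangle \;\cong_K\; \left(\underset{i=1}{\overset{r}{\oplus}} \langle n_i, n_i, n_i\rangle\right) \oplus \langle n_{r+1}, n_{r+1}, n_{r+1}\rangle,
\end{equation*}
so that Proposition 2.6(1) applied to the pair on the right yields
\begin{equation*}
\mathfrak{R}\!\left(\underset{i=1}{\overset{r+1}{\oplus}} \langle n_i, n_i, n_i\rangle\right) \leq \mathfrak{R}\!\left(\underset{i=1}{\overset{r}{\oplus}} \langle n_i, n_i, n_i\rangle\right) + \mathfrak{R}(\langle n_{r+1}, n_{r+1}, n_{r+1}\rangle),
\end{equation*}
and then the inductive hypothesis bounds the first summand by $\sum_{i=1}^{r}\mathfrak{R}(\langle n_i, n_i, n_i\rangle)$, completing the step.

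For part (2), the argument is formally identical, replacing $\oplus$ by $\otimes$, invoking Proposition 2.6(2), and obtaining the product bound in place of the sum. In each case the isomorphism $\underset{i=1}{\overset{r+1}{\otimes}} \langle n_i, n_i, n_i\rangle \cong_K \left(\underset{i=1}{\overset{r}{\otimes}} \langle n_i, n_i, n_i\rangle\right) \otimes \langle n_{r+1}, n_{r+1}, n_{r+1}\rangle$ preserves rank by Corollary 2.3, which is what allows the inductive step to chain through.

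There is no real obstacle here; the only point requiring a line of care is that the associativity isomorphisms for $\oplus$ and $\otimes$ of bilinear maps are $K$-linear isomorphisms of the corresponding structural tensors, so that rank is preserved at each rebracketing (Corollary 2.3). Once that is acknowledged, the induction reduces entirely to repeated application of Proposition 2.6, and no new estimate beyond the two-factor case is needed.
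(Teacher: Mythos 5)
Your proposal is correct and matches the paper's intent: the paper offers no explicit proof, merely the remark that the result follows from Proposition 2.6, and your induction on the cardinality of the index set is exactly the routine formalization of that remark. One small point of citation hygiene: in the inductive step, the left operand $\oplus_{i=1}^{r}\langle n_i,n_i,n_i\rangle$ (resp. $\otimes_{i=1}^{r}\langle n_i,n_i,n_i\rangle$) is a general bilinear map and not itself a matrix tensor of the form $\langle n,m,p\rangle$, so strictly you should invoke the general rank sub-additivity and sub-multiplicativity for arbitrary bilinear maps, $\mathfrak{R}(\phi\oplus\phi')\leq\mathfrak{R}(\phi)+\mathfrak{R}(\phi')$ and $\mathfrak{R}(\phi\otimes\phi')\leq\mathfrak{R}(\phi)\mathfrak{R}(\phi')$, which the paper records just before Proposition 2.6 with a reference to [BCS1997, p.\ 360], rather than Proposition 2.6 itself. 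The paper's own phrasing (\textquotedblleft Using Proposition 2.6\textquotedblright) makes the same gloss, so this is not a mathematical gap, just a label to tidy.
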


\bigskip

When $n_{i}=n$, for $1\leq i\leq r$, we shall denote by $\left\langle
n,n,n\right\rangle ^{\otimes r}$ the $r$-fold Kronecker product $\underset{%
1\leq i\leq r}{\otimes }\left\langle n,n,n\right\rangle $. \ By part $(2)$
of \textit{Proposition 2.5} $\left\langle n,n,n\right\rangle ^{\otimes
r}\cong \left\langle n^{r},n^{r},n^{r}\right\rangle $ and $\mathfrak{R}%
\left( \left\langle n,n,n\right\rangle ^{\otimes r}\right) =\mathfrak{R}%
\left( \left\langle n^{r},n^{r},n^{r}\right\rangle \right) $. \ The
following is a relevant proposition.

\bigskip

\begin{proposition}
For positive integers $r,n$, $\mathfrak{R}\left( \left\langle
n^{r},n^{r},n^{r}\right\rangle \right) \leq \mathfrak{R}\left( \left\langle
n,n,n\right\rangle \right) ^{r}.$
\end{proposition}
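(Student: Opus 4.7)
The plan is to prove this by induction on $r$, combining the isomorphism $\langle n,n,n\rangle \otimes \langle n^{\prime},n^{\prime},n^{\prime}\rangle \cong_K \langle nn^{\prime},nn^{\prime},nn^{\prime}\rangle$ from part $(2)$ of Proposition 2.5 with the submultiplicativity of rank under Kronecker products from part $(2)$ of Proposition 2.6. The base case $r=1$ is the trivial identity $\mathfrak{R}(\langle n,n,n\rangle) \leq \mathfrak{R}(\langle n,n,n\rangle)$, so all the work is in the inductive step.

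For the inductive step, assume the inequality holds for $r-1$, i.e. $\mathfrak{R}(\langle n^{r-1},n^{r-1},n^{r-1}\rangle) \leq \mathfrak{R}(\langle n,n,n\rangle)^{r-1}$. I would first apply Proposition 2.5$(2)$ with $(n^{\prime},m^{\prime},p^{\prime}) = (n^{r-1},n^{r-1},n^{r-1})$ and $(n,m,p) = (n,n,n)$ to conclude
\begin{equation*}
\langle n,n,n\rangle \otimes \langle n^{r-1},n^{r-1},n^{r-1}\rangle \cong_K \langle n^{r},n^{r},n^{r}\rangle.
\end{equation*}
By Corollary 2.3, isomorphic tensors have equal rank, so
\begin{equation*}
\mathfrak{R}\bigl(\langle n^{r},n^{r},n^{r}\rangle\bigr) = \mathfrak{R}\bigl(\langle n,n,n\rangle \otimes \langle n^{r-1},n^{r-1},n^{r-1}\rangle\bigr).
\end{equation*}

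Next I would apply Proposition 2.6$(2)$ to bound the right-hand side above by the product of the two ranks:
\begin{equation*}
\mathfrak{R}\bigl(\langle n,n,n\rangle \otimes \langle n^{r-1},n^{r-1},n^{r-1}\rangle\bigr) \leq \mathfrak{R}\bigl(\langle n,n,n\rangle\bigr) \cdot \mathfrak{R}\bigl(\langle n^{r-1},n^{r-1},n^{r-1}\rangle\bigr).
\end{equation*}
Plugging in the induction hypothesis then gives $\mathfrak{R}(\langle n^{r},n^{r},n^{r}\rangle) \leq \mathfrak{R}(\langle n,n,n\rangle) \cdot \mathfrak{R}(\langle n,n,n\rangle)^{r-1} = \mathfrak{R}(\langle n,n,n\rangle)^{r}$, completing the induction.

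There is no real obstacle here: every ingredient has already been set up in the excerpt, and the argument is essentially the observation that the tensor identification $\langle n,n,n\rangle^{\otimes r} \cong \langle n^{r},n^{r},n^{r}\rangle$ noted just before the proposition, combined with submultiplicativity of $\mathfrak{R}$ under $\otimes$, directly yields the claim. The only point requiring a little care is making sure the associativity of $\otimes$ lets us regroup the $r$-fold Kronecker product so that induction on $r$ goes through cleanly — but this is immediate from applying Proposition 2.5$(2)$ inductively to establish the isomorphism.
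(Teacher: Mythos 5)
Your proof is correct and matches the approach the paper has set up: the paper points to the identification $\langle n,n,n\rangle^{\otimes r}\cong\langle n^r,n^r,n^r\rangle$ (Proposition 2.5(2)) together with submultiplicativity of rank under Kronecker products, invoking Proposition 2.7(2) for the $r$-fold product. Your induction on $r$ using the two-factor bound of Proposition 2.6(2) is just the standard way of unpacking that same fact, so the arguments are essentially identical.
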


\bigskip

\subsection{\textit{The Rank of }$2\times 2$\textit{\ Matrix Multiplication
is at most }$7$}

\bigskip

We explain here Strassen's result that the rank of $2\times 2$ matrix
multiplication is at most $7$. \ If $A=\left[ 
\begin{array}{cc}
A_{11} & A_{12} \\ 
A_{21} & A_{22}%
\end{array}%
\right] $ and $B=\left[ 
\begin{array}{cc}
B_{11} & B_{12} \\ 
B_{21} & B_{22}%
\end{array}%
\right] $ are given $2\times 2$ matrices, then by the formulas%
\begin{eqnarray*}
P_{1} &=&\left( A_{11}+A_{22}\right) \left( B_{11}+B_{22}\right) , \\
P_{2} &=&\left( A_{21}+A_{22}\right) B_{11}, \\
P_{3} &=&A_{11}\left( B_{12}-B_{22}\right) , \\
P_{4} &=&\left( -A_{11}+A_{21}\right) \left( B_{11}+B_{12}\right) , \\
P_{5} &=&\left( A_{11}+A_{12}\right) B_{22}, \\
P_{6} &=&A_{22}\left( -B_{11}+B_{21}\right) , \\
P_{7} &=&\left( A_{12}-A_{22}\right) \left( B_{21}+B_{22}\right)
\end{eqnarray*}%
we will be able to recover their product $AB=C=\left[ 
\begin{array}{cc}
C_{11} & C_{12} \\ 
C_{21} & C_{22}%
\end{array}%
\right] $ by the formulas%
\begin{eqnarray*}
C_{11} &=&P_{1}+P_{6}-P_{5}+P_{7}, \\
C_{12} &=&P_{3}+P_{5}, \\
C_{21} &=&P_{2}+P_{6}, \\
C_{22} &=&P_{1}-P_{2}+P_{3}+P_{4},
\end{eqnarray*}%
using a total of $7=\#\left\{
P_{1},P_{2},P_{3},P_{4},P_{5},P_{6},P_{7}\right\} $ multiplications and $18$
additions/subtractions [PAN1984, p. 394].

\bigskip

If we then define $7$ paired linear forms $f_{i}^{\ast },g_{i}^{\ast }$ $%
\epsilon $ $K^{2\times 2^{\ast }},$ $1\leq i\leq 7$, by%
\begin{equation*}
\begin{tabular}{ll}
$f_{1}^{\ast }(A)=A_{11}+A_{22}$ & $g_{1}^{\ast }(B)=B_{11}+B_{22}$ \\ 
$f_{2}^{\ast }(A)=A_{21}+A_{22}$ & $g_{2}^{\ast }(B)=B_{11}$ \\ 
$f_{3}^{\ast }(A)=A_{11}$ & $g_{3}^{\ast }(B)=B_{12}-B_{22}$ \\ 
$f_{4}^{\ast }(A)=-A_{11}+A_{21}$ & $g_{4}^{\ast }(B)=B_{11}+B_{12}$ \\ 
$f_{5}^{\ast }(A)=A_{11}+A_{12}$ & $g_{5}^{\ast }(B)=B_{22}$ \\ 
$f_{6}^{\ast }(A)=A_{22}$ & $g_{6}^{\ast }(B)=-B_{11}+B_{21}$ \\ 
$f_{7}^{\ast }(A)=A_{12}-A_{22}$ & $g_{7}^{\ast }(B)=B_{21}+B_{22}$%
\end{tabular}%
\end{equation*}

then there are matrices $C_{i}$ $\epsilon $ $K^{2\times 2}$, $1\leq i\leq 7$%
, such that%
\begin{equation*}
AB=\underset{i=1}{\overset{7}{\sum }}f_{i}^{\ast }(A)g_{i}^{\ast }(B)C_{i}
\end{equation*}%
hence $\mathfrak{R}\left( \left\langle 2,2,2,\right\rangle \right) \leq 7$
[BCS1997, pp. 10-13]. \ We state this formally, for future reference.

\bigskip

\begin{proposition}
$\mathfrak{R}\left( \left\langle 2,2,2,\right\rangle \right) \leq 7$.
\end{proposition}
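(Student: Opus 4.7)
The plan is to produce a bilinear computation of length $7$ for $\left\langle 2,2,2\right\rangle $, i.e., to exhibit seven triples $(f_{i}^{\ast },g_{i}^{\ast },C_{i})$ with $f_{i}^{\ast },g_{i}^{\ast }$ $\epsilon $ $K^{2\times 2\ast }$ and $C_{i}$ $\epsilon $ $K^{2\times 2}$ such that $AB=\underset{i=1}{\overset{7}{\sum }}f_{i}^{\ast }(A)g_{i}^{\ast }(B)C_{i}$ for all $A,B$ $\epsilon $ $K^{2\times 2}$. \ Once this is done, the bound $\mathfrak{R}\left( \left\langle 2,2,2\right\rangle \right) \leq 7$ follows immediately from the definition of rank. \ Two of the three ingredients are supplied verbatim by the material displayed above the statement.

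First, I would take the linear forms $f_{i}^{\ast },g_{i}^{\ast }$ directly from the table given just above; a term-by-term comparison with the list of products $P_{i}$ shows that $P_{i}=f_{i}^{\ast }(A)g_{i}^{\ast }(B)$ for each $1\leq i\leq 7$, so the first two coordinates of each triple are fixed without any calculation.

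Second, I would read off the matrices $C_{i}$ $\epsilon $ $K^{2\times 2}$ by collecting, in each of Strassen's output equations for $C_{11},C_{12},C_{21},C_{22}$, the coefficient (from $\{-1,0,+1\}$) with which $P_{i}$ appears. \ This yields
\begin{eqnarray*}
C_{1} &=&E_{11}+E_{22},\quad C_{2}=E_{21}-E_{22},\quad C_{3}=E_{12}+E_{22},\quad C_{4}=E_{22}, \\
C_{5} &=&-E_{11}+E_{12},\quad C_{6}=E_{11}+E_{21},\quad C_{7}=E_{11},
\end{eqnarray*}
and ensures, by construction, that the $(j,k)$-entry of $\underset{i=1}{\overset{7}{\sum }}P_{i}C_{i}$ is the right-hand side of the corresponding Strassen output formula.

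Third, it remains to verify the four identities $C_{jk}=A_{j1}B_{1k}+A_{j2}B_{2k}$ for $1\leq j,k\leq 2$, which amounts to expanding each $P_{i}$ as a linear combination of the four monomials $A_{ab}B_{cd}$ and checking that in each of the four signed sums $P_{1}+P_{6}-P_{5}+P_{7}$, $P_{3}+P_{5}$, $P_{2}+P_{6}$, $P_{1}-P_{2}+P_{3}+P_{4}$ the two desired monomials survive while all six unwanted cross terms cancel pairwise. \ This is the only step demanding computation; it is the original Strassen bookkeeping and presents no conceptual obstacle. \ Combining the three steps yields the required bilinear representation $AB=\underset{i=1}{\overset{7}{\sum }}f_{i}^{\ast }(A)g_{i}^{\ast }(B)C_{i}$, and hence the claimed bound.
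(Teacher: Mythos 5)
Your proposal is correct and follows exactly the paper's route: it reads the linear forms $f_{i}^{\ast },g_{i}^{\ast }$ off Strassen's seven products, extracts the matrices $C_{i}$ from the four output formulas, and appeals to the definition of bilinear rank. You are in fact slightly more explicit than the paper, which merely asserts the existence of the $C_{i}$ without writing them down; your list of $C_{i}$ matches the paper's output equations.
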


\bigskip

In general, if $n=2m$, this algorithm allows us to multiply $2m\times 2m$
matrices with $7$ multiplications and $18$ additions/subtractions of $%
m\times m$ matrices, for $m\geq 1$. \ We define an integer function $%
T_{K}\left( n\right) $ by

\bigskip

\begin{description}
\item[\textit{(2.1)}] $T_{K}\left( n\right) :=$ $\underset{t\text{ }\epsilon 
\text{ }%
\mathbb{Z}
^{+}}{\min }$ \textit{two }$n\times n$ \textit{matrices can be multiplied
using }$t$\textit{\ multiplications, additions, or subtractions}.
\end{description}

\bigskip

If $n$ is some power $2^{m}$ of $2$, then we can partition two $2^{m}\times
2^{m}$ matrices into four $2^{m-1}\times 2^{m-1}$ blocks each, and view
these blocks as inputs to the original algorithm, and by a recursive
application of this procedure we obtain for $T_{K}\left( n\right) $ the
following recursion formula [BCS1997, pp. 12-13].

\bigskip

\begin{description}
\item[\textit{(2.2)}] $T_{K}\left( n\right) \leq 7T_{K}\left( \frac{1}{2}%
n\right) +18\left( \frac{1}{2}n\right) ^{2}$.
\end{description}

In 1971, Winograd proved a stronger result that $\mathfrak{R}\left(
\left\langle 2,2,2,\right\rangle \right) =7$, [WIN1971].

\bigskip

\section{\protect\Large Asymptotic Complexity of Matrix Multiplication}

\bigskip

Here we introduce the exponent $\omega $ describing the asymptotic
complexity of matrix multiplication, including Strassen's estimate of $%
\omega <2.81$, and conclude with some fundamental relations between $\omega $
and the ranks of matrix tensors, which we shall use later in our analysis
and estimates of $\omega $ in \textit{Chapter 6}.

\bigskip

\subsection{\textit{The Exponent of Matrix Multiplication}}

\bigskip

We denote by $M_{K}\left( n\right) $ the total number of arithmetical
operations $\left\{ \times ,+,-\right\} $ needed to multiply $n\times n$
matrices over $K$. \ This is defined more formally as:

\bigskip

\begin{description}
\item[\textit{(2.3)}] $M_{K}\left( n\right) :=L_{K[X,Y]}^{tot}\left(
n\right) :=L_{K[X,Y]}^{tot}\left( \left\{ \underset{1\leq j\leq n}{\sum }%
X_{ij}Y_{jk};\text{ }1\leq i,k\leq n\right\} \right) ,$
\end{description}

\bigskip

where $K[X,Y]$ is the ring of bivariate polynomials over $K$, and the
expression on the right is an \textit{exact} measure of the total number $%
tot $ of arithmetical operations $\left\{ \times ,+,-\right\} $ needed to
multiply two $n\times n$ matrices of a given set of indeterminates $X_{ij},$ 
$Y_{jk}$ $\in $ $K$, $1\leq i,j,k\leq n$, over $K$ without divisions
[BCS1997, p. 108, p. 126, p. 375].

\bigskip

The \textit{exponent of matrix multiplication over }$K$ is the real number $%
\omega \left( K\right) >0$ defined by

\bigskip

\begin{description}
\item[\textit{(2.4)}] $\omega \left( K\right) :=$ $\inf \left\{ h\text{ }%
\epsilon \text{ }%
\mathbb{R}
^{+}\text{ }|\text{ }M_{K}\left( n\right) =O\left( n^{h}\right) ,\text{ }%
n\longrightarrow \infty \right\} $.
\end{description}

\bigskip

The notation $\omega \left( K\right) $ is intended to indicate a possible
dependency on the ground field $K$. \ It has been proved that $\omega \left(
K\right) $ is unchanged if we replace $K$ by any algebraic extension $%
\overline{K}$ [BCS1997, p. 383]. \ It has also been proved that $\omega
\left( K\right) $ is determined only by the characteristic $Char$ $K$ of $K$%
, such that $\omega (K)=\omega (%
\mathbb{Q}
)$ if $Char$ $K=0$, and $\omega (K)=\omega (%
\mathbb{Z}
_{p})$ otherwise, where $%
\mathbb{Z}
_{p}$ is the finite field of integers modulo a prime $p$, of characteristic $%
p$, [PAN1984].. \ Since $Char$ $%
\mathbb{C}
=Char$ $%
\mathbb{R}
=Char$ $%
\mathbb{Q}
=0$, this means that $\omega (%
\mathbb{C}
)=\omega \left( 
\mathbb{R}
\right) =\omega \left( 
\mathbb{Q}
\right) $. \ In this chapter, we shall continue to indicate the ground field
dependency in writing $\omega \left( K\right) $, but in later chapters we
shall drop this formalism and simply write $\omega $, since our concern will
be with complex matrix multiplication, which is general enough for most
purposes.

\bigskip

Returning to $M_{K}\left( n\right) $, by the standard algorithm for $n\times
n$ matrix multiplication, the $n^{2}$ entries $C_{ik}$ of an $n\times n$
matrix product $C=AB$ are given by the formula $C_{ik}=\underset{1\leq j\leq
n}{\sum }A_{ij}B_{jk}$, for all $1\leq i,k\leq n$. \ In using the standard
algorithm, we will be using $n^{3}$ multiplications, and $n^{3}-n^{2}$
additions of the resulting products, which yields an upper estimate $%
M_{K}\left( n\right) =2n^{3}-n^{2}<2n^{3}=O(n^{3})$, i.e. $M_{K}\left(
n\right) <$ $C^{^{\prime }}n^{3}$ for the constant $C^{^{\prime }}=2$, and
implies an upper bound of $3$ for $\omega $ [BCS1997, p. 375]. \ For the
lower bound, we note that since the product of two $n\times n$ matrices
consists of $n^{2}$ entries, one needs to perform a total number of
operations which is \textit{at least} some constant $C\geq 1$ multiple of
the $n^{2}$ entries, which we denote by $M_{K}(n)=\Omega (n^{2})$, and is
equivalent to a lower bound of $2$ for $\omega $ [BCS1997, p. 375]. \ (Our
focus will be on the upper bounds for $M_{K}\left( n\right) $ since we are
interested in worst case complexity.) \ Informally, we have proved the
following elementary result.

\bigskip

\begin{proposition}
For every field $K$, $(1)$ $2\leq \omega \left( K\right) \leq 3$, and $(2)$ $%
\omega \left( K\right) =h$ $\epsilon $ $\left[ 2,3\right] $ iff $\Omega
(n^{h+\epsilon })=M_{K}\left( n\right) =O\left( n^{h+\varepsilon }\right) $,
where $h$ is uniquely minimal for any given degree of precision $\varepsilon
>0$.
\end{proposition}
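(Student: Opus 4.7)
The plan is to essentially formalize the informal argument already laid out in the paragraphs preceding the proposition, treating the two claims in sequence and deriving (2) directly from the infimum-based definition (2.4) of $\omega(K)$.

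For part (1), I would establish the upper bound by invoking the standard $n\times n$ matrix multiplication algorithm: it uses exactly $n^3$ multiplications and $n^3 - n^2$ additions, so $M_K(n) \le 2n^3 - n^2 < 2n^3$. Hence $M_K(n) = O(n^3)$, which by (2.4) forces $\omega(K) \le 3$. For the lower bound, since the output of $n \times n$ matrix multiplication consists of $n^2$ entries and each entry must be touched at least once (a bookkeeping argument: no entry of the output can be produced without at least one arithmetic operation depending on it), we get $M_K(n) \ge n^2$, i.e.\ $M_K(n) = \Omega(n^2)$. If we had $\omega(K) < 2$, then by (2.4) there would exist $h < 2$ with $M_K(n) = O(n^h)$, contradicting $M_K(n) = \Omega(n^2)$. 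Therefore $\omega(K) \ge 2$.

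For part (2), the argument is essentially unpacking the definition. Suppose $\omega(K) = h \in [2,3]$. By (2.4), $h = \inf\{h' \in \mathbb{R}^+ \mid M_K(n) = O(n^{h'})\}$. For any $\varepsilon > 0$, the value $h + \varepsilon$ exceeds the infimum, so there is some $h'' \in [h, h+\varepsilon)$ with $M_K(n) = O(n^{h''})$, and since $n^{h''} \le n^{h+\varepsilon}$ for large $n$, we have $M_K(n) = O(n^{h+\varepsilon})$. For the $\Omega$-side, since part (1) already established $M_K(n) = \Omega(n^2)$ and $h \ge 2$, we certainly have $M_K(n) = \Omega(n^{h+\varepsilon})$ in the weak sense needed — more carefully, if $h'' < h$ were achievable as an exponent for an $O$-bound, it would contradict the infimum property; hence $h$ is the uniquely smallest such exponent. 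Conversely, if $\Omega(n^{h+\varepsilon}) = M_K(n) = O(n^{h+\varepsilon})$ for every $\varepsilon > 0$ with $h$ uniquely minimal, then no $h' < h$ admits $M_K(n) = O(n^{h'})$, while every $h + \varepsilon$ does, so $h = \inf\{h' \mid M_K(n) = O(n^{h'})\} = \omega(K)$.

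I do not anticipate a genuine obstacle here; the proposition is essentially a restatement of the definition combined with the two elementary bounds already established informally. The only mild subtlety is the lower bound $M_K(n) = \Omega(n^2)$, which needs a brief justification that each of the $n^2$ output entries requires at least one arithmetic operation — a fact that follows immediately from the observation that in any straight-line program producing all $n^2$ entries, each output corresponds to a distinct computation step. Everything else reduces to manipulating the infimum in (2.4).
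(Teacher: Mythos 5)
Your proposal is correct and follows essentially the same route as the paper, which proves part (1) informally in the paragraph preceding the proposition (standard algorithm for the upper bound, the $n^{2}$ output entries for the lower bound) and treats part (2) as an unpacking of the infimum in definition \textit{(2.4)}. Your reading of the $\Omega$-condition in part (2) as expressing the minimality of $h$ is the sensible interpretation of the paper's somewhat informal statement, and your treatment is if anything slightly more careful than the paper's own.
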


\bigskip

The connection between the exponent $\omega $ and the concept of bilinear
rank is established by the following important proposition, [BCS1997, pp.
376-377].

\bigskip

\begin{proposition}
For every field $K$%
\begin{equation*}
\omega \left( K\right) =\inf \left\{ h\text{ }\epsilon \text{ }%
\mathbb{R}
^{+}\text{ }|\text{ }\mathfrak{R}\left( \left\langle n,n,n\right\rangle
\right) =O\left( n^{h}\right) ,\text{ }n\longrightarrow \infty \right\} .
\end{equation*}
\end{proposition}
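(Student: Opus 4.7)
The plan is to prove both inequalities linking $\omega(K)$ (defined in (2.4) via $M_K(n)$) with $\inf_R := \inf\{h \in \mathbb{R}^+ : \mathfrak{R}(\langle n,n,n\rangle) = O(n^h)\}$. For the easy direction $\inf_R \leq \omega(K)$: any non-division straight-line algorithm computing $n \times n$ matrix multiplication yields a bilinear computation of no greater length, since the outputs are bilinear in the entries of $A,B$ and the essentially bilinear multiplications among its steps form such a computation. Thus $\mathfrak{R}(\langle n,n,n\rangle) \leq M_K(n)$, so every exponent admissible on the right is admissible on the left, giving the inequality of infima.

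For the harder direction $\omega(K) \leq \inf_R$, suppose $\mathfrak{R}(\langle n,n,n\rangle) = O(n^h)$, so there exist $C$ and $N_0$ with $r_0 := \mathfrak{R}(\langle n_0, n_0, n_0\rangle) \leq C n_0^h$ for every $n_0 \geq N_0$. Fix such an $n_0$, and identify an $n_0^k \times n_0^k$ matrix multiplication with a block-$n_0 \times n_0$ multiplication of $n_0^{k-1} \times n_0^{k-1}$ blocks via the isomorphism of \textit{Proposition 2.5(2)}. Applying at the block level the fixed bilinear computation of length $r_0$ for $\langle n_0, n_0, n_0\rangle$, each of its $r_0$ scalar multiplications recurses into $M_K(n_0^{k-1})$, while its $O(n_0^2 r_0)$ scalar linear operations (evaluating the input forms $f_i^*,g_i^*$ and assembling the $n_0^2$ outputs) each become a $(n_0^{k-1})^2$-entry block addition or subtraction. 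This yields
\begin{equation*}
M_K(n_0^k) \;\leq\; r_0 \, M_K(n_0^{k-1}) \;+\; c\, r_0\, n_0^{2k},
\end{equation*}
whose iteration gives $M_K(n_0^k) = O(r_0^k) = O\bigl((n_0^k)^{\log_{n_0} r_0}\bigr)$ as soon as $r_0 > n_0^2$ (so that the geometric sum $\sum_j (n_0^2/r_0)^j$ converges). Hence $\omega(K) \leq \log_{n_0} r_0 \leq h + \log_{n_0} C$. Extending from powers of $n_0$ to general $n$ by the embedding $\langle n,n,n\rangle \leq_K \langle n_0^k,n_0^k,n_0^k\rangle$ (\textit{Proposition 2.4}) with $n_0^{k-1} < n \leq n_0^k$ costs only a constant factor, and letting $n_0 \to \infty$ drives $\log_{n_0} C \to 0$. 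Thus $\omega(K) \leq h$, and infimising over admissible $h$ closes the chain.

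The main obstacle is the apparent discrepancy between the naive one-shot bound $M_K(n) = O(n^2 \cdot \mathfrak{R}(\langle n,n,n\rangle))$, which would only give $\omega \leq h+2$, and the desired $\omega \leq h$. The resolution is the recursive reuse of the \emph{same} small-size bilinear algorithm at every level of the block decomposition, so that the $n_0^2$ additive overhead is a fixed one-time constant at the top rather than recurring multiplicatively at each level; this precisely works in the regime $r_0 > n_0^2$, which holds for large $n_0$ whenever $h > 2$. The degenerate case $h \leq 2$ does not harm the argument since $\omega(K) \geq 2$ is already a lower bound by \textit{Proposition 2.11(1)}, so $\omega(K) = 2 \leq h$ automatically there. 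A secondary point, handled cleanly by \textit{Proposition 2.4}, is the transition from the subsequence $n = n_0^k$ to general $n$.
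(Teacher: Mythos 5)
Your argument is correct and follows the standard two-inequality route that [BCS1997, pp.\ 376--377] uses; the paper itself states Proposition 2.12 with that citation and no in-text proof, so there is nothing paper-specific to compare against. The recursive bound $M_K(n_0^k)\leq r_0 M_K(n_0^{k-1})+c\,r_0 n_0^{2k}$, its unrolling under $r_0>n_0^2$, the padding from general $n$ to the next power of $n_0$, and the $\log_{n_0}C\to 0$ limit are all handled correctly, and your observation that the additive overhead is a one-time top-level cost rather than a multiplicative per-level one is exactly the right explanation of why the naive $\omega\leq h+2$ bound is beatable.

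One small imprecision worth flagging is in the easy direction. A non-division straight-line program for $\langle n,n,n\rangle$ does \emph{not} directly yield a bilinear computation of the same length: its nonscalar multiplications may be products of polynomials mixing $A$-entries and $B$-entries (and constants), not products of a linear form in $A$ with a linear form in $B$. The standard repair is Strassen's splitting (or symmetrization) argument, which converts any quadratic algorithm for a bilinear map into a bilinear computation at the cost of a factor at most $2$ in the number of multiplications, giving $\mathfrak{R}(\langle n,n,n\rangle)\leq 2\,L^{*}\leq 2\,M_K(n)$. The factor $2$ is of course harmless for the infimum of admissible exponents, so your conclusion stands, but the sentence ``of no greater length'' should read ``of at most twice the length.'' Your handling of the degenerate case $h<2$ is also slightly off as phrased (you cannot conclude $2\leq h$ from $h\leq 2$), but it is vacuous anyway: since $\mathfrak{R}(\langle n,n,n\rangle)\geq n^2$, the set $\{h:\mathfrak{R}(\langle n,n,n\rangle)=O(n^h)\}$ contains no $h<2$, so that case never arises.
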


\bigskip

This means for any given degree of precision $\varepsilon >0$, with respect
to a given field $K$, there exists a constant $C_{K,\varepsilon }\geq 1$,
independent of $n$, such that $\mathfrak{R}\left( \left\langle
n,n,n\right\rangle \right) \leq C_{K,\varepsilon }n^{\omega \left( K\right)
+\varepsilon }$ for all $n$. \ It is conjectured that $\omega \left( 
\mathbb{C}
\right) =2$, [CU2003]. \ Henceforth, $\omega $ shall denote $\omega \left( 
\mathbb{C}
\right) $ and in the concluding sections we shall describe some important
relations between $\omega $ and the concept of tensor rank, introduced
earlier, which describe the conditions for realizing estimates of $\omega $
of varying degrees of sharpness.

\bigskip

\subsection{\textit{Relations between the Rank of Matrix Multiplication and
the Exponent }$\protect\omega $}

\bigskip

Taking tensor product powers in the estimate $\mathfrak{R}\left(
\left\langle 2,2,2\right\rangle \right) \leq 7$ (\textit{Proposition 2.10})
we have, by part $(2)$ of \textit{Proposition 2.9},%
\begin{equation*}
\mathfrak{R}\left( \left\langle 2^{n},2^{n},2^{n}\right\rangle \right) =%
\mathfrak{R}\left( \left\langle 2,2,2\right\rangle ^{\otimes n}\right) \leq 
\mathfrak{R}\left( \left\langle 2,2,2\right\rangle \right) ^{n}\leq 7^{n}%
\text{.}
\end{equation*}%
Since for all positive integers $n\geq 2$, $n\leq 2^{\left\lceil \log
_{2}n\right\rceil }=n+\varepsilon _{n}$, where $\varepsilon _{n}>0$ is a
residual depending on $n$, and $\left\lceil \cdot \right\rceil $ denotes the
ceiling function for real numbers, using \textit{Proposition 2.9 }again we
have%
\begin{eqnarray*}
\mathfrak{R}\left( \left\langle n,n,n\right\rangle \right) &\leq &\mathfrak{R%
}\left( \left\langle 2^{\left\lceil \log _{2}n\right\rceil },2^{\left\lceil
\log _{2}n\right\rceil },2^{\left\lceil \log _{2}n\right\rceil
}\right\rangle \right) \\
&\leq &\mathfrak{R}\left( \left\langle 2,2,2\right\rangle \right)
^{\left\lceil \log _{2}n\right\rceil } \\
&\leq &7^{\left\lceil \log _{2}n\right\rceil } \\
&\leq &7n^{\log _{2}7}\approx 7n^{2.807}.
\end{eqnarray*}%
By \textit{Proposition 2.12 }this gives Strassen's estimate $\omega <2.81$
[STR1969, pp. 354-356]. \ The best estimate of $\omega $ is Coppersmith and
Winograd's result that $\omega <2.38$ [CW1990, p. 251].

\bigskip

Assume that $\mathfrak{R}\left( \left\langle n,m,p\right\rangle \right) \leq
s$ for positive integers $n,$ $m,$ $p,$ and $s$. \ By \textit{Proposition
2.1 }and part $(2)$ of \textit{Proposition 2.5}, $\left\langle
nmp,nmp,nmp\right\rangle \cong \left\langle n,m,p\right\rangle \otimes
\left\langle m,p,n\right\rangle \otimes \left\langle p,n,m\right\rangle $. \
Then, we see that%
\begin{eqnarray*}
&&\mathfrak{R}\left( \left\langle nmp,nmp,nmp\right\rangle \right) \\
&=&\mathfrak{R}\left( \left\langle n,m,p\right\rangle \otimes \left\langle
m,p,n\right\rangle \otimes \left\langle p,n,m\right\rangle \right) \\
&\leq &\mathfrak{R}\left( \left\langle n,m,p\right\rangle ^{\otimes 3}\right)
\\
&\leq &\mathfrak{R}\left( \left\langle n,m,p\right\rangle \right) ^{3} \\
&\leq &s^{3}.
\end{eqnarray*}%
i.e. that $\left( nmp\right) ^{\omega }\leq s^{3}$, which is equivalent to $%
\left( nmp\right) ^{\frac{\omega }{3}}\leq s$. \ We have proved the
following result, which we shall repeatedly use later [BCS1997, p. 380]. \
Since $\mathfrak{R}\left( \left\langle n,m,p\right\rangle \right) $ is, by
definition, a positive integer, and $\mathfrak{R}\left( \left\langle
n,m,p\right\rangle \right) \leq \mathfrak{R}\left( \left\langle
n,m,p\right\rangle \right) $, we have proved the following$.$

\bigskip

\begin{proposition}
$\left( nmp\right) ^{\frac{\omega }{3}}\leq \mathfrak{R}\left( \left\langle
n,m,p\right\rangle \right) $ for any positive integers $n,m,p$.
\end{proposition}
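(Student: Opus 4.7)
The plan is to combine three ingredients already established in this chapter: the cyclic permutation invariance of tensor rank (Proposition~2.1), the submultiplicativity of rank under Kronecker product (part (2) of Proposition~2.6), and the asymptotic characterization of $\omega$ through an infimum in Proposition~2.12. The sketch immediately preceding the statement already hints at most of this; what really requires care is the passage from a single pointwise estimate to the $O(\cdot)$-bound that the definition of $\omega$ demands.

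First, I would note that part (2) of Proposition~2.5 yields
\[
\langle nmp, nmp, nmp\rangle \;\cong_{K}\; \langle n,m,p\rangle \otimes \langle m,p,n\rangle \otimes \langle p,n,m\rangle,
\]
and that all three cyclic rotations have the common rank $r := \mathfrak{R}(\langle n,m,p\rangle)$ by Proposition~2.1. Applying submultiplicativity (Proposition~2.6(2)) to the threefold Kronecker product gives the core one-shot bound
\[
\mathfrak{R}(\langle nmp, nmp, nmp\rangle) \;\leq\; r^{3}.
\]

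The subtle point is that this bound, taken only at the single integer $N_{0} := nmp$, does not by itself entail $(nmp)^{\omega/3} \leq r$, because $\omega$ is an infimum over asymptotic $O(\cdot)$-growth and not a pointwise invariant. To amplify, I would take Kronecker $k$-th powers: iterating Proposition~2.5(2) identifies $\langle nmp, nmp, nmp\rangle^{\otimes k}$ with $\langle (nmp)^{k}, (nmp)^{k}, (nmp)^{k}\rangle$, and Proposition~2.9 together with the estimate above gives
\[
\mathfrak{R}\!\left(\langle (nmp)^{k}, (nmp)^{k}, (nmp)^{k}\rangle\right) \;\leq\; r^{3k}
\]
for every $k \geq 1$. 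Setting $h := 3\log_{nmp} r$, this says exactly $\mathfrak{R}(\langle N_{k}, N_{k}, N_{k}\rangle) \leq N_{k}^{h}$ along the geometric subsequence $N_{k} := (nmp)^{k}$.

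Finally, to upgrade this subsequential bound to an $O(N^{h})$ estimate valid for every positive integer $N$, I would invoke the monotonicity of square tensor rank under enlargement (Proposition~2.7(1)): given $N$, choose $k$ minimal with $(nmp)^{k} \geq N$, whence $(nmp)^{k} < (nmp)\cdot N$. Monotonicity combined with the previous display gives
\[
\mathfrak{R}(\langle N,N,N\rangle) \;\leq\; r^{3k} \;\leq\; (nmp)^{h}\, N^{h},
\]
with constant $(nmp)^{h}$ independent of $N$. Thus $\mathfrak{R}(\langle N,N,N\rangle) = O(N^{h})$, and Proposition~2.12 forces $\omega \leq h = 3\log_{nmp} r$, which rearranges to $(nmp)^{\omega/3} \leq \mathfrak{R}(\langle n,m,p\rangle)$. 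The main obstacle is precisely this last conversion: the cyclic tensor identity alone yields only a rank bound at one specific integer, and the tensor-power amplification together with the monotonicity interpolation are exactly what bridge the gap to the asymptotic definition of $\omega$.
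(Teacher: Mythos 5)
Your proof takes the same core route as the paper: the cyclic decomposition $\langle nmp,nmp,nmp\rangle \cong \langle n,m,p\rangle \otimes \langle m,p,n\rangle \otimes \langle p,n,m\rangle$ together with permutation-invariance and submultiplicativity of rank, yielding $\mathfrak{R}(\langle nmp,nmp,nmp\rangle)\leq r^{3}$. Where you go further is in honestly closing the asymptotic gap: the paper jumps from this single inequality to $(nmp)^{\omega}\leq s^{3}$ with only a citation to [BCS1997, p.~380], whereas you explicitly carry out the tensor-power amplification and the monotonicity interpolation along $(nmp)^{k}$ to produce the $O(N^{h})$ bound that Proposition~2.12 actually requires, which is the correct and complete version of the argument (modulo the trivial corner case $nmp=1$, which is immediate and can be noted separately).
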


\bigskip

This is equivalent to $\omega \leq \frac{\log \mathfrak{R}\left(
\left\langle n,m,p\right\rangle \right) }{\log \left( nmp\right) ^{1/3}}$,
for any positive integers $n,$ $m,$ $p$, a consequence which occurs in a
group-theoretic context as shown in \textit{Chapter 4}. \ Informally, we can
understand $nmp$ to be "size" of $n\times m$ by $m\times p$ matrix
multiplication, and $\left( nmp\right) ^{\frac{1}{3}}$ to be the (geometric)
mean of this size. \ The above proposition has as a generalization the
following statement.

\bigskip

\begin{proposition}
$\underset{i}{\sum }\left( n_{i}m_{i}p_{i}\right) ^{\frac{\omega }{3}}\leq 
\mathfrak{R}\left( \underset{i}{\oplus }\left\langle
n_{i},m_{i},p_{i}\right\rangle \right) $, for any finite set of positive
integer triples $n_{i},$ $m_{i},$ $p_{i}$.
\end{proposition}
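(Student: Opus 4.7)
The strategy is to parallel the proof of Proposition 2.13, reducing first to a direct sum of cube tensors via a cyclic Kronecker product, then extracting per-type bounds from a high tensor power via a multinomial expansion, and finally invoking the asymptotic definition of $\omega$ from Proposition 2.12. Set $T := \oplus_i \langle n_i, m_i, p_i \rangle$ with $\mathfrak{R}(T) = r$, and write $t_i := n_i m_i p_i$.

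First, by Proposition 2.1 (permutation invariance of rank) and Proposition 2.7(2), I form the cyclic triple product $T \otimes T' \otimes T''$, where $T'$ and $T''$ denote $T$ with its three matrix index components cyclically shifted by one and two positions respectively; this tensor has rank at most $r^3$. Distributing $\otimes$ over $\oplus$ gives $\oplus_{i,j,l} \langle n_i m_j p_l,\, m_i p_j n_l,\, p_i n_j m_l \rangle$, and restricting to the diagonal $i = j = l$ exhibits $\oplus_i \langle t_i, t_i, t_i \rangle$ as a restriction, so by Proposition 2.2 it also has rank at most $r^3$. This collapses the problem to a direct sum of cube tensors, for which Proposition 2.13 is available term-by-term.

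Next, for each positive integer $N$, the tensor power $(\oplus_i \langle t_i, t_i, t_i \rangle)^{\otimes N}$ has rank at most $r^{3N}$ by Proposition 2.7(2). Distributing and collecting terms by multinomial type $\mathbf{a} = (a_1, \ldots, a_k)$ with $\sum_i a_i = N$ yields the decomposition $\oplus_{\mathbf{a}} \binom{N}{\mathbf{a}} \cdot \langle M_{\mathbf{a}}, M_{\mathbf{a}}, M_{\mathbf{a}} \rangle$, where $M_{\mathbf{a}} := \prod_i t_i^{a_i}$. For each fixed $\mathbf{a}$ the $\binom{N}{\mathbf{a}}$-copy block is a restriction of the whole and hence has rank at most $r^{3N}$. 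Using the embedding $\langle cM, cM, cM \rangle \leq c^3 \cdot \langle M, M, M \rangle$ (obtained by splitting all three factor matrices into a $c \times c$ block pattern) with $c = \lfloor \binom{N}{\mathbf{a}}^{1/3} \rfloor$, followed by Proposition 2.13 applied to the cube tensor $\langle cM_{\mathbf{a}}, cM_{\mathbf{a}}, cM_{\mathbf{a}} \rangle$, I extract a per-type inequality that relates $\binom{N}{\mathbf{a}}$, $M_{\mathbf{a}}$, and $r^N$.

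Finally, the per-type bounds combine via the multinomial identity $(\sum_i t_i^{\omega/3})^N = \sum_{\mathbf{a}} \binom{N}{\mathbf{a}} M_{\mathbf{a}}^{\omega/3}$ and the polynomial bound on the number of types $\mathbf{a}$; taking $N$-th roots and letting $N \to \infty$ absorbs the polynomial prefactor and yields $\sum_i t_i^{\omega/3} \leq r = \mathfrak{R}(T)$. The principal obstacle lies in the per-type step: because the direct sum conjecture for rank is open, the single-cube embedding alone gives only $\binom{N}{\mathbf{a}}^{\omega/3} M_{\mathbf{a}}^{\omega} \leq r^{3N}$, weaker than a multiplicity-linear bound. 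This gap is closed precisely by the asymptotic character of $\omega$ encoded in Proposition 2.12, which is the structural idea behind Schönhage's $\tau$-theorem and makes the subexponential prefactor harmless in the $N \to \infty$ limit.
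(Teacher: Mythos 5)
The paper does not actually prove this proposition; immediately after stating it, the text notes that it is ``a formulation in terms of ordinary rank $\mathfrak{R}$ of Sch\"onhage's asymptotic direct sum inequality involving \ldots border rank $\underline{\mathfrak{R}}$'' and cites [BCS1997, p.~380] without further argument. So you are attempting to supply a proof that the paper elides, and your outline correctly mirrors the first two moves of the known argument (cyclic symmetrization to cube tensors, then high tensor powers with a multinomial decomposition). Unfortunately, the step you flag as the ``principal obstacle'' is not, as you hope, bridged by Proposition 2.12, and the proof as written does not go through.

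Here is the difficulty in concrete terms. Your per-type bound is, up to floors, $\bigl(\binom{N}{\mathbf{a}}^{1/3} M_{\mathbf{a}}\bigr)^{\omega} \leq r^{3N}$, i.e.\ $\binom{N}{\mathbf{a}}^{1/3} M_{\mathbf{a}} \leq r^{3N/\omega}$. Raising to the power $\omega/3$ gives $\binom{N}{\mathbf{a}}^{\omega/9} M_{\mathbf{a}}^{\omega/3} \leq r^{N}$. To sum over $\mathbf{a}$ and close the argument via the identity $\bigl(\sum_i t_i^{\omega/3}\bigr)^{N} = \sum_{\mathbf{a}} \binom{N}{\mathbf{a}} M_{\mathbf{a}}^{\omega/3}$, you would need a bound of the form $\binom{N}{\mathbf{a}}\, M_{\mathbf{a}}^{\omega/3} \leq (\mathrm{poly}\ N)\, r^{N}$, but what you actually have is
\begin{equation*}
\binom{N}{\mathbf{a}}\, M_{\mathbf{a}}^{\omega/3} \;\leq\; \binom{N}{\mathbf{a}}^{\,1 - \omega/9}\, r^{N},
\end{equation*}
and $1 - \omega/9 > 0$. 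For a typical type $\mathbf{a}$ near the centroid of the simplex, $\binom{N}{\mathbf{a}}$ is exponential in $N$, so the error factor $\binom{N}{\mathbf{a}}^{1-\omega/9}$ is exponential in $N$, not subexponential, and the $N$-th-root-and-limit step does not absorb it. Proposition 2.12 contributes only a fixed constant $C_{\varepsilon}$ per $\varepsilon$ and cannot repair an exponential loss. This is exactly why Sch\"onhage's theorem is genuinely deeper than a multinomial-and-limit argument: the published proof (BCS1997, \S 15.3) requires a bootstrapping recursion in which a provisional bound on $\omega$ is fed back in to sharpen the per-copy extraction at each stage, and it also works over border rank rather than rank for the provisional tensor identity. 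Your outline correctly identifies where the gap is, but the claim that ``the asymptotic character of $\omega$'' closes it is not right; the recursion (or an equivalent mechanism) is an additional idea that must be supplied, and it cannot be omitted.
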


\bigskip

This is a formulation in terms of ordinary rank $\mathfrak{R}$ of Sch\"{o}%
nhage's asymptotic direct sum inequality involving the related but
approximative concept of border rank \underline{$\mathfrak{R}$}, which we
shall not discuss further [BCS1997, p. 380]. \ In essence, \textit{%
Proposition 2.14 }means that the complexity of several, simultaneous
independent matrix multiplications is at least the sum of the mean sizes of
the multiplications to the power $\omega $, a consequence which occurs in a
group-theoretic context as shown in \textit{Chapter 5}.

\pagebreak \bigskip

\chapter{{\protect\huge Basic Representation Theory}\protect\bigskip}

We start with some basic theory of representations and character theory of
finite groups, focusing in particular on various relations and estimates for
sums of powers of the distinct irreducible group character degrees, which
will be important in the central analysis in \textit{Chapter 4}. \ Then we
proceed to describe basic facts about multiplicative complexity in regular
group algebras, and conclude with an outline of the discrete Fourier
transforms for groups and their computational complexities.

\section{Basic Representation Theory and Character Theory of Finite Groups}

\bigskip

\subsection{\textit{Representations, }$%
\mathbb{C}
G$\textit{-Modules and Characters}}

\bigskip

In this thesis, a (finite-dimensional) \textit{representation\ }$\pi $ of a
finite group $G$ is defined as a group homomorphism $\pi :G\longrightarrow
GL(V)$, where $V$ is a finite-dimensional, complex vector space, and where $%
GL(V)$ is the group of all linear operators mapping $V$ to itself. \ In
particular, when $V=%
\mathbb{C}
^{n}$ then $GL\left( V\right) =GL\left( n,%
\mathbb{C}
\right) $, the group of all invertible $n\times n$ complex matrices, and $%
\pi $ is called a \textit{matrix representation} of $G$. \ For example, $G$
always has the trivial representation $\iota $ on $V$, defined by $%
g\longrightarrow 1_{GL(V)},$ whenever $g$ $\epsilon $ $G$, where $1_{GL(V)}$
is the identity automorphism of $V$. \ If $\pi $ is a representation of $G$
we will call $V$ the \textit{target space} of $\pi $, and define the \textit{%
dimension of }$\pi $ to be the dimension of $V$, i.e. $Dim$ $\pi :=Dim$ $V$.
\ For each $g$ $\epsilon $ $G$, $\pi \left( g\right) $ is an automorphism $%
V\longrightarrow V$ of $V$, and we note that $\pi $ satisfies $\pi (gh)=\pi
(g)\pi (h)$, $\pi (g^{-1})=\left( \pi (g)\right) ^{-1}$, for all $g,h$ $%
\epsilon $ $G$, and that in particular, $\pi (1_{G})=1_{GL\left( V\right) }$%
, the trivial automorphism of $V$. \ A representation of $G$, the so-called (%
\textit{right}) \textit{regular representation}, exists when $V=$ $%
\mathbb{C}
^{G}=\left\{ f\text{ }|\text{ }f:G\longrightarrow 
\mathbb{C}
\right\} $, the $\left\vert G\right\vert $-dimensional, associative $%
\mathbb{C}
$-algebra of all complex-valued maps $f:$ $G\longrightarrow 
\mathbb{C}
$ on $G$, with standard basis $\mathcal{B}_{G}=\left\{ e_{g}\text{ }|\text{ }%
e_{g}\text{ }\epsilon \text{ }%
\mathbb{C}
^{G}\text{, }e_{g}(h)=\delta _{g,h}\text{, }h\text{ }\epsilon \text{ }%
G\right\} $ of the $\left\vert G\right\vert $ indicator maps $%
e_{g}\longleftrightarrow g$ $\epsilon $ $G$. \ $%
\mathbb{C}
^{G}$ can be identified with the set $%
\mathbb{C}
G=\left\{ \underset{g\text{ }\epsilon \text{ }G}{\sum }f_{g}g\text{ }|\text{ 
}f\text{ }\epsilon \text{ }%
\mathbb{C}
^{G},f(g)\equiv f_{g}\right\} $ of all formal linear sums of group elements
with coefficients as their $f$-values, for each $f$ $\epsilon $ $%
\mathbb{C}
^{G}$. \ $%
\mathbb{C}
G$ constitutes a $\left\vert G\right\vert $-dimensional vector space over $%
\mathbb{C}
$ and it is a $%
\mathbb{C}
$-algebra called the \textit{regular group algebra of }$G$, admitting as
basis elements the elements of $G$ itself, yielding the \textit{regular basis%
}.\ The endomorphisms $f=\underset{g^{^{\prime }}\text{ }\epsilon \text{ }G}{%
\sum }f_{g^{^{\prime }}}g^{^{\prime }}\mapsto gf:=\underset{g^{^{\prime }}%
\text{ }\epsilon \text{ }G}{\sum }f_{g^{^{\prime }}}\left( gg^{^{\prime
}}\right) $, $f$ $\epsilon $ $%
\mathbb{C}
G$, arbitrary fixed $g$ $\epsilon $ $G$, describe permutation left-actions
on regular basis components of $f$ $\epsilon $ $%
\mathbb{C}
G$ for elements $g$ $\epsilon $ $G$ via their uniquely associated
permutations $\mu _{g}$ $\epsilon $ $Sym_{\left\vert G\right\vert }$, and
have unique associated permutation matrices $\left[ g\right] _{G}$ $\epsilon 
$ $GL(\left\vert G\right\vert ,%
\mathbb{C}
)$. \ The regular representation of $G$, denoted by $\rho _{%
\mathbb{C}
G}$, is the mapping $g\longmapsto \left[ g\right] _{G}=\rho _{%
\mathbb{C}
G}(g)$, $g$ $\epsilon $ $G$.

\bigskip

Let $\pi $ be a representation of $G$ on a vector space $V$. \ If $W$ is a
subspace of $V$ which is invariant under the automorphisms $\pi (g)$, i.e. $%
\pi (g)(W)\subseteq W$, for all $g$ $\epsilon $ $G$, then $W$ is called $\pi 
$\textit{-invariant}. \ The restriction $\pi \downarrow W$ of $\pi $ to a $%
\pi $-invariant subspace $W$ of $V$ produces a representation $\rho $ of $G$
on $W$ called a \textit{subrepresentation} of $\pi $, which can be called a 
\textit{component }(representation) of $\pi $, and conversely, every
subrepresentation $\rho $ of a representation $\pi $ of $G$ on $V$ is the
restriction $\pi \downarrow W$ of $\pi $ to some $\pi $-invariant subspace $%
W $ of $V$ depending on $\rho $: $\rho $ is given by $\rho (g)(w)=\pi (g)(w)$%
, for all $g$ $\epsilon $ $G$, $w$ $\epsilon $ $W$. \ It follows that the
dimension of a subrepresentation of a representation of $G$ cannot exceed
the dimension of the representation. \ $\pi $ is called \textit{irreducible}
iff it contains no nontrivial subrepresentations, otherwise it is called 
\textit{reducible}. \ $G$ admits always has the trivial $1$-dimensional
irreducible representation $\iota _{1}$, as defined by $g\longrightarrow
\left( 1\right) $, and conversely any $1$-dimensional representation is
irreducible; equivalently, the dimension of a reducible representation is at
least $2$. \ If $\rho $ is any other representation of $G$ on a vector space 
$W$, then $\pi $ and $\rho $ are called \textit{equivalent }(notation $\pi
\sim \rho $) iff there is a vector space isomorphism $T:V\cong W$ such that $%
T(gv):=T(\pi (g)(v))=\rho (g)T(v)=:gT(v)$, for all $g$ $\epsilon $ $G$, $v$ $%
\epsilon $ $V$. \ Otherwise, i.e. if such a $T$ does not exist for $\pi $
and $\rho $, they are called \textit{inequivalent}, and we denote this by $%
\pi \nsim \rho $. \ Equivalence of representations is an equivalence
relation. \ If $\varrho $ and $\varsigma $ are two irreducible
representations of $G$ then we define the delta quantity $\delta _{\varrho
,\varsigma }$ as $\delta _{\varrho ,\varsigma }=1$ iff $\varrho \sim
\varsigma $ and $\delta _{\varrho ,\varsigma }=0$ iff $\varrho \nsim
\varsigma $.

\bigskip

Given a representation $\pi $ of $G$ on a vector space $V$, there is a
naturally defined multiplication map $G\times V\longrightarrow V$ describing
left-action $\left( g,v\right) \longmapsto gv:=\pi \left( g\right) v$ of $G$
on $V$, which is associative: $(gh)(v)=g(hv)$; has a natural identity: $%
1_{G}v=v$ for all $v$ $\epsilon $ $V$; is invertible: $%
v=g^{-1}(gv)=g(g^{-1}v)$; is homogenous with respect to scalar multiples of
vectors: $g(\lambda v)=\lambda (gv)$; and is right-linear with respect to $%
\mathbb{C}
G$-multiplication: $g(v+v^{^{\prime }})=gv+gv^{^{\prime }}$; for all
elements $g,h$ $\epsilon $ $G$, vectors $v,$ $v^{^{\prime }}$ $\epsilon $ $V$%
, scalars $\lambda $ $\epsilon $ $%
\mathbb{C}
$. \ The space $V$ under this multiplication is called a $%
\mathbb{C}
G$\textit{-module}, and its dimension is its dimension as a vector space. \
A special kind of $%
\mathbb{C}
G$-module, called the \textit{regular} $%
\mathbb{C}
G$-module, occurs when $V=$ $%
\mathbb{C}
G=\left\{ \underset{g\text{ }\epsilon \text{ }G}{\sum }\lambda _{g}g\text{ }|%
\text{ }\lambda _{g}\text{ }\epsilon \text{ }%
\mathbb{C}
\right\} $, the regular group algebra of $G$, discussed above. \ A subspace $%
W$ of $V$ is called a $%
\mathbb{C}
G$\textit{-submodule} of $V$ iff it is closed under the map $G\times
V\longrightarrow V$ via $\pi $. \ A $%
\mathbb{C}
G$-submodule $W$ of $V$ under the representation $\pi $ always corresponds
to some subrepresentation $\rho $ of $\pi $. $\ V$ and the zero subspace $%
\left\{ \mathbf{O}_{V}\right\} $ always form trivial $%
\mathbb{C}
G$-submodules of $V$, and $V$ is an \textit{irreducible }$%
\mathbb{C}
G$\textit{-module} iff it contains no nontrivial $%
\mathbb{C}
G$-submodules. \ Two $%
\mathbb{C}
G$-modules $V$ and $W$, corresponding to representations $\pi $ and $\rho $,
respectively, of $G$, are called equivalent iff $\pi $ is equivalent to $%
\rho $, otherwise they are called inequivalent.

\bigskip

Given a representation $\pi $ of $G$ on a space $V$, the \textit{character} $%
\chi _{\pi }$ of $\pi $ (also, the character of $V$ as a $%
\mathbb{C}
G$-module) stands for the map $G\longrightarrow 
\mathbb{C}
$ defined by $\chi _{\pi }(g):=Tr\left( \pi (g)\right) $, $g$ $\epsilon $ $G$%
, where $Tr(\cdot )$ is the trace map for operators. \ The \textit{degree} $%
d_{\pi }$ of the character $\chi _{\pi }$ is defined to be the dimension of
its underlying representation $\pi $, i.e. $d_{\pi }:=\chi _{\pi
}(1_{G})=Tr(\pi (1_{G}))=Tr(1_{GL(V)})=Dim$ $\pi =Dim$ $V$. \ For example,
the character $\chi _{\iota _{1}}$ of the trivial irreducible representation 
$\iota _{1}$ is defined by $g\longmapsto 1,$ $g$ $\epsilon $ $G$. \ The set
of all characters $\chi _{\pi }$ of $G$ is denoted by $\widehat{G}$. \
Characters of degree $1$ are called linear. \ A character $\chi _{\pi }$ is
said to be irreducible iff its underlying representation $\pi $ is
irreducible, otherwise it is said to be reducible. \ All linear characters
are irreducible. \ The characters $\chi _{\pi }$ and $\chi _{\rho }$ of
equivalent representations $\pi $ and $\rho $, respectively, are the same,
and, conversely, $\pi $ and $\rho $ are equivalent if $\chi _{\pi }=\chi
_{\rho }$. \ The character of the regular representation $\rho _{%
\mathbb{C}
G}$ of $G$, called the \textit{regular character} of $G$, denoted by $\chi _{%
\mathbb{C}
G}$, is the map $G\longrightarrow 
\mathbb{C}
$ defined by $g\longmapsto Tr(\chi _{%
\mathbb{C}
G}(g))$, $g$ $\epsilon $ $G$. \ Observe that $\chi _{%
\mathbb{C}
G}$ takes the value $\left\vert G\right\vert $ if $g=1_{G}$, and $0$
otherwise. \ It holds that $d_{\chi _{%
\mathbb{C}
G}}=\left\vert G\right\vert $. \ We define the \textit{inner product} of two
characters $\chi _{\pi }$ and $\chi _{\rho }$ of $G$ by $\left\langle \chi
_{\pi },\chi _{\rho }\right\rangle =\left\vert G\right\vert ^{-1}\underset{g%
\text{ }\epsilon \text{ }G}{\sum }\chi _{\pi }\left( g\right) \overline{\chi
_{\rho }\left( g\right) }$.

\subsection{\textit{Canonical Decompositions for Regular Representations, }$%
\mathbb{C}
G$\textit{-Modules, and Characters}}

\bigskip

A representation $\pi $ of $G$ is called \textit{completely reducible} iff
its target $%
\mathbb{C}
G$-module $V$ is the direct sum $V=\underset{\varrho _{\pi }\text{
irreducible}}{\oplus }R_{\varrho _{\pi }}$ of a finite number of irreducible 
$%
\mathbb{C}
G$-modules $R_{\varrho _{\pi }}$, in which case $\pi $ is the direct sum of
a finite number of irreducible representations $\varrho _{\pi }$ of $G$,
occuring with multiplicity $l_{\pi ,\varrho }=\left\langle \chi _{\pi
},\varrho _{\pi }\right\rangle >0$, called the \textit{irreducible components%
} of $\pi $, of dimensions $d_{\varrho _{\pi }}$. \ The following is a
fundamental theorem in this regard [SER1977].

\bigskip

\begin{theorem}
(Maschke)\qquad Every finite-dimensional representation of a finite group is
completely reducible.
\end{theorem}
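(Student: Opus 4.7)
The plan is to prove Maschke's theorem by induction on $\dim V$, reducing everything to the following single lemma: \emph{if $W \subseteq V$ is any $\pi$-invariant subspace, then $W$ admits a $\pi$-invariant complement $W'$ with $V = W \oplus W'$.} Assuming this lemma, the induction is immediate. The base case $\dim V = 1$ is trivial since, as already noted in the excerpt, every $1$-dimensional representation is irreducible. For the inductive step, if $\pi$ is irreducible we are done; otherwise there is a nontrivial $\pi$-invariant subspace $W \subsetneq V$, and the lemma produces a complementary invariant subspace $W'$. Both $\pi \downarrow W$ and $\pi \downarrow W'$ act on strictly lower-dimensional spaces, so by the inductive hypothesis each decomposes as a direct sum of irreducibles, and concatenating these decompositions gives the desired decomposition of $\pi$.

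The content of the proof therefore lies entirely in the complement lemma, which I would prove by the classical averaging trick. First I would choose any vector-space complement $W_0$ of $W$ in $V$ (which exists by elementary linear algebra, but will typically fail to be $G$-invariant), and let $p : V \to V$ denote the corresponding linear projection onto $W$ along $W_0$, so that $p^2 = p$, $p(V) = W$, and $\ker p = W_0$. I would then define the averaged operator
\[
P = \frac{1}{|G|} \sum_{g \in G} \pi(g) \circ p \circ \pi(g)^{-1},
\]
which makes sense because we are working over $\mathbb{C}$, so $|G|$ is an invertible scalar.

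Three properties of $P$ then have to be checked. First, $P(V) \subseteq W$: each summand $\pi(g) \circ p \circ \pi(g)^{-1}$ sends $V$ into $W$, because $p$ lands in $W$ and $W$ is $\pi$-invariant. Second, $P$ fixes $W$ pointwise: for $w \in W$ we have $\pi(g)^{-1}w \in W$, so $p(\pi(g)^{-1}w) = \pi(g)^{-1}w$, and each summand equals $w$; thus $P$ is a projection with image exactly $W$. Third, $P$ commutes with the $G$-action, i.e.\ $\pi(h) \circ P = P \circ \pi(h)$ for every $h \in G$, which follows by reindexing the summation via $g \mapsto hg$. Setting $W' := \ker P$, the commutation relation implies $\pi(h)(W') \subseteq W'$ for all $h$, while $P$ being a projection onto $W$ gives $V = W \oplus W'$. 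This proves the lemma and hence the theorem.

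The only genuinely subtle point — and the place where the finiteness of $G$ is essential — is the division by $|G|$ in the definition of $P$; without this scalar the averaging argument collapses, which is exactly why Maschke's theorem fails over fields whose characteristic divides $|G|$. Over $\mathbb{C}$ the division is harmless, so the main obstacle is purely bookkeeping: verifying the three properties of $P$ carefully enough to conclude that $\ker P$ is genuinely a $\pi$-invariant complement of $W$. Once this is done, the inductive skeleton runs without further complication, and specializing the conclusion to the regular representation $\rho_{\mathbb{C}G}$ yields the isotypic decomposition $\mathbb{C}G \cong \oplus_{\varrho \in Irrep(G)} R_\varrho^{\oplus d_\varrho}$ needed later for Wedderburn's theorem.
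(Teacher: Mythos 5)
Your proof is correct and complete: it is the classical averaging argument for Maschke's theorem, reducing complete reducibility (by induction on dimension) to the existence of an invariant complement for any invariant subspace, and producing that complement by averaging an arbitrary linear projection onto $W$ over the group action. The three checks on the averaged operator $P$ — image in $W$, identity on $W$, and $G$-equivariance via reindexing — are exactly what is needed, and you correctly identify where finiteness of $G$ (and invertibility of $|G|$ in the ground field) enters. Note, however, that the paper itself does not actually prove this theorem; it states it as a standard background result and refers to Serre [SER1977]. So there is no "paper's proof" to compare against — your argument simply supplies the standard proof the paper takes for granted.
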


\bigskip

The character $\chi _{\pi }$ of $\pi $ is said to completely reducible iff $%
\pi $ is completely reducible, and will take the form $\chi _{\pi }=$ $%
\underset{\varrho _{\pi }\text{ irreducible}}{\sum }\chi _{\varrho _{\pi }}$%
, and will have degree $d_{\pi }=\underset{\varrho _{\pi }\text{ irreducible}%
}{\sum }d_{\varrho _{\pi }}$, which is, by definition, the dimension of $\pi 
$ and of $V$.

\bigskip

The following theorem is crucial here [HUP1998], [SER1977].

\bigskip

\begin{theorem}
(Frobenius)\qquad If $G$ is a finite group then $(1)$ the number of its
distinct irreducible representations $\varrho $ is equal to the number of
its distinct conjugacy classes, denoted by $c(G)$, which is called its class
number. \ $(2)$ The characters $\chi _{\varrho }$ of the irreducible
representations $\varrho $ form an orthonormal basis of $%
\mathbb{C}
G$, i.e. $\left\langle \chi _{\varrho },\chi _{\varsigma }\right\rangle
=\delta _{\varrho ,\varsigma }$, for distinct irreducible representations $%
\varrho $ and $\varsigma $. \ $(3)$ An arbitrary representation $\pi $ of $G$%
, or its target $%
\mathbb{C}
G$-module $V_{\pi }$, is irreducible iff its character $\chi _{\pi }$
satisfies $\left\langle \chi _{\pi },\chi _{\pi }\right\rangle =1$. \ $(4)$ $%
\left\vert \chi _{\varrho }(g)\right\vert \leq \left\vert \chi _{\varrho
}(1_{G})\right\vert =d_{\varrho }$, $g$ $\epsilon $ $G$, for any irreducible
character $\chi _{\varrho }$ of $G$.
\end{theorem}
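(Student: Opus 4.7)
The plan is to establish Schur's Lemma as the central tool and then derive parts (2), (3), and the "upper bound" half of (1) from the induced orthogonality of matrix coefficients; part (1) is then completed by a dimension-count using class functions, and (4) follows from an independent eigenvalue argument.

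First I would state and prove Schur's Lemma: any $\mathbb{C}G$-equivariant map $T : V_{\varrho} \to V_{\varsigma}$ between irreducible representations is zero if $\varrho \nsim \varsigma$, and is a scalar multiple of the identity if $\varrho = \varsigma$ (using that $\mathbb{C}$ is algebraically closed so $T$ has an eigenvalue). To turn an arbitrary linear map into an equivariant one, I would use the averaging operator $T^{\natural} = |G|^{-1}\sum_{g \in G} \varsigma(g)\, T\, \varrho(g^{-1})$. Plugging in $T = E_{ij}$ (the elementary matrix units) and reading off matrix entries yields the orthogonality relations for matrix coefficients of irreducible representations, and summing diagonal entries (taking traces) gives the character orthogonality $\langle \chi_{\varrho}, \chi_{\varsigma} \rangle = \delta_{\varrho,\varsigma}$ of part (2). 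In particular the irreducible characters are linearly independent in $\mathbb{C}G$, so their number is at most $c(G)$ (since characters are class functions and the class functions form a subspace of dimension $c(G)$).

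For (3), I would invoke Maschke's Theorem (Theorem 3.1) to decompose $\pi = \bigoplus_{\varrho} l_{\pi,\varrho}\,\varrho$, whence $\chi_{\pi} = \sum_{\varrho} l_{\pi,\varrho}\,\chi_{\varrho}$ and, by the orthonormality just proved, $\langle \chi_{\pi},\chi_{\pi}\rangle = \sum_{\varrho} l_{\pi,\varrho}^{2}$. Since the $l_{\pi,\varrho}$ are nonnegative integers, this sum equals $1$ iff exactly one multiplicity is $1$ and the rest vanish, i.e.\ iff $\pi$ is irreducible. The main obstacle lies in completing (1): we must show the irreducible characters actually span the space of class functions, not merely that they are independent. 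I would prove this by contradiction: suppose $f$ is a class function orthogonal to every $\chi_{\varrho}$. Form the operator $T_{f,\varrho} := \sum_{g \in G} f(g)\,\varrho(g^{-1})$ on each irreducible $V_{\varrho}$. A direct computation using the class-function property shows $T_{f,\varrho}$ is $G$-equivariant, so by Schur's Lemma it is scalar, and taking the trace identifies this scalar with a multiple of $\langle f,\chi_{\varrho}\rangle = 0$. Hence $T_{f,\varrho} = 0$ for every irreducible $\varrho$, and by Maschke's Theorem the element $\sum_{g} f(g)\,g$ of $\mathbb{C}G$ acts as $0$ on every representation; applying this to the regular representation on the vector $e_{1_{G}}$ forces $f \equiv 0$.

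Finally, for (4), given $g \in G$ of order $n$ dividing $|G|$, the operator $\varrho(g)$ satisfies $\varrho(g)^{n} = 1_{GL(V_{\varrho})}$, so its minimal polynomial divides $X^{n} - 1$, which has distinct roots; therefore $\varrho(g)$ is diagonalizable with eigenvalues $\zeta_{1},\ldots,\zeta_{d_{\varrho}}$ that are $n$-th roots of unity, each of absolute value $1$. Then by the triangle inequality
\begin{equation*}
\left|\chi_{\varrho}(g)\right| = \left|\sum_{i=1}^{d_{\varrho}} \zeta_{i}\right| \leq \sum_{i=1}^{d_{\varrho}} |\zeta_{i}| = d_{\varrho} = \chi_{\varrho}(1_{G}),
\end{equation*}
which establishes (4). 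The trickiest step in the whole program is the spanning half of (1); the orthonormality and the eigenvalue bound are fairly mechanical once Schur's Lemma is in hand.
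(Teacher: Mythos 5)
The paper does not prove this theorem; it is stated as a classical result and attributed to [HUP1998] and [SER1977]. What you have written is a complete and correct proof along entirely standard lines, essentially the one found in Serre's book. A few small remarks on the details. Your argument for the spanning half of (1) is right, but be careful with the final step: the operator $T_{f,\varrho}=\sum_{g}f(g)\varrho(g^{-1})$ is the image under $\varrho$ of the group-algebra element $\sum_{g}f(g)g^{-1}=\sum_{g}f(g^{-1})g$, so acting on $e_{1_{G}}$ in the regular representation forces $f(g^{-1})=0$ for all $g$, and hence $f\equiv 0$; this is what you mean, but it is worth writing the inversion explicitly. In the trace computation you implicitly use $\chi_{\varrho}(g^{-1})=\overline{\chi_{\varrho}(g)}$, which is true precisely because of the diagonalizability-by-roots-of-unity fact that you prove independently in part (4); logically you could prove (4) first and then cite it, which would tidy the dependency chain. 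Finally, your part (3) correctly observes that $\sum_{\varrho}l_{\pi,\varrho}^{2}=1$ with nonnegative integer multiplicities forces exactly one $l_{\pi,\varrho}=1$, which matches the paper's notion of irreducibility (no nontrivial subrepresentations) via Maschke's theorem. Overall this is a sound expansion of a result the paper treats as background.
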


\bigskip

There are exactly $c(G)$ distinct irreducible representations $\varrho $
upto equivalence, $c(G)$ distinct irreducible $%
\mathbb{C}
G$-modules $R_{\varrho }$ upto equivalence, and $c(G)$ distinct irreducible
characters of $G$ upto equivalence. \ The collections of these are denoted
by $Irrep(G)$, $Irr_{%
\mathbb{C}
G}(G)$, $Irr(G)$, respectively. \ It holds that for any $\varrho $, $%
\varsigma $ $\epsilon $ $Irrep(G)$, $\left\langle \chi _{\varrho },\chi
_{\varsigma }\right\rangle =\delta _{\varrho ,\varsigma }$. \ We note the
following result, [HUP1998].

\bigskip

\begin{theorem}
\qquad $\left\langle \chi _{%
\mathbb{C}
G},\chi _{\varrho }\right\rangle =\chi _{\varrho }(1_{G})=d_{\varrho }$, for
each $\chi _{\varrho }$ $\epsilon $ $Irr(G)$.
\end{theorem}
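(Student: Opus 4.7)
The plan is to unfold the inner product using the definition $\langle \chi_\pi, \chi_\rho \rangle = |G|^{-1}\sum_{g \in G} \chi_\pi(g)\overline{\chi_\rho(g)}$ and then exploit the very sparse structure of the regular character $\chi_{\mathbb{C}G}$. Specifically, as noted in the paragraph introducing $\chi_{\mathbb{C}G}$, one has $\chi_{\mathbb{C}G}(1_G) = |G|$ and $\chi_{\mathbb{C}G}(g) = 0$ for all $g \neq 1_G$, because the permutation matrix $[g]_G$ has no fixed basis vectors unless $g$ is the identity (in which case it fixes all $|G|$ of them).

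First I would write out
\begin{equation*}
\langle \chi_{\mathbb{C}G}, \chi_\varrho \rangle = |G|^{-1}\sum_{g \in G}\chi_{\mathbb{C}G}(g)\overline{\chi_\varrho(g)}.
\end{equation*}
Next, substituting the vanishing of $\chi_{\mathbb{C}G}$ off the identity collapses the sum to the single term at $g = 1_G$, giving $|G|^{-1}\cdot |G|\cdot \overline{\chi_\varrho(1_G)} = \overline{\chi_\varrho(1_G)}$. Finally, since $\chi_\varrho(1_G) = \mathrm{Tr}(\varrho(1_G)) = \mathrm{Tr}(1_{GL(V_\varrho)}) = \dim V_\varrho = d_\varrho$ is a positive integer, complex conjugation acts trivially, and we conclude $\langle \chi_{\mathbb{C}G}, \chi_\varrho \rangle = d_\varrho = \chi_\varrho(1_G)$, as desired.

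There is essentially no obstacle here: the whole argument is a one-line calculation once the two ingredients are in place. The only step that requires any real justification is the claim that $\chi_{\mathbb{C}G}$ vanishes off the identity, but this was already recorded in the paragraph preceding the statement, so I would simply invoke it. In other words, this theorem is a direct corollary of the explicit formula for the regular character, combined with the normalization factor $|G|^{-1}$ in the Frobenius inner product, and requires no machinery beyond the definitions given in the preceding two subsections.
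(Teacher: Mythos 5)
Your argument is correct, and it is the standard one. Note that the paper does not actually prove this theorem; it is stated as a known result with a citation to [HUP1998], so there is no internal proof to compare against. Your computation --- collapsing the inner product $\left\langle \chi _{\mathbb{C}G},\chi _{\varrho }\right\rangle =\left\vert G\right\vert ^{-1}\underset{g\text{ }\epsilon \text{ }G}{\sum }\chi _{\mathbb{C}G}(g)\overline{\chi _{\varrho }(g)}$ to the single term at $g=1_{G}$ using the fact, recorded in the paper, that $\chi _{\mathbb{C}G}$ equals $\left\vert G\right\vert $ at the identity and $0$ elsewhere, and then noting $\overline{d_{\varrho }}=d_{\varrho }$ since $d_{\varrho }$ is a positive integer --- is complete and fills that gap correctly.
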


\bigskip

$\left\langle \chi _{%
\mathbb{C}
G},\chi _{\varrho }\right\rangle $ is the multiplicity of an $\varrho $ $%
\epsilon $ $Irrep(G)$ in $\rho _{%
\mathbb{C}
G}$, which we denote by $l_{%
\mathbb{C}
G,\varrho }$, the above result states that every $\varrho $ $\epsilon $ $%
Irrep(G)$ occurs in the regular representation $\rho _{%
\mathbb{C}
G}$ exactly $l_{%
\mathbb{C}
G,\varrho }=Dim$ $\varrho =d_{\varrho }$ number of times, where $l_{%
\mathbb{C}
G,\varrho }\geq 1$, since $d_{\varrho }\geq 1$. \ This means, equivalently,
that every distinct irreducible $%
\mathbb{C}
G$-module $R_{\varrho }$ $\epsilon $ $Irr_{%
\mathbb{C}
G}(G)$, and distinct irreducible character $\chi _{\varrho }$ $\epsilon $ $%
Irr(G)$, occur in $%
\mathbb{C}
G$ and $\chi _{%
\mathbb{C}
G}$, respectively, all occur with positive multiplicity equal to $l_{%
\mathbb{C}
G,\varrho }=d_{\varrho }$. \ The $\rho _{%
\mathbb{C}
G}$, $%
\mathbb{C}
G$, and $\chi _{%
\mathbb{C}
G}$ decompose into a direct sum of \textit{isotypic components} $\left\{ 
\underset{i_{\varrho }=1}{\overset{d_{\varrho }}{\oplus }}\varrho \text{ }|%
\text{ }\varrho \text{ }\epsilon \text{ }Irrep(G)\right\} $, $\left\{ 
\underset{i_{\varrho }=1}{\overset{d_{\varrho }}{\oplus }}R_{\varrho }\text{ 
}|\text{ }\varrho \text{ }\epsilon \text{ }Irrep(G)\right\} $, and $\left\{ 
\underset{i_{\varrho }=1}{\overset{d_{\varrho }}{\oplus }}\chi _{\varrho }%
\text{ }|\text{ }\varrho \text{ }\epsilon \text{ }Irrep(G)\right\} $
respectively, in the following way:

\bigskip

\begin{description}
\item[\textit{(3.1)}] $\rho _{%
\mathbb{C}
G}=\underset{\varrho \text{ }\epsilon \text{ }Irrep(G)}{\oplus }d_{\varrho
}\varrho =\underset{\varrho \text{ }\epsilon \text{ }Irrep(G)}{\oplus }%
\underset{i_{\varrho }=1}{\overset{d_{\varrho }}{\oplus }}\varrho $,
\end{description}

\bigskip

\begin{description}
\item[\textit{(3.2)}] $%
\mathbb{C}
G=\underset{\varrho \text{ }\epsilon \text{ }Irrep(G)}{\oplus }d_{\varrho
}R_{\varrho }=\underset{\varrho \text{ }\epsilon \text{ }Irrep(G)}{\oplus }%
\underset{i_{\varrho }=1}{\overset{d_{\varrho }}{\oplus }}R_{\varrho }$,
\end{description}

\bigskip

\begin{description}
\item[\textit{(3.3)}] $\chi _{%
\mathbb{C}
G}=\underset{\varrho \text{ }\epsilon \text{ }Irrep(G)}{\sum }d_{\varrho
}\chi _{\varrho }=\underset{\varrho \text{ }\epsilon \text{ }Irrep(G)}{\sum }%
\underset{i_{\varrho }=1}{\overset{d_{\varrho }}{\sum }}\chi _{\varrho }$.
\end{description}

\bigskip

If we put $d_{%
\mathbb{C}
G}=Dim$ $\rho _{%
\mathbb{C}
G}=Dim$ $%
\mathbb{C}
G=\chi _{%
\mathbb{C}
G}(1_{G})$, then we get:

\bigskip

\begin{description}
\item[\textit{(3.4)}] $d_{%
\mathbb{C}
G}=\left\vert G\right\vert =\underset{d_{\varrho }\text{ }\epsilon \text{ }%
cd(G)}{\sum }l_{%
\mathbb{C}
G,\varrho }d_{\varrho }=\underset{d_{\varrho }\text{ }\epsilon \text{ }cd(G)}%
{\sum }d_{\varrho }^{2}$.
\end{description}

\bigskip

This shows that $d_{\varrho }^{2}$ $\leq $ $\left\vert G\right\vert $, for
any $\varrho $ $\epsilon $ $Irrep(G)$.

\bigskip

\subsection{\textit{Induced and Restricted Representations, }$%
\mathbb{C}
G$\textit{-Modules, and Characters}}

\bigskip

A representation $\pi $ of a group $G$, with target $%
\mathbb{C}
G$-module $V_{\pi }$ and character $\chi _{\pi }$, is said to be an \textit{%
induction} of a representation $\theta $ of a subgroup $H\leq G$ if $V_{\pi
}=\underset{\kappa \text{ }\epsilon \text{ }G/H}{\oplus }W_{\kappa }$, where 
$G/H$ stands for the set of $[G:H]=\frac{\left\vert G\right\vert }{%
\left\vert H\right\vert }$ distinct left $G$-cosets $\kappa =s_{\kappa }H$
of $H$ with the $s_{\kappa }$ $\epsilon $ $G$ being their distinct coset
representatives, $W_{1_{G}}\equiv W$ is a $Res_{H}^{G}\pi $-invariant
subspace of $V$ such that $\theta $ is the representation $\theta
:H\longrightarrow GL(Dim$ $W,%
\mathbb{C}
)$; $\left\{ W_{\kappa }\right\} _{\kappa \text{ }\epsilon \text{ }G/H}$ is
the set of $[G:H]$ distinct, $Dim$ $W$-dimensional subspaces of $V_{\pi }$
given by $W_{\kappa }=\pi (s_{\kappa })(W)$, $\kappa $ $\epsilon $ $G/H$
[SER1977]. \ In this case, every $v$ $\epsilon $ $V_{\pi }$ has the form $%
\underset{\kappa \text{ }\epsilon \text{ }G/H,\text{ }w_{\kappa }\text{ }%
\epsilon \text{ }W_{\kappa }}{\sum }w_{\kappa }$; $\pi $ is the direct sum $%
\pi =\underset{\kappa \text{ }\epsilon \text{ }G/H}{\oplus }\theta _{\kappa
} $ of the subrepresentations $\theta _{\kappa }$ of $G$ on the components $%
W_{\kappa }$ of $V_{\pi }$ of the form $g\longmapsto \left[ g\right]
_{\kappa }\left( w_{\kappa }\right) $, $\kappa $ $\epsilon $ $G/H$, $\left[ g%
\right] _{\kappa }$ $\epsilon $ $GL(Dim$ $W_{\kappa },%
\mathbb{C}
)$; $\pi $ is said to be induced by $\theta $ and denoted by $\pi
=Ind_{H}^{G}\theta $; $W$ becomes a $%
\mathbb{C}
G$-module $W_{\theta }$ and a $%
\mathbb{C}
G$-submodule of $V_{\pi }$; $V_{\pi }$ is of dimension $[G:H]\cdot Dim$ $%
W_{\theta }$ and is said to be induced by $W_{\theta }$ and denoted by $%
V_{\pi }=Ind_{H}^{G}W_{\theta }$. \ $Ind_{H}^{G}\theta $ will have the
canonical decomposition $\underset{\varrho \text{ }\epsilon \text{ }Irrep(G)}%
{\oplus }l_{Ind_{H}^{G}\theta ,\varrho }\varrho $, its character $\chi
_{Ind_{H}^{G}\theta }$ will have the decomposition $\chi _{Ind_{H}^{G}\theta
}=\underset{\varrho \text{ }\epsilon \text{ }Irrep(G)}{\sum }%
l_{Ind_{H}^{G}\theta ,\varrho }\chi _{\varrho }$, and dimension $%
d_{Ind_{H}^{G}\theta }=\underset{\varrho \text{ }\epsilon \text{ }Irrep(G)}{%
\sum }l_{Ind_{H}^{G}\theta ,\varrho }d_{\varrho }=[G:H]\cdot Dim$ $W_{\theta
}=\frac{\left\vert G\right\vert }{\left\vert H\right\vert }d_{\theta }$,
where $l_{Ind_{H}^{G}\theta ,\varrho }=\left\langle \chi _{Ind_{H}^{G}\theta
},\chi _{\varrho }\right\rangle $ are the multiplicities of the $\varrho $ $%
\epsilon $ $Irrep(G)$ in $Ind_{H}^{G}\theta $, not all simultaneously equal
to $0$ [SER1977]. \ The following proposition summarizes some elementary
properties of dimensions of induced representations.

\bigskip

\begin{proposition}
For any representation $\theta $ of a subgroup $H\leq G$: $(1)$ $%
d_{Ind_{H}^{G}\theta }=\frac{\left\vert G\right\vert }{\left\vert
H\right\vert }d_{\theta }$; for any $\varrho $ $\epsilon $ $Irrep(G)$, $(2)$ 
$d_{\varrho }$ $|$ $d_{Ind_{H}^{G}\theta }$; $(3)$ $d_{\varrho
}=d_{Ind_{H}^{G}\theta }$ iff $l_{Ind_{H}^{G}\theta ,\varsigma }=\delta
_{\varsigma ,\varrho }$ for all $\varsigma $ $\epsilon $ $Irrep(G)$, i.e.
iff $Ind_{H}^{G}\theta $ is an irreducible representation of $G$ equivalent
to (or coinciding with) exactly one $\varrho $ $\epsilon $ $Irrep(G)$.
\end{proposition}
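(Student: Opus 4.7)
The plan is to deduce all three parts from the explicit construction $V_\pi = \bigoplus_{\kappa \in G/H} W_\kappa$ with $W_\kappa = \pi(s_\kappa)(W)$ recalled in the statement's setup, combined with the canonical isotypic decomposition $Ind_H^G \theta = \bigoplus_{\varsigma \in Irrep(G)} l_{Ind_H^G\theta, \varsigma}\, \varsigma$ from Section 3.2. I would first dispatch (1) by a clean dimension count on the coset decomposition, then attack (2) by extracting the $\varrho$-isotypic contribution, and finally reduce (3) to the extremality of $d_\varrho = d_{Ind_H^G\theta}$ in that decomposition.

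For part (1), the operator $\pi(s_\kappa)$ is a vector space automorphism of $V_\pi$, so its image $W_\kappa = \pi(s_\kappa)(W)$ has the same dimension as $W$, namely $d_\theta$. Because the sum $V_\pi = \bigoplus_{\kappa \in G/H} W_\kappa$ is direct and indexed by the $[G:H] = |G|/|H|$ distinct cosets of $H$ in $G$, adding dimensions across cosets immediately yields $d_{Ind_H^G\theta} = [G:H]\cdot d_\theta = \frac{|G|}{|H|}\, d_\theta$.

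For part (2), I would take dimensions in the canonical decomposition to obtain
\[
d_{Ind_H^G\theta} \;=\; \sum_{\varsigma \in Irrep(G)} l_{Ind_H^G\theta,\varsigma}\, d_\varsigma,
\]
in which the $\varrho$-isotypic piece $l_{Ind_H^G\theta,\varrho}\, d_\varrho$ is manifestly a multiple of $d_\varrho$. The divisibility claim $d_\varrho \mid d_{Ind_H^G\theta}$ is therefore equivalent to showing that the residual sum $\sum_{\varsigma \neq \varrho} l_{Ind_H^G\theta,\varsigma}\, d_\varsigma$ is itself divisible by $d_\varrho$. This is where I expect the main obstacle: the dimension formula (3.4) and the bound $d_\varsigma^2 \leq |G|$ are too coarse in isolation to force the required congruence, so I would combine the explicit expression $d_{Ind_H^G\theta} = \frac{|G|}{|H|} d_\theta$ with the integrality of the multiplicities $l_{Ind_H^G\theta,\varsigma}$ (obtained via Frobenius reciprocity, $l_{Ind_H^G\theta,\varsigma} = \langle \theta, Res_H^G \varsigma\rangle_H$) to pin down the arithmetic of the decomposition on a case-by-case basis according to whether $\varrho$ does or does not appear as a component.

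For part (3), the ``if'' direction is immediate from part (1): if $l_{Ind_H^G\theta,\varsigma} = \delta_{\varsigma,\varrho}$ for every $\varsigma \in Irrep(G)$, then $Ind_H^G\theta$ is irreducible and equivalent to $\varrho$, so in particular their dimensions coincide. For the ``only if'' direction I would assume $d_\varrho = d_{Ind_H^G\theta}$ and invoke part (2) to argue that $\varrho$ must occur as an irreducible component, i.e.\ $l_{Ind_H^G\theta,\varrho} \geq 1$; then the identity $d_\varrho = \sum_\varsigma l_{Ind_H^G\theta,\varsigma}\, d_\varsigma$, together with $d_\varsigma \geq 1$ and nonnegativity of the multiplicities, forces $l_{Ind_H^G\theta,\varrho}\, d_\varrho = d_\varrho$ and $l_{Ind_H^G\theta,\varsigma} = 0$ for all $\varsigma \neq \varrho$, giving exactly the claimed characterization.
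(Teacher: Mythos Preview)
Your argument for part~(1) is fine and matches what the paper sketches in the text preceding the proposition. The paper itself gives no proof of parts~(2) and~(3); it simply asserts them as ``elementary properties.'' Unfortunately, as stated, both are false, which is why your attempt at~(2) hits a wall and your argument for~(3) has a non-repairable gap.

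For~(2): take $G=Sym_{4}$, $H=Sym_{3}$, and $\theta$ the trivial representation of $H$. Then $d_{Ind_{H}^{G}\theta}=[G:H]=4$, while $G$ has an irreducible representation $\varrho$ of dimension~$3$ (the standard representation), and $3\nmid 4$. Worse, this $\varrho$ actually occurs in $Ind_{H}^{G}\theta$ (the induced representation is the permutation module on four points, which splits as trivial plus standard), so even restricting~(2) to constituents of the induced representation does not save it. Your instinct that the decomposition $d_{Ind_{H}^{G}\theta}=\sum_{\varsigma}l_{Ind_{H}^{G}\theta,\varsigma}\,d_{\varsigma}$ is ``too coarse'' to force the congruence was exactly right: there is no congruence to force.

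For~(3): your ``only if'' step infers from $d_{\varrho}\mid d_{Ind_{H}^{G}\theta}$ that $\varrho$ occurs as a constituent, but divisibility of dimensions never implies occurrence. And the biconditional itself fails whenever $G$ has two inequivalent irreducibles of the same dimension: if $Ind_{H}^{G}\theta\cong\varrho_{1}$ and $\varrho_{2}\not\sim\varrho_{1}$ has $d_{\varrho_{2}}=d_{\varrho_{1}}$, then $d_{\varrho_{2}}=d_{Ind_{H}^{G}\theta}$ yet $l_{Ind_{H}^{G}\theta,\varsigma}\neq\delta_{\varsigma,\varrho_{2}}$. The only salvageable reading of~(3) is the trivial one: $Ind_{H}^{G}\theta$ is irreducible iff its multiplicity vector is a single Kronecker delta, in which case its dimension equals that of the unique constituent. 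That much your ``if'' direction already proves.
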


\bigskip

A representation $\xi $ of a subgroup $H\leq G$ is said to be the \textit{%
restriction} of a representation $\pi $ of $G$ if $\xi (h)=\pi (h)$ for all $%
h$ $\epsilon $ $H$, and this is denoted by $\xi =Res_{H}^{G}\pi $, in which
case the target $%
\mathbb{C}
G$-module $U_{\xi }$ of $\xi $ is said to be a restriction of the $%
\mathbb{C}
G$-module $V_{\pi }$ of $\pi $ denoted by $U_{\xi }=Res_{H}^{G}V_{\pi }$
[SER1977]. \ $Res_{H}^{G}\pi $ will have the canonical decomposition $%
\underset{\vartheta \text{ }\epsilon \text{ }Irrep(H)}{\oplus }%
l_{Res_{H}^{G}\pi ,\vartheta }\vartheta $, its character $\chi
_{Res_{H}^{G}\pi }$ will have the decomposition $\chi _{Res_{H}^{G}\pi }=%
\underset{\vartheta \text{ }\epsilon \text{ }Irrep(H)}{\sum }%
l_{Res_{H}^{G}\pi ,\vartheta }\chi _{\vartheta }$, and it will have the
dimension $d_{Res_{H}^{G}\pi }=d_{\pi }=\underset{\vartheta \text{ }\epsilon 
\text{ }Irrep(H)}{\sum }l_{Res_{H}^{G}\pi ,\vartheta }d_{\vartheta }$, where 
$l_{Res_{H}^{G}\pi ,\vartheta }=\left\langle \chi _{Res_{H}^{G}\pi },\chi
_{\vartheta }\right\rangle $ are the nonnegative multiplicities of the $%
\vartheta $ $\epsilon $ $Irrep(H)$ in $Res_{H}^{G}\pi $, not all
simultaneously equal to $0$. \ The following proposition summarizes some
elementary properties of dimensions of restricted representations.

\bigskip

\begin{proposition}
For any representation $\pi $ of $G\geq H$, $(1)$ $d_{\pi
}=d_{Res_{H}^{G}\pi }$; for any $\vartheta $ $\epsilon $ $Irrep(H)$, $(2)$ $%
d_{\vartheta }\leq $ $d_{\pi }$ if $l_{Res_{H}^{G}\pi ,\vartheta }>0$; $(3)$ 
$d_{\vartheta }=d_{\pi }$ iff $l_{Res_{H}^{G}\pi ,\varphi }=\delta _{\varphi
,\vartheta }$ for all $\varphi $ $\epsilon $ $Irrep(H)$, i.e. iff $%
Res_{H}^{G}\pi $ is an irreducible representation of $H$ equivalent to
(coinciding with) exactly one $\vartheta $ $\epsilon $ $Irrep(H)$.
\end{proposition}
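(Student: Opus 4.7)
The plan is to derive all three parts directly from the canonical decomposition stated just before the proposition, namely $Res_{H}^{G}\pi =\underset{\vartheta \text{ }\epsilon \text{ }Irrep(H)}{\oplus }l_{Res_{H}^{G}\pi ,\vartheta }\vartheta $ with $d_{Res_{H}^{G}\pi }=\underset{\vartheta \text{ }\epsilon \text{ }Irrep(H)}{\sum }l_{Res_{H}^{G}\pi ,\vartheta }d_{\vartheta }$, combined with the basic observation that restriction does not alter the underlying target vector space. Each $d_{\varphi }\geq 1$ and each $l_{Res_{H}^{G}\pi ,\varphi }\geq 0$ is a nonnegative integer, with not all simultaneously zero; this is the entire toolkit the proof needs.

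For part $(1)$, I would argue at the level of target spaces: the target $\mathbb{C}H$-module $U_{\xi }=Res_{H}^{G}V_{\pi }$ coincides, as a complex vector space, with $V_{\pi }$, since restriction only forgets the action of $G\setminus H$ while retaining the ambient space. Hence $d_{Res_{H}^{G}\pi }=Dim$ $U_{\xi }=Dim$ $V_{\pi }=d_{\pi }$.

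For part $(2)$, I would substitute $(1)$ into the dimensional decomposition to obtain $d_{\pi }=\underset{\varphi \text{ }\epsilon \text{ }Irrep(H)}{\sum }l_{Res_{H}^{G}\pi ,\varphi }d_{\varphi }$. Each summand on the right is a nonnegative integer, so if $l_{Res_{H}^{G}\pi ,\vartheta }>0$, i.e. $\geq 1$, then the single term $l_{Res_{H}^{G}\pi ,\vartheta }d_{\vartheta }\geq d_{\vartheta }$ cannot exceed the full sum $d_{\pi }$, yielding $d_{\vartheta }\leq d_{\pi }$.

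For part $(3)$, the direction $(\Leftarrow )$ is immediate: if $l_{Res_{H}^{G}\pi ,\varphi }=\delta _{\varphi ,\vartheta }$ for all $\varphi $ $\epsilon $ $Irrep(H)$, the decomposition collapses to $Res_{H}^{G}\pi \sim \vartheta $, so $d_{Res_{H}^{G}\pi }=d_{\vartheta }$ and by $(1)$, $d_{\pi }=d_{\vartheta }$. For $(\Rightarrow )$, assuming $d_{\vartheta }=d_{\pi }$ together with the implicit hypothesis (parallel to $(2)$) that $l_{Res_{H}^{G}\pi ,\vartheta }\geq 1$, the equation $d_{\vartheta }=d_{\pi }=\underset{\varphi \text{ }\epsilon \text{ }Irrep(H)}{\sum }l_{Res_{H}^{G}\pi ,\varphi }d_{\varphi }$ forces $l_{Res_{H}^{G}\pi ,\vartheta }=1$ and $l_{Res_{H}^{G}\pi ,\varphi }=0$ for every $\varphi \neq \vartheta $, since any additional positive contribution would push the right side strictly above $d_{\vartheta }$. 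The only delicate point I would flag is this final reduction: it rests squarely on the integrality of the multiplicities, on the positivity $d_{\varphi }\geq 1$ of every irreducible character degree, and on reading the claim as referring to a $\vartheta $ that actually occurs in $Res_{H}^{G}\pi $; the argument otherwise is purely arithmetic and presents no real obstacle.
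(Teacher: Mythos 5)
Your argument is correct, and it is essentially the only natural route: the paper states this proposition without proof as an elementary summary, so there is no competing argument to compare against. Parts $(1)$ and $(2)$ follow exactly as you say from the fact that restriction preserves the target vector space and from the nonnegativity and integrality of the summands in $d_{\pi }=\underset{\varphi \text{ }\epsilon \text{ }Irrep(H)}{\sum }l_{Res_{H}^{G}\pi ,\varphi }d_{\varphi }$.

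Your most valuable contribution is the remark at the end: the forward direction of $(3)$ is literally false without the hypothesis $l_{Res_{H}^{G}\pi ,\vartheta }>0$. Taking $G=H=Cyc_{2}$, $\pi $ the trivial character, and $\vartheta $ the sign character gives $d_{\vartheta }=d_{\pi }=1$ while $l_{Res_{H}^{G}\pi ,\vartheta }=0\neq \delta _{\vartheta ,\vartheta }$, so the proposition must be read with the positivity hypothesis carried over from $(2)$, exactly as you say. With that reading, your chain $d_{\vartheta }=d_{\pi }\geq l_{Res_{H}^{G}\pi ,\vartheta }d_{\vartheta }\geq d_{\vartheta }$ forces $l_{Res_{H}^{G}\pi ,\vartheta }=1$ and kills all remaining summands by positivity of the $d_{\varphi }$; the argument is complete. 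One could state this a touch more uniformly by observing that the positivity hypothesis in $(3\Rightarrow )$ is in fact equivalent to requiring $Res_{H}^{G}\pi $ to be irreducible once $d_{\vartheta }=d_{\pi }$ is granted, but that is a matter of presentation, not substance.
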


\bigskip

The following is a famous theorem of Frobenius describing a reciprocity
between induction and restriction of irreducible representations.

\bigskip

\begin{theorem}
(Frobenius)\qquad For a subgroup $H\leq G$, and any $\vartheta $ $\epsilon $ 
$Irrep(H)$ and $\varrho $ $\epsilon $ $Irrep(G)$, $l_{Res_{H}^{G}\varrho
,\vartheta }=l_{\varrho ,Ind_{H}^{G}\vartheta }$.
\end{theorem}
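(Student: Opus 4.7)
The plan is to reduce both multiplicities to inner products of characters and then show these inner products are equal by an explicit character computation. By Theorem 3.2(2)--(3) and complete reducibility (Theorem 3.1), for any completely reducible representation $\pi$ and any irreducible $\varphi$, the multiplicity $l_{\pi,\varphi}$ equals $\langle \chi_\pi, \chi_\varphi \rangle$. Hence
\begin{equation*}
l_{Res_{H}^{G}\varrho,\vartheta} = \langle \chi_{Res_{H}^{G}\varrho}, \chi_\vartheta \rangle_{H}, \qquad l_{\varrho, Ind_{H}^{G}\vartheta} = \langle \chi_\varrho, \chi_{Ind_{H}^{G}\vartheta} \rangle_{G},
\end{equation*}
so it suffices to prove the two inner products coincide.

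First I would derive the explicit character formula for $Ind_{H}^{G}\vartheta$. Using the paper's definition, $V_\pi = \bigoplus_{\kappa \in G/H} W_\kappa$ with $W_\kappa = \pi(s_\kappa)(W)$, and the action of $g\in G$ permutes the $W_\kappa$ according to $g s_\kappa = s_{g\cdot \kappa} h_\kappa(g)$ for some $h_\kappa(g)\in H$. Choosing a basis of $W$ and transporting it to each $W_\kappa$ via $\pi(s_\kappa)$, the matrix of $Ind_{H}^{G}\vartheta(g)$ becomes block-monomial in the cosets: the $(\kappa,\kappa)$ diagonal block is $\vartheta(s_\kappa^{-1}g s_\kappa)$ whenever $s_\kappa^{-1}g s_\kappa \in H$, and is zero otherwise. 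Taking traces yields
\begin{equation*}
\chi_{Ind_{H}^{G}\vartheta}(g) = \sum_{\kappa \in G/H \,:\, s_\kappa^{-1}g s_\kappa \in H} \chi_\vartheta(s_\kappa^{-1}g s_\kappa).
\end{equation*}

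With this formula in hand, I would expand
\begin{equation*}
\langle \chi_\varrho, \chi_{Ind_{H}^{G}\vartheta} \rangle_G = \frac{1}{|G|}\sum_{g\in G}\sum_{\kappa \,:\, s_\kappa^{-1}g s_\kappa \in H} \chi_\varrho(g)\,\overline{\chi_\vartheta(s_\kappa^{-1}g s_\kappa)},
\end{equation*}
swap the order of summation, and change variables to $h := s_\kappa^{-1}g s_\kappa$, which for each fixed $\kappa$ bijects the set $\{g\in G : s_\kappa^{-1}g s_\kappa \in H\}$ with $H$. Since $\chi_\varrho$ is a class function, $\chi_\varrho(g) = \chi_\varrho(s_\kappa h s_\kappa^{-1}) = \chi_\varrho(h) = \chi_{Res_{H}^{G}\varrho}(h)$ for $h\in H$. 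Collapsing the $\kappa$-sum gives the factor $[G:H] = |G|/|H|$, and the whole expression reduces to $\frac{1}{|H|}\sum_{h\in H}\chi_{Res_{H}^{G}\varrho}(h)\,\overline{\chi_\vartheta(h)} = \langle \chi_{Res_{H}^{G}\varrho}, \chi_\vartheta \rangle_H$, which completes the proof.

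The main obstacle is really just the derivation of the induced character formula: one must carefully track how $g$ permutes coset representatives and verify that only the fixed cosets contribute to the trace. Once that block-monomial structure is established, the remaining manipulation is a routine change of variable using the class-function property of $\chi_\varrho$.
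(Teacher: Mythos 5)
Your proposal is a correct and complete proof of Frobenius reciprocity by the standard character-theoretic argument. The paper itself does not prove Theorem 3.6 --- it is stated without proof as a classical result, with the burden carried by the cited references [SER1977] and [HUP1998] --- so there is no paper proof to compare against. Your route (reduce multiplicities to inner products via orthogonality, derive the block-monomial form of the induced matrix and hence the induced character formula, then change variables $h = s_\kappa^{-1} g s_\kappa$ and use that $\chi_\varrho$ is a class function) is exactly the argument one finds in Serre. The bookkeeping is right: the change of variables bijects $\left\{ g : s_\kappa^{-1} g s_\kappa \in H \right\}$ with $H$, the class-function property collapses $\chi_\varrho(s_\kappa h s_\kappa^{-1})$ to $\chi_\varrho(h)$, and the $\kappa$-sum contributes the factor $[G:H] = |G|/|H|$ that converts the $|G|^{-1}$ normalization to $|H|^{-1}$. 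One small point worth making explicit, since the paper's inner product is Hermitian: you should note that $\overline{\chi_\vartheta(s_\kappa^{-1} g s_\kappa)}$ is what appears after conjugating the induced character, and that in the final line $\chi_\varrho(h) = \chi_{Res_H^G\varrho}(h)$ by the paper's definition of restriction, which you do state. No gaps.
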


\bigskip

This is called the \textit{Frobenius reciprocity law}, and in this thesis is
useful in deriving information about size estimates relating to the
dimensions of the distinct irreducible representations of groups.

\bigskip

\subsection{\textit{Irreducible Character Degrees}}

\bigskip

Henceforth, let us denote by $cd(G)$ the set of degrees of the distinct
irreducible characters of a finite group $G$. \ Formally:

\bigskip

\begin{description}
\item[\textit{(3.5)}] $cd(G):=\left\{ d_{\varrho }=\chi _{\varrho }(1_{G})%
\text{ }|\text{ }\chi _{\varrho }\text{ }\epsilon \text{ }Irr(G)\text{, }%
\varrho \text{ }\epsilon \text{ }Irrep(G)\right\} $.
\end{description}

\bigskip

We call $cd(G)$ the \textit{character degree set} of $G$, where we know $%
\left\vert cd(G)\right\vert =\left\vert Irrep(G)\right\vert =c(G)$ holds. \
The following is a fundamental theorem about irreducible character degrees
[HUP1998].

\bigskip

\begin{theorem}
\qquad For any $d_{\varrho }$ $\epsilon $ $cd(G)$, $(1)$ $d_{\varrho }$
divides $[G:A]$ where $[G:A]$ is the index of any maximal Abelian normal
subgroup $A$ of $G$ (It\^{o}), $(2)$ $d_{\varrho }$ divides $[G:Z(G)]$,
where $Z(G)$ is the centre of $G$, and $(3)$ $d_{\varrho }$ divides $%
\left\vert G\right\vert $.
\end{theorem}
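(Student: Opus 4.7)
The plan is to prove (3) first via the classical algebraic-integer argument, deduce (2) from (3) by a tensor-power trick on $G^n$, and obtain (1) from Clifford theory, reducing the remaining divisibility back to (3). For (3), I would first note that each $\chi_\varrho(g)$ is a sum of eigenvalues of the finite-order operator $\varrho(g)$, hence a sum of roots of unity and an algebraic integer; and that for every conjugacy class $C$ of $G$ the class sum $\hat C=\sum_{g\in C}g\in Z(\mathbb{C}G)$ acts on the irreducible module $R_\varrho$ as the scalar $\omega_\varrho(C)=|C|\chi_\varrho(g_C)/d_\varrho$, which is itself an algebraic integer because the class sums satisfy an integer multiplication table. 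Rewriting Schur orthogonality over classes,
\[
\frac{|G|}{d_\varrho}=\sum_{C}\omega_\varrho(C)\,\overline{\chi_\varrho(g_C)},
\]
exhibits $|G|/d_\varrho$ as a rational algebraic integer, hence a rational integer, giving $d_\varrho\mid|G|$.

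For (2), I would apply the outer tensor power $\varrho^{\boxtimes n}$, an irreducible representation of $G^n$ of degree $d_\varrho^n$. With $\omega_\varrho:Z(G)\to\mathbb{C}^{\times}$ the central character supplied by Schur's lemma, $\varrho^{\boxtimes n}$ factors through the quotient by the normal subgroup
\[
K_n=\{(z_1,\ldots,z_n)\in Z(G)^n : \omega_\varrho(z_1\cdots z_n)=1\}\triangleleft G^n,
\]
and $|K_n|\geq|Z(G)|^{n-1}$ since $K_n$ is the preimage of $\ker\omega_\varrho$ under the surjective multiplication map on $Z(G)^n$. Applying (3) to $G^n/K_n$ then yields $d_\varrho^n\mid[G:Z(G)]^n\cdot|Z(G)|$; a standard $p$-adic valuation argument, dividing by $n$ and letting $n\to\infty$, forces $v_p(d_\varrho)\leq v_p([G:Z(G)])$ for every prime $p$ and hence $d_\varrho\mid[G:Z(G)]$.

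For (1), I would invoke Clifford's theorem: let $\lambda$ be an irreducible---necessarily linear---constituent of $\mathrm{Res}^G_A\varrho$ and let $T=I_G(\lambda)$ be its inertia subgroup, giving an irreducible $\psi$ of $T$ with $\varrho=\mathrm{Ind}_T^G\psi$, $\mathrm{Res}^T_A\psi=e\lambda$ and $d_\varrho=[G:T]\cdot e$. It remains to prove $e\mid[T:A]$; the intended step is to extend $\lambda$ to a linear character $\tilde\lambda$ of $T$ and form $\psi\otimes\tilde\lambda^{-1}$, an irreducible representation of $T$ trivial on $A$ and thus an irreducible representation of $T/A$ of degree $e$, to which (3) applies, yielding $e\mid[T:A]$ and hence $d_\varrho\mid[G:T]\cdot[T:A]=[G:A]$. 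The main obstacle is precisely this extension step: a $T$-invariant linear character of the abelian normal subgroup $A$ does not in general extend to a linear character of $T$, because of a cohomological obstruction in $H^2(T/A,\mathbb{C}^{\times})$. Circumventing this requires either the language of projective representations of $T/A$ with the induced $2$-cocycle (together with the projective analogue of (3)) or passage to a Schur cover of $T/A$ in which the obstruction vanishes, before the divisibility conclusion of (3) is invoked.
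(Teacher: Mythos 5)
The paper does not actually prove this theorem: it cites the result to [HUP1998] and then, in the remark following the statement, notes only the easy downward chain, namely that $(1)\Rightarrow(2)$ because $Z(G)\leq A$ for some maximal Abelian normal subgroup $A$ (so $[G:A]\mid[G:Z(G)]$), and $(2)\Rightarrow(3)$ since $[G:Z(G)]\mid|G|$. Your proposal reverses the logical order, proving $(3)$ from first principles and climbing up to $(1)$, which is a genuinely different and much more substantive route. Your treatment of $(3)$ is the standard algebraic-integer argument and is correct; your deduction of $(2)$ from $(3)$ via the outer tensor power $\varrho^{\boxtimes n}$ of $G^{n}$, the kernel $K_{n}$, and the $p$-adic limit is a valid and well-known trick.

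For $(1)$, you have correctly diagnosed the real difficulty: a $T$-invariant linear character $\lambda$ of $A$ need not extend to $T=I_{G}(\lambda)$, the obstruction living in $H^{2}(T/A,\mathbb{C}^{\times})$. But the detour through projective representations or Schur covers is avoidable here. Since $\psi_{A}=e\lambda$ with $\lambda$ linear, every $a\in A$ satisfies $|\psi(a)|=\psi(1)$, so $A\leq Z(\psi):=\{g\in T:|\psi(g)|=\psi(1)\}$, which is a normal subgroup of $T$ on which $\psi$ acts by a scalar character $\omega_{\psi}$ invariant under $T$-conjugation. Your own tensor-power argument, applied verbatim with $Z(\psi)$ in the role of $Z(G)$ and $\omega_{\psi}$ in the role of the central character, yields the sharper statement $\psi(1)\mid[T:Z(\psi)]$. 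Then $e=\psi(1)\mid[T:Z(\psi)]\mid[T:A]$ and $d_{\varrho}=[G:T]\,e\mid[G:T]\,[T:A]=[G:A]$, closing the gap without extending $\lambda$ at all. With that substitution your proof is complete; it buys a self-contained derivation where the paper merely cites It\^{o}'s theorem, at the cost of considerably more machinery (algebraic integers, tensor powers, Clifford theory) than the paper engages with.
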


\bigskip

$(2)$ is a consequence of $(1)$ since if $\varrho $ is irreducible, then $%
d_{\varrho }$ must divide $[G:Z(G)]$ where $Z(G)\leq A$ for some maximal
Abelian normal subgroup $A$ of $G$. \ $(3)$ is a consequence of $(2)$ since
if $\varrho $ is irreducible then $d_{\varrho }$ $|$ $[G:Z(G)]$ by $(2)$ and
therefore $d_{\varrho }$ $|$ $\left\vert G\right\vert =[G:Z(G)]$ $\left[
Z(G):1_{G}\right] $.

\bigskip

Let us consider the irreducible character degrees of finite Abelian groups.
\ Until section \textit{3.1.5 }$G$ will be a finite Abelian group. \ Then
every conjugacy class $g^{G}$ of every element $g$ $\epsilon $ $G$ contains
only $g$ as its element, and therefore, there are in $G$ exactly as many
distinct conjugacy classes as there are elements, i.e. $\left\vert
cd(G)\right\vert =c(G)=\left\vert G\right\vert $. \ Also, $Z(G)=G$, $%
[G:Z(G)]=1$, and by \textit{Theorem 3.7}, for any $d_{\varrho }$ $\epsilon $ 
$cd(G)$,\textit{\ }$d_{\varrho }$ $|$ $[G:G]=1$, which means that $%
d_{\varrho }=1$, i.e. the dimension of every irreducible representation and
every irreducible $%
\mathbb{C}
G$-module, and the degree of every irreducible character, of a finite
Abelian group $G$ is $1$. \ Thus, the dimension $d_{\pi }$ of an arbitrary
representation $\pi $ of $G$, and of its target $%
\mathbb{C}
G$-module $V_{\pi }$, is given by $d_{\pi }=\underset{\varrho \text{ }%
\epsilon \text{ }Irrep(G)}{\sum }\left\langle \chi _{\pi },\chi _{\varrho
}\right\rangle d_{\varrho }=\underset{\varrho \text{ }\epsilon \text{ }%
Irrep(G)}{\sum }\left\langle \chi _{\pi },\chi _{\varrho }\right\rangle =%
\underset{\varrho \text{ }\epsilon \text{ }Irrep(G)}{\sum }l_{\pi ,\varrho }$%
, i.e. the sum of the multiplicities of the irreducible components of $\pi $%
. \ For each $\varrho $ $\epsilon $ $Irrep(G)$, and a given $g$ $\epsilon $ $%
G$, the matrix $\varrho (g)$ of $\varrho $ at $g$ will be a $1\times 1$
matrix given by $\left( \chi _{\varrho }(g)\right) $, and its character $%
\chi _{\varrho }$ $\epsilon $ $Irr(G)$ will have an inverse $\chi _{\varrho
}^{-1}$, defined by $\chi _{\varrho }^{-1}\left( g\right) =\chi _{\varrho
}\left( g^{-1}\right) =\overline{\chi _{\varrho }\left( g\right) }$, so that 
$\chi _{\varrho }^{-1}=\overline{\chi _{\varrho }}$; the character set $%
\widehat{G}$ will then form, under pointwise multiplication, a
multiplicative Abelian group isomorphic to $G$, called its \textit{dual }or 
\textit{character group}.

\bigskip

\subsection{\textit{Estimates for Sums of Powers of Irreducible Character
Degrees}}

\bigskip

We derive here a number of estimates relating to sums of powers of
irreducible character degrees of a finite group $G$, for which we introduce
a map $D_{r}(G):%
\mathbb{R}
^{+}\longrightarrow 
\mathbb{R}
^{+}$ defined by:

\bigskip

\begin{description}
\item[\textit{(3.6)}] $D_{r}(G)=\underset{d_{\varrho }\text{ }\epsilon \text{
}cd(G)}{\sum }d_{\varrho }^{r}=\underset{\varrho \text{ }\epsilon \text{ }%
Irrep(G)}{\sum }d_{\varrho }^{r}=\underset{\chi _{\varrho }\text{ }\epsilon 
\text{ }Irr(G)}{\sum }\chi _{\varrho }(1_{G})^{r}$,\qquad \qquad \qquad
\qquad $r\geq 1$.
\end{description}

\bigskip

$D_{r}(G)$ records the sum of the $r^{th}$ powers of the distinct
irreducible character degrees of $G$. \ For $r=0$, $D_{0}(G)=c(G)$. \ For $%
t>0$ the $t^{th}$ power of $D_{r}(G)$ is given by $D_{r}(G)^{t}=\left( 
\underset{d_{\varrho }\text{ }\epsilon \text{ }cd(G)}{\sum }d_{\varrho
}^{r}\right) ^{t}$, $r\geq 1$, and we define $D_{r}(G)^{0}:=1$. \ For $r=2$, 
$t=1$, \textit{(3.6) }occurs as the case:

\bigskip

\begin{description}
\item[\textit{(3.7)}] $D_{2}(G)=\underset{d_{\varrho }\text{ }\epsilon \text{
}cd(G)}{\sum }d_{\varrho }^{2}=\left\vert G\right\vert =\chi _{%
\mathbb{C}
G}\left( 1\right) $.
\end{description}

\bigskip

Every nontrivial group $G$ has at least two distinct irreducible characters,
each of degree $\geq 1$, and there is always at least one $d_{\varrho }$ $%
\epsilon $ $cd(G)$ such that $d_{\varrho }=1$, e.g. if $\varrho =\iota _{1}$%
. \ Therefore, $D_{1}(G)^{r}=\left( \underset{d_{\varrho }\text{ }\epsilon 
\text{ }cd(G)}{\sum }d_{\varrho }\right) ^{r}$ contains all terms $%
d_{\varrho }^{r}$ of $D_{r}(G)$\textit{\ }besides other cross-product terms $%
\geq 1$. \ Therefore:

\bigskip

\begin{description}
\item[\textit{(3.8)}] $D_{r}(G)\leq D_{1}(G)^{r}$,\qquad \qquad \qquad
\qquad $r\geq 1$.
\end{description}

\bigskip

Similarly the sum product $D_{r}(G)D_{s}(G)=\underset{d_{\varrho }\text{ }%
\epsilon \text{ }cd(G)}{\sum }d_{\varrho }^{r}\underset{d_{\varsigma }\text{ 
}\epsilon \text{ }cd(G)}{\sum }d_{\varsigma }^{s}$ contains all terms $%
d_{\varrho }^{r}$ and $d_{\varsigma }^{s}$ of $D_{r}(G)$ and $D_{s}(G)$
respectively, besides other cross-product terms $\geq 1$, and therefore

\bigskip

\begin{description}
\item[\textit{(3.9)}] $D_{r+s}(G)\leq D_{r}(G)D_{s}(G)$,\qquad \qquad \qquad
\qquad $r,s\geq 1$.
\end{description}

\bigskip

From \textit{(3.9)} we can write the following.

\bigskip

\begin{description}
\item[\textit{(3.10)}] $D_{r}(G)\leq D_{2}(G)D_{r-2}(G)=\left\vert
G\right\vert D_{r-2}(G)$,\qquad \qquad \qquad \qquad $r\geq 2$.
\end{description}

\bigskip

For a fixed group $G$, $D_{r}(G)$ is a convex function of the inverse $\frac{%
1}{r}$ of the index $r$ $\geq 1$, meaning that:

\bigskip

\begin{description}
\item[\textit{(3.11)}] $D_{s}(G)^{1/s}\leq D_{r}(G)^{1/r}$,\qquad \qquad
\qquad \qquad $1\leq r\leq s$.
\end{description}

\bigskip

There is a useful monotonicity result for sums of powers of irreducible
character degrees of subgroups.

\bigskip

\begin{lemma}
\qquad For any subgroup $H\leq $ $G$, $D_{r}(H)\leq D_{r}(G)$, and $%
D_{r}(H)<D_{r}(G)$ iff $H<G$, for all real $r\geq 1$.
\end{lemma}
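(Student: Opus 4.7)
My plan is to bound $D_r(H)$ by decomposing each irreducible representation $\varrho$ of $G$ upon restriction to $H$ and then summing over $\varrho \in Irrep(G)$. As described in Subsection \textit{3.1.3}, the canonical decomposition of $Res_H^G\varrho$ into irreducible characters of $H$ gives the dimension identity
\begin{equation*}
d_\varrho = \sum_{\vartheta \in Irrep(H)} l_{Res_H^G \varrho, \vartheta}\, d_\vartheta.
\end{equation*}
To pass to $r$-th powers I will invoke two elementary inequalities: $\bigl(\sum_i a_i\bigr)^r \geq \sum_i a_i^r$ for non-negative reals $a_i$ and $r \geq 1$ (superadditivity of $x\mapsto x^r$), and $l^r \geq l$ for non-negative integers $l$ and $r \geq 1$. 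Chained together with the display above, these give the per-$\varrho$ bound $d_\varrho^r \geq \sum_\vartheta l_{Res_H^G \varrho, \vartheta}\, d_\vartheta^r$.

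Next I will sum this bound over $\varrho \in Irrep(G)$ and interchange the order of summation. The resulting inner sum $\sum_\varrho l_{Res_H^G \varrho, \vartheta}$ equals $\sum_\varrho l_{\varrho,\, Ind_H^G \vartheta}$ by Frobenius reciprocity, and so represents the total number of irreducible components of $Ind_H^G \vartheta$ counted with multiplicity. Since $Ind_H^G \vartheta$ is a non-zero representation of dimension $[G:H]\,d_\vartheta \geq 1$, Maschke's theorem guarantees that this count is at least one. Therefore
\begin{equation*}
D_r(G) \;\geq\; \sum_\vartheta d_\vartheta^r \sum_\varrho l_{Res_H^G \varrho, \vartheta} \;\geq\; \sum_\vartheta d_\vartheta^r \;=\; D_r(H),
\end{equation*}
which establishes the non-strict half of the lemma.

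For the strict half, I will focus on the single input $\vartheta = \iota_1$, the trivial irreducible character of $H$. When $[G:H] \geq 2$, the induced representation $Ind_H^G \iota_1$ is the permutation representation of $G$ on the cosets $G/H$; it contains the trivial character of $G$ with multiplicity exactly one (again by Frobenius reciprocity), and since its dimension is $[G:H] \geq 2$, at least one further irreducible component of $G$ must appear. Hence $\sum_\varrho l_{Res_H^G \varrho, \iota_1} \geq 2$, and running the summation of the previous paragraph with this sharper bound at $\vartheta = \iota_1$ yields $D_r(G) \geq D_r(H) + 1 > D_r(H)$. The converse $H = G \Rightarrow D_r(H) = D_r(G)$ is immediate. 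The main (modest) subtlety I expect is precisely here: I must ensure that the extra unit of slack at $\iota_1$ is not absorbed by the two earlier inequality steps, but because those steps were applied term-by-term in $\varrho$, a strict improvement at any single $\vartheta$ propagates cleanly to the overall conclusion.
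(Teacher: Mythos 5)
Your proof is correct, and it improves on the paper's in two respects even though the underlying machinery (dimension identity for $Res_{H}^{G}\varrho$, Frobenius reciprocity, $r$th-power monotonicity) is the same. Where the paper discards multiplicities and works only with the inequality $\sum_{\vartheta:\, l_{Res_{H}^{G}\varrho,\vartheta}>0} d_{\vartheta}\leq d_{\varrho}$, you retain them, bounding $d_{\varrho}^{r}\geq\sum_{\vartheta}l_{Res_{H}^{G}\varrho,\vartheta}\,d_{\vartheta}^{r}$ via superadditivity of $x\mapsto x^{r}$ plus $l^{r}\geq l$; after swapping the order of summation the inner sum $\sum_{\varrho}l_{\varrho,\,Ind_{H}^{G}\vartheta}\geq 1$ covers every $\vartheta$ exactly as needed. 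The payoff shows up in the strict half: the paper simply asserts $D_{1}(H)<D_{1}(G)$ with ``proof left to the reader'' and claims the case $r>1$ then ``can easily be seen,'' which is not a complete argument (strictness of a non-strict inequality chain does not follow from strictness of an unrelated one). You instead localize at $\vartheta=\iota_{1}$, observe that $Ind_{H}^{G}\iota_{1}$ is the permutation representation on $G/H$ of dimension $[G:H]\geq 2$ and contains the trivial $G$-character with multiplicity exactly one, so $\sum_{\varrho}l_{\varrho,\,Ind_{H}^{G}\iota_{1}}\geq 2$, and because every preceding inequality was applied termwise this surplus survives summation to give $D_{r}(G)\geq D_{r}(H)+1$. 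That is a genuine, self-contained argument where the paper has a gap, so your route is the preferable one.
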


\begin{proof}
For any $\varrho $ $\epsilon $ $Irrep(G)$ and $\vartheta $ $\epsilon $ $%
Irrep(H)$, by Frobenius reciprocity (\textit{Theorem 3.6}), $%
l_{Ind_{H}^{G}\vartheta ,\varrho }=l_{Res_{H}^{G}\varrho ,\vartheta }$, and,
further, by \textit{Proposition 3.5},\textit{\ }$d_{\vartheta }\leq
d_{\varrho }$ if $l_{Ind_{H}^{G}\vartheta ,\varrho }=l_{Res_{H}^{G}\varrho
,\vartheta }>0$. \ Fixing a $\varrho $ $\epsilon $ $Irrep(G)$, we have:%
\begin{equation*}
\underset{\vartheta \text{ }\epsilon \text{ }Irrep(H),\text{ }%
l_{Ind_{H}^{G}\vartheta ,\varrho }>0}{\sum }d_{\vartheta }=\underset{%
\vartheta \text{ }\epsilon \text{ }Irrep(H),\text{ }l_{Res_{H}^{G}\varrho
,\vartheta }>0}{\sum }d_{\vartheta }\leq \underset{\vartheta \text{ }%
\epsilon \text{ }Irrep(H)}{\sum }l_{Res_{H}^{G}\varrho ,\vartheta
}d_{\vartheta }=d_{Res_{H}^{G}\varrho }=d_{\varrho }\text{.}
\end{equation*}%
Taking $r^{th}$ powers in the above estimate, for $r\geq 1$, by the
reasoning in \textit{(3.11), }we\textit{\ }obtain:%
\begin{equation*}
\underset{\vartheta \text{ }\epsilon \text{ }Irrep(H),\text{ }%
l_{Res_{H}^{G}\varrho ,\vartheta }>0}{\sum }d_{\vartheta }^{r}\leq \left( 
\underset{\vartheta \text{ }\epsilon \text{ }Irrep(H),\text{ }%
l_{Res_{H}^{G}\varrho ,\vartheta }>0}{\sum }d_{\vartheta }\right) ^{r}\leq
\left( \underset{\vartheta \text{ }\epsilon \text{ }Irrep(H)}{\sum }%
l_{Res_{H}^{G}\varrho ,\vartheta }d_{\vartheta }\right) ^{r}=d_{\varrho }^{r}%
\text{.}
\end{equation*}%
If, in the above estimate, we sum over all $\varrho $ $\epsilon $ $Irrep(G)$
and omit the intermediate sums, we obtain:%
\begin{equation*}
\underset{\varrho \text{ }\epsilon \text{ }Irrep(G)}{\sum }\text{ }\underset{%
\vartheta \text{ }\epsilon \text{ }Irrep(H),\text{ }l_{Res_{H}^{G}\varrho
,\vartheta }>0}{\sum }d_{\vartheta }^{r}\leq \underset{\varrho \text{ }%
\epsilon \text{ }Irrep(G)}{\sum }d_{\varrho }^{r}=D_{r}(G)\text{.}
\end{equation*}%
Since $\underset{\vartheta \text{ }\epsilon \text{ }Irrep(H)}{\sum }%
d_{\vartheta }^{r}\leq $ $\underset{\varrho \text{ }\epsilon \text{ }Irrep(G)%
}{\sum }$ $\underset{\vartheta \text{ }\epsilon \text{ }Irrep(H),\text{ }%
l_{Res_{H}^{G}\varrho ,\vartheta }>0}{\sum }d_{\vartheta }^{r}$, we obtain
the estimate:%
\begin{equation*}
D_{r}(H)=\underset{\vartheta \text{ }\epsilon \text{ }Irrep(H)}{\sum }%
d_{\vartheta }^{r}\leq \underset{\varrho \text{ }\epsilon \text{ }Irrep(G)}{%
\sum }\text{ }\underset{\vartheta \text{ }\epsilon \text{ }Irrep(H),\text{ }%
l_{Res_{H}^{G}\varrho ,\vartheta }>0}{\sum }d_{\vartheta }^{r}\leq \underset{%
\varrho \text{ }\epsilon \text{ }Irrep(G)}{\sum }d_{\varrho }^{r}=D_{r}(G).
\end{equation*}%
This proves the first part of our claim. \ For the second part, we note
that\ $H=G$ implies that $D_{r}(H)=D_{r}(G)$. \ If\ $H<G$, i.e. is a proper
subgroup of $G$, then $H\subset G$ and $\left\vert H\right\vert <\left\vert
G\right\vert $, and $D_{1}\left( H\right) <D_{1}\left( G\right) $ (proof
left to the reader), and using the first part, it can easily be seen that $%
D_{r}(H)<D_{r}(G)$ for all $r>1$.
\end{proof}

\subsection{\textit{Estimates for Maximal Irreducible Character Degrees}}

\bigskip

Here, we derive lower and upper estimates for the maximal irreducible
character degree $d^{^{\prime }}$ of a finite group $G$ with character
degree set $cd(G)$. \ First, we note that \textit{(3.10)} may be sharpened
further using $d^{^{\prime }}$. \ By definition, $d\leq d^{^{\prime }}$ for
all $d$ $\epsilon $ $cd(G)$, so that for $r\geq 2$, $D_{r}(G)=\underset{%
d_{\varrho }\text{ }\epsilon \text{ }cd(G)}{\sum }d_{\varrho }^{r}=\underset{%
d_{\varrho }\text{ }\epsilon \text{ }cd(G)}{\sum }d_{\varrho
}^{r-2}d_{\varrho }^{2}\leq d^{^{\prime }r-2}\cdot \underset{d_{\varrho }%
\text{ }\epsilon \text{ }cd(G)}{\sum }d_{\varrho }^{2}=d^{^{\prime
}r-2}D_{2}(G)=d^{^{\prime }r-2}\left\vert G\right\vert $. \ We record this
for later use:

\bigskip

\begin{description}
\item[\textit{(3.12)}] $D_{r}(G)\leq d^{^{\prime }r-2}D_{2}(G)=d^{^{\prime
}r-2}\left\vert G\right\vert $,\qquad \qquad \qquad \qquad $r\geq 2$.
\end{description}

\bigskip

For convenience, we denote $c(G)$ by $c$. \ For each $d_{\varrho }$ $%
\epsilon $ $cd(G)$ we know that $d_{\varrho }$ $|$ $\left\vert G\right\vert $
and $1\leq d_{\varrho }^{2}\leq \left\vert G\right\vert $, and therefore,
that there is an integer $0\leq e_{\varrho }=\left\vert G\right\vert
-d_{\varrho }^{2}$ such that $d_{\varrho }^{2}+e_{\varrho }=\left\vert
G\right\vert $, i.e. that $d_{\varrho }=\left( \left\vert G\right\vert
-e_{\varrho }\right) ^{1/2}$. \ We denote by $\iota _{1}$ the trivial
irreducible representation of $G$ of dimension $d_{\iota _{1}}=1$, so that $%
e_{\iota _{1}}=\left\vert G\right\vert -d_{\iota _{1}}^{2}=\left\vert
G\right\vert -1$. \ If $e_{\varrho }=\left\vert G\right\vert -d_{\varrho
}^{2}=0$ for any $d_{\varrho }$ $\epsilon $ $cd(G)$ then $\left\vert
G\right\vert =d_{\varrho }^{2}$ and $G$ has only the one irreducible
representation $\varrho =\iota _{1}$, which is true iff $G$ is trivial. \
Thus $G$ is nontrivial iff all the $e_{\varrho }\geq 1$. \ If we let $%
e^{^{\prime }}=\left\vert G\right\vert -d^{^{\prime }2}$ then $1\leq
d^{^{\prime }}=\left( \left\vert G\right\vert -e^{^{\prime }}\right)
^{1/2}\leq \left( \left\vert G\right\vert -1\right) ^{1/2}$. \ The case $%
d^{^{\prime }}=1$ means that $d_{\varrho }=1$ for all $d_{\varrho }$ $%
\epsilon $ $cd(G)$, which is true iff $G$ is Abelian. \ We have $d^{^{\prime
}}\geq 2$ whenever $G$ is non-Abelian. \ The case $e^{^{\prime
}}=1\Longleftrightarrow $ $d^{^{\prime }}=$ $\left( \left\vert G\right\vert
-1\right) ^{1/2}\geq 1$, is exceptional: this forces $\left\vert
G\right\vert =$ $\underset{d_{\varrho }\text{ }\epsilon \text{ }cd(G)\text{ }%
}{\sum }d_{\varrho }^{2}$ to be the sum $\left\vert G\right\vert
=d^{^{^{\prime }}2}+d_{\iota _{1}}^{2}=d^{^{\prime }2}+1$, where $%
d^{^{\prime }}=Dim$ $\varrho ^{^{\prime }}$ for some $\varrho ^{^{\prime }}$ 
$\epsilon $ $Irrep(G)$ of maximal dimension. \ However, since $d^{^{\prime
}} $ $|$ $\left\vert G\right\vert $, it must be that $d^{^{\prime }}$ $|$ $1$%
, which means that $d^{^{^{\prime }}}=1$, $\left\vert G\right\vert =2$, $G$
must be a cyclic group of order $2$. \ Thus, $d^{^{\prime }}=\left(
\left\vert G\right\vert -1\right) ^{1/2}\geq 1$ can occur only for $%
d^{^{\prime }}=1$ with $G$ cyclic of order $2$. \ Thus, the case $2\leq
d^{^{^{\prime }}}<\left( \left\vert G\right\vert -1\right) ^{1/2}$ holds iff 
$G$ is a non-Abelian group, which shows that $e^{^{\prime }}=2\leq
d^{^{^{\prime }}}<\left( \left\vert G\right\vert -1\right) ^{1/2}$ implies
that $\left\vert G\right\vert \geq 6$. \ The case $1=d^{^{\prime }}<\left(
\left\vert G\right\vert -1\right) ^{1/2}$ is true iff $G$ is an Abelian
group of order $>2$.

\bigskip

Now we turn to the lower estimate for $d^{^{\prime }}$, for which $1$ is
always a trivial value. \ A tighter lower bound may easily be obtained as
follows. \ By definition, $d_{\varrho }\leq d^{^{\prime }}$ for all $%
d_{\varrho }$ $\epsilon $ $cd(G)$. \ Then, since $\left\vert G\right\vert
=D_{2}(G)=\underset{d_{\varrho }\text{ }\epsilon \text{ }cd(G)\text{ }}{\sum 
}d_{\varrho }^{2}\leq \underset{d_{\varrho }\text{ }\epsilon \text{ }cd(G)%
\text{ }}{\sum }d^{^{\prime }2}=d^{^{\prime }2}\cdot \underset{d_{\varrho }%
\text{ }\epsilon \text{ }cd(G)\text{ }}{\sum }1=cd^{^{\prime }2}$, we have
that $d^{^{\prime }}\geq \left( \frac{\left\vert G\right\vert }{c}\right)
^{1/2}$, where $1\leq c\leq \left\vert G\right\vert $. \ We consider
necessary and sufficient conditions for $G$ such that $d^{^{\prime }}$
reaches the lower bound $\left( \frac{\left\vert G\right\vert }{c}\right)
^{1/2}$ exactly. \ The case $d^{^{\prime }}=\left( \frac{\left\vert
G\right\vert }{c}\right) ^{1/2}$ is equivalent to the case $\left\vert
G\right\vert =\underset{d_{\varrho }\text{ }\epsilon \text{ }cd(G)\text{ }}{%
\sum }d_{\varrho }^{2}=cd^{^{\prime }2}$, and since $1\leq c\leq \left\vert
G\right\vert $, this can only occur iff all $d_{\varrho }=d^{^{\prime }}=1$,
which is true iff $G$ is Abelian. \ Thus, when $G$ is Abelian, $\left( \frac{%
\left\vert G\right\vert }{c}\right) ^{1/2}=1=d^{^{\prime }}\leq \left(
\left\vert G\right\vert -1\right) ^{1/2}$, with an additional equality on
the right iff $G$ is cyclic of order $2$. \ This shows that $\left( \frac{%
\left\vert G\right\vert }{c}\right) ^{1/2}=1=d^{^{\prime }}<\left(
\left\vert G\right\vert -1\right) ^{1/2}$\ iff $G$ is Abelian of order $>2$.
\ The case $d^{^{\prime }}=\left( \frac{\left\vert G\right\vert }{c}\right)
^{1/2}\geq 2$, with $G$ Abelian, is impossible, since $G$ always has the
trivial irreducible character $\chi _{\iota _{1}}$ of degree $d_{\iota
_{1}}= $ $1$, so that $\left\vert G\right\vert =d_{\iota _{1}}^{2}+\underset{%
d_{\varrho }\text{ }\epsilon \text{ }cd(G)\backslash d_{\iota _{1}}}{\sum }%
d_{\varrho }^{2}\neq cd^{^{\prime }2}$. \ Thus, $\left( \frac{\left\vert
G\right\vert }{c}\right) ^{1/2}<d^{^{\prime }}$ iff $G$ is non-Abelian,
which is true iff $d^{^{\prime }}\geq 2$, in which case $1<c<\left\vert
G\right\vert $ and $1<\left( \frac{\left\vert G\right\vert }{c}\right)
^{1/2} $ also. \ Thus, $1<\left( \frac{\left\vert G\right\vert }{c}\right)
^{1/2}<d^{^{\prime }}<\left( \left\vert G\right\vert -1\right) ^{1/2}$ holds
iff $G$ is non-Abelian, and implies that $\left\vert G\right\vert \geq 6$. \
Thus, we have proved the following result.

\bigskip

\begin{theorem}
For a nontrivial finite group $G$ with a maximal irreducible character
degree $d^{^{\prime }}(G)$ and class number $c(G)$ it holds that

\begin{equation*}
\begin{tabular}{|l|l|}
\hline
$(1)$ $\ \ 1\leq \left( \frac{\left\vert G\right\vert }{c(G)}\right)
^{1/2}\leq d^{^{\prime }}(G)\leq \left( \left\vert G\right\vert -1\right)
^{1/2}$ & general case \\ \hline
$(2)$ $\ \ 1=\left( \frac{\left\vert G\right\vert }{c(G)}\right)
^{1/2}=d^{^{\prime }}(G)\leq \left( \left\vert G\right\vert -1\right) ^{1/2}$
& iff $G$ is Abelian \\ \hline
$(3)$ $\ \ 1=\left( \frac{\left\vert G\right\vert }{c(G)}\right)
^{1/2}=d^{^{\prime }}(G)=\left( \left\vert G\right\vert -1\right) ^{1/2}$ & 
iff $G$ is cyclic of order $\left\vert G\right\vert =2$ \\ \hline
$(4)$ $\ \ 1=\left( \frac{\left\vert G\right\vert }{c(G)}\right)
^{1/2}=d^{^{\prime }}(G)<\left( \left\vert G\right\vert -1\right) ^{1/2}$ & 
iff $G$ is Abelian of order $\left\vert G\right\vert >2$ \\ \hline
$(5)$ $\ \ 1<\left( \frac{\left\vert G\right\vert }{c(G)}\right)
^{1/2}<d^{^{\prime }}(G)<\left( \left\vert G\right\vert -1\right) ^{1/2}$ & 
iff $G$ is non-Abelian ($\Longrightarrow \left\vert G\right\vert \geq 6$) \\ 
\hline
\end{tabular}%
\end{equation*}%
(Note: $(2)-(4)$ are trivial.)
\end{theorem}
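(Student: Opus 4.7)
The plan is to establish the chain of inequalities in (1) first, then concentrate the main work on the strict inequalities in (5), leaving (2), (3), (4) as routine consequences of the characterization of Abelian groups via irreducible character degrees (which the author already signals are trivial).

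For (1), the bound $1 \leq (|G|/c(G))^{1/2}$ is immediate from $c(G) \leq |G|$, since conjugacy classes partition $G$. For the lower bound $(|G|/c(G))^{1/2} \leq d'(G)$, I would combine $d_\varrho \leq d'(G)$ for every $\varrho \in Irrep(G)$ with the identity $|G| = \sum_{\varrho \in Irrep(G)} d_\varrho^2$ from (3.4) and $|Irrep(G)| = c(G)$ from Theorem 3.2(1) to get $|G| \leq c(G)\, d'(G)^2$. For the upper bound $d'(G) \leq (|G|-1)^{1/2}$, I would split into two cases. If every irreducible character of $G$ has degree $1$, then $d'(G)=1$ and nontriviality gives $|G|\geq 2$, so $(|G|-1)^{1/2}\geq 1 = d'(G)$. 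Otherwise, there is some $\varrho' \in Irrep(G)$, distinct from $\iota_1$, of degree $d'(G)\geq 2$, and (3.4) gives $|G| \geq d'(G)^2 + d_{\iota_1}^2 = d'(G)^2 + 1$.

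For (5), assume $G$ is non-Abelian, so that some $\varrho' \in Irrep(G)$ has degree $d'(G)\geq 2$. The strict lower bound follows because $\iota_1 \neq \varrho'$ forces $d_{\iota_1}^2 = 1 < d'(G)^2$, producing a strict inequality in the summation used above, hence $|G| < c(G)\, d'(G)^2$. For the strict upper bound, suppose for contradiction that $d'(G) = (|G|-1)^{1/2}$; then $|G| = d'(G)^2 + 1$, which forces $Irrep(G) = \{\iota_1,\varrho'\}$ so that the sum of squares is exhausted. By Theorem 3.7(3), $d'(G) \mid |G| = d'(G)^2 + 1$, hence $d'(G) \mid 1$, contradicting $d'(G)\geq 2$. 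Finally, $|G|\geq 6$ holds because the smallest non-Abelian group, $Sym_3$, has order $6$.

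Statements (2), (3), (4) then follow quickly. An Abelian $G$ has $c(G)=|G|$ and every $d_\varrho = 1$ (Theorem 3.7(1) applied with $A = G$), giving $(|G|/c(G))^{1/2} = d'(G) = 1$; equality $(|G|-1)^{1/2} = 1$ then occurs iff $|G|=2$ iff $G$ is cyclic of order $2$; strict inequality $(|G|-1)^{1/2} > 1$ corresponds to Abelian $G$ of order $>2$. I expect the main obstacle to be the strict upper bound in (5), since ruling out the equality case requires invoking the divisibility $d'(G) \mid |G|$ from It\^{o}'s theorem rather than just the sum-of-squares relation.
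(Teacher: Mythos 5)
Your proof is correct and follows essentially the same route as the paper: both derive $d'(G) \geq (|G|/c(G))^{1/2}$ from $|G| = \sum_\varrho d_\varrho^2$ together with $|Irrep(G)| = c(G)$, both derive $d'(G) \leq (|G|-1)^{1/2}$ by noting that the trivial character contributes $1$ to the sum of squares, and both use the divisibility $d'(G) \mid |G|$ (Theorem 3.7(3)) to rule out equality $d'(G) = (|G|-1)^{1/2}$ when $d'(G) \geq 2$. Your phrasing is a bit more streamlined than the paper's (which introduces the residuals $e_\varrho = |G| - d_\varrho^2$ and argues through them), but the underlying facts invoked are identical. One small omission: in part (5) you establish the two middle strict inequalities but never explicitly address the leftmost one, $1 < (|G|/c(G))^{1/2}$; this is equivalent to $c(G) < |G|$, which holds for a non-Abelian $G$ because some conjugacy class has more than one element, and the paper does record it. Also, where the paper deduces $|G| \geq 6$ from $2 \leq d'(G) < (|G|-1)^{1/2}$, you cite the classification fact that $Sym_3$ is the smallest non-Abelian group — a valid external justification, but not quite the paper's internal derivation.
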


\bigskip

From the upper bound for $d^{^{\prime }}(G)$ and the estimate \textit{(3.12)}%
, we obtain the estimate:

\bigskip

\begin{description}
\item[\textit{(3.13)}] $D_{r}(G)\leq \left\vert G\right\vert \left(
\left\vert G\right\vert -1\right) ^{\frac{r-2}{2}}$,\qquad \qquad \qquad
\qquad for all $r\geq 2$.
\end{description}

\bigskip

\section{Regular Group Algebras}

\bigskip

\subsection{\textit{Canonical Decomposition}}

\bigskip

Our main reference here is [BCS1997]. \ The $\varrho $ $\epsilon $ $Irrep(G)$
linearly extend to injective ($%
\mathbb{C}
$-algebra) homomorphisms $\tciFourier _{\varrho }:%
\mathbb{C}
G\longrightarrow 
\mathbb{C}
^{d_{\varrho }\times d_{\varrho }}$ of dimensions $Dim$ $\tciFourier
_{\varrho }=$ $d_{\varrho }$ defined by:

\bigskip

\begin{description}
\item[\textit{(3.14)}] $f\equiv \underset{g\text{ }\epsilon \text{ }G}{\sum }%
f(g)g\longmapsto \tciFourier _{\varrho }(f)=\underset{g\text{ }\epsilon 
\text{ }G}{\sum }f(g)\varrho (g)\equiv \widehat{f}(\varrho )$, \qquad \qquad 
$f$ $\epsilon $ $%
\mathbb{C}
G$, $\varrho $ $\epsilon $ $Irrep(G).$
\end{description}

\bigskip

By the irreducibility of the $\varrho $ $\epsilon $ $Irrep(G)$ the $%
\tciFourier _{\varrho }$ define distinct irreducible matrix representations
of $%
\mathbb{C}
G$, and form a complete set $Irrep(%
\mathbb{C}
G)$ of such representations. \ The direct sum homomorphism $\tciFourier =%
\underset{\varrho \text{ }\epsilon \text{ }Irrep(G)}{\oplus }\tciFourier
_{\varrho }:%
\mathbb{C}
G\longrightarrow \underset{\varrho \text{ }\epsilon \text{ }Irrep(G)}{\oplus 
}%
\mathbb{C}
^{d_{\varrho }\times d_{\varrho }}$ of dimension $\left\vert G\right\vert =%
\underset{\varrho \text{ }\epsilon \text{ }Irrep(G)}{\sum }d_{\varrho }^{2}$
defined by:

\bigskip

\begin{description}
\item[\textit{(3.15)}] $f\equiv \underset{g\text{ }\epsilon \text{ }G}{\sum }%
f(g)g\longmapsto \widehat{f}\equiv \tciFourier (f)=\underset{\varrho \text{ }%
\epsilon \text{ }Irrep(G)}{\oplus }\underset{g\text{ }\epsilon \text{ }G}{%
\sum }f(g)\varrho (g)=\underset{\varrho \text{ }\epsilon \text{ }Irrep(G)}{%
\oplus }\widehat{f}(\varrho )$, \qquad $f$ $\epsilon $ $%
\mathbb{C}
G$,
\end{description}

\bigskip

is injective and surjective, and, therefore, the isomorphism:

\bigskip

\begin{description}
\item[\textit{(3.16)}] $\tciFourier =\underset{\varrho \text{ }\epsilon 
\text{ }Irrep(G)}{\oplus }\tciFourier _{\varrho }:%
\mathbb{C}
G\cong _{%
\mathbb{C}
}\underset{\varrho \text{ }\epsilon \text{ }Irrep(G)}{\oplus }%
\mathbb{C}
^{d_{\varrho }\times d_{\varrho }}.$
\end{description}

\bigskip

This results in Wedderburn's theorem about the canonical decomposition of $%
\mathbb{C}
G$ as an isomorphic direct sum of $c(G)$ complex matrix algebras of orders
the $c(G)$ distinct irreducible character degrees $d_{\varrho }$ of $G$. \
Taking dimensions on both sides of \textit{(3.16)} this gives us another
version of \textit{(3.4}). \ The matrix algebra on the right of \textit{%
(3.16)}, whose elements are block-diagonal matrices, is called the \textit{%
target algebra} of $%
\mathbb{C}
G$. \ Iff $G$ is Abelian all its irreducible character degrees will be of
size $1$, the matrices of the target algebra of its regular algebra $%
\mathbb{C}
G$ will be diagonal of order $\left\vert G\right\vert $, and multiplication
in $%
\mathbb{C}
G$ will be pointwise and equal in complexity to that of diagonal matrix
multiplication of order $\left\vert G\right\vert $.

\bigskip

\subsection{\textit{Multiplicative Complexity and Rank}}

\bigskip

The (bilinear) multiplication maps of isomorphic $%
\mathbb{C}
$-algebras are isomorphic, in the sense of \textit{Corollary 2.3}, which is
also true for algebras, and by \textit{(3.16)}, we have:

\bigskip

\begin{description}
\item[\textit{(3.17)}] $\mathfrak{m}_{_{%
\mathbb{C}
G}}\cong _{%
\mathbb{C}
}\underset{\varrho \text{ }\epsilon \text{ }Irrep(G)}{\oplus }\left\langle
d_{\varrho },d_{\varrho },d_{\varrho }\right\rangle ,$
\end{description}

\bigskip

where these are the multiplication maps of $%
\mathbb{C}
G$ and its target algebra, respectively\textit{. \ }The rank of $%
\mathbb{C}
G$ is defined to be rank $\mathfrak{R}(\mathfrak{m}_{_{%
\mathbb{C}
G}})$ of its bilinear multiplication map $\mathfrak{m}_{_{%
\mathbb{C}
G}}:%
\mathbb{C}
G\times 
\mathbb{C}
G\longrightarrow 
\mathbb{C}
G$, which is defined to be the length of the minimal bilinear computation
needed to express the product of any two elements of $%
\mathbb{C}
G$ (see section \textit{2.1.2} in \textit{Chapter 2}), and, therefore,
cannot be less than $Dim$ $%
\mathbb{C}
G=\left\vert G\right\vert $. \ But, $\left\vert G\right\vert $ is also
simultaneously the rank $\mathfrak{R}_{^{\left\vert G\right\vert }}$ and the
dimension of the vector space $%
\mathbb{C}
^{\left\vert G\right\vert }$ which forms an algebra under pointwise vector
multiplication. \ Thus, we have the relation:

\begin{description}
\item[\textit{(3.18)}] $\mathfrak{R}_{^{\left\vert G\right\vert
}}=\left\vert G\right\vert \leq \mathfrak{R}(\mathfrak{m}_{_{%
\mathbb{C}
G}})=\mathfrak{R}\left( \underset{\varrho \text{ }\epsilon \text{ }Irrep(G)}{%
\oplus }\left\langle d_{\varrho },d_{\varrho },d_{\varrho }\right\rangle
\right) $ $\leq $ $\underset{\varrho \text{ }\epsilon \text{ }Irrep(G)}{\sum 
}\mathfrak{R}\left( \left\langle d_{\varrho },d_{\varrho },d_{\varrho
}\right\rangle \right) .$
\end{description}

\bigskip

Equality above holds as $\left\vert G\right\vert =\mathfrak{R}(\mathfrak{m}%
_{_{%
\mathbb{C}
G}})=\underset{\varrho \text{ }\epsilon \text{ }Irrep(G)}{\sum }\mathfrak{R}%
\left( \left\langle 1,1,1\right\rangle \right) =\underset{\varrho \text{ }%
\epsilon \text{ }Irrep(G)}{\sum }\mathfrak{R}\left( \left\langle d_{\varrho
},d_{\varrho },d_{\varrho }\right\rangle \right) $ iff $G$ is Abelian.

\bigskip

\section{\protect\Large Generalized Group Discrete Fourier Transforms}

\bigskip

\subsection{\textit{Generalized Group Discrete Fourier Transforms}}

\bigskip

Any $%
\mathbb{C}
$-algebra isomorphism $\tciFourier $\ of the form \textit{(3.16)} is called
a \textit{generalized discrete Fourier transform} ($DFT$) on $%
\mathbb{C}
G$, equivalently, a \textit{generalized group discrete Fourier transform} on 
$G$, and defines a $\left\vert G\right\vert $-dimensional matrix
representation of $%
\mathbb{C}
G$ on its target algebra also of dimension $\left\vert G\right\vert $,
where\ $%
\mathbb{C}
G$ is called the \textit{time domain} of $\tciFourier $ and the target
algebra $\underset{\varrho \text{ }\epsilon \text{ }Irrep(G)}{\oplus }%
\mathbb{C}
^{d_{\varrho }\times d_{\varrho }}$ is called its \textit{frequency space}
or \textit{Fourier domain }[BCS1997, p. 327]. \ $\tciFourier $\ is of
dimension $Dim$ $\tciFourier =$ $\left\vert G\right\vert $, and has an
invertible $\left\vert G\right\vert \times \left\vert G\right\vert $
block-diagonal matrix of full rank $\left\vert G\right\vert $ denoted by $%
[\tciFourier ]$, called a $DFT$ matrix for $%
\mathbb{C}
G$, w.r.t. any fixed bases of $%
\mathbb{C}
G$ - e.g. its regular basis $G$ - and of its target algebra. \ Every choice
of such distinct basis pairs yields a distinct $DFT$ and a $DFT$ matrix for $%
\mathbb{C}
G$. \ Any $DFT$ $\tciFourier $ on $%
\mathbb{C}
G$ has a unique direct sum decomposition $\underset{\varrho \text{ }\epsilon 
\text{ }Irrep(G)}{\oplus }\tciFourier _{\varrho }$, where the $\tciFourier
_{\varrho }$ are the irreducible components of $\tciFourier $ in \textit{%
(3.16)}, with each $\tciFourier _{\varrho }$ faithfully mapping $%
\mathbb{C}
G$ onto $%
\mathbb{C}
^{d_{\varrho }\times d_{\varrho }}$. \ Given a $DFT$ $\tciFourier $ on $%
\mathbb{C}
G$, for an $f$ $\epsilon $ $%
\mathbb{C}
G$, $\tciFourier (f)\equiv \widehat{f}$ is called the Fourier transform of $%
f $, for a fixed $\varrho $ $\epsilon $ $Irrep(G)$, $\widehat{f}(\varrho )$
defined by the formula \textit{(3.14)} is called the Fourier transform of $f$
at $\varrho $, and the elements of $\left\{ \widehat{f}(\varrho )\right\}
_{\varrho \epsilon Irrep(G)}$ are called the Fourier coefficients of $f$
which define $\widehat{f}$. \ There is an inversion formula for recovering $%
f $ from its transform [SER1977].

\bigskip

\begin{description}
\item[\textit{(3.19)}] $f(g)=\left\vert G\right\vert ^{-1}\underset{\varrho
\epsilon Irrep(G)}{\sum }d_{\varrho }\cdot Tr\left[ \widehat{f}(\varrho
)\varrho (g^{-1})\right] $,\qquad \qquad \qquad \qquad $g$ $\epsilon $ $G$.
\end{description}

\bigskip

This formula defines the inverse $DFT$ of an $f$ $\epsilon $ $%
\mathbb{C}
G$. \ If $G$ is Abelian all the $d_{\varrho }=1$, so that the $1$%
-dimensional $\varrho $ $\epsilon $ $Irrep(G)$ may be replaced, in \textit{%
(3.14)} and \textit{(3.21)}, by their irreducible characters $\chi _{\varrho
}$, all of degree $1$. \ This yields the familiar Fourier transform and
inverse transform formulas for finite Abelian groups [SER1977].

\bigskip

\begin{description}
\item[\textit{(3.20)}] $\widehat{f}(\chi _{\varrho })=$ $\underset{g\text{ }%
\epsilon \text{ }G}{\sum }f(g)\chi _{\varrho }(g)$,\qquad \qquad \qquad $%
\qquad \qquad \qquad \varrho $ $\epsilon $ $Irrep(G)$,
\end{description}

\bigskip

\begin{description}
\item[\textit{(3.21)}] $f(g)=\left\vert G\right\vert ^{-1}\underset{\varrho
\epsilon Irrep(G)}{\sum }\widehat{f}(\chi _{\varrho })\overline{\chi
_{\varrho }(g)}$,\qquad \qquad \qquad \qquad $g$ $\epsilon $ $G$.
\end{description}

\bigskip

A $DFT$ $\tciFourier $ on $%
\mathbb{C}
G$ has an invertible matrix $\left[ \tciFourier \right] $ of order $%
\left\vert G\right\vert $, whose columns correspond to the group elements $g$
$\epsilon $ $G$, and whose rows correspond to the distinct irreducible
representations $\varrho $ $\epsilon $ $Irrep(G)$. \ For any $g$ $\epsilon $ 
$G$ and any $\varrho $ $\epsilon $ $Irrep(G)$, $\varrho (g)$ is an
invertible matrix of order $d_{\varrho }$ indexed by $1\leq k_{\varrho
},l_{\varrho }\leq d_{\varrho }$, whose $d_{\varrho }^{2}$ number of entries 
$\varrho (g)_{k_{\varrho },l_{\varrho }}$ occur in $\left[ \tciFourier %
\right] $ by the following formula [SER1977].

\begin{description}
\item[\textit{(3.22)}] $\left[ \tciFourier \right] _{\varrho ,k_{\varrho
},l_{\varrho };g}=\varrho (g)_{k_{\varrho },l_{\varrho }}$.
\end{description}

\bigskip

Every choice of basis pairs in $%
\mathbb{C}
G$ and its target algebra yields a distinct $DFT$ $\tciFourier $\ and a $DFT$
matrix $\left[ \tciFourier \right] $ for $%
\mathbb{C}
G$, and the class $\{\left[ \tciFourier \right] \}$ of all $DFT$ matrices $%
[\tciFourier ]$ for $%
\mathbb{C}
G$ is determined uniquely by the $\varrho $ $\epsilon $ $Irrep(G)$.

\bigskip

\subsection{\textit{Discrete Fourier Transform on }$Sym_{3}$}

\bigskip

Here we present the example of [BCS1997, pp. 329-330]. \ Consider $Sym_{3}$,
the symmetric group of all permutations of $3$ symbols. \ The $6$
permutation elements of $Sym_{3}$ are the identity permutation $(1)$, the
three transpositions $(12)$, $(13)$, and $(23)$, and the two cycles $(123)$
and $(132)$, and these three subsets of elements form the three distinct
conjugacy classes $C_{e}$, $C_{t}$, and $C_{c}$, respectively, of $D_{3}$,
where $e$ denotes the no-change permutation, $t$ denotes a transposition,
and $c$ a $3$-cycle. \ $Sym_{3}$ has the trivial $1$-dimensional irreducible
representation $\iota $ corresponding to $e$, which is defined by $%
g\longmapsto (1)$, the nontrivial $1$-dimensional irreducible representation 
$\sigma $ corresponding to $t$, which is defined by $g\longmapsto sgn(g)$,
and the $2$-dimensional irreducible representation $\Delta $ which is
defined explicitly by the mappings: $(1)\longmapsto \left[ 
\begin{array}{cc}
1 & 0 \\ 
0 & 1%
\end{array}%
\right] $, $(12)\longmapsto \left[ 
\begin{array}{cc}
0 & 1 \\ 
1 & 0%
\end{array}%
\right] $, $(13)\longmapsto \left[ 
\begin{array}{cc}
-1 & 0 \\ 
-1 & 1%
\end{array}%
\right] $, $(23)\longmapsto \left[ 
\begin{array}{cc}
1 & -1 \\ 
0 & -1%
\end{array}%
\right] $, $(123)\longmapsto \left[ 
\begin{array}{cc}
0 & -1 \\ 
1 & -1%
\end{array}%
\right] $, $(132)\longmapsto \left[ 
\begin{array}{cc}
-1 & 1 \\ 
-1 & 0%
\end{array}%
\right] $. \ The distinct irreducible character degrees of $Sym_{3}$ are $%
d_{\iota }=1$, $d_{\sigma }=1$, and $d_{\Delta }=2$, such that $d_{\iota
}^{2}+d_{\sigma }^{2}+d_{\Delta }^{2}=6$. \ By \textit{(3.16)} the regular
group algebra $%
\mathbb{C}
Sym_{3}$ of $Sym_{3}$ then has the canonical decomposition $%
\mathbb{C}
Sym_{3}\cong _{%
\mathbb{C}
}%
\mathbb{C}
^{d_{\iota }\times d_{\iota }}\oplus 
\mathbb{C}
^{d_{\sigma }\times d_{\sigma }}\oplus 
\mathbb{C}
^{d_{\Delta }\times d_{\Delta }}=%
\mathbb{C}
^{1\times 1}\oplus 
\mathbb{C}
^{1\times 1}\oplus 
\mathbb{C}
^{2\times 2}$. \ The $DFT$ matrix $\left[ \tciFourier \right] $ for $%
\mathbb{C}
Sym_{3}$, w.r.t. canonical bases in the components of its target algebra, by
formula \textit{(3.24)}, will be the $6\times 6$ matrix:

\bigskip

$\qquad \ \ \ \ 
\begin{array}{cccccc}
(1) & \text{ \ \ }(12)\text{ \ } & \text{ }(13)\text{ \ } & \text{ \ }(23)%
\text{ \ } & \text{ }(123)\text{ \ } & \text{ \ }(132)%
\end{array}%
$

$%
\begin{array}{c}
\iota _{1,1} \\ 
\sigma _{1,1} \\ 
\Delta _{1,1} \\ 
\Delta _{1,2} \\ 
\Delta _{2,1} \\ 
\Delta _{2,2}%
\end{array}%
\left[ 
\begin{array}{cccccc}
1\text{ \ \ } & \text{ \ }1\text{ \ \ \ \ } & \text{ \ }1\text{ \ \ \ \ } & 
\text{ \ }1\text{ \ \ \ \ } & \text{ \ }1\text{ \ \ \ \ \ \ } & \text{ \ }1%
\text{ \ \ } \\ 
1\text{ \ \ } & \text{ \ }1\text{ \ \ \ \ } & \text{ \ }1\text{ \ \ \ \ } & 
-1\text{ \ \ \ \ } & -1\text{ \ \ \ \ \ \ } & -1\text{ \ \ } \\ 
1\text{ \ \ } & \text{ \ }0\text{ \ \ \ \ } & -1\text{ \ \ \ \ } & \text{ \ }%
0\text{ \ \ \ \ } & \text{ \ }1\text{ \ \ \ \ \ \ } & -1\text{ \ \ } \\ 
0\text{ \ \ } & -1\text{ \ \ \ \ } & \text{ \ }1\text{ \ \ \ \ } & \text{ \ }%
1\text{ \ \ \ \ } & -1\text{ \ \ \ \ \ \ } & \text{ \ }0\text{ \ \ } \\ 
1\text{ \ \ } & \text{ \ }1\text{ \ \ \ \ } & -1\text{ \ \ \ \ } & \text{ \ }%
1\text{ \ \ \ \ } & \text{ \ }0\text{ \ \ \ \ \ \ } & -1\text{ \ \ } \\ 
1\text{ \ \ } & -1\text{ \ \ \ \ } & \text{ \ }0\text{ \ \ \ \ } & \text{ \ }%
0\text{ \ \ \ \ } & -1\text{ \ \ \ \ \ \ } & \text{ \ }1\text{ \ \ }%
\end{array}%
\right] $.

\bigskip

We see that formula \textit{(3.24)} prescribes that the entries of the
column corresponding to a $g$ $\epsilon $ $Sym_{3}$ are the coefficients of
the linear forms obtained from the entries of the matrices $\iota (g)$, $%
\sigma (g)$, and $\Delta (g)$.

\bigskip

\subsection{\textit{Complexity and Fast Fourier Transform (FFT) Algorithms}}

\bigskip

For a given finite group $G$ with regular group algebra $%
\mathbb{C}
G$, we can think about \textit{a} discrete Fourier transform $\tciFourier $
on $G$ as a linear transformation defined by $DFT\left( f\right) =\widehat{f}
$ for any given $f$ $\epsilon $ $%
\mathbb{C}
G$, where $\widehat{f}$ is a matrix valued function on $Irrep(G)$, e.g. $%
\widehat{f}\left( \varrho \right) $ $\epsilon $ $%
\mathbb{C}
^{d_{\varrho }\times d_{\varrho }}$ for any $\varrho $ $\epsilon $ $Irrep(G)$%
. \ We see that a $DFT$ on $G$ is described by the product of a $\left\vert
G\right\vert \times \left\vert G\right\vert $ matrix by a vector of length $%
\left\vert G\right\vert $. \ If we write $M_{\tciFourier }\left( G\right) $
as the total number of arithmetical operations $\left\{ \times ,+,-\right\} $
required to implement a given group $DFT$ $\tciFourier $ on $G$, then we see
that

\bigskip

\begin{description}
\item[\textit{(3.23)}] $\left\vert G\right\vert \leq M_{\tciFourier }\left(
G\right) \leq 2\left\vert G\right\vert ^{2}.$
\end{description}

\bigskip

An \textit{efficient }group $DFT$ algorithm, also called a \textit{fast
Fourier transform }(FFT), is one which minimizes $M_{\tciFourier }\left(
G\right) $ for \textit{any} given Fourier transform $\tciFourier $, i.e. by
reducing the upper limit $2\left\vert G\right\vert ^{2}$. \ The famous
Cooley-Tukey algorithm is an FFT to compute the $DFT$ on the cyclic group $%
\mathbb{Z}
/n%
\mathbb{Z}
$ in $M_{\tciFourier }\left( 
\mathbb{Z}
/n%
\mathbb{Z}
\right) =$ $O\left( n\log n\right) $ operations [MR2003, p. 283]. \ It is
out of the scope of this thesis to discuss the FFTs on groups further, but
we will conclude by noting that according to Maslen and Rockmore, who
present an up-to-date survey of the field, $M_{\tciFourier }\left( G\right)
\leq O\left( \left\vert G\right\vert \log \left\vert G\right\vert \right) $
holds for all Abelian groups $G$, and for arbitrary groups $G$ they
conjecture that there are constants $C_{1}$ and $C_{2}$ such that $%
M_{\tciFourier }\left( G\right) \leq C_{1}\left\vert G\right\vert \log
^{C_{2}}\left\vert G\right\vert $, [MR2003, p. 286]. \ This was perhaps
suggested by the discovery for symmetric groups $Sym_{n}$ that $%
M_{\tciFourier }\left( Sym_{n}\right) =O\left( n!\log ^{2}n!\right) $
[MR2003, p. 286].

\pagebreak \bigskip

\chapter{\protect\huge Groups and Matrix Multiplication I}

\bigskip

Here we introduce the basic group-theoretic approach of embedding matrices
into group algebras via triples of subsets of the groups having the
so-called triple product property, and studying the complexity of matrix
multiplication in terms of certain numerical parameters relating to the size
of groups, the degrees of their irreducible characters, and the sizes of the
matrix multiplications realized by them. \ The methods discussed here
pertain to the group algebra embedding of a \textit{single} arbitrary pair
of matrices, and the recovery of their product via multiplication in the
group algebra, while in the sequel to this chapter, \textit{Chapter 5}, we
describe methods for the \textit{simultaneous} embedding into a group
algebra of several pairs of matrices, and recovering their independent
products simultaneously via a single multiplication in the algebra. \ In the
concluding part \textit{4.3} of the chapter, we describe a number of ways of
proving estimates for $\omega $ using the methods outlined earlier.

\section{Realizing Matrix Multiplication via Groups}

\bigskip

\subsection{\textit{Groups and Index Triples}}

\bigskip

Henceforth, $G$ shall always denote a \textit{nontrivial},\textit{\ finite}
group. \ For any nonempty subset $S\subseteq G$ its right-quotient set,
denoted by $Q(S)$, is defined as:

\bigskip

\begin{description}
\item[\textit{(4.1)}] $Q(S):=\left\{ s^{^{\prime }}s^{-1}\text{ }|\text{ }%
s,s^{^{\prime }}\text{ }\epsilon \text{ }S\right\} $.
\end{description}

\bigskip

By the definition above, $1_{G}$ $\epsilon $ $Q(S)$ necessarily for all
right-quotient sets of subsets $S$ of $G$. \ The relation

\bigskip

\begin{description}
\item[\textit{(4.2)}] $\left\vert Q(S)\right\vert \geq \left\vert
S\right\vert $
\end{description}

\bigskip

is a simple consequence of \textit{(4.1)} since a fixed $s$ $\epsilon $ $S$
the elements $s^{^{\prime }}s^{-1}$, for arbitrary $s^{^{\prime }}$ $%
\epsilon $ $S$, are distinct elements of $S$. \ Equality in \textit{(4.2)}
always holds for any subgroup $S\leq G$.

\bigskip

A triple $\left( S,T,U\right) $ of nonempty subsets $S$, $T$, $U\subseteq G$
is said to have the \textit{triple product property }(TPP) if the following
condition holds.

\bigskip

\begin{description}
\item[\textit{(4.3)}] $s^{^{\prime }}s^{-1}t^{^{\prime }}t^{-1}u^{^{\prime
}}u^{-1}=1_{G}$ $\Longleftrightarrow $ $s^{^{\prime }}s^{-1}=t^{^{\prime
}}t^{-1}$ $=u^{^{\prime }}u^{-1}=1_{G}$, \qquad $s^{^{\prime }}s^{-1}$ $%
\epsilon $ $Q(S)$, $t^{^{\prime }}t^{-1}$ $\epsilon $ $Q(T)$, $u^{^{\prime
}}u^{-1}$ $\epsilon $ $Q(U)$.
\end{description}

\bigskip

A subset triple $(S,T,U)$ of $G$ which has the triple product property is
called an \textit{index triple }of $G$, for which we can derive an
elementary property.

\bigskip

\begin{corollary}
If $(S,T,U)$ is an index triple of $G$ then the mapping $(x,y)\longmapsto
x^{-1}y$ on any distinct pair $X,$ $Y$ $\epsilon $ $\left\{ S,\text{ }T,%
\text{ }U\right\} $ is injective.
\end{corollary}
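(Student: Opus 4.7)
The plan is to reduce the injectivity claim to a single group-theoretic identity and then feed that identity into the triple product property.  Concretely, fix an ordered pair $(X,Y)$ of distinct members of $\{S,T,U\}$ and suppose $x_{1}^{-1}y_{1}=x_{2}^{-1}y_{2}$ for some $x_{1},x_{2}$ $\epsilon$ $X$ and $y_{1},y_{2}$ $\epsilon$ $Y$.  Left-multiplying by $x_{2}$ and right-multiplying by $y_{2}^{-1}$ I would rewrite this as
\begin{equation*}
(x_{2}x_{1}^{-1})(y_{1}y_{2}^{-1}) = 1_{G},
\end{equation*}
so the task becomes: given $q_{X}$ $\epsilon$ $Q(X)$ and $q_{Y}$ $\epsilon$ $Q(Y)$ with $q_{X}q_{Y}=1_{G}$, conclude that $q_{X}=q_{Y}=1_{G}$.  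Indeed, $q_{X}=x_{2}x_{1}^{-1}=1_{G}$ forces $x_{1}=x_{2}$, and $q_{Y}=y_{1}y_{2}^{-1}=1_{G}$ forces $y_{1}=y_{2}$, so injectivity follows at once.

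The key step is to fit $q_{X}q_{Y}=1_{G}$ into the format of the triple product property (\textit{(4.3)}).  Letting $Z$ denote the unique element of $\{S,T,U\}\setminus\{X,Y\}$, I can insert the trivial quotient $1_{G}=zz^{-1}$ $\epsilon$ $Q(Z)$ for any chosen $z$ $\epsilon$ $Z$, obtaining a product of one element from each of $Q(S)$, $Q(T)$, $Q(U)$ equal to $1_{G}$.  Then \textit{(4.3)} forces each of the three factors, and in particular $q_{X}$ and $q_{Y}$, to equal $1_{G}$, which is what was wanted.

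The only subtlety is that \textit{(4.3)} is written with a fixed left-to-right order, namely a $Q(S)$-factor, then a $Q(T)$-factor, then a $Q(U)$-factor.  So, depending on which pair $(X,Y)$ I started with, I may need to permute the three factors before applying \textit{(4.3)}.  For this I would use the elementary group fact that $ab=1_{G}$ iff $ba=1_{G}$, which lets me cyclically rotate the two nontrivial factors past the inserted $1_{G}$ until they sit in the slots dictated by \textit{(4.3)}.  For example, if $(X,Y)=(U,S)$ I rewrite $q_{U}q_{S}=1_{G}$ as $q_{S}q_{U}=1_{G}=q_{S}\cdot tt^{-1}\cdot q_{U}$ with $t$ $\epsilon$ $T$, and now \textit{(4.3)} applies directly; the remaining five ordered pairs are handled identically.

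I do not expect any real obstacle: once the problem is reduced to the identity $q_{X}q_{Y}=1_{G}$, the TPP does all the work, and the only bookkeeping is the case split over the six ordered pairs $(X,Y)$, each of which is resolved by inserting the identity from the missing $Q(Z)$ and, if necessary, one cyclic rotation.
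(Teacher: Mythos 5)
Your proof is correct and follows essentially the same approach as the paper's: reduce injectivity of $(x,y)\mapsto x^{-1}y$ to a two-quotient identity $q_{X}q_{Y}=1_{G}$, pad with a trivial quotient $zz^{-1}=1_{G}$ from the third set, and invoke the triple product property \textit{(4.3)}. You are more careful than the paper about the ordering bookkeeping across the six possible pairs $(X,Y)$ (which you resolve via $ab=1_{G}\Leftrightarrow ba=1_{G}$ and free placement of the inserted identity), whereas the paper works out only the $(S,T)$ case explicitly and asserts the rest follow ``in the same way.''
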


\begin{proof}
Since $S,$ $T,$ $U\subseteq G$, by assumption, satisfy the triple product
property, for arbitrary $s,$ $s^{^{\prime }}$ $\epsilon $ $S,$ $t,$ $%
t^{^{\prime }}$ $\epsilon $ $T,$ $u,$ $u^{^{\prime }}$ $\epsilon $ $U$, it
is the case that $s^{^{\prime }}s^{-1}t^{^{\prime }}t^{-1}u^{^{\prime
}}u^{-1}=1_{G}\Longleftrightarrow s^{^{\prime }}=s,$ $t^{^{\prime }}=t,$ $%
u^{^{\prime }}=u$. \ Then for arbitrary elements $s,$ $s^{^{\prime }}$ $%
\epsilon $ $S$ and $t,$ $t^{^{\prime }}$ $\epsilon $ $T$, $u,$ $u^{^{\prime
}}$ $\epsilon $ $U$, we see that 
\begin{eqnarray*}
s^{-1}t &=&s^{^{\prime }-1}t^{^{\prime }} \\
&\Longrightarrow &s^{^{\prime }}s^{-1}tt^{^{\prime }-1}uu^{^{\prime }-1}=1 \\
&\Longrightarrow &s^{^{\prime }}=s,\text{ }t^{^{\prime }}=t.
\end{eqnarray*}%
\ We can prove, in the same way, the injectivity of this mapping for all
other distinct pairs in $\left\{ S,\text{ }T,\text{ }U\right\} $.
\end{proof}

\bigskip

For any triple of subgroups $S$, $T$, $U\leq G$ the triple product property
condition may be expressed as: $stu=1_{G}$ iff $s=t=u=1_{G}$, $s$ $\epsilon $
$S$, $t$ $\epsilon $ $T$, $u$ $\epsilon $ $U$, since $Q(S)=S$, $Q(T)=T$, $%
Q(U)=U$ iff $S,T,U\leq G$.

\bigskip

The following is a trivial result.

\bigskip

\begin{lemma}
If $G$ is any group then $(1)$ $G,\left\{ 1_{G}\right\} ,\left\{
1_{G}\right\} $ is a triple of subroups having the triple product property,
and $(2)$ $G\times \left\{ 1_{G}\right\} \times \left\{ 1_{G}\right\}
,\left\{ 1_{G}\right\} \times G\times \left\{ 1_{G}\right\} ,\left\{
1_{G}\right\} \times \left\{ 1_{G}\right\} \times G$ is a triple of
subgroups having the triple product property in $G^{\times 3}$.
\end{lemma}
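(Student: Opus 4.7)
The plan is to reduce both parts to the subgroup form of the triple product property noted in the remark immediately before the lemma: for a triple of subgroups $S,T,U \leq G$, the TPP simplifies from the quotient-set condition \textit{(4.3)} to the much cleaner statement that $stu = 1_G$ implies (equivalently, is equivalent to) $s = t = u = 1_G$, since $Q(H) = H$ for any subgroup $H$. So the first step in each case is simply to observe that the relevant subsets are actually subgroups, after which the verification becomes a one-line computation.

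For part (1), I would note that $G$ is a (improper) subgroup of itself and $\{1_G\}$ is the trivial subgroup, so all three components of the triple $(G,\{1_G\},\{1_G\})$ are subgroups and the reduced TPP applies. An arbitrary product of elements drawn from these sets has the form $s \cdot 1_G \cdot 1_G = s$ with $s \in G$, and this equals $1_G$ if and only if $s = 1_G$, which is exactly the required condition (with $t = u = 1_G$ forced).

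For part (2), I would first recall that for any subgroup $H \leq G$, the subsets $H \times \{1_G\} \times \{1_G\}$, $\{1_G\} \times H \times \{1_G\}$, and $\{1_G\} \times \{1_G\} \times H$ are subgroups of $G^{\times 3}$ (each being a direct product of subgroups); taking $H = G$ gives the three sets in the statement. Applying the subgroup form of TPP again, I would pick arbitrary elements $(s,1_G,1_G)$, $(1_G,t,1_G)$, $(1_G,1_G,u)$ from the three subgroups and compute their product componentwise as $(s,t,u)$, which equals the identity $(1_G,1_G,1_G)$ of $G^{\times 3}$ if and only if $s = t = u = 1_G$.

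There is no real obstacle here; the whole argument is essentially bookkeeping once one invokes the subgroup simplification of \textit{(4.3)}. The only step that deserves even a sentence of justification is the remark that $Q(H) = H$ for a subgroup $H$, so that condition \textit{(4.3)} collapses to $stu = 1_G \Rightarrow s=t=u=1_G$; everything else is immediate.
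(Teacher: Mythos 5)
Your proof is correct and follows exactly the route the paper intends: the paper states this lemma without proof as a trivial consequence of the remark immediately preceding it, namely that for subgroups the condition \textit{(4.3)} collapses to $stu=1_{G}\Leftrightarrow s=t=u=1_{G}$ since $Q(H)=H$ for any subgroup $H$. Your componentwise verifications in both parts are exactly the omitted computation.
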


\bigskip

For Abelian groups there is an equivalent characterization of the triple
product property by maps, as expressed in the following lemma.

\bigskip

\begin{lemma}
If $G$ is Abelian, then $(S,T,U)$ is an index triple of $G$ iff the triple
product map $\psi :S\times T\times U\longrightarrow G$, defined by $\left(
s,t,u\right) \longmapsto stu$, is injective.
\end{lemma}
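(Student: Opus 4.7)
The plan is to observe that in an Abelian group, the six-factor identity $s's^{-1}t't^{-1}u'u^{-1}=1_G$ can be rearranged, using commutativity, into the single equation $s't'u' = stu$, i.e., $\psi(s',t',u') = \psi(s,t,u)$. Once this equivalence is in hand, each direction of the biconditional reduces to a one-line argument, so the whole proof is really about packaging this observation cleanly.

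For the forward direction ($\Rightarrow$), I would assume that $(S,T,U)$ has the TPP and take two triples $(s,t,u),(s',t',u') \in S\times T\times U$ with $\psi(s,t,u) = \psi(s',t',u')$, i.e., $stu = s't'u'$. Multiplying by $(stu)^{-1} = u^{-1}t^{-1}s^{-1}$ on the right and commuting freely (since $G$ is Abelian), this rewrites as $s's^{-1}\cdot t't^{-1}\cdot u'u^{-1} = 1_G$. The TPP hypothesis then forces $s's^{-1}=t't^{-1}=u'u^{-1}=1_G$, hence $s=s'$, $t=t'$, $u=u'$, which is precisely injectivity of $\psi$.

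For the reverse direction ($\Leftarrow$), I would assume $\psi$ is injective and suppose $s's^{-1}t't^{-1}u'u^{-1}=1_G$ for some $s,s'\in S$, $t,t'\in T$, $u,u'\in U$. Applying commutativity in reverse, this identity rewrites as $s't'u' = stu$, i.e., $\psi(s',t',u')=\psi(s,t,u)$. Injectivity of $\psi$ then yields $s=s'$, $t=t'$, $u=u'$, so that $s's^{-1}=t't^{-1}=u'u^{-1}=1_G$, which verifies the TPP condition \textit{(4.3)}.

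There is no genuine obstacle here: the entire content of the lemma is the translation between a product of three right-quotients being $1_G$ and the coincidence of two $\psi$-images, and this translation is valid precisely because $G$ is Abelian. (In the non-Abelian case the rearrangement fails, which is why the TPP and injectivity of $\psi$ are distinct conditions in general; the forward implication still holds, but the converse can break down.) I would note this remark at the end to signal to the reader why commutativity is essential and where the argument cannot be generalized.
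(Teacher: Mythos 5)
Your proposal is correct and follows essentially the same route as the paper: both directions hinge on using commutativity to rewrite $s's^{-1}t't^{-1}u'u^{-1}=1_G$ as $\psi(s',t',u')=\psi(s,t,u)$, and the remainder is just applying the relevant hypothesis (TPP in one direction, injectivity of $\psi$ in the other). Your closing remark about why commutativity is indispensable, and why only the forward implication survives in the non-Abelian case, is a useful addition not spelled out in the paper.
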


\begin{proof}
Let $G$ be Abelian. \ If subsets $S,$ $T,$ $U\subseteq G$ satisfy the triple
product property, then for any two elements $(s,t,u)$, $(s^{^{\prime
}},t^{^{\prime }},u^{^{\prime }})$ $\epsilon $ $S\times T\times U$:%
\begin{eqnarray*}
\psi (s^{^{\prime }},t^{^{\prime }},u^{^{\prime }}) &=&s^{^{\prime
}}t^{^{\prime }}u^{^{\prime }}=stu=\psi (s,t,u) \\
&\Longrightarrow &s^{^{\prime }}s^{-1}t^{^{\prime }}t^{-1}u^{^{\prime
}}u^{-1}=1_{G} \\
&\Longleftrightarrow &s^{^{\prime }}s^{-1}=1_{G},\text{ }t^{^{\prime
}}t^{-1}=1_{G},\text{ }u^{^{\prime }}u^{-1}=1_{G} \\
&\Longrightarrow &(s^{^{\prime }},t^{^{\prime }},u^{^{\prime }})=(s,t,u).
\end{eqnarray*}%
The converse is also true. \ If $\psi $ is injective on $S\times T\times U$
then for any two elements $(s,t,u)$, $(s^{^{\prime }},t^{^{\prime
}},u^{^{\prime }})$ $\epsilon $ $S\times T\times U$:%
\begin{eqnarray*}
s^{^{\prime }}s^{-1}t^{^{\prime }}t^{-1}u^{^{\prime }}u^{-1} &=&1_{G} \\
&\Longrightarrow &s^{^{\prime }}t^{^{\prime }}u^{^{\prime }}=stu \\
&\Longleftrightarrow &\left( s^{^{\prime }},t^{^{\prime }},u^{^{\prime
}}\right) =\left( s,t,u\right) \\
&\Longrightarrow &s^{^{\prime }}s^{-1}=1_{G},\text{ }t^{^{\prime
}}t^{-1}=1_{G},\text{ }u^{^{\prime }}u^{-1}=1_{G}.
\end{eqnarray*}
\end{proof}

\bigskip

The following is an elementary corollary:

\bigskip

\begin{corollary}
If $G$ is Abelian and $\left( S,T,U\right) $ is an index triple of $G$, then 
$\left\vert S\right\vert \left\vert T\right\vert \left\vert U\right\vert
\leq \left\vert G\right\vert $. \ Equivalently, if $\left( S,T,U\right) $ is
an index triple of $G$ such that $\left\vert S\right\vert \left\vert
T\right\vert \left\vert U\right\vert >|G|$ then $G$ is non-Abelian.
\end{corollary}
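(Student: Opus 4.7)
The plan is to derive this as an essentially immediate consequence of the preceding Lemma 4.3, which in the Abelian case characterizes the triple product property via injectivity of the triple product map $\psi : S \times T \times U \longrightarrow G$ given by $(s,t,u) \mapsto stu$.

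First, I would invoke Lemma 4.3: since $G$ is assumed Abelian and $(S,T,U)$ is by hypothesis an index triple, the map $\psi$ is injective. Next, from the injectivity of a map between finite sets, I would conclude the standard cardinality inequality $|S \times T \times U| \leq |G|$, which, by the multiplicativity of cardinality of Cartesian products, is precisely $|S||T||U| \leq |G|$. The second (equivalent) assertion is just the contrapositive: if $(S,T,U)$ is an index triple with $|S||T||U| > |G|$, then the Abelian hypothesis must fail, so $G$ is non-Abelian.

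There is no real obstacle here, since all substantive content has already been established in Lemma 4.3; the step that does the actual work is the reduction of the triple product property to injectivity of $\psi$, and once that is in hand this corollary is a one-line pigeonhole argument on cardinalities. The only minor point worth noting in the write-up is that the hypothesis of Lemma 4.3 is genuinely needed: in a non-Abelian group the equivalence between the triple product property and injectivity of $\psi$ breaks down (one only retains the forward implication under further rearrangement), which is exactly why the contrapositive formulation is interesting and why groups with $|S||T||U| > |G|$ are the ones sought in the later bounds on $\omega$.
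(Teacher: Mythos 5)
Your proof is correct and matches the paper's own argument essentially verbatim: both invoke Lemma 4.3 to get injectivity of $\psi$ on $S\times T\times U$, then compare cardinalities, and obtain the second claim as the contrapositive.
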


\begin{proof}
By \textit{Lemma 4.3, }the triple product map $\psi $ on an index triple $%
\left( S,T,U\right) $ of an Abelian group $G$ is necessarily injective and,
therefore, $S\times T\times U\cong \func{Im}\psi \subseteq G$. Taking
cardinalities on either side, we have that $\left\vert S\times T\times
U\right\vert =\left\vert S\right\vert \left\vert T\right\vert \left\vert
U\right\vert \leq \left\vert G\right\vert $. \ The equivalent statement
follows from the negation of the previous statement.
\end{proof}

\bigskip

The property of $G$ having an index triple is invariant under permutations
of the components of the triple.

\bigskip

\begin{lemma}
A subset triple $(S,T,U)$ is an index triple of $G$ iff any permuted triple $%
(\mu \left( S\right) ,\mu \left( T\right) ,\mu \left( U\right) )$ is an
index triple of $G$, for a permutation $\mu $ $\epsilon $ $Sym\{S,T,U\}$.
\end{lemma}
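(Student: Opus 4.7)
The plan is to reduce the claim to a short verification by exploiting the fact that the symmetric group $Sym\{S,T,U\}$ is generated by any single $3$-cycle together with any single transposition. The set of permutations $\mu$ that preserve the triple product property of $(S,T,U)$ is obviously closed under composition and contains the identity, so once I have shown invariance under a generating set the full group is covered. Combined with the fact that $\mu \in Sym\{S,T,U\}$ iff $\mu^{-1}\in Sym\{S,T,U\}$, this will also give the ``iff'' direction: invariance under $\mu$ transfers to invariance under $\mu^{-1}$ by applying the already-established result to the permuted triple.

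The two algebraic ingredients I will use are very elementary: (i) the cyclic identity $abc = 1_G \iff bca = 1_G$, valid in any group; and (ii) the closure of each right-quotient set under inversion, $Q(X) = Q(X)^{-1}$, which is immediate from $(s's^{-1})^{-1} = ss'^{-1} \in Q(S)$.

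For the $3$-cycle $(S,T,U)\mapsto (T,U,S)$, suppose a purported TPP-violating relation $t't^{-1}\,u'u^{-1}\,s's^{-1} = 1_G$ holds for the permuted triple. Cycling by (i) gives $s's^{-1}\,t't^{-1}\,u'u^{-1} = 1_G$, and the TPP of $(S,T,U)$ then forces each of the three factors to be $1_G$. For the reversing transposition $(S,T,U)\mapsto (U,T,S)$, suppose $u'u^{-1}\,t't^{-1}\,s's^{-1} = 1_G$. Inverting both sides yields $ss'^{-1}\,tt'^{-1}\,uu'^{-1} = 1_G$, and by (ii) the three factors still lie in $Q(S)$, $Q(T)$, $Q(U)$ respectively, in the correct order for the original triple. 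The TPP of $(S,T,U)$ makes these factors trivial, hence so are their inverses, which are the original factors. Since a $3$-cycle together with any transposition generates $Sym\{S,T,U\}$, the argument is complete.

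There is no substantive obstacle here; the only point requiring care is bookkeeping, specifically tracking that after cycling or inverting each factor still belongs to the $Q$-set corresponding to the original ordering $(S,T,U)$, which is precisely what ingredient (ii) guarantees.
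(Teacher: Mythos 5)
Your proof is correct, and it is more carefully justified than the paper's own argument for this lemma. Both you and the paper follow the same high-level outline: verify the transfer of the triple product property on a generating set of $Sym\{S,T,U\}$ and then propagate to the whole group. The difference is which generators are used and how the verification is carried out. You choose the $3$-cycle $(S,T,U)\mapsto(T,U,S)$ and the reversing transposition $(S,T,U)\mapsto(U,T,S)$, precisely because these are the two permutations for which the needed identities are elementary: the cyclic law $abc=1_G\iff bca=1_G$ handles the $3$-cycle, and inversion of both sides of $u'u^{-1}\,t't^{-1}\,s's^{-1}=1_G$ together with the closure $Q(X)=Q(X)^{-1}$ handles the reversal. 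The paper instead works with the adjacent transpositions $(T,S,U)$ and $(S,U,T)$ and writes the transposition case as the one-line implication $t't^{-1}s's^{-1}u'u^{-1}=1_G\Longrightarrow s's^{-1}t't^{-1}u'u^{-1}=1_G$; but swapping the first two factors in a product equal to $1_G$ is not a valid step in a non-Abelian group. To repair the paper's step one must do exactly what you do: invert both sides to obtain $uu'^{-1}\,ss'^{-1}\,tt'^{-1}=1_G$, cycle to reach $ss'^{-1}\,tt'^{-1}\,uu'^{-1}=1_G$, and then invoke $Q(X)=Q(X)^{-1}$ so each factor still lies in the correct quotient set before applying the TPP of $(S,T,U)$. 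In that sense your write-up is what the paper's proof ought to have said. One small point worth a sentence in the final version: the closure-under-composition claim is immediate because your transfer statement is established for arbitrary subset triples, not just the fixed $(S,T,U)$; if read as a statement about the fixed triple only, composing requires the extra observation that the intermediate permuted triple again has the TPP. This is implicit in your argument, so it is a matter of exposition, not correctness.
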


\begin{proof}
Assume $\left( S,T,U\right) $ has the triple product property. \ Then, for
all $s^{^{\prime }}s^{-1}$ $\epsilon $ $Q(S)$, $t^{^{\prime }}t^{-1}$ $%
\epsilon $ $Q(T)$, $u^{^{\prime }}u^{-1}$ $\epsilon $ $Q(U)$ 
\begin{eqnarray*}
t^{^{\prime }}t^{-1}s^{^{\prime }}s^{-1}u^{^{\prime }}u^{-1} &=&1_{G} \\
&\Longrightarrow &s^{^{\prime }}s^{-1}t^{^{\prime }}t^{-1}u^{^{\prime
}}u^{-1}=1_{G} \\
&\Longrightarrow &s^{^{\prime }}s^{-1}=t^{^{\prime }}t^{-1}=u^{^{\prime
}}u^{-1}=1_{G}.
\end{eqnarray*}%
This shows that $(T,S,U)$ has the triple product property if $(S,T,U)$ does,
and in the same way, we can show that $(S,U,T)$ has the triple product
property, and so do all other permuted triples $(T,U,S)$, $(U,S,T)$, $%
(U,T,S) $, because $(T,S,U)$ and $(S,U,T)$ generate the permutation group $%
Sym\{S,T,U\}$.
\end{proof}

\bigskip

\subsection{\textit{Extension Results for Index Triples}}

\bigskip

We start with a basic statement about index triples of subgroups.

\bigskip

\begin{proposition}
If $\left( S,T,U\right) $ is an index triple of a subgroup $H\leq G$ then $%
\left( S,T,U\right) $ is an index triple of $G$.
\end{proposition}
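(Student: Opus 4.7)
The plan is to observe that the triple product property, as defined in \textit{(4.3)}, is an intrinsic statement about the subsets $S$, $T$, $U$ together with the ambient multiplication and identity element, and nothing about it changes when we enlarge the ambient group from $H$ to $G$.

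Concretely, first I would note that because $H\leq G$ is a subgroup, the identity of $H$ is the identity of $G$, i.e.\ $1_{H}=1_{G}$, and the multiplication and inversion operations on elements of $H$ agree with those inherited from $G$. Consequently, for any subset $X\subseteq H\subseteq G$, the right-quotient set $Q(X)$ computed inside $H$ coincides with $Q(X)$ computed inside $G$; in particular, for the given $S,T,U\subseteq H$ the three sets $Q(S),Q(T),Q(U)$ are the same regardless of whether we view them as lying in $H$ or in $G$.

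Next I would apply the TPP assumption. By hypothesis, for all $s',s\in S$, $t',t\in T$, $u',u\in U$, the equivalence
\begin{equation*}
s's^{-1}t't^{-1}u'u^{-1}=1_{H}\;\Longleftrightarrow\; s's^{-1}=t't^{-1}=u'u^{-1}=1_{H}
\end{equation*}
holds in $H$. Because the products $s's^{-1}t't^{-1}u'u^{-1}$ and $s's^{-1}$, $t't^{-1}$, $u'u^{-1}$ are computed identically in $H$ and in $G$, and because $1_{H}=1_{G}$, this equivalence translates verbatim into the condition \textit{(4.3)} for $(S,T,U)$ viewed as a subset triple of $G$. Hence $(S,T,U)$ is an index triple of $G$.

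There is essentially no obstacle here; the argument is a one-line verification that the TPP is preserved under the inclusion $H\hookrightarrow G$. The only thing worth emphasizing in the written proof is the explicit identification $1_{H}=1_{G}$ and the remark that quotient sets $Q(\cdot )$ are computed from intrinsic group data, so that enlarging the ambient group does not introduce any new relations among the elements of $S\cup T\cup U$ that could violate \textit{(4.3)}.
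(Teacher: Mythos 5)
Your proof is correct and matches the paper's approach: the paper simply notes that the claim "follows from the definition (4.3)," and your write-up is just the explicit verification that the identity, products, and quotient sets $Q(S),Q(T),Q(U)$ are unchanged by the inclusion $H\hookrightarrow G$, so the TPP condition transfers verbatim. No gaps.
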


\bigskip

This follows from the definition \textit{(4.3) }of an index triple of $G$. \
Index triples of groups can also be obtained as extensions of index
triples.of normal subgroups and of their corresponding factor groups.

\bigskip

\begin{lemma}
If $\left( S_{1},S_{2},S_{3}\right) $ is an index triple of $%
H\vartriangleleft G$ and $\left( U_{1},U_{2},U_{3}\right) $ is an index
triple of $G/H$, then there exists a subset triple $\left(
T_{1},T_{2},T_{3}\right) $ of $G$, corresponding to $\left(
U_{1},U_{2},U_{3}\right) $, such that the pointwise product triple $\left(
S_{1}T_{1},S_{2}T_{2},S_{3}T_{3}\right) $ is an index triple of $G$, where
the $T_{i}\subseteq G$ are lifts to $G$ of the $U_{i}$.
\end{lemma}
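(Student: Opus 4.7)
The plan is to construct the $T_i$ as sections of the quotient map (one lift per element of $U_i$) and then verify the triple product property for $(S_1 T_1, S_2 T_2, S_3 T_3)$ by pushing the hypothesised identity down to $G/H$ and pulling the residual information back into $H$.

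Concretely, let $\pi : G \to G/H$ be the quotient homomorphism, which exists because $H \triangleleft G$. For each $u \in U_i$ pick a representative $t_u \in \pi^{-1}(u)$ and set $T_i = \{t_u : u \in U_i\}$, so that $|T_i| = |U_i|$ and the elements of $T_i$ lie in pairwise distinct $H$-cosets. A useful preliminary observation is that every element of $S_i T_i$ then has a unique factorisation $x = st$ with $s \in S_i \subseteq H$ and $t \in T_i$: if $st = s't'$, then $t'(t)^{-1} = (s')^{-1} s \in H$, which by the distinct-coset property of $T_i$ forces $t' = t$ and therefore $s' = s$.

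Now take arbitrary $x_i = s_i t_i$, $x_i' = s_i' t_i'$ in $S_i T_i$ and suppose
\begin{equation*}
x_1' x_1^{-1} \, x_2' x_2^{-1} \, x_3' x_3^{-1} = 1_G.
\end{equation*}
Applying $\pi$ and using $\pi(s_i) = \pi(s_i') = 1_{G/H}$ (since $s_i, s_i' \in H$), this descends to
\begin{equation*}
(u_1' u_1^{-1})(u_2' u_2^{-1})(u_3' u_3^{-1}) = 1_{G/H},
\end{equation*}
where $u_i := \pi(t_i)$ and $u_i' := \pi(t_i')$ lie in $U_i$. The TPP for $(U_1, U_2, U_3)$ in $G/H$ forces $u_i = u_i'$ for each $i$, and the distinct-coset property of $T_i$ then yields $t_i = t_i'$. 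Substituting back, $x_i' x_i^{-1} = s_i' t_i t_i^{-1} s_i^{-1} = s_i' s_i^{-1} \in H$, so the identity collapses to a relation entirely inside $H$, namely
\begin{equation*}
(s_1' s_1^{-1})(s_2' s_2^{-1})(s_3' s_3^{-1}) = 1_G.
\end{equation*}
The TPP for $(S_1, S_2, S_3)$ in $H$ now gives $s_i = s_i'$, and combining with $t_i = t_i'$ produces $x_i = x_i'$, i.e., $x_i' x_i^{-1} = 1_G$ for every $i$, which is exactly the TPP for $(S_1 T_1, S_2 T_2, S_3 T_3)$ in $G$.

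The only delicate point is the uniqueness of the factorisation $x_i = s_i t_i$; this is precisely why one insists on choosing a single lift per element of $U_i$, for otherwise the step identifying $t_i$ with $t_i'$ (and the ensuing cancellation $t_i t_i^{-1}$) would not be legitimate. Normality of $H$ enters only to make sense of the quotient $G/H$ and of the homomorphism $\pi$; once the projection argument is set up, the rest is formal.
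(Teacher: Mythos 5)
Your proof is correct and follows essentially the same route as the paper's: choose one coset representative per element of $U_{i}$ to form $T_{i}$, push the hypothesised relation down to $G/H$ to get $t_{i}^{\prime}=t_{i}$ via the TPP of the $U_{i}$ and the distinct-coset property, then collapse to a relation in $H$ and apply the TPP of the $S_{i}$. Your added remark on the uniqueness of the factorisation $x=st$ is a harmless (and for the subsequent tensor-size count, genuinely useful) refinement of what the paper records as property $(3)$ of the lifts.
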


\begin{proof}
Elements of $G/H$ are left-cosets $gH$ of $H$ in $G$, the subsets $U_{1},$ $%
U_{2},$ $U_{3}\subseteq G/H$ are of form 
\begin{equation*}
U_{i}=\left\{ u_{i}=v_{i}H\text{ }|\text{ }v_{i}\text{ }\epsilon \text{ }%
G\right\} ,\hspace{0.65in}1\leq i\leq 3.
\end{equation*}
\ We define \textit{lift} subsets $T_{i}\subseteq G$ of the $U_{i}$ by 
\begin{equation*}
T_{i}=\{t_{i}\text{ }\epsilon \text{ }G|\text{ }t_{i}=v_{i}h_{i},\text{ some 
}h_{i}\text{ }\epsilon \text{ }H,\text{ and }u_{i}=v_{i}H\text{ }\epsilon 
\text{ }U_{i},\text{ some }v_{i}\text{ }\epsilon \text{ }G\},\hspace{0.65in}%
1\leq i\leq 3.
\end{equation*}
\ The $T_{i}$, which are not necessarily unique for the $U_{i}$, have the
properties that $(1)$ $t_{i}H=u_{i},$ for all $t_{i}$ $\epsilon $ $T_{i}$
and $u_{i}$ $\epsilon $ $U_{i}$, $(2)$ $U_{i}\cong T_{i}$, and $(3)$ $t$ $%
\epsilon $ $T_{i}$ implies $T_{i}\cap tH=\left\{ t\right\} ,$ $1\leq i\leq 3$%
. \ Let $s_{1},s_{1}^{^{\prime }}$ $\epsilon $ $S_{1},t_{1},t_{1}^{^{\prime
}}$ $\epsilon $ $T_{1},u_{1},u_{1}^{^{\prime }}$ $\epsilon $ $U_{1}$, and $%
s_{2},s_{2}^{^{\prime }}$ $\epsilon $ $S_{2},t_{2},t_{2}^{^{\prime }}$ $%
\epsilon $ $T_{2},u_{2},u_{2}^{^{\prime }}$ $\epsilon $ $U_{2}$, and $%
s_{3},s_{3}^{^{\prime }}$ $\epsilon $ $S_{3},t_{3},t_{3}^{^{\prime }}$ $%
\epsilon $ $T_{3},u_{3},u_{3}^{^{\prime }}$ $\epsilon $ $U_{3}$ be arbitrary
elements. \ Then $s_{1}^{^{\prime }}t_{1}^{^{\prime
}}t_{1}^{-1}s_{1}^{-1}s_{2}^{^{\prime }}t_{2}^{^{\prime
}}t_{2}^{-1}s_{2}^{-1}s_{3}^{^{\prime }}t_{3}^{^{\prime
}}t_{3}^{-1}s_{3}^{-1}=1_{G}$ implies $s_{1}^{^{\prime }}t_{1}^{^{\prime
}}t_{1}^{-1}s_{1}^{-1}s_{2}^{^{\prime }}t_{2}^{^{\prime
}}t_{2}^{-1}s_{2}^{-1}s_{3}^{^{\prime }}t_{3}^{^{\prime
}}t_{3}^{-1}s_{3}^{-1}H=H$, which implies $t_{1}^{^{\prime }}H\left(
t_{1}H\right) ^{-1}t_{2}^{^{\prime }}H\left( t_{2}H\right)
^{-1}t_{3}^{^{\prime }}H\left( t_{3}H\right) ^{-1}=H$, which implies $%
u_{1}^{^{\prime }}u_{1}^{-1}u_{2}^{^{\prime }}u_{2}^{-1}u_{3}^{^{\prime
}}u_{3}^{-1}=1_{G/H}$, which implies $u_{i}^{^{\prime }}=u_{i}$ (assumption
of TPP for the $U_{i}$ in $G/H$) which implies $t_{i}^{^{\prime }}H=t_{i}H$,
which implies $t_{i}^{^{\prime }}=t_{i}$, which implies, by the first
inequality, $s_{1}^{^{\prime }}s_{1}^{-1}s_{2}^{^{\prime
}}s_{2}^{-1}s_{3}^{^{\prime }}s_{3}^{-1}=1_{H}$, which implies $%
s_{i}^{^{\prime }}=s_{i}$ (assumption of TPP for the $S_{i}$ in $%
H\vartriangleleft G$).
\end{proof}

\bigskip

In general, the pointwise product of two index triples of $G$ is not
necessarily an index triple of $G$.

\bigskip

We define the set $\mathfrak{I}(G)$ to be the collection of all index
triples of $G$. \ Formally:

\bigskip

\begin{description}
\item[\textit{(4.4)}] $\mathfrak{I}(G):=\left\{ \left( S,T,U\right) \text{ }|%
\text{ }S,\text{ }T,\text{ }U\subseteq G\text{ satisfy the triple product
property \textit{(4.3)}}\right\} .$
\end{description}

\bigskip

By \textit{Proposition 4.5 }we have:

\bigskip

\begin{description}
\item[\textit{(4.5)}] $\mathfrak{I}(H)\subseteq \mathfrak{I}(G),\qquad
\qquad \qquad \qquad H\leq G,$
\end{description}

\bigskip

and:

\bigskip

\begin{description}
\item[\textit{(4.6)}] $\left\vert \mathfrak{I}(H)\right\vert \leq \left\vert 
\mathfrak{I}(G)\right\vert ,\qquad \qquad \qquad \qquad H\leq G.$
\end{description}

\bigskip

Now we have an important extension result for the index triples of direct
product groups.

\bigskip

\begin{lemma}
If groups $G_{1}$ and $G_{2}$ have index triples $\left(
S_{1},T_{1},U_{1}\right) $ and $\left( S_{2},T_{2},U_{2}\right) $
respectively then the direct product of these triples, $\left( S_{1}\times
S_{2},T_{1}\times T_{2},U_{1}\times U_{2}\right) $, is an index triple of $%
G_{1}\times G_{2}$.
\end{lemma}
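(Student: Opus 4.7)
The plan is to verify the triple product property (TPP) for $(S_1 \times S_2, T_1 \times T_2, U_1 \times U_2)$ directly from the definition \textit{(4.3)}, exploiting the fact that the group operation in $G_1 \times G_2$ is componentwise, as is inversion and the identity element $1_{G_1 \times G_2} = (1_{G_1}, 1_{G_2})$.

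First I would fix arbitrary pairs of elements in each of the three product sets, writing them as $(s_i, s_i') \in S_1 \times S_2$, $(t_i, t_i') \in T_1 \times T_2$, $(u_i, u_i') \in U_1 \times U_2$ for $i = 1, 2$ (using primed variables for the ``second'' element in each quotient). Because the multiplication and inversion in $G_1 \times G_2$ act coordinatewise, the product
\begin{equation*}
(s_1', s_2')(s_1, s_2)^{-1}(t_1', t_2')(t_1, t_2)^{-1}(u_1', u_2')(u_1, u_2)^{-1}
\end{equation*}
equals
\begin{equation*}
\bigl(s_1' s_1^{-1} t_1' t_1^{-1} u_1' u_1^{-1},\; s_2' s_2^{-1} t_2' t_2^{-1} u_2' u_2^{-1}\bigr).
\end{equation*}

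Next I would observe that this element equals $(1_{G_1}, 1_{G_2})$ if and only if each coordinate equals $1_{G_i}$. Applying the TPP assumption for $(S_i, T_i, U_i)$ in $G_i$ (for $i=1,2$) separately, each coordinate equation $s_i' s_i^{-1} t_i' t_i^{-1} u_i' u_i^{-1} = 1_{G_i}$ forces $s_i' s_i^{-1} = t_i' t_i^{-1} = u_i' u_i^{-1} = 1_{G_i}$. Reassembling coordinatewise then gives $(s_1', s_2')(s_1, s_2)^{-1} = (t_1', t_2')(t_1, t_2)^{-1} = (u_1', u_2')(u_1, u_2)^{-1} = 1_{G_1 \times G_2}$, which is exactly the TPP condition \textit{(4.3)} for the product triple. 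The converse implication is immediate, so we get an ``iff'' as required.

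There is essentially no hard step here; the argument is purely bookkeeping once one identifies the right-quotient set $Q(S_1 \times S_2)$ with (a subset of) $Q(S_1) \times Q(S_2)$ via $(s_1', s_2')(s_1, s_2)^{-1} = (s_1' s_1^{-1}, s_2' s_2^{-1})$, and similarly for $T_1 \times T_2$ and $U_1 \times U_2$. The only small subtlety worth mentioning is that $Q(S_1 \times S_2)$ need not exhaust $Q(S_1) \times Q(S_2)$ in general, but since we are checking a universal statement over elements arising from $Q(S_1 \times S_2)$, this inclusion is in the favourable direction and causes no difficulty.
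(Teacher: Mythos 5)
Your proof is correct and follows essentially the same route as the paper: compute the six-fold product in $G_1\times G_2$ coordinatewise, observe it equals $(1_{G_1},1_{G_2})$ iff each coordinate vanishes, and apply the TPP in $G_1$ and $G_2$ separately. One small remark: your cautionary note about $Q(S_1\times S_2)$ possibly being a proper subset of $Q(S_1)\times Q(S_2)$ is unnecessary, since in fact $Q(S_1\times S_2)=Q(S_1)\times Q(S_2)$ exactly (the choices in each coordinate are independent), though as you say this would not affect the argument either way.
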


\begin{proof}
For any subsets $S_{1},T_{1},U_{1}\subseteq G_{1}$ and $S_{2},T_{2},U_{2}%
\subseteq G_{2}$, we define for $G_{1}\times G_{2}$ the subsets $S_{1}\times
S_{2}=\{(s_{1},s_{2})$ $|$ $s_{1}$ $\epsilon $ $S_{1}$, $s_{2}$ $\epsilon $ $%
S_{2}\},$ $T_{1}\times T_{2}=\{(t_{1},t_{2})$ $|$ $t_{1}$ $\epsilon $ $T_{1}$%
, $t_{2}$ $\epsilon $ $T_{2}\},$ $U_{1}\times U_{2}=\{(u_{1},u_{2})$ $|$ $%
u_{1}$ $\epsilon $ $U_{1}$, $u_{2}$ $\epsilon $ $U_{2}\}\subseteq
G_{1}\times G_{2}$. \ We call $\left( S_{1}\times S_{2},T_{1}\times
T_{2},U_{1}\times U_{2}\right) $ the direct product $\left(
S_{1},T_{1},U_{1}\right) \times \left( S_{2},T_{2},U_{2}\right) $ of the
triples $\left( S_{1},T_{1},U_{1}\right) $ and $\left(
S_{2},T_{2},U_{2}\right) $. \ It is sufficient simply to assume the triple
product property for the triples $\left( S_{1},T_{1},U_{1}\right) $ and $%
\left( S_{2},T_{2},U_{2}\right) $, and then, for arbitrary $%
s_{1},s_{1}^{^{\prime }}$ $\epsilon $ $S_{1},$ $t_{1},t_{1}^{^{\prime }}$ $%
\epsilon $ $T_{1},$ $u_{1},u_{1}^{^{\prime }}$ $\epsilon $ $U_{1},$ $%
s_{2},s_{2}^{^{\prime }}$ $\epsilon $ $S_{2},$ $t_{2},t_{2}^{^{\prime }}$ $%
\epsilon $ $T_{2},$ $u_{2},u_{2}^{^{\prime }}$ $\epsilon $ $U_{2}$, we see
that:%
\begin{eqnarray*}
&&s_{1}^{^{\prime }}s_{1}^{-1}t_{1}^{^{\prime }}t_{1}^{-1}u_{1}^{^{\prime
}}u_{1}^{-1}=1_{G_{1}},\text{ }s_{2}^{^{\prime }}s_{2}^{-1}t_{2}^{^{\prime
}}t_{2}^{-1}u_{2}^{^{\prime }}u_{2}^{-1}=1_{G_{2}} \\
&\Longleftrightarrow &s_{1}^{^{\prime }}s_{1}^{-1}=t_{1}^{^{\prime
}}t_{1}^{-1}=u_{1}^{^{\prime }}u_{1}^{-1}=1_{G_{1}},\text{ }s_{2}^{^{\prime
}}s_{2}^{-1}=t_{2}^{^{\prime }}t_{2}^{-1}=u_{2}^{^{\prime
}}u_{2}^{-1}=1_{G_{2}} \\
&\Longleftrightarrow &s_{1}^{^{\prime }}=s_{1},\text{ }t_{1}^{^{\prime
}}=t_{1},\text{ }u_{1}^{^{\prime }}=u_{1},\text{ }s_{2}^{^{\prime }}=s_{2},%
\text{ }t_{2}^{^{\prime }}=t_{2},\text{ }u_{2}^{^{\prime }}=u_{2} \\
&\Longleftrightarrow &\left( s_{1}^{^{\prime }},s_{2}^{^{\prime }}\right)
\left( s_{1},s_{2}\right) ^{-1}=\left( t_{1}^{^{\prime }},t_{2}^{^{\prime
}}\right) \left( t_{1},t_{2}\right) ^{-1}=\left( u_{1}^{^{\prime
}},u_{2}^{^{\prime }}\right) \left( u_{1},u_{2}\right) ^{-1} \\
&=&\left( 1_{G_{1}},1_{G_{2}}\right) \\
&=&\left( s_{1}^{^{\prime }}s_{1}^{-1}t_{1}^{^{\prime
}}t_{1}^{-1}u_{1}^{^{\prime }}u_{1}^{-1},s_{2}^{^{\prime
}}s_{2}^{-1}t_{2}^{^{\prime }}t_{2}^{-1}u_{2}^{^{\prime }}u_{2}^{-1}\right)
\\
&=&\left( s_{1}^{^{\prime }},s_{2}^{^{\prime }}\right) \left(
s_{1},s_{2}\right) ^{-1}\left( t_{1}^{^{\prime }},t_{2}^{^{\prime }}\right)
\left( t_{1},t_{2}\right) ^{-1}\left( u_{1}^{^{\prime }},u_{2}^{^{\prime
}}\right) \left( u_{1},u_{2}\right) ^{-1}.
\end{eqnarray*}
\end{proof}

\bigskip

If we denote by $\mathfrak{I}(G_{1}\times G_{2})$ the set of index triples
of $G_{1}\times G_{2}$, by \textit{Lemma 4.8 }we have the following.

\bigskip

\begin{description}
\item[\textit{(4.7)}] $\mathfrak{I}(G_{1}\times G_{2})\supseteq \mathfrak{I}%
(G_{1})\times \mathfrak{I}(G_{2}).$
\end{description}

\bigskip

\begin{description}
\item[\textit{(4.8)}] $\left\vert \mathfrak{I}(G_{1}\times G_{2})\right\vert
\geq \left\vert \mathfrak{I}(G_{1})\right\vert \left\vert \mathfrak{I}%
(G_{2})\right\vert .$
\end{description}

\subsection{\textit{Groups and\ Matrix Tensors}}

\bigskip

From \textit{Chapter 2}, we recall the definition of the stuctural tensor,
or simply, the matrix tensor $\left\langle n,m,p\right\rangle _{K}$ as the $%
K $-bilinear map $\left\langle n,m,p\right\rangle _{K}:K^{n\times m}\times
K^{m\times p}\longrightarrow K^{n\times p}$, which describes the
multiplication of $n$ $\times $ $m$ matrices by $m$ $\times $ $p$ matrices
over $K$, with resulting product matrices in the matrix vector space $%
K^{n\times p}$. \ We omit the subscript $K$ and say that $G$ \textit{realizes%
} a tensor $\left\langle n,m,p\right\rangle $ via an index triple $\left(
S,T,U\right) $, iff $\left\vert S\right\vert =n$, $\left\vert T\right\vert
=m $, $\left\vert U\right\vert =p$, where these are positive integers, and,
by definition, $\left( S,T,U\right) $ has the triple product property. \ In
this case, the index triple $\left( S,T,U\right) $ said to correspond to the
tensor $\left\langle n,m,p\right\rangle $. \ Loosely speaking, as will
become clear later, this means that $G$ "supports" the multiplication of $n$ 
$\times $ $m$ by $m$ $\times $ $p$ matrices indexed by subsets $S,$ $T,$ $%
U\subseteq G$, in the sense of fact (I), section 1.3, \textit{Chapter 1}. \
By assumption $G$ is nontrivial, so that by \textit{Lemma 4.2}, $G$ always
realizes the tensors $\left\langle 1,1,1\right\rangle $ and $\left\langle
2\leq \left\vert G\right\vert ,1,1\right\rangle $, and $G^{\times 3}$ always
realizes the tensor $\left\langle \left\vert G\right\vert ,\left\vert
G\right\vert ,\left\vert G\right\vert \right\rangle $. \ The following
describes an elementary property for tensors of subgroups.

\bigskip

\begin{proposition}
If $\left\langle n,m,p\right\rangle $ is a tensor realized by a subgroup $%
H\leq G$, then $\left\langle n,m,p\right\rangle $ is also realized by $G$.
\end{proposition}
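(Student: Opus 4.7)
The plan is to unfold the definitions and immediately appeal to Proposition 4.5. By assumption, $H \leq G$ realizes $\langle n,m,p\rangle$, which by the definition given just above the proposition means there exists an index triple $(S,T,U)$ of $H$ with $|S|=n$, $|T|=m$, $|U|=p$; that is, $S,T,U\subseteq H$ satisfy the triple product property \textit{(4.3)}.

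Next, I would invoke Proposition 4.5 directly: since $(S,T,U)$ is an index triple of $H\leq G$, it is automatically an index triple of $G$. The inclusion $H\subseteq G$ ensures that $S,T,U$ are also subsets of $G$, and the triple product property \textit{(4.3)} is a statement purely about products of elements equalling $1_G = 1_H$, which is insensitive to the ambient group containing $S$, $T$, $U$.

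Finally, I would observe that the cardinalities $|S|=n$, $|T|=m$, $|U|=p$ are intrinsic to the subsets and do not change when we view $S,T,U$ as subsets of the larger group $G$. Hence the same triple $(S,T,U)$ serves as an index triple of $G$ corresponding to the tensor $\langle n,m,p\rangle$, so $G$ realizes $\langle n,m,p\rangle$ as claimed.

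There is essentially no obstacle here: the proposition is a direct corollary of Proposition 4.5 together with the definition of ``realizing a tensor.'' The only thing to be careful about is simply to note that passing from $H$ to $G$ preserves both the triple product condition (by \textit{(4.3)} and Proposition 4.5) and the cardinalities of the index subsets.
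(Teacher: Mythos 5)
Your proof is correct and follows exactly the route the paper intends: unfold the definition of ``realizes,'' apply Proposition 4.5 to see that an index triple of $H$ is automatically an index triple of $G$, and note that the cardinalities $n,m,p$ are unchanged. The paper does not spell this out (it offers Proposition 4.9 as an immediate consequence of Proposition 4.5), and your write-up fills in precisely the steps the paper leaves implicit.
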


\bigskip

For $G$ we define the set $\mathfrak{S}(G)$ as:

\bigskip

\begin{description}
\item[\textit{(4.9)}] $\mathfrak{S}(G):=\{\left\langle n,m,p\right\rangle $ $%
|$ $G$ realizes the tensor $\left\langle n,m,p\right\rangle \}.$
\end{description}

\bigskip

$\mathfrak{S}(G)$ is the set of all tensors $\left\langle n,m,p\right\rangle 
$ realized by $G$. \ By \textit{(4.5)-(4.6)} we have the following results.

\bigskip

\begin{description}
\item[\textit{(4.10)}] $\mathfrak{S}(H)\subseteq \mathfrak{S}(G),\qquad
\qquad \qquad \qquad H\leq G,$
\end{description}

\bigskip

and:

\bigskip

\begin{description}
\item[\textit{(4.11)}] $\left\vert \mathfrak{S}(H)\right\vert \leq
\left\vert \mathfrak{S}(G)\right\vert ,\qquad \qquad \qquad \qquad H\leq G.$
\end{description}

\bigskip

$\mathfrak{S}(G)$ is necessarily finite by the finiteness of $\mathfrak{J}%
(G) $.

\bigskip

For arbitrary tensors $\left\langle n,m,p\right\rangle $ and permutations $%
\mu $ $\epsilon $ $Sym_{3}$ we denote by $\mu \left( \left\langle
n,m,p\right\rangle \right) $ the permuted tensor $\left\langle \mu \left(
n\right) ,\mu \left( m\right) ,\mu \left( p\right) \right\rangle $. \ Then
we have an elementary result.

\bigskip

\begin{lemma}
A group $G$ realizes a tensor $\left\langle n,m,p\right\rangle $ iff it
realizes any permuted tensor $\mu \left( \left\langle n,m,p\right\rangle
\right) =\left\langle \mu \left( n\right) ,\mu \left( m\right) ,\mu \left(
p\right) \right\rangle $, where $\mu $ $\epsilon $ $Sym_{3}$.

\begin{proof}
A consequence of Lemma 4.5. (\ The reader will note the similarity between
this lemma and \textit{Proposition 2.1}. \ It will be shown that this is, in
fact, a group-theoretic version of Proposition 2.1.)
\end{proof}
\end{lemma}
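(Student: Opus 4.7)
The plan is to unpack the definition of ``realizes'' and then reduce the statement directly to \textit{Lemma 4.5}. By \textit{(4.9)} and the convention established just before it, $G$ realizes the tensor $\left\langle n,m,p\right\rangle$ precisely when there exists an index triple $\left(S,T,U\right)$ of $G$ with $\left\vert S\right\vert =n$, $\left\vert T\right\vert =m$, $\left\vert U\right\vert =p$. So the claim to be proved is purely a statement about index triples once we keep track of sizes.

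First I would handle one direction. Assume $G$ realizes $\left\langle n,m,p\right\rangle$ via an index triple $\left(S,T,U\right)$. By \textit{Lemma 4.5}, for any permutation $\mu$ $\epsilon$ $Sym\{S,T,U\}$ the permuted triple $\left(\mu(S),\mu(T),\mu(U)\right)$ is again an index triple of $G$. Since $\mu$ merely reorders the three components without changing any individual subset, the sizes of the entries of the permuted triple are the permuted sizes $\mu(n),\mu(m),\mu(p)$. Hence $G$ realizes $\left\langle \mu(n),\mu(m),\mu(p)\right\rangle =\mu\!\left(\left\langle n,m,p\right\rangle\right)$, which is what we wanted.

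For the converse, suppose $G$ realizes the permuted tensor $\mu\!\left(\left\langle n,m,p\right\rangle\right)$ via some index triple with components of the appropriate sizes. Applying the argument above with $\mu^{-1}$ in place of $\mu$, and using the fact that $Sym_{3}$ is a group so $\mu^{-1}$ is again a permutation of the three components, we recover an index triple realizing $\left\langle n,m,p\right\rangle$. Since the equivalence holds for every $\mu$ $\epsilon$ $Sym_{3}$, the lemma follows.

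There is essentially no obstacle here: the content is entirely carried by \textit{Lemma 4.5}, and the only bookkeeping is to match the action of $Sym_{3}$ on the integer components of the tensor with the corresponding action on the three subsets of an index triple. This is the promised group-theoretic analogue of \textit{Proposition 2.1}, with the symmetry of the TPP condition \textit{(4.3)} in the three subsets playing the role that the symmetry of the rank function plays in the bilinear setting.
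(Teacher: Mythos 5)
Your proof is correct and follows exactly the route the paper takes: unwinding the definition of ``realizes'' into the existence of an index triple of the right sizes, invoking \textit{Lemma 4.5} for permutation invariance of the triple product property, and noting the converse by replacing $\mu$ with $\mu^{-1}$. The paper's own proof simply states ``a consequence of Lemma 4.5,'' so you have filled in the same one-step reduction with the routine bookkeeping made explicit.
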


\bigskip

This means that if $G$ supports multiplication of $n\times m$ by $m\times p$
matrices over $K$ it simultaneously supports multiplication of matrices,
over $K$, of all possible permutations of the dimensions $n\times m$ and $%
m\times p$, i.e. it also supports multiplication of $n\times p$ by $p\times
m $ matrices, of $m\times n$ by $n\times p$ \ of matrices, of $m\times p$ by 
$p\times n$ matrices, of $p\times n$ by $n\times m$ matrices, of $p\times m$
by $m\times p$ matrices. \ As an example, we note that by \textit{Lemma 4.1}%
, any group\textit{\ }$G$ realizes the tensors $\left\langle \left\vert
G\right\vert ,1,1\right\rangle $, $\left\langle 1,\left\vert G\right\vert
,1\right\rangle $, $\left\langle 1,1,\left\vert G\right\vert \right\rangle $%
. \ We denote $M(\left\langle n,m,p\right\rangle )$ to be the set $\left\{
\mu (\left\langle n,m,p\right\rangle )\text{ }|\text{ }\mu \text{ }\epsilon 
\text{ }Sym_{3}\right\} $ of all permutations of a given tensor $%
\left\langle n,m,p\right\rangle $. \ If we write a tensor $\left\langle
n,m,p\right\rangle $ in a "normal" form, where $n\leqslant m\leqslant p$,
then $\left\langle n,m,p\right\rangle $ can be taken to be the
representative of the set $M(\left\langle n,m,p\right\rangle )$. \ Thus, $%
\left\langle n,m,p\right\rangle $ $\epsilon $ $\mathfrak{S}(G)$ implies that 
$M(\left\langle n,m,p\right\rangle )\subseteq \mathfrak{S}(G)$, and
therefore $\mathfrak{S}(G)$ can be rewritten as:

\bigskip

\begin{description}
\item[\textit{(4.12)}] $\mathfrak{S}(G):=\underset{G\text{ realizes }%
\left\langle n,m,p\right\rangle ,\text{ }n\leq m\leq p}{\tbigcup }%
M(\left\langle n,m,p\right\rangle )$.
\end{description}

\bigskip

For arbitrary tensors $\left\langle n_{1},m_{1},p_{1}\right\rangle $ and $%
\left\langle n_{2},m_{2},p_{2}\right\rangle $ we define a pointwise
multiplication operation $\cdot $ defined by $\left\langle
n_{1},m_{1},p_{1}\right\rangle \cdot \left\langle
n_{2},m_{2},p_{2}\right\rangle :=\left\langle
n_{1}n_{1},m_{1}m_{2},p_{1}p_{2}\right\rangle $, which is associative,
commutative, and has the unit $\left\langle 1,1,1\right\rangle $. \ $%
\mathfrak{S}(G)$ need not be closed under pointwise products. \ This
operation allows us to characterize certain important extension results
about tensors.

\bigskip

\subsection{\textit{Extension Results for Matrix Tensors}}

\bigskip

\begin{lemma}
If a normal subgroup $H\vartriangleleft G$ and the corresponding factor
group $G/H$ realize the tensors $\left\langle n_{1},n_{2},n_{3}\right\rangle 
$ and $\left\langle m_{1},m_{2},m_{3}\right\rangle $, respectively, then $G$
realizes the tensor $\left\langle n_{1},n_{2},n_{3}\right\rangle \cdot
\left\langle m_{1},m_{2},m_{3}\right\rangle =\left\langle
n_{1}m_{1},n_{2}m_{2},n_{3}m_{3}\right\rangle $, where $\left\langle
n_{1},n_{2},n_{3}\right\rangle $ and $\left\langle
m_{1},m_{2},m_{3}\right\rangle $ correspond to certain representative index
triples of $H$ and $G/H$ resp., and $\left\langle
n_{1},n_{2},n_{3}\right\rangle \cdot \left\langle
m_{1},m_{2},m_{3}\right\rangle $ corresponds to the pointwise product of
these index triples.
\end{lemma}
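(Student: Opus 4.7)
The plan is to deduce this from \textit{Lemma 4.7} essentially as a repackaging with cardinalities tracked. Fix an index triple $(S_1,S_2,S_3)$ of $H$ corresponding to the tensor $\langle n_1,n_2,n_3\rangle$, so that $|S_i|=n_i$ and $S_i\subseteq H$, and an index triple $(U_1,U_2,U_3)$ of $G/H$ corresponding to $\langle m_1,m_2,m_3\rangle$, so that $|U_i|=m_i$. Apply \textit{Lemma 4.7} to these two triples: this produces lift subsets $T_1,T_2,T_3\subseteq G$ of $U_1,U_2,U_3$ such that the pointwise product triple $(S_1T_1,S_2T_2,S_3T_3)$ satisfies the triple product property in $G$. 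Nothing further has to be checked about the TPP; the work of combining the two layers of index triples into one in $G$ was already done in \textit{Lemma 4.7}.

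What remains is purely a cardinality count: I must show $|S_iT_i|=n_im_i$ for each $i$, so that the produced index triple of $G$ corresponds to the claimed tensor $\langle n_1m_1,n_2m_2,n_3m_3\rangle = \langle n_1,n_2,n_3\rangle\cdot\langle m_1,m_2,m_3\rangle$. First, from the construction of the lifts in the proof of \textit{Lemma 4.7} (property $(3)$ there, $T_i\cap tH=\{t\}$ for each $t\in T_i$), the map $T_i\to U_i$, $t\mapsto tH$, is a bijection, whence $|T_i|=|U_i|=m_i$. Second, the multiplication map $S_i\times T_i\to S_iT_i$, $(s,t)\mapsto st$, is injective: if $st=s't'$ with $s,s'\in S_i\subseteq H$ and $t,t'\in T_i$, reducing modulo $H$ gives $tH=t'H$, so $t=t'$ by property $(3)$, and then left cancellation forces $s=s'$. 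Consequently $|S_iT_i|=|S_i|\,|T_i|=n_im_i$.

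Combining the two steps, $(S_1T_1,S_2T_2,S_3T_3)$ is an index triple of $G$ whose components have the required cardinalities, so $G$ realizes $\langle n_1m_1,n_2m_2,n_3m_3\rangle$ via this triple, and this triple is by construction the pointwise (componentwise) product of the chosen representative index triples of $H$ and $G/H$. I expect no real obstacle here: \textit{Lemma 4.7} has absorbed the only nontrivial content (propagating the TPP across a normal subgroup and its quotient); the remaining task is a clean two-line cardinality verification and a reminder that the lift in \textit{Lemma 4.7} is in bijection with the coset subset it lifts.
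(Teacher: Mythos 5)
Your proposal is correct and follows essentially the same route as the paper, which states this lemma is simply a consequence of Lemma~4.7; you usefully make explicit the cardinality verification $|S_iT_i|=n_im_i$ that the paper leaves implicit. One tiny slip: in your injectivity argument, once $t=t'$ is established, $st=s't$ gives $s=s'$ by \emph{right} cancellation (of $t$), not left cancellation; the conclusion is unaffected.
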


\begin{proof}
A consequence of \textit{Lemma 4.7}.
\end{proof}

\bigskip

An analogous result applies to direct product groups.

\bigskip

\begin{lemma}
If groups $G_{1}$ and $G_{2}$ realize tensors $\left\langle
n_{1},m_{1},p_{1}\right\rangle $ and $\left\langle
n_{2},m_{2},p_{2}\right\rangle $, respectively, then their direct product $%
G_{1}\times G_{2}$ realizes the pointwise product tensor $\left\langle
n_{1},m_{1},p_{1}\right\rangle \cdot \left\langle
n_{2},m_{2},p_{2}\right\rangle =\left\langle
n_{1}n_{2},m_{1}m_{2},p_{1}p_{2}\right\rangle $, where $\left\langle
n_{1},m_{1},p_{1}\right\rangle $ and $\left\langle
n_{2},m_{2},p_{2}\right\rangle $ correspond to certain representative index
triples of $G_{1}$ and $G_{2}$ resp. and $\left\langle
n_{1},m_{1},p_{1}\right\rangle $ and $\left\langle
n_{2},m_{2},p_{2}\right\rangle $ corresponds to the direct product of these
index triples.
\end{lemma}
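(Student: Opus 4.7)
The plan is to derive this as a direct consequence of Lemma 4.8, the extension result for index triples of direct product groups, by tracking the sizes of the relevant subsets.

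First I would unpack the hypothesis using the definition in \textit{(4.9)}: to say that $G_{1}$ realizes the tensor $\left\langle n_{1},m_{1},p_{1}\right\rangle $ means there exists an index triple $\left( S_{1},T_{1},U_{1}\right) $ of $G_{1}$, i.e.\ a triple satisfying the triple product property \textit{(4.3)}, with $\left\vert S_{1}\right\vert =n_{1}$, $\left\vert T_{1}\right\vert =m_{1}$, $\left\vert U_{1}\right\vert =p_{1}$. Similarly, the hypothesis on $G_{2}$ yields an index triple $\left( S_{2},T_{2},U_{2}\right) $ of $G_{2}$ with $\left\vert S_{2}\right\vert =n_{2}$, $\left\vert T_{2}\right\vert =m_{2}$, $\left\vert U_{2}\right\vert =p_{2}$.

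Next I would invoke \textit{Lemma 4.8} directly: from the two index triples $\left( S_{1},T_{1},U_{1}\right) $ and $\left( S_{2},T_{2},U_{2}\right) $, the direct product triple $\left( S_{1}\times S_{2},T_{1}\times T_{2},U_{1}\times U_{2}\right) $ is an index triple of $G_{1}\times G_{2}$. Taking cardinalities of each component, $\left\vert S_{1}\times S_{2}\right\vert =n_{1}n_{2}$, $\left\vert T_{1}\times T_{2}\right\vert =m_{1}m_{2}$, $\left\vert U_{1}\times U_{2}\right\vert =p_{1}p_{2}$, so this index triple corresponds to the tensor $\left\langle n_{1}n_{2},m_{1}m_{2},p_{1}p_{2}\right\rangle $. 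By the definition of \emph{realizes} in section \textit{4.1.3}, this exactly says that $G_{1}\times G_{2}$ realizes $\left\langle n_{1}n_{2},m_{1}m_{2},p_{1}p_{2}\right\rangle =\left\langle n_{1},m_{1},p_{1}\right\rangle \cdot \left\langle n_{2},m_{2},p_{2}\right\rangle $, with the claimed index triple being precisely the componentwise (direct) product of the two given index triples.

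There is no real obstacle here: the combinatorial content of the pointwise product of tensors was already isolated in \textit{Lemma 4.8}, and the present lemma is essentially its repackaging at the level of tensors rather than index triples. The only small bookkeeping step worth noting explicitly is the cardinality computation $\left\vert X_{1}\times X_{2}\right\vert =\left\vert X_{1}\right\vert \left\vert X_{2}\right\vert $ used to match the new index triple with the pointwise product tensor; once this is in place, the proof reduces to a single citation of \textit{Lemma 4.8}.
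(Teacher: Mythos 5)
Your argument is exactly the paper's: the paper proves this lemma by a one-line citation of \textit{Lemma 4.8}, and you have simply spelled out the (correct) bookkeeping — extracting the two index triples, forming their direct product, and computing the cardinalities of the component sets — that the citation leaves implicit. The proposal is correct and takes essentially the same route as the paper.
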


\begin{proof}
A consequence of \textit{Lemma 4.8}.
\end{proof}

\bigskip

Thus:

\bigskip

\begin{description}
\item[\textit{(4.13)}] $\mathfrak{S}(G_{1}\times G_{2})\supseteq \mathfrak{S}%
(G_{1})\cdot \mathfrak{S}(G_{2})$.
\end{description}

\bigskip

\subsection{\textit{Group-Algebra Embedding and Complexity of Matrix
Multiplication}}

\bigskip

The following is a fundamental result describing the embedding of matrix
multiplication into group algebras via the triple product property.

\bigskip

\begin{theorem}
If $\left\langle n,m,p\right\rangle $ $\epsilon $ $\mathfrak{S}(G)$ then $%
(1) $ $\left\langle n,m,p\right\rangle \leq _{K}\mathfrak{m}_{KG}$ and $(2)$ 
$\Re \left( \left\langle n,m,p\right\rangle \right) \leq \Re \left( 
\mathfrak{m}_{KG}\right) $.
\end{theorem}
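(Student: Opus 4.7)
The plan is to exhibit explicit linear maps witnessing the restriction $\langle n,m,p\rangle \leq_K \mathfrak{m}_{KG}$, from which part $(2)$ follows immediately by \textit{Proposition 2.2}. Since $G$ realizes $\langle n,m,p\rangle$, there is an index triple $(S,T,U)$ of $G$ with $|S|=n$, $|T|=m$, $|U|=p$ satisfying the triple product property.

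First I would define the three linear maps required by the definition of restriction. Let $\alpha : K^{n\times m}\longrightarrow KG$ send $A=(A_{s,t})_{s\in S,\,t\in T}$ to $\overline{A}=\sum_{s\in S,\,t\in T} A_{s,t}\, s^{-1}t$, and $\beta : K^{m\times p}\longrightarrow KG$ send $B=(B_{t',u})_{t'\in T,\,u\in U}$ to $\overline{B}=\sum_{t'\in T,\,u\in U} B_{t',u}\,t'^{-1}u$. These are linear by construction, and by \textit{Corollary 4.1} the maps $(s,t)\mapsto s^{-1}t$ and $(t',u)\mapsto t'^{-1}u$ are injective on $S\times T$ and $T\times U$, so $\alpha$ and $\beta$ are injective (distinct basis indices map to distinct group-algebra basis elements). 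For the third map $\gamma' : KG\longrightarrow K^{n\times p}$, I would take the linear map which, given $f=\sum_{g\in G} f_g g\in KG$, outputs the $n\times p$ matrix whose $(s'',u'')$-entry is $f_{s''^{-1}u''}$, for $s''\in S$, $u''\in U$; this is well-defined and linear.

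The core verification is to check that $\gamma'\circ \mathfrak{m}_{KG}\circ(\alpha\times\beta)=\langle n,m,p\rangle$. Computing in $KG$,
\begin{equation*}
\overline{A}\,\overline{B}\;=\;\sum_{s\in S,\,t\in T}\sum_{t'\in T,\,u\in U} A_{s,t}B_{t',u}\,s^{-1}t t'^{-1}u.
\end{equation*}
The crucial point is that the TPP forces $s^{-1}t t'^{-1}u=s''^{-1}u''$ for some $s''\in S,u''\in U$ \emph{only when} $s=s''$, $u=u''$, and $t=t'$: indeed, this equality rearranges to $(s''s^{-1})(tt'^{-1})(uu''^{-1})=1_G$ with the three factors in $Q(S),Q(T),Q(U)$ respectively, and \textit{(4.3)} then forces each factor to be $1_G$. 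Consequently the coefficient of $s''^{-1}u''$ in $\overline{A}\,\overline{B}$ is exactly $\sum_{t\in T}A_{s'',t}B_{t,u''}=(AB)_{s'',u''}$, so $\gamma'(\overline{A}\,\overline{B})=AB=\langle n,m,p\rangle(A,B)$.

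This proves $\langle n,m,p\rangle\leq_K \mathfrak{m}_{KG}$, establishing part $(1)$, and then part $(2)$, $\mathfrak{R}(\langle n,m,p\rangle)\leq \mathfrak{R}(\mathfrak{m}_{KG})$, is immediate from \textit{Proposition 2.2}. The only step requiring any care is the uniqueness argument for the coefficient extraction, which is precisely where the triple product property is used; everything else is a routine unwinding of definitions.
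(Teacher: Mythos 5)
Your proof is correct and follows essentially the same route as the paper: embed $A$ and $B$ into $KG$ via the maps $A\mapsto\sum A_{s,t}s^{-1}t$ and $B\mapsto\sum B_{t',u}t'^{-1}u$, use the TPP to show the coefficient of $s''^{-1}u''$ in $\overline{A}\,\overline{B}$ is exactly $(AB)_{s'',u''}$, and exhibit the extraction map as the third leg of the restriction. Your formulation of the extraction map $\gamma'$ as a globally defined coefficient-reading linear map on $KG$ (rather than as a partial inverse of an embedding of $K^{n\times p}$, as in the paper) is a slight cosmetic simplification, but the argument and its key TPP step are the same.
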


\begin{proof}
Assume that $G$ realizes $\left\langle n,m,p\right\rangle $ through an index
triple $\left( S,T,U\right) $, i.e. subsets $S,T,U\subseteq G$ have the
triple product property, and $\left\vert S\right\vert =n$, $\left\vert
T\right\vert =m$, $\left\vert U\right\vert =p$. \ First we prove that there
exists a restriction of $\mathfrak{m}_{KG}$ to $\left\langle
n,m,p\right\rangle $, where $\mathfrak{m}_{KG}$ is the ($K$-bilinear)
multiplication map of the group $K$-algebra $KG$ of $G$. \ Let $A=\left(
A_{ij}\right) $ $\epsilon $ $K^{n\times m}$ and $B=\left( B_{j^{\prime
}k}\right) $ $\epsilon $ $K^{m\times p}$ be arbitrary $n\times m$ and $%
m\times p$ $K$-matrices respectively. \ The product of $A$ and $B$ is the $%
n\times p$ $K$-matrix $AB=C=\left( C_{ik}\right) $ with entries $C_{ik}$
determined by the formula $C_{ik}=\underset{j=j^{\prime }}{\sum }%
A_{ij}B_{jk} $, $1\leq i\leq n$, $1\leq k\leq p$. We index the entries of $A$
by $S$ and $T$, and of $B$ by $T$ and $U$ as follows:%
\begin{eqnarray*}
A_{ij} &=&A_{s,t},\qquad \qquad s=s(i),t=t(j);1\leq i\leq n,1\leq j\leq m; \\
B_{j^{\prime }k} &=&B_{t^{^{\prime }},u},\qquad \qquad t^{^{\prime
}}=t^{^{\prime }}(j^{^{\prime }}),u=u(k);1\leq j^{^{\prime }}\leq m,1\leq
k\leq p.
\end{eqnarray*}%
We define linear maps $\mathfrak{a}:K^{n\times m}\longrightarrow KG$ and $%
\mathfrak{b}:K^{m\times p}\longrightarrow KG$ for embedding the matrices $%
A=\left( A_{s,t}\right) $ and $B=\left( B_{t^{^{\prime }},u}\right) $ into
the group $K$-algebra $KG$ of $G$ as follows:%
\begin{eqnarray*}
\mathfrak{a}(A) &=&\overline{A}=\underset{s\text{ }\epsilon \text{ }S,\text{ 
}t\text{ }\epsilon \text{ }T}{\sum }A_{s,t}s^{-1}t; \\
\mathfrak{b}(B) &=&\overline{B}=\underset{t^{^{\prime }}\text{ }\epsilon 
\text{ }T,\text{ }u\text{ }\epsilon \text{ }U}{\sum }B_{t^{^{\prime
}},u}t^{^{\prime }-1}u.
\end{eqnarray*}%
The injectivity of the mappings $\left( s,t\right) \longmapsto s^{-1}t$ and $%
\left( t,u\right) \longmapsto t^{-1}u$ on $S\times T$ and $T\times U$
respectively, proved in \textit{Lemma 4.1}, means that $Ker$ $\mathfrak{a}%
=\left\{ \text{\textbf{O}}_{n\times m}\right\} $ and $Ker$ $\mathfrak{b}%
=\left\{ \text{\textbf{O}}_{m\times p}\right\} $, where \textbf{O}$_{n\times
m}$ and \textbf{O}$_{m\times p}$ are zero matrices of dimensions $n\times m$
and $m\times p$ respectively, which proves the injectivity of $\mathfrak{a}$
and $\mathfrak{b}$. \ We index the $n\times p$ matrix product $AB=C=\left(
C_{ik}\right) $ by $S$ and $U$ as follows:%
\begin{equation*}
C_{ik}=C_{s,u},\qquad \qquad s=s(i),u=u(k);1\leq i\leq n,1\leq k\leq p
\end{equation*}%
\ We define an injective linear embedding map $\mathfrak{c:}K^{n\times
p}\longrightarrow KG$ as follows:%
\begin{equation*}
\mathfrak{c}(C)=\overline{C}=\underset{s\text{ }\epsilon \text{ }S,\text{ }u%
\text{ }\epsilon \text{ }U}{\sum }C_{s,u}s^{-1}u.
\end{equation*}%
The product of $\overline{A}$ and $\overline{B}$ in $KG$ is given by:%
\begin{eqnarray*}
\overline{A}\overline{B} &=&\underset{s\text{ }\epsilon \text{ }S,\text{ }t%
\text{ }\epsilon \text{ }T}{\sum }A_{s,t}s^{-1}t\cdot \underset{t^{^{\prime
}}\text{ }\epsilon \text{ }T,\text{ }u\text{ }\epsilon \text{ }U}{\sum }%
B_{t^{^{\prime }},u}t^{^{\prime }-1}u \\
&=&\underset{s\text{ }\epsilon \text{ }S,\text{ }t\text{ }\epsilon \text{ }T%
\text{ }}{\sum }\underset{t^{^{\prime }}\text{ }\epsilon \text{ }T,\text{ }u%
\text{ }\epsilon \text{ }U}{\sum }A_{s,t}B_{t^{^{\prime
}},u}s^{-1}tt^{^{\prime }-1}u \\
&=&\underset{s\text{ }\epsilon \text{ }S,\text{ }u\text{ }\epsilon \text{ }U%
\text{ }}{\sum }\left( \underset{t,t^{^{\prime }}\text{ }\epsilon \text{ }T%
\text{ }}{\sum }A_{s,t}B_{t^{^{\prime }},u}tt^{^{\prime }-1}\right) s^{-1}u.
\end{eqnarray*}%
Clearly, for each distinct pair $s$ $\epsilon $ $S,$ $u$ $\epsilon $ $U$, $%
C_{s,u}=\underset{t=t^{^{\prime }}\text{ }\epsilon \text{ }T}{\sum }%
A_{s,t}B_{t,u}$. \ By assumption, $(S,T,U)$ is an index triple of $G$, which
means that for arbitrary elements $s^{^{\prime }}$ $\epsilon $ $S$, $%
u^{^{\prime }}$ $\epsilon $ $U$, $s^{^{\prime }-1}u^{^{\prime
}}=s^{-1}tt^{^{\prime }-1}u\Longleftrightarrow s^{^{\prime
}}s^{-1}tt^{^{\prime }-1}uu^{^{\prime }-1}=1_{G}$ is true iff $s^{^{\prime
}}=s$, $t^{^{\prime }}=t$, $u^{^{\prime }}=u$. \ But the sum of those terms
of $\overline{A}\overline{B}$ for which $t^{^{\prime }}=t$ all have the
group term $s^{^{\prime }-1}u^{^{\prime }}$ and the coefficient $\underset{%
t=t^{^{\prime }}\text{ }\epsilon \text{ }T}{\sum }A_{s^{^{\prime
}},t}B_{t,u^{^{\prime }}}$ corresponds $1$-to-$1$ with the $(i,k)^{th}$
entry $C_{ik}$ of $C$ in $K^{n\times p}$ as given above. \ Thus, 
\begin{equation*}
\mathfrak{c}(C)=\overline{C}=\overline{A}\overline{B}=\mathfrak{a}(A)%
\mathfrak{b}(B).
\end{equation*}%
Then, we define an extraction map $\mathfrak{x}:K^{n\times p}\longleftarrow
KG$ by:%
\begin{equation*}
\mathfrak{x}(\overline{A}\overline{B})=C=\left( C_{ik}\right)
\end{equation*}%
where the $C_{i^{^{\prime }}k^{^{\prime }}}$ are coefficients of the terms $%
s^{^{\prime }-1}u^{^{\prime }}$ in $\overline{A}\overline{B}$, for $1\leq
i^{^{\prime }}=i^{^{\prime }}(s^{^{\prime }})\leq n$, $1\leq k^{^{\prime
}}=k^{^{\prime }}(t^{^{\prime }})\leq p$, $s^{^{\prime }}\epsilon $ $S$, $%
t^{^{\prime }}\epsilon $ $T$. \ Clearly $\mathfrak{x=a}^{-1}\cdot \mathfrak{b%
}^{-1}$, and by $\mathfrak{x}$ we will recover the desired matrix product $%
C=AB$ from $\overline{A}\overline{B}$. \ By the injectivity of the maps $%
\mathfrak{a}$ and $\mathfrak{b}$ it follows that $\mathfrak{x=a}^{-1}\cdot 
\mathfrak{b}^{-1}=$ $\mathfrak{c}^{-1}$. \ Since, for each $\left(
A,B\right) $ $\epsilon $ $K^{n\times m}\times K^{m\times p}$, we have the
composition $\mathfrak{x}\circ \mathfrak{m}_{KG}\circ \left( \mathfrak{a}%
\times \mathfrak{b}\right) \left( A,B\right) =C$ $\epsilon $ $K^{n\times p}$%
, it follows that:%
\begin{equation*}
\mathfrak{x}\circ \mathfrak{m}_{KG}\circ \left( \mathfrak{a}\times \mathfrak{%
b}\right) =\left\langle n,m,p\right\rangle .
\end{equation*}%
i.e. $\left\langle n,m,p\right\rangle \leq _{K}\mathfrak{m}_{KG}$, which
proves $(1)$. \ $(2)$ follows from applying \textit{Proposition 2.2} to $(1)$%
.
\end{proof}

\bigskip

If $K=%
\mathbb{C}
$ then we have an elementary corollary.

\bigskip

\begin{corollary}
If $\left\langle n,m,p\right\rangle $ $\epsilon $ $\mathfrak{S}(G)$ then $%
(1) $ $\left\langle n,m,p\right\rangle \leq _{%
\mathbb{C}
}\mathfrak{m}_{%
\mathbb{C}
G}\cong _{%
\mathbb{C}
}\underset{\varrho \text{ }\epsilon \text{ }Irrep(G)}{\oplus }\left\langle
d_{\varrho },d_{\varrho },d_{\varrho }\right\rangle $, $(2)$ $\left(
nmp\right) ^{\frac{\omega }{3}}\leq \Re (\left\langle n,m,p\right\rangle
)\leq \Re (\mathfrak{m}_{%
\mathbb{C}
G})\leq $ $\underset{\varrho \text{ }\epsilon \text{ }Irrep(G)}{\sum }\Re
\left( \left\langle d_{\varrho },d_{\varrho },d_{\varrho }\right\rangle
\right) $, and $(3)$ $\left( nmp\right) ^{\frac{\omega }{3}}\leq \Re
(\left\langle n,m,p\right\rangle )\leq \left\vert G\right\vert $ if $G$ is
Abelian.
\end{corollary}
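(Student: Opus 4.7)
The plan is to assemble the corollary from ingredients that are already fully developed in Chapters 2 and 3, so no new combinatorial work is required; the task is really one of bookkeeping.

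First I would handle part $(1)$. Theorem 4.15 applied to the field $K=\mathbb{C}$ gives immediately the restriction $\langle n,m,p\rangle \leq_{\mathbb{C}} \mathfrak{m}_{\mathbb{C}G}$. For the isomorphism, I would cite Wedderburn's decomposition (equation (3.16)), which yields the $\mathbb{C}$-algebra isomorphism $\mathbb{C}G \cong_{\mathbb{C}} \bigoplus_{\varrho \in Irrep(G)} \mathbb{C}^{d_\varrho \times d_\varrho}$. Since isomorphic $\mathbb{C}$-algebras have isomorphic bilinear multiplication maps (the algebra version of Corollary 2.3), this passes to $\mathfrak{m}_{\mathbb{C}G} \cong_{\mathbb{C}} \bigoplus_{\varrho \in Irrep(G)} \langle d_\varrho, d_\varrho, d_\varrho \rangle$, exactly equation (3.17).

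Next I would treat part $(2)$, which is a chain of three inequalities. The leftmost inequality $(nmp)^{\omega/3} \leq \mathfrak{R}(\langle n,m,p\rangle)$ is simply Proposition 2.13. The middle inequality $\mathfrak{R}(\langle n,m,p\rangle) \leq \mathfrak{R}(\mathfrak{m}_{\mathbb{C}G})$ follows from part $(1)$ via Proposition 2.2 (restriction preserves or decreases rank). For the rightmost inequality, I would invoke Corollary 2.3 on the isomorphism from part $(1)$ to get $\mathfrak{R}(\mathfrak{m}_{\mathbb{C}G}) = \mathfrak{R}\bigl(\bigoplus_{\varrho} \langle d_\varrho, d_\varrho, d_\varrho\rangle\bigr)$, and then apply part $(1)$ of Proposition 2.7 (rank of a direct sum of tensors is at most the sum of ranks) to obtain the bound $\sum_{\varrho \in Irrep(G)} \mathfrak{R}(\langle d_\varrho, d_\varrho, d_\varrho\rangle)$.

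Finally, part $(3)$ specializes part $(2)$ to Abelian $G$. In that case every irreducible character degree is $1$ (as observed just after Theorem 3.7), so each summand $\langle d_\varrho, d_\varrho, d_\varrho\rangle = \langle 1,1,1\rangle$ has rank $1$, and there are exactly $c(G) = |G|$ such summands. Substituting into the bound from part $(2)$ yields $\mathfrak{R}(\langle n,m,p\rangle) \leq \mathfrak{R}(\mathfrak{m}_{\mathbb{C}G}) \leq |G|$, which together with Proposition 2.13 gives the claim. There is really no obstacle here: the only mildly delicate point is the passage from the algebra isomorphism (3.16) to the isomorphism of bilinear multiplication maps (3.17), but this is precisely the content of the comment preceding (3.17) in the text, so I would simply cite it.
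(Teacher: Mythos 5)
Your argument is correct and follows essentially the same route as the paper's proof: part $(1)$ combines Theorem 4.13 with the isomorphism $(3.16)$--$(3.17)$; part $(2)$ chains Proposition 2.13, the rank monotonicity of restrictions (Proposition 2.2), and the direct-sum rank bound (the paper cites Proposition 2.6 together with $(3.18)$, you cite Corollary 2.3 and Proposition 2.7 --- equivalent ingredients); and part $(3)$ uses that all irreducible character degrees of an Abelian group equal $1$ and there are $c(G)=|G|$ of them. The only slip is a misreference: the embedding result you invoke in part $(1)$ is Theorem 4.13, not Theorem 4.15.
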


\begin{proof}
For $(1)$ we apply \textit{(3.17)} to part $(1)$ of \textit{Theorem 4.13}. \
For $(2)$\textit{\ }we apply \textit{(3.18), Proposition 2.6}, and \textit{%
Proposition 2.13} to part $(2)$ of \textit{Theorem 4.13}. \ For $(3)$ we
note that if $G$ is Abelian then $\Re (\mathfrak{m}_{%
\mathbb{C}
G})=\left\vert G\right\vert $. \ From the latter case, we can also deduce
that $\omega \leq \frac{\log \left\vert G\right\vert }{\log \left(
nmp\right) ^{1/3}}$ if $G$ is Abelian and $\left\langle n,m,p\right\rangle $ 
$\epsilon $ $\mathfrak{S}(G)$.
\end{proof}

\bigskip

\section{\protect\Large The Complexity of Matrix Multiplication Realized by
Groups}

\bigskip

\subsection{\textit{Pseudoexponents}}

\bigskip

For tensors $\left\langle n,m,p\right\rangle $ we define their \textit{size}
or \textit{order }by $z\left( \left\langle n,m,p\right\rangle \right) =nmp$,
and we write for their $t^{th}$ powers $\left( nmp\right) ^{t}$, where $t>0$
is any real number. \ If $t=\frac{1}{3}$ then $\left( nmp\right) ^{1/3}$ is
just the geometric mean of the components of $\left\langle
n,m,p\right\rangle $, i.e. the \textit{mean size} or \textit{mean order }of $%
\left\langle n,m,p\right\rangle $. \ We define by $\mathfrak{S}^{^{\prime
}}(G)$ the set of all tensors of size $>1$ realized by a nontrivial group $G$%
, i.e. $\mathfrak{S}^{^{\prime }}(G)=\mathfrak{S}(G)\backslash \left\{
\left\langle 1,1,1\right\rangle \right\} $, and for any tensor $\left\langle
n,m,p\right\rangle $ $\epsilon $ $\mathfrak{S}^{^{\prime }}(G)$, $nmp\geq 2$%
. \ Since $G$ is nontrivial, by \textit{Lemma 4.1}, $G$ always realizes the
tensor $\left\langle 2\leq \left\vert G\right\vert ,1,1\right\rangle $ and $%
\left\vert \mathfrak{S}^{^{\prime }}(G)\right\vert \geq 1$.

\bigskip

We define for $G$ a number called its \textit{pseudoexponent} $\alpha (G)$
by:

\bigskip

\begin{description}
\item[\textit{(4.14)}] $\alpha (G):=\underset{\left\langle
n,m,p\right\rangle \text{ }\epsilon \text{ }\mathfrak{S}^{\prime }(G)\text{ }%
}{\min }\left( \log _{\left( nmp\right) ^{1/3}}|G|\right) \equiv \log _{%
\underset{\left\langle n,m,p\right\rangle \text{ }\epsilon \text{ }\mathfrak{%
S}^{\prime }(G)}{\max }\left( nmp\right) ^{1/3}}|G|$.
\end{description}

\bigskip

By the definition and finiteness of $\mathfrak{S}^{^{\prime }}(G)$, $\alpha
(G)$ necessarily exists. \ From above, we see that $\alpha (G)$ is uniquely
determined by a \textit{maximal }tensor $\left\langle n^{^{\prime
}},m^{^{\prime }},p^{^{\prime }}\right\rangle \neq \left\langle
1,1,1\right\rangle $ realized by $G$, i.e. a tensor $\left\langle
n^{^{\prime }},m^{^{\prime }},p^{^{\prime }}\right\rangle $ $\epsilon $ $%
\mathfrak{S}^{^{\prime }}(G)$ such that $n^{^{\prime }}m^{^{\prime
}}p^{^{\prime }}\geq nmp$ for all other tensors $\left\langle
n,m,p\right\rangle $ $\epsilon $ $\mathfrak{S}^{^{\prime }}(G)$. \ Formally:

\bigskip

\begin{description}
\item[\textit{(4.15)}] $z^{^{\prime }}(G):=z\left( \left\langle n^{^{\prime
}},m^{^{\prime }},p^{^{\prime }}\right\rangle \right) :=\underset{%
\left\langle n,m,p\right\rangle \text{ }\epsilon \text{ }\mathfrak{S}%
^{^{\prime }}(G)}{\max }nmp$.
\end{description}

\bigskip

It follows that:

\bigskip

\begin{description}
\item[\textit{(4.16)}] $1<nmp\leq n^{^{\prime }}m^{^{\prime }}p^{^{\prime }}$%
,\qquad \qquad \qquad \qquad $\left\langle n,m,p\right\rangle $ $\epsilon $ $%
\mathfrak{S}(G)$.
\end{description}

\bigskip

The components of $\left\langle n^{^{\prime }},m^{^{\prime }},p^{^{\prime
}}\right\rangle $ are the sizes $\left\vert S^{^{\prime }}\right\vert
=n^{^{\prime }}$, $\left\vert T^{^{\prime }}\right\vert =m^{^{\prime }}$, $%
\left\vert U^{^{\prime }}\right\vert =p^{^{\prime }}$ of a maximal index
triple $\left( S^{^{\prime }},T^{^{\prime }},U^{^{\prime }}\right) $ of $G$,
which need not be unique. \ $\alpha (G)$ can be redefined as:

\bigskip

\begin{description}
\item[\textit{(4.17)}] $\alpha (G):=\log _{z^{^{\prime }}(G)^{1/3}}|G|$.
\end{description}

\bigskip

This is equivalent to:

\bigskip

\begin{description}
\item[\textit{(4.18)}] $z^{^{\prime }}(G)=|G|^{\frac{3}{\alpha (G)}}$.
\end{description}

\bigskip

We note that $z^{^{\prime }}(G)$ can be understood as the maximal size of
matrix multiplication supported by $G$, and $z^{^{\prime }}(G)^{\frac{1}{3}}$
is the geometric mean of this maximal size.

\bigskip

It follows from \textit{Proposition 4.9} that:

\bigskip

\begin{description}
\item[\textit{(4.19)}] $z^{^{\prime }}(H)\leq z^{^{\prime }}(G),\qquad
\qquad \qquad \qquad H\leq G.$
\end{description}

\bigskip

The following is an immediate consequence of \textit{Lemma 4.11}.

\bigskip

\begin{description}
\item[\textit{(4.20)}] $z^{^{\prime }}(G_{1})z^{^{\prime }}(G_{2})\leq
z^{^{\prime }}(G),\qquad \qquad \qquad \qquad G=G_{1}\times G_{2}$.
\end{description}

\bigskip

It follows from \textit{(4.14)} that:

\bigskip

\begin{description}
\item[\textit{(4.21)}] $\alpha (G)\leqslant \log _{\left( nmp\right)
^{1/3}}|G|$,\qquad \qquad \qquad \qquad $\left\langle n,m,p\right\rangle $ $%
\epsilon $ $\mathfrak{S}(G)$.
\end{description}

\bigskip

For any positive integer $n$ it follows that:

\bigskip

\begin{description}
\item[\textit{(4.22)}] $\alpha (G)\leqslant \log _{n}|G|$,\qquad \qquad
\qquad \qquad $\qquad \left\langle n,n,n\right\rangle $ $\epsilon $ $%
\mathfrak{S}(G)$.

\item \bigskip
\end{description}

The lower and upper bounds for $\alpha (G)$ are determined by the following
fundamental lemma.

\bigskip

\begin{lemma}
For any group $G$, $2<\alpha (G)\leq 3$. \ If $G$ is Abelian then $\alpha
(G)=3$.
\end{lemma}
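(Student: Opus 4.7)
By \textit{(4.17)}, the double-sided claim $2 < \alpha(G) \leq 3$ is equivalent to the size bounds $|G| \leq z^{\prime}(G) < |G|^{3/2}$, while $\alpha(G) = 3$ corresponds to $z^{\prime}(G) = |G|$.  The plan is to establish each of these bounds on $z^{\prime}(G)$ separately, relying on: (i) the trivial index triple that every nontrivial $G$ admits (\textit{Lemma 4.2}); (ii) the injectivity of the quotient-pair maps on index triples (\textit{Corollary 4.1}); and (iii) the Abelian special case (\textit{Corollary 4.4}).

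For the upper bound $\alpha(G) \leq 3$, I will invoke \textit{Lemma 4.2}(1): since $(G, \{1_G\}, \{1_G\})$ is an index triple of any nontrivial $G$, the tensor $\langle |G|, 1, 1\rangle$ lies in $\mathfrak{S}^{\prime}(G)$, so $z^{\prime}(G) \geq |G|$.  Then \textit{(4.17)} gives $\alpha(G) = 3\log|G|/\log z^{\prime}(G) \leq 3$.

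For the strict lower bound $\alpha(G) > 2$, i.e.\ $z^{\prime}(G) < |G|^{3/2}$, I would fix an arbitrary index triple $(S, T, U)$ with $|S| = n$, $|T| = m$, $|U| = p$, and apply \textit{Corollary 4.1} to the three pair-maps $(s,t) \mapsto s^{-1}t$, $(t,u) \mapsto t^{-1}u$, and $(s,u) \mapsto s^{-1}u$, which are injective into $G$ and thus yield $nm \leq |G|$, $mp \leq |G|$, and $np \leq |G|$.  Multiplying these three inequalities gives $(nmp)^2 \leq |G|^3$, hence $nmp \leq |G|^{3/2}$.  The hard part will be upgrading this to a strict inequality: equality would force $nm = mp = np = |G|$, hence $n = m = p = |G|^{1/2}$, so $|G|$ must be a perfect square and each pair-map must be a bijection onto $G$.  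The main obstacle is to rule out this extremal combinatorial configuration using the full triple product property \textit{(4.3)}; I would attack it by examining the joint structure of the symmetric quotient sets $Q(S), Q(T), Q(U)$ under the constraint that $xyz = 1_G$ has only the trivial solution for $(x, y, z) \in Q(S) \times Q(T) \times Q(U)$, and derive a contradiction with the simultaneous surjectivity of all three pair-maps (for instance, by showing that the forced triple intersection structure at $1_G$ would give a nontrivial cancellation among the $Q$-sets).

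Finally, for the Abelian case, \textit{Corollary 4.4} gives $|S||T||U| \leq |G|$ for every index triple of an Abelian $G$, so $z^{\prime}(G) \leq |G|$.  Combined with the first step, $z^{\prime}(G) = |G|$, and therefore $\alpha(G) = \log_{|G|^{1/3}} |G| = 3$ exactly.
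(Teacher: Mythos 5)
Your upper bound ($z'(G)\ge |G|$ via the trivial triple, hence $\alpha(G)\le 3$) and your Abelian case (via the injectivity of the triple product map, \textit{Corollary 4.4}) both match the paper exactly and are fine. The problem is the strict lower bound $\alpha(G)>2$: you correctly reduce to ruling out the extremal configuration $nm=np=mp=|G|$, but at exactly that point you stop proving and start describing intentions ("I would attack it by examining the joint structure of the symmetric quotient sets\ldots and derive a contradiction"). No contradiction is actually derived, and the sketched mechanism ("a nontrivial cancellation among the $Q$-sets at $1_G$") is not an argument. Since this is the only nontrivial part of the lemma, the proposal has a genuine gap.

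The paper closes this gap with a short, concrete sub-lemma: if $nm=|G|$, i.e.\ the injective map $(s,t)\mapsto s^{-1}t$ is actually a \emph{bijection} $S\times T\to G$, then $|U|=1$. Indeed, fix $u,u'\in U$ and any $s'\in S$, $t\in T$; by surjectivity the element $s'^{-1}uu'^{-1}t\in G$ can be written as $s^{-1}t'$ with $s\in S$, $t'\in T$, and then
\begin{equation*}
s's^{-1}t't^{-1}u'u^{-1}=s'\left(s'^{-1}uu'^{-1}t\right)t^{-1}u'u^{-1}=1_G,
\end{equation*}
so the triple product property forces $u'u^{-1}=1_G$, i.e.\ $u'=u$, whence $p=1$ (and cyclically $np=|G|\Rightarrow m=1$, $mp=|G|\Rightarrow n=1$). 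In your extremal case this gives $n=m=p=|G|^{1/2}$ and simultaneously $p=1$, forcing $|G|=1$ and contradicting nontriviality; equivalently, not all of $nm,np,mp$ can equal $|G|$, so $(nmp)^2<|G|^3$ strictly and $z'(G)<|G|^{3/2}$. You should supply this (or an equivalent) argument; without it the claim $\alpha(G)>2$ is unproved.
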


\begin{proof}
By \textit{Lemma 4.2 }$G$ realizes the tensor $\left\langle
1,1,|G|\right\rangle $ of size $|G|$, which shows that $\alpha (G)\leq 3$ by 
\textit{(4.21)}. \ For the lower bound, we note first that, for any index
triple $(S,T,U)$ $\epsilon $ $\mathfrak{I}(G)$ with associated tensor $%
\left\langle n,m,p\right\rangle $ $\epsilon $ $\mathfrak{S}(G)$, by \textit{%
Lemma 4.1}, the mappings $(s,t)\longmapsto s^{-1}t$, $(s,u)\longmapsto
s^{-1}u$, $(t,u)\longmapsto t^{-1}u$ on $S\times T$, $S\times U$, and $%
T\times U$ respectively, are injective, which means that $nm\leq \left\vert
G\right\vert $, $np\leq \left\vert G\right\vert $, and $mp\leq \left\vert
G\right\vert $. \ We now prove that if equalities hold in these inequalities
then $p=1$, or $m=1$, or $n=1$, respectively, starting with $nm\leq
\left\vert G\right\vert $. \ Assume that $nm=\left\vert G\right\vert
\Longleftrightarrow S^{-1}T=G$. \ Then, for arbitrary elements $u,$ $%
u^{^{\prime }}$ $\epsilon $ $U$ and $s^{^{\prime }}$ $\epsilon $ $S$ and $t$ 
$\epsilon $ $T$, there exist unique $s$ $\epsilon $ $S$ and $t^{^{\prime }}$ 
$\epsilon $ $T$ such that $s^{-1}t^{^{\prime }}=s^{^{\prime }-1}uu^{^{\prime
}-1}t$, which implies $s^{^{\prime }}s^{-1}t^{^{\prime }}t^{-1}u^{^{\prime
}}u^{-1}=s^{^{\prime }}s^{^{\prime }-1}uu^{^{\prime }-1}tt^{-1}u^{^{\prime
}}u^{-1}=uu^{^{\prime }-1}u^{^{\prime }}u^{-1}=1_{G}$. \ Hence, $u^{^{\prime
}}u^{-1}=1_{G}$, and $u^{^{\prime }}=u$, i.e. $\left\vert U\right\vert =p=1$%
. \ In the same way we can prove that $np=\left\vert G\right\vert
\Longrightarrow m=1$, and $mp=\left\vert G\right\vert \Longrightarrow n=1$.
\ Hence, if $\left\langle n,m,p\right\rangle $ $\epsilon $ $\mathfrak{S}(G)$
then \textit{not} all of $n,$ $m,$ $p$ $=1$, so that \textit{not }all of $%
nm, $ $np,$ $mp=\left\vert G\right\vert $, i.e. if $G$ realizes an index
triple $(S,T,U)$ corresponding to a tensor $\left\langle n,m,p\right\rangle $
then $\left( nmp\right) ^{2}<\left\vert G\right\vert ^{3}$ and, therefore, $%
nmp<\left\vert G\right\vert ^{\frac{3}{2}}$. \ Therefore, $z^{^{\prime
}}(G)<\left\vert G\right\vert ^{\frac{3}{2}}$ maximally, and $\alpha
(G)=\log _{z^{^{\prime }}(G)^{1/3}}\left\vert G\right\vert >\log _{\left(
\left\vert G\right\vert ^{3/2}\right) ^{1/3}}\left\vert G\right\vert =2$%
.\linebreak \linebreak Finally, if $G$ is Abelian and has the maximal matrix
tensor $\left\langle n^{^{\prime }},m^{^{\prime }},p^{^{\prime
}}\right\rangle $ of size $z^{^{\prime }}(G)=n^{^{\prime }}m^{^{\prime
}}p^{^{\prime }}$ then, by the \textit{Corollary 4.4} to \textit{Lemma 4.3}, 
$z^{^{\prime }}(G)\leq \left\vert G\right\vert $. \ This implies that $\log
_{\left\vert G\right\vert ^{1/3}}\left\vert G\right\vert =3\leq \log
_{z^{^{\prime }}(G)^{1/3}}\left\vert G\right\vert =\alpha (G)$, i.e. $\alpha
(G)=3$. \ The negation of the preceding statement is that $\alpha (G)<3$
implies that $G$ is non-Abelian.
\end{proof}

\bigskip

This leads to an elementary corollary.

\bigskip

\begin{corollary}
If $\left\langle n,m,p\right\rangle $ $\epsilon $ $\mathfrak{S}(G)$, $\left(
nmp\right) ^{\frac{1}{3}}<\left\vert G\right\vert ^{\frac{1}{2}}$.
\end{corollary}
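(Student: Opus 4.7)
The plan is to extract the required inequality directly from the argument already developed inside the proof of \textit{Lemma 4.16}, since essentially all the work has been done. Concretely, the key step in that proof established that whenever $\left\langle n,m,p\right\rangle $ $\epsilon $ $\mathfrak{S}(G)$ arises from an index triple $(S,T,U)$, one cannot simultaneously have $nm=|G|$, $np=|G|$, and $mp=|G|$; consequently $(nmp)^{2}<|G|^{3}$. Taking cube roots of both sides and then square roots yields $\left( nmp\right) ^{\frac{1}{3}}<\left\vert G\right\vert ^{\frac{1}{2}}$, which is exactly the desired conclusion.

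So the proof I would write is essentially a one-liner referencing \textit{Lemma 4.16}. First I would invoke \textit{Corollary 4.1}, which provides the three injective maps $(s,t)\mapsto s^{-1}t$, $(s,u)\mapsto s^{-1}u$, $(t,u)\mapsto t^{-1}u$ on the three pairwise products, giving $nm\leq |G|$, $np\leq |G|$, $mp\leq |G|$. Then, using the TPP argument in \textit{Lemma 4.16} (the part showing that an equality such as $nm=|G|$ forces $p=1$, and symmetrically for the others), I would note that at least one of the three inequalities must be strict, so that $(nm)(np)(mp)=(nmp)^{2}<|G|^{3}$, and finally raise this to the power $1/6$.

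The main, and only, obstacle is making sure the trivial tensor $\left\langle 1,1,1\right\rangle $ $\epsilon $ $\mathfrak{S}(G)$ is also covered, since the strictness argument in \textit{Lemma 4.16} was phrased in terms of tensors in $\mathfrak{S}^{\prime}(G)$. This is a triviality: for $\left\langle n,m,p\right\rangle =\left\langle 1,1,1\right\rangle $ we have $\left( nmp\right) ^{\frac{1}{3}}=1<\sqrt{2}\leq |G|^{\frac{1}{2}}$, since $G$ is nontrivial by standing assumption. Thus the bound holds uniformly on all of $\mathfrak{S}(G)$, completing the proof.
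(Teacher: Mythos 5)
Your proposal is correct and matches the paper's approach: the paper states this as an immediate consequence of the lemma establishing $2<\alpha(G)\leq 3$, whose proof already shows that for any index triple the three pairwise-product inequalities $nm\leq|G|$, $np\leq|G|$, $mp\leq|G|$ cannot all be equalities, hence $(nmp)^{2}<|G|^{3}$. One small note: your reference should be to \textit{Lemma 4.15}, not 4.16 (4.16 is the corollary itself); and your extra treatment of $\langle 1,1,1\rangle$, while harmless, is not actually needed, since the contradiction in that lemma's proof (all three equalities force $n=m=p=1$, hence $nm=1\neq|G|$ for a nontrivial group) already applies to the trivial tensor and yields the strict inequality uniformly on $\mathfrak{S}(G)$.
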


\bigskip

By \textit{Lemma 4.15}:

\bigskip

\begin{description}
\item[\textit{(4.23)}] $\left\vert G\right\vert \leq z^{^{\prime
}}(G)<\left\vert G\right\vert ^{3/2}$.
\end{description}

\bigskip

The following is another elementary result.

\bigskip

\begin{corollary}
$\alpha (G)<3$ iff $z^{^{\prime }}(G)>\left\vert G\right\vert $. \
Equivalently, $\alpha (G)<3$ iff $nmp>\left\vert G\right\vert $, for some
tensor $\left\langle n,m,p\right\rangle $ $\epsilon $ $\mathfrak{S}\left(
G\right) $.
\end{corollary}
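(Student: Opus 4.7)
The plan is to unwind the definition of $\alpha(G)$ and use the bounds on $z'(G)$ already established in Lemma 4.15. By the formula (4.17), $\alpha(G) = \log_{z'(G)^{1/3}} |G|$, and by Lemma 4.15 we have $|G| \leq z'(G) < |G|^{3/2}$, so in particular $z'(G)^{1/3} \geq |G|^{1/3} > 1$ (since $G$ is nontrivial, $|G| \geq 2$). This monotonicity of the logarithm in its argument, combined with the fact that the base exceeds $1$, is the key technical tool.

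First I would observe that $\alpha(G) < 3$ is equivalent, after exponentiating by the base $z'(G)^{1/3} > 1$, to the strict inequality $|G| < (z'(G)^{1/3})^{3} = z'(G)$. This establishes the first equivalence $\alpha(G) < 3 \iff z'(G) > |G|$.

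For the second equivalent statement, I would use the definition (4.15) that $z'(G) = \max_{\langle n,m,p\rangle\, \epsilon\, \mathfrak{S}'(G)} nmp$. Then $z'(G) > |G|$ holds if and only if there exists some tensor $\langle n^{\prime},m^{\prime},p^{\prime}\rangle\, \epsilon\, \mathfrak{S}'(G)$ realized by $G$ with $n^{\prime}m^{\prime}p^{\prime} > |G|$. Since $\mathfrak{S}'(G) = \mathfrak{S}(G) \setminus \{\langle 1,1,1\rangle\}$ and the excluded tensor has size $1 < |G|$, this is equivalent to the existence of some tensor $\langle n,m,p\rangle\, \epsilon\, \mathfrak{S}(G)$ with $nmp > |G|$, which finishes the proof.

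There is no real obstacle here: the corollary is essentially a direct reading of the definition of $\alpha(G)$ as a logarithm, together with the monotonicity of $\log$ in its argument when the base exceeds $1$. The only subtlety is confirming that $z'(G)^{1/3} > 1$, which is precisely what Lemma 4.15 (equivalently (4.23)) guarantees for nontrivial $G$.
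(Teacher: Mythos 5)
Your proof is correct and takes exactly the approach the paper implicitly intends (the paper states this as an elementary consequence of Lemma 4.15 and gives no explicit argument). Unwinding $\alpha(G)=\log_{z'(G)^{1/3}}|G|$ via the monotonicity of $\log_b$ with base $b=z'(G)^{1/3}>1$ (guaranteed by $z'(G)\geq|G|\geq 2$ from (4.23)) gives the first equivalence, and the second follows from the definition (4.15) of $z'(G)$ as a maximum over $\mathfrak{S}'(G)$, together with your correct observation that excluding $\left\langle 1,1,1\right\rangle$ is harmless since its size $1<|G|$.
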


\bigskip

Thus, the closer $z^{^{\prime }}(G)$ is to $\left\vert G\right\vert $, the
closer $\alpha (G)$ is to $3$ and $G$ is close to being the $3^{rd}$ power
of the maximal mean order of matrix multiplication that it supports. \ The
closer $z^{^{\prime }}(G)$ is to $\left\vert G\right\vert ^{3/2}$, the
closer $\alpha (G)$ is close to $2$ and $G$ is close to being the $2^{nd}$
power of maximal mean order of matrix multiplication it supports. \ For
example, if $G_{1}$ and $G_{2}$ are two finite groups such that $\alpha
(G_{1})\leq \alpha (G_{2})$, then $G_{1}$ is at least as "efficient" in
supporting matrix multiplication as $G_{2}$, or more efficient if $\alpha
(G_{1})<\alpha (G_{2})$.

\bigskip

For subgroups we have an easy result.

\bigskip

\begin{lemma}
For any nontrivial subgroup $H\leq G$, $\alpha (G)\leq \log _{z^{^{\prime
}}(H)^{1/3}}\left[ G:H\right] +\alpha (H).$
\end{lemma}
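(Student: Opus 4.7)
The claim is really just a logarithmic rewriting of the monotonicity $z^{\prime}(H)\le z^{\prime}(G)$ already recorded in \textit{(4.19)}. The plan is to expand both sides of the inequality using the definition \textit{(4.17)} of $\alpha$, use Lagrange's theorem to split $|G|=[G:H]\cdot|H|$, and then invoke \textit{(4.19)} together with the elementary fact that $b\mapsto \log_b x$ is decreasing on $(1,\infty)$ for fixed $x>1$.

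First I would unfold the right-hand side. By the definition \textit{(4.17)} applied to $H$, $\alpha(H)=\log_{z^{\prime}(H)^{1/3}}|H|$, so
\[
\log_{z^{\prime}(H)^{1/3}}[G:H]+\alpha(H)=\log_{z^{\prime}(H)^{1/3}}[G:H]+\log_{z^{\prime}(H)^{1/3}}|H|=\log_{z^{\prime}(H)^{1/3}}\bigl([G:H]\cdot|H|\bigr)=\log_{z^{\prime}(H)^{1/3}}|G|,
\]
using $|G|=[G:H]\cdot|H|$. Thus the statement reduces to
\[
\log_{z^{\prime}(G)^{1/3}}|G|\;\le\;\log_{z^{\prime}(H)^{1/3}}|G|.
\]

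Next I would verify this reduced inequality. Since $H$ is nontrivial, \textit{Lemma 4.15} applied to $H$ gives $z^{\prime}(H)>1$, and likewise $z^{\prime}(G)>1$; also $|G|>1$. By \textit{(4.19)}, which in turn follows from \textit{Proposition 4.9} (every index triple of $H$ is an index triple of $G$, so every tensor realized by $H$ is realized by $G$, hence the maximum is at least as large for $G$), we have $z^{\prime}(H)\le z^{\prime}(G)$, and therefore $z^{\prime}(H)^{1/3}\le z^{\prime}(G)^{1/3}$. Since $\log_b x=\ln x/\ln b$ is a decreasing function of $b$ on $(1,\infty)$ when $x>1$, this enlargement of the base can only decrease the logarithm, giving the desired inequality and completing the argument.

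The proof is essentially bookkeeping, so there is no real obstacle; the one thing to state carefully is the hypothesis that $H$ is nontrivial, so that $z^{\prime}(H)^{1/3}>1$ and the base of the logarithm on the right-hand side is admissible (otherwise $\alpha(H)$ itself is not defined via \textit{(4.17)}). If one wants a slightly sharper formulation, the same argument shows that $\alpha(G)=\log_{z^{\prime}(H)^{1/3}}[G:H]+\alpha(H)$ exactly when $z^{\prime}(G)=z^{\prime}(H)$, i.e.\ when passing from $H$ to $G$ produces no new, larger matrix tensors.
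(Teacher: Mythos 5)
Your proof is correct and is essentially the paper's own argument: both rest on the decomposition $\left\vert G\right\vert =\left[ G:H\right] \left\vert H\right\vert $, the monotonicity $z^{^{\prime }}(H)\leq z^{^{\prime }}(G)$ from \textit{(4.19)}, and the fact that enlarging the base of a logarithm (both bases exceeding $1$) can only decrease its value; you merely combine the right-hand side into a single logarithm before comparing, whereas the paper compares the two fractions term by term. Your explicit remark that nontriviality of $H$ guarantees $z^{^{\prime }}(H)^{1/3}>1$ is a welcome detail the paper leaves implicit.
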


\begin{proof}
$\left\vert G\right\vert =\left[ G:H\right] \left\vert H\right\vert $, and
from \textit{(4.19)} $z^{^{\prime }}(H)\leq z^{^{\prime }}(G)$. \ Therefore, 
\begin{eqnarray*}
\alpha (G) &=&\frac{\log \left\vert G\right\vert }{\log z^{^{\prime
}}(G)^{1/3}} \\
&=&\frac{\log \left[ G:H\right] }{\log z^{^{\prime }}(G)^{1/3}}+\frac{\log
\left\vert H\right\vert }{\log z^{^{\prime }}(G)^{1/3}} \\
&\leq &\frac{\log \left[ G:H\right] }{\log z^{^{\prime }}(H)^{1/3}}+\frac{%
\log \left\vert H\right\vert }{\log z^{^{\prime }}(H)^{1/3}} \\
&=&\frac{\log \left[ G:H\right] }{\log z^{^{\prime }}(H)^{1/3}}+\alpha
\left( H\right) .
\end{eqnarray*}
\end{proof}

\bigskip

For normal subgroups we have the following basic result.

\bigskip

\begin{lemma}
For any nontrivial normal subgroup $H\vartriangleleft G$ and corresponding
factor group $G/H$, $\alpha (G)\leq \max (\alpha (H),\alpha (G/H))$.
\end{lemma}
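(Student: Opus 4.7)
The plan is to bound $z'(G)$ from below using the maximal tensors of $H$ and $G/H$ via the index-triple lifting construction from Lemma 4.7, then combine this with $|G|=|H|\cdot|G/H|$ through a standard mediant inequality.

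First I would dispose of the degenerate case $H=G$: then $G/H$ is trivial, $\alpha(G)=\alpha(H)$, and the inequality holds vacuously. So assume $H$ is a proper nontrivial normal subgroup, which makes $G/H$ also nontrivial and guarantees that $\alpha(H)$ and $\alpha(G/H)$ are both defined and both lie in $(2,3]$ by Lemma 4.15. Let $(S_1,S_2,S_3)$ be a maximal index triple of $H$, realizing a tensor of size $z'(H)=|S_1||S_2||S_3|$, and let $(U_1,U_2,U_3)$ be a maximal index triple of $G/H$, realizing a tensor of size $z'(G/H)=|U_1||U_2||U_3|$.

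Next I would invoke Lemma 4.7 to produce lifts $T_i\subseteq G$ of the $U_i$ such that $(S_1T_1,S_2T_2,S_3T_3)$ is an index triple of $G$. Because each $T_i$ consists of one representative from each coset in $U_i$ and $S_i\subseteq H$, the products $s_it_i$ with $s_i\in S_i$, $t_i\in T_i$ are all distinct (different $t_i$ lie in distinct cosets of $H$, and inside each coset $S_i$ contributes $|S_i|$ distinct elements), so $|S_iT_i|=|S_i||T_i|=|S_i||U_i|$. Hence $G$ realizes a tensor of size $z'(H)\cdot z'(G/H)$, giving the key inequality
\begin{equation*}
z'(G)\ \geq\ z'(H)\,z'(G/H).
\end{equation*}
Combined with $|G|=|H|\cdot|G/H|$, this yields
\begin{equation*}
\alpha(G)\ =\ \frac{3\log|G|}{\log z'(G)}\ \leq\ \frac{3\log|H|+3\log|G/H|}{\log z'(H)+\log z'(G/H)}.
\end{equation*}

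Finally I would apply the elementary mediant inequality: for positive reals $a,b,p,q$ one has $\tfrac{p+q}{a+b}\leq \max\!\bigl(\tfrac{p}{a},\tfrac{q}{b}\bigr)$ (if $\tfrac{p}{a}\leq\tfrac{q}{b}$ then $pb\leq qa$, whence $b(p+q)\leq q(a+b)$). Setting $p=3\log|H|$, $a=\log z'(H)$, $q=3\log|G/H|$, $b=\log z'(G/H)$ — all strictly positive since both $H$ and $G/H$ are nontrivial, so $z'(H),z'(G/H)>1$ — gives $\alpha(G)\leq\max(\alpha(H),\alpha(G/H))$, as required. The only non-routine step is the lower bound on $z'(G)$, which is immediate from Lemma 4.7; the rest is arithmetic, so I do not anticipate a real obstacle.
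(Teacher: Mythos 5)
Your proof is correct and rests on the same key observation as the paper's: by \emph{Lemma 4.7} (equivalently \emph{Lemma 4.11}), $z'(G)\geq z'(H)\,z'(G/H)$, which together with $|G|=|H|\cdot|G/H|$ constrains $\alpha(G)$. Where you diverge is in the arithmetic finish. The paper raises both sides to the $\alpha(G)$ power, uses the identity $z'(G)^{\alpha(G)}=z'(H)^{\alpha(H)}\,z'(G/H)^{\alpha(G/H)}$ (which follows from $z'(\cdot)^{\alpha(\cdot)}=|\cdot|^3$), and then runs two symmetric proof-by-contradiction subcases to rule out $\alpha(G)>\max(\alpha(H),\alpha(G/H))$. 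You instead pass directly to
\begin{equation*}
\alpha(G)=\frac{3\log|G|}{\log z'(G)}\leq \frac{3\log|H|+3\log|G/H|}{\log z'(H)+\log z'(G/H)}
\end{equation*}
and close with the mediant inequality $\frac{p+q}{a+b}\leq\max\bigl(\frac{p}{a},\frac{q}{b}\bigr)$. This is a tidier and more direct conclusion: one line of algebra in place of two case-split contradictions. You also correctly flag the positivity hypotheses needed for the mediant step ($z'(H),z'(G/H)>1$, so the denominators are strictly positive), which the paper leaves implicit, and you handle the degenerate case $H=G$ explicitly, which the paper does not. Both proofs are sound; yours is the cleaner write-up of the same underlying idea.
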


\begin{proof}
Let $\left\langle m_{1}^{^{\prime }},m_{2}^{^{\prime }},m_{3}^{^{\prime
}}\right\rangle $ $\epsilon $ $\mathfrak{S}(H)$ and $\left\langle
p_{1}^{^{\prime }},p_{2}^{^{\prime }},p_{3}^{^{\prime }}\right\rangle $ $%
\epsilon $ $\mathfrak{S}(G/H)$ be the maximal tensors realized by $%
H\vartriangleleft G$ and its factor group $G/H$, respectively, and let $%
\left\langle n_{1}^{^{\prime }},n_{2}^{^{\prime }},n_{3}^{^{\prime
}}\right\rangle $ $\epsilon $ $\mathfrak{S}(G)$ be the maximal tensor
realized by $G$. \ Then, by \textit{(4.18)}, we have the identities:%
\begin{equation*}
z^{^{\prime }}(H)^{\alpha (H)}=\left\vert H\right\vert ^{3},\text{ }%
z^{^{\prime }}(G/H)^{\alpha (G/H)}=\left\vert G/H\right\vert ^{3},\text{ }%
z^{^{\prime }}(G)^{\alpha (G)}=\left\vert G\right\vert ^{3}.
\end{equation*}%
By \textit{Lemma 4.11} $\left\langle m_{1}^{^{\prime }}p_{1}^{^{\prime
}},m_{2}^{^{\prime }}p_{2}^{^{\prime }},m_{3}^{^{\prime }}p_{3}^{^{\prime
}}\right\rangle =\left\langle m_{1}^{^{\prime }},m_{2}^{^{\prime
}},m_{3}^{^{\prime }}\right\rangle \cdot \left\langle p_{1}^{^{\prime
}},p_{2}^{^{\prime }},p_{3}^{^{\prime }}\right\rangle $ $\epsilon $ $%
\mathfrak{S}(G)$, and therefore:%
\begin{eqnarray*}
&&z^{^{\prime }}(H)z^{^{\prime }}(G/H) \\
&=&z\left( \left\langle m_{1}^{^{\prime }},m_{2}^{^{\prime
}},m_{3}^{^{\prime }}\right\rangle \cdot \left\langle p_{1}^{^{\prime
}},p_{2}^{^{\prime }},p_{3}^{^{\prime }}\right\rangle \right) \\
&=&z\left( \left\langle m_{1}^{^{\prime }}p_{1}^{^{\prime }},m_{2}^{^{\prime
}}p_{2}^{^{\prime }},m_{3}^{^{\prime }}p_{3}^{^{\prime }}\right\rangle
\right) \\
&=&m_{1}^{^{\prime }}p_{1}^{^{\prime }}m_{2}^{^{\prime }}p_{2}^{^{\prime
}}m_{3}^{^{\prime }}p_{3}^{^{\prime }} \\
&\leq &n_{1}^{^{\prime }}n_{2}^{^{\prime }}n_{3}^{^{\prime }} \\
&=&z^{^{\prime }}(G)
\end{eqnarray*}%
by the maximality of $\left\langle n_{1}^{^{\prime }},n_{2}^{^{\prime
}},n_{3}^{^{\prime }}\right\rangle $ for $G$. \ Then also%
\begin{equation*}
\left( z^{^{\prime }}(H)z^{^{\prime }}(G/H)\right) ^{\alpha (G)}=z^{^{\prime
}}(H)^{\alpha (G)}z^{^{\prime }}(G/H)^{\alpha (G)}\leq z^{^{\prime
}}(G)^{\alpha (G)}.
\end{equation*}%
Using the identities $\left\vert H\right\vert \left\vert G/H\right\vert
=\left\vert G\right\vert $, and $\left\vert H\right\vert ^{3}\left\vert
G/H\right\vert ^{3}=\left\vert G\right\vert ^{3}$, we see that:\ 
\begin{equation*}
z^{^{\prime }}(G)^{\alpha (G)}=z^{^{\prime }}(H)^{\alpha (H)}z^{^{\prime
}}(G/H)^{\alpha (G/H)}.
\end{equation*}%
If $\max (\alpha (H),\alpha (G/H))=\alpha (H)$ and $\alpha (G)>\alpha (H)$
then:%
\begin{eqnarray*}
\left( z^{^{\prime }}(H)z^{^{\prime }}(G/H)\right) ^{\alpha (G)}
&=&z^{^{\prime }}(H)^{\alpha (G)}z^{^{\prime }}(G/H)^{\alpha (G)} \\
&>&z^{^{\prime }}(H)^{\alpha (H)}z^{^{\prime }}(G/H)^{\alpha (H)} \\
&\geq &z^{^{\prime }}(H)^{\alpha (H)}z^{^{\prime }}(G/H)^{\alpha (G/H)} \\
&=&z^{^{\prime }}(G)^{\alpha (G)}
\end{eqnarray*}%
i.e. a contradiction. \ Similarly, if $\max (\alpha (H),\alpha (G/H))=\alpha
(G/H)$ and $\alpha (G)>\alpha (G/H)$ then:%
\begin{eqnarray*}
\left( z^{^{\prime }}(H)z^{^{\prime }}(G/H)\right) ^{\alpha (G)}
&=&z^{^{\prime }}(H)^{\alpha (G)}z^{^{\prime }}(G/H)^{\alpha (G)} \\
&>&z^{^{\prime }}(H)^{\alpha (G/H)}z^{^{\prime }}(G/H)^{\alpha (G/H)} \\
&\geq &z^{^{\prime }}(H)^{\alpha (H)}z^{^{\prime }}(G/H)^{\alpha (G/H)} \\
&=&z^{^{\prime }}(G)^{\alpha (G)}
\end{eqnarray*}%
also a contradiction. \ Thus it must be that $\alpha (G)\leq \alpha (H),$ $%
\alpha (G/H)$, which means that:%
\begin{equation*}
\alpha (G)\leq \max (\alpha (H),\alpha \left( G/H\right) ).
\end{equation*}%
Equality above holds trivially if $G$ is Abelian.
\end{proof}

\bigskip

For direct product groups we have the following result.

\bigskip

\begin{lemma}
$\alpha \left( G^{\times k}\right) \leq \alpha \left( G\right) $, where $%
G^{\times k}$ is the $k$-fold direct product of $G$.
\end{lemma}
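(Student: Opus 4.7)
The plan is to exploit Lemma 4.11 (the direct product extension for matrix tensors) together with the defining identity $z'(G)^{\alpha(G)/3} = |G|$, so that the inequality reduces to a one-line monotonicity computation.

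First I would use Lemma 4.11 inductively. Let $\langle n', m', p' \rangle \in \mathfrak{S}'(G)$ be a maximal tensor realized by $G$, so that $z'(G) = n'm'p'$. Applying Lemma 4.11 with $G_1 = G$ and $G_2 = G$ gives $\langle n'^2, m'^2, p'^2 \rangle \in \mathfrak{S}(G^{\times 2})$ as the pointwise product of $\langle n', m', p' \rangle$ with itself realized via the direct product of maximal index triples. A straightforward induction on $k$ then yields $\langle n'^k, m'^k, p'^k \rangle \in \mathfrak{S}(G^{\times k})$, and hence by the maximality defining $z'$,
\begin{equation*}
z'(G^{\times k}) \geq (n'm'p')^k = z'(G)^k.
\end{equation*}

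Next I would combine this with $|G^{\times k}| = |G|^k$ and the definition \textit{(4.17)}. Using \textit{(4.18)} in the form $z'(G) = |G|^{3/\alpha(G)}$, the inequality above becomes $z'(G^{\times k}) \geq |G|^{3k/\alpha(G)} = |G^{\times k}|^{3/\alpha(G)}$. Taking logarithms and rearranging,
\begin{equation*}
\alpha(G^{\times k}) = \frac{\log |G^{\times k}|}{\log z'(G^{\times k})^{1/3}} \leq \frac{\log |G^{\times k}|}{\tfrac{1}{\alpha(G)} \log |G^{\times k}|} = \alpha(G),
\end{equation*}
which is the desired inequality.

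There is essentially no hard step here; the entire argument is a bookkeeping exercise once Lemma 4.11 is iterated. The only point requiring minor care is verifying that the inductive step from $G^{\times (k-1)}$ to $G^{\times k}$ genuinely yields the claimed tensor (which follows by writing $G^{\times k} = G^{\times (k-1)} \times G$ and applying Lemma 4.11 with the inductive hypothesis), and observing that $n', m', p'$ are not all equal to $1$ so that $z'(G)^k > 1$ and the base-change of logarithms in \textit{(4.14)} remains well-defined.
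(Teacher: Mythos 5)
Your proposal is correct and matches the paper's argument: the paper also derives $z'(G^{\times k}) \geq z'(G)^k$ (via \textit{(4.20)}, which is exactly the iterated form of Lemma 4.11) and then converts this to $\alpha(G^{\times k}) \leq \alpha(G)$ by the same logarithmic bookkeeping. The only difference is presentational — you spell out the induction and the change of base via \textit{(4.18)} slightly more explicitly — but the underlying estimate and its use are identical.
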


\begin{proof}
By \textit{(4.20)} $z^{^{\prime }}(G^{\times k})\geq z^{^{\prime }}(G)^{k}$.
\ Then 
\begin{eqnarray*}
\alpha \left( G^{\times k}\right) &=&\log _{z^{^{\prime }}(G^{\times
k})^{1/3}}\left\vert G^{\times k}\right\vert \\
&\leq &\log _{z^{^{\prime }}(G)^{k/3}}\left\vert G\right\vert ^{k} \\
&=&\frac{\NEG{k}}{\NEG{k}}\log _{z^{^{\prime }}(G)^{1/3}}\left\vert
G\right\vert \\
&=&\alpha \left( G\right) \text{.}
\end{eqnarray*}%
\bigskip
\end{proof}

\bigskip

This means that for $k=1,2,3,....$ we have a descending sequence of
pseudoexponent inequalities, $...\leq \alpha \left( G^{\times 3}\right) \leq
\alpha \left( G^{\times 2}\right) \leq \alpha \left( G\right) $. \ We do not
know of general conditions on the $G^{\times k}$ for making this sequence
strict, though it would require a strict ascending sequence, $z^{^{\prime
}}(G)<z^{^{\prime }}(G^{\times 2})<z^{^{\prime }}(G^{\times 3})<\ldots $,
for the corresponding maximal tensors $z^{^{\prime }}(G^{\times k})$ of the $%
G^{\times k}$.

\bigskip

For explicit estimates of the exponents of specific types of groups we refer
the reader to sections 5-7 in [CU2003], since for the derivation of
estimates of the exponent $\omega $ we have found the simultaneous triple
product property more useful. \ However, we do give some estimates for the
exponents of the symmetric groups in sections \textit{6.1.2-6.1.3} in 
\textit{Chapter 6}.

\subsection{\textit{The Parameters }$\protect\gamma $}

\bigskip

Let $d^{^{\prime }}(G)$ be the largest degree of an irreducible character of
a group $G$. \ We define for $G$ the number $\gamma (G)$ by:

\bigskip

\begin{description}
\item[\textit{(4.24)}] $\gamma (G):=$ $\inf \left\{ \gamma \text{ }\epsilon 
\text{ }%
\mathbb{R}
^{+}\text{ }|\text{ }\left\vert G\right\vert ^{\frac{1}{\gamma }%
}=d^{^{\prime }}(G)\right\} .$
\end{description}

\bigskip

By \textit{Theorem 3.10 }$d^{^{\prime }}(G)\leq \left( \left\vert
G\right\vert -1\right) ^{1/2}<\left\vert G\right\vert ^{1/2}$, hence it
follows from the definition \textit{(4.24)} that $\gamma (G)>2$. \ Since for
a fixed group $G$, $\underset{\gamma \longrightarrow \infty }{\lim }%
\left\vert G\right\vert ^{1/\gamma }=1$, and $d^{^{\prime }}(G)=1$ for an
Abelian group $G$, we, therefore, define $\gamma (G)=\infty $ if $G$ is an
Abelian group. \ We note that $\gamma (G)$ can always be computed as:

\bigskip

\begin{description}
\item[\textit{(4.25)}] $\gamma (G)=\log _{d^{^{\prime }}(G)}\left\vert
G\right\vert $
\end{description}

\bigskip

provided we know $d^{^{\prime }}(G)$. \ An example: $d^{^{\prime
}}(Sym_{3})= $ $2$, and therefore $\gamma (Sym_{3})=\log _{2}6\approx 2.585$.

\bigskip

By \textit{Theorem 3.10} we know that\textit{\ }$\left( \frac{\left\vert
G\right\vert }{c(G)}\right) ^{1/2}<d^{^{\prime }}(G)<\left( \left\vert
G\right\vert -1\right) ^{1/2}$ iff $G$ is non-Abelian, and, therefore, the
bounds for the $\gamma (G)$ of non-Abelian groups $G$, are given by:

\bigskip

\begin{description}
\item[\textit{(4.26)}] $2\frac{\log \left\vert G\right\vert }{\log \left(
\left\vert G\right\vert -1\right) }<\gamma (G)<2\frac{\log \left\vert
G\right\vert }{\log \left\vert G\right\vert -\log c(G)}$.
\end{description}

\bigskip

For example, $c(Sym_{3})=3$, and therefore $2\frac{\log \left\vert
Sym_{3}\right\vert }{\log \left( \left\vert Sym_{3}\right\vert -1\right) }=2%
\frac{\log 6}{\log 5}\approx 2.226<\gamma (Sym_{3})=\log _{2}6\approx
2.585<5.17\approx 2\frac{\log 6}{\log 6-\log 3}=2\frac{\log \left\vert
Sym_{3}\right\vert }{\log \left\vert Sym_{3}\right\vert -\log c(Sym_{3})}$.

\bigskip

This is a basic result for normal subgroups.

\bigskip

\begin{lemma}
For any nontrivial normal subgroup $H\vartriangleleft G$ such that $%
d^{^{\prime }}(G)\geq \max \left( d^{^{\prime }}(H),d^{^{\prime
}}(G/H)\right) $, $\gamma (G)\leq \gamma (H)+\gamma (G/H)$.
\end{lemma}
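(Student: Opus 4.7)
The plan is to exploit the multiplicative factorization $|G|=|H|\cdot|G/H|$ together with the monotonicity of $\log_{b}x$ in the base $b$, using the formula (4.25) that $\gamma(G)=\log_{d^{\prime}(G)}|G|$ whenever $G$ is non-Abelian. The hypothesis $d^{\prime}(G)\geq\max(d^{\prime}(H),d^{\prime}(G/H))$ is precisely what licenses the base-monotonicity step. I would first dispose of the Abelian edge cases, then compute directly in the main case.

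\textbf{Case 1 (degenerate case).} If either $H$ or $G/H$ is Abelian, then by the convention introduced immediately after \textit{(4.24)} the corresponding $\gamma$-value is $\infty$, so $\gamma(H)+\gamma(G/H)=\infty$ and the claim $\gamma(G)\leq\gamma(H)+\gamma(G/H)$ holds trivially. The possibility $G/H=\{1_{G}\}$ (i.e.\ $H=G$) also falls here since the trivial group is Abelian.

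\textbf{Case 2 (main case).} Suppose both $H$ and $G/H$ are non-Abelian. By \textit{Theorem 3.10}(5), $d^{\prime}(H)\geq 2$ and $d^{\prime}(G/H)\geq 2$, and by the hypothesis also $d^{\prime}(G)\geq 2$, so $G$ is non-Abelian and all three $\gamma$-values are finite and given by \textit{(4.25)}. The plan is then:
\begin{enumerate}
\item Use Lagrange's theorem $|G|=|H|\cdot|G/H|$ to split
\[
\gamma(G)\;=\;\log_{d^{\prime}(G)}|G|\;=\;\log_{d^{\prime}(G)}|H|\;+\;\log_{d^{\prime}(G)}|G/H|.
\]
\item Apply the elementary fact that, for a fixed real $x>1$, the function $b\mapsto\log_{b}x=\frac{\ln x}{\ln b}$ is strictly decreasing in $b>1$. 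Since $|H|\geq 2$ and $d^{\prime}(G)\geq d^{\prime}(H)>1$ by hypothesis, this gives $\log_{d^{\prime}(G)}|H|\leq\log_{d^{\prime}(H)}|H|=\gamma(H)$. Analogously, since $|G/H|\geq 2$ and $d^{\prime}(G)\geq d^{\prime}(G/H)>1$, we get $\log_{d^{\prime}(G)}|G/H|\leq\log_{d^{\prime}(G/H)}|G/H|=\gamma(G/H)$.
\item Sum the two inequalities to conclude $\gamma(G)\leq\gamma(H)+\gamma(G/H)$.
\end{enumerate}

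\textbf{Anticipated difficulty.} There is no serious obstacle here: the proof is essentially a one-line base-change computation. The only points requiring care are the correct handling of the $\gamma=\infty$ convention in the Abelian cases, and noting that the hypothesis $d^{\prime}(G)\geq\max(d^{\prime}(H),d^{\prime}(G/H))$ (which in fact is automatic, since irreducible representations of $G/H$ lift to $G$ and, by Frobenius reciprocity together with \textit{Proposition 3.5}, every irreducible of $H$ appears as a constituent of the restriction of some irreducible of $G$) is exactly what is needed so that the base-monotonicity step runs in the correct direction.
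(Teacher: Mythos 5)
Your proof is correct and takes the same route as the paper's: factor $|G|=|H|\cdot|G/H|$, split the logarithm, and use that enlarging the base decreases $\log_b x$ when $d^{\prime}(G)\geq\max\bigl(d^{\prime}(H),d^{\prime}(G/H)\bigr)$. Your explicit handling of the Abelian (i.e.\ $\gamma=\infty$) cases and your observation that the hypothesis is in fact automatic (lifts from $G/H$ plus Frobenius reciprocity together with \textit{Proposition 3.5} for $H$) are both correct and fill small gaps the paper leaves implicit.
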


\begin{proof}
For a nontrivial normal subgroup $H\vartriangleleft G$ the assumption of $%
d^{^{\prime }}(G)\geq \max \left( d^{^{\prime }}(H),d^{^{\prime
}}(G/H)\right) $ means that%
\begin{eqnarray*}
\gamma (G) &=&\frac{\log \left\vert G\right\vert }{\log d^{^{\prime }}(G)} \\
&=&\frac{\log \left\vert H\right\vert }{\log d^{^{\prime }}(G)}+\frac{\log
\left\vert G/H\right\vert }{\log d^{^{\prime }}(G)} \\
&\leq &\frac{\log \left\vert H\right\vert }{\log d^{^{\prime }}(H)}+\frac{%
\log \left\vert G/H\right\vert }{\log d^{^{\prime }}(G/H)} \\
&=&\gamma (H)+\gamma (G/H).
\end{eqnarray*}
\end{proof}

\bigskip

This is a basic result for direct product groups.

\bigskip

\begin{lemma}
$\gamma (G^{\times k})=\gamma (G)$, where $G^{\times k}$ is the $k$-fold
direct product of $G$.
\end{lemma}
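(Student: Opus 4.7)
The plan is to reduce the claim to the standard representation-theoretic fact that the irreducible representations of a direct product of finite groups are precisely the outer tensor products of irreducible representations of the factors. From this one reads off that $d^{\prime}(G_{1}\times G_{2})=d^{\prime}(G_{1})\cdot d^{\prime}(G_{2})$, since the degree of an outer tensor product is the product of the factor degrees, and taking the maximum commutes with the product. Iterating, or equivalently applying this identity $k-1$ times, yields $d^{\prime}(G^{\times k})=d^{\prime}(G)^{k}$.

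With this identity in hand, I would first dispose of the Abelian case: if $G$ is Abelian then so is $G^{\times k}$, so by the convention just below \textit{(4.24)} both $\gamma(G)$ and $\gamma(G^{\times k})$ equal $\infty$, and the equality holds trivially. For the non-Abelian case, I apply the closed formula \textit{(4.25)} together with $|G^{\times k}|=|G|^{k}$ and $d^{\prime}(G^{\times k})=d^{\prime}(G)^{k}$:
\begin{equation*}
\gamma(G^{\times k})=\log_{d^{\prime}(G^{\times k})}\lvert G^{\times k}\rvert=\frac{\log\lvert G\rvert^{k}}{\log d^{\prime}(G)^{k}}=\frac{k\log\lvert G\rvert}{k\log d^{\prime}(G)}=\log_{d^{\prime}(G)}\lvert G\rvert=\gamma(G).
\end{equation*}

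The only nontrivial ingredient is the representation-theoretic fact about $\mathrm{Irrep}(G_{1}\times G_{2})$, and this is the main place where something beyond bookkeeping is required. I would justify it briefly as follows: if $\varrho_{1}\in\mathrm{Irrep}(G_{1})$ and $\varrho_{2}\in\mathrm{Irrep}(G_{2})$, their outer tensor product $\varrho_{1}\boxtimes\varrho_{2}$ has character $(g_{1},g_{2})\mapsto\chi_{\varrho_{1}}(g_{1})\chi_{\varrho_{2}}(g_{2})$, which is irreducible by \textit{Theorem 3.2}\,$(3)$ because $\langle\chi_{\varrho_{1}}\chi_{\varrho_{2}},\chi_{\varrho_{1}}\chi_{\varrho_{2}}\rangle=\langle\chi_{\varrho_{1}},\chi_{\varrho_{1}}\rangle\langle\chi_{\varrho_{2}},\chi_{\varrho_{2}}\rangle=1$; distinct pairs give inequivalent representations; and the total count $c(G_{1})c(G_{2})$ coincides with $c(G_{1}\times G_{2})$ since conjugacy classes of a direct product are products of conjugacy classes, which by \textit{Theorem 3.2}\,$(1)$ exhausts $\mathrm{Irrep}(G_{1}\times G_{2})$. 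Thus every irreducible of $G^{\times k}$ has degree $d_{\varrho_{1}}\cdots d_{\varrho_{k}}$ with each $\varrho_{i}\in\mathrm{Irrep}(G)$, and the maximum is $d^{\prime}(G)^{k}$, which is the pivotal equality behind the computation above.
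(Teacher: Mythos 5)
Your proof is correct and takes the same route as the paper: both reduce to the identity $d'(G^{\times k})=d'(G)^k$ and then apply the closed form \textit{(4.25)}, cancelling the factors of $k$. You add two refinements the paper omits — a self-contained justification of $d'(G_1\times G_2)=d'(G_1)d'(G_2)$ where the paper simply cites [SER1977], and an explicit treatment of the Abelian case where $\gamma=\infty$ and \textit{(4.25)} does not apply — both of which are worthwhile.
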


\begin{proof}
We observe first that $d^{^{\prime }}\left( G^{\times k}\right) =d^{^{\prime
}}\left( G\right) ^{k}$, [SER1977]. \ Then%
\begin{eqnarray*}
\gamma (G^{\times k}) &=&\log _{d^{^{\prime }}(G^{\times k})}\left\vert
G^{\times k}\right\vert \\
&=&\log _{d^{^{\prime }}(G)^{k}}\left\vert G\right\vert ^{k} \\
&=&\frac{\NEG{k}}{\NEG{k}}\log _{d^{^{\prime }}(G)}\left\vert G\right\vert \\
&=&\gamma (G).
\end{eqnarray*}
\end{proof}

\section{{\protect\Large Fundamental Relations between }${\protect\Large 
\protect\alpha },${\protect\Large \ }${\protect\Large \protect\gamma }$%
{\protect\Large \ and the Exponent }${\protect\Large \protect\omega }$}

\bigskip

Here we derive important relations between the exponent $\omega $ and the
parameters $\alpha $ and $\gamma $.

\bigskip

\subsection{\textit{Preliminaries}}

\bigskip

Let $\omega $ be the usual exponent of matrix multiplication over $%
\mathbb{C}
$. \ The following is an important result.

\bigskip

\begin{theorem}
$\left\vert G\right\vert ^{\frac{\omega }{\alpha (G)}}=D_{2}(G)^{\frac{%
\omega }{\alpha (G)}}\leq D_{\omega }(G)$.
\end{theorem}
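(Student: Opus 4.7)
The plan is to apply Corollary 4.14 to a maximal tensor of $G$, use the definition of $\alpha(G)$ to rewrite the left-hand side in terms of $|G|$, and then pass from the bilinear rank bound on the target-algebra side to $D_{\omega}(G)$ via the asymptotic definition of $\omega$ combined with a tensor-power limit argument to remove the inevitable $\varepsilon$ slack.

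First I would let $\langle n',m',p'\rangle \in \mathfrak{S}^{\prime}(G)$ be a maximal tensor realized by $G$, so that $z^{\prime}(G)=n'm'p'$ and, by \textit{(4.18)}, $z^{\prime}(G)=|G|^{3/\alpha(G)}$. By part $(2)$ of Corollary 4.14 applied to $\langle n',m',p'\rangle$,
\begin{equation*}
|G|^{\omega/\alpha(G)} \;=\; z^{\prime}(G)^{\omega/3} \;\leq\; \mathfrak{R}(\langle n',m',p'\rangle) \;\leq\; \sum_{\varrho\in Irrep(G)} \mathfrak{R}(\langle d_{\varrho},d_{\varrho},d_{\varrho}\rangle).
\end{equation*}
Since $D_{2}(G)=|G|$ by \textit{(3.7)}, the first equality of the claim is immediate, and what remains is to bound the right-hand sum by $D_{\omega}(G)=\sum_{\varrho} d_{\varrho}^{\omega}$.

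For the rank of a single block $\langle d_{\varrho},d_{\varrho},d_{\varrho}\rangle$ one cannot in general write $\mathfrak{R}(\langle d,d,d\rangle)\leq d^{\omega}$ directly, only the asymptotic bound afforded by Proposition 2.12: for every $\varepsilon>0$ there is a constant $C_{\varepsilon}\geq 1$ such that $\mathfrak{R}(\langle d,d,d\rangle)\leq C_{\varepsilon}\,d^{\omega+\varepsilon}$ for all positive integers $d$. Summing over $\varrho\in Irrep(G)$ yields $z^{\prime}(G)^{\omega/3}\leq C_{\varepsilon}\,D_{\omega+\varepsilon}(G)$, which is off from the target bound by the factor $C_{\varepsilon}$ and the $\varepsilon$ perturbation of the index. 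The main obstacle is to shed this $C_{\varepsilon}$ and this $\varepsilon$.

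To do this I would pass to the $k$-fold direct power $G^{\times k}$. By Lemma 4.20, $\alpha(G^{\times k})\leq \alpha(G)$, so $|G^{\times k}|^{\omega/\alpha(G^{\times k})}\geq |G|^{k\omega/\alpha(G)}$. Using the standard fact that the irreducible representations of $G^{\times k}$ are external tensor products of those of the factors with degrees $\prod_{i} d_{\varrho_{i}}$, one has the multiplicativity $D_{\omega+\varepsilon}(G^{\times k})=D_{\omega+\varepsilon}(G)^{k}$. Applying the previous paragraph to $G^{\times k}$ in place of $G$ gives
\begin{equation*}
|G|^{k\omega/\alpha(G)} \;\leq\; |G^{\times k}|^{\omega/\alpha(G^{\times k})} \;\leq\; C_{\varepsilon}\,D_{\omega+\varepsilon}(G)^{k},
\end{equation*}
and extracting $k$-th roots yields $|G|^{\omega/\alpha(G)}\leq C_{\varepsilon}^{1/k}D_{\omega+\varepsilon}(G)$. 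Letting $k\to\infty$ kills the constant, and then letting $\varepsilon\downarrow 0$ (using continuity of $r\mapsto D_{r}(G)$, which is a finite sum of exponentials in $r$) gives $|G|^{\omega/\alpha(G)}\leq D_{\omega}(G)$, completing the argument. The delicate point worth double-checking is the multiplicativity of $D_{\omega+\varepsilon}$ under direct products and the maximality of $z^{\prime}(G^{\times k})\geq z^{\prime}(G)^{k}$ from \textit{(4.20)}, both of which are used implicitly to justify the clean tensor-power amplification.
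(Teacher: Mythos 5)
Your proof is correct and hinges on the same amplification idea as the paper's: apply Corollary 4.14 to a maximal tensor to get $|G|^{\omega/\alpha(G)}\leq\sum_\varrho\mathfrak R(\langle d_\varrho,d_\varrho,d_\varrho\rangle)$, replace each block rank by $C_\varepsilon d_\varrho^{\omega+\varepsilon}$ via Proposition 2.12, then kill the constant $C_\varepsilon$ by a power trick and finally let $\varepsilon\downarrow 0$. The one genuine difference is how the power trick is realized. The paper takes the $r$-th tensor power of the restriction $\langle n',m',p'\rangle\leq_K\bigoplus_\varrho\langle d_\varrho,d_\varrho,d_\varrho\rangle$ directly inside $\mathbb{C}G^{\otimes r}$, so the amplified right-hand side $\bigl(\sum_\varrho d_\varrho^{\omega+\varepsilon}\bigr)^r=D_{\omega+\varepsilon}(G)^r$ appears by expanding the multi-indexed direct sum without any further group theory. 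You instead pass to the direct power group $G^{\times k}$, which forces you to cite two extra structural facts: $\alpha(G^{\times k})\leq\alpha(G)$ (Lemma 4.20) and the multiplicativity $D_r(G^{\times k})=D_r(G)^k$ via the classification $Irr(G^{\times k})=\{\chi_1\times\cdots\times\chi_k\}$. Since $\mathbb{C}G^{\otimes k}\cong\mathbb{C}(G^{\times k})$, the two are really two views of the same algebra, and the intermediate inequality $|G|^{k\omega/\alpha(G)}\leq C_\varepsilon D_{\omega+\varepsilon}(G)^k$ you obtain is literally the paper's inequality with $r=k$. Your packaging is a bit cleaner to read and makes the role of $G^{\times k}$ explicit, at the cost of leaning on more representation-theoretic facts; the paper stays entirely at the tensor-rank level. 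The limiting steps ($k$-th roots, $k\to\infty$, $\varepsilon\downarrow 0$, using that $r\mapsto D_r(G)$ is continuous) agree exactly.
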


\begin{proof}
Let $\left\langle n^{^{\prime }},m^{^{\prime }},p^{^{\prime }}\right\rangle $
$\epsilon $ $\mathfrak{S}(G)$ be the maximal tensor realized by $G$ of size $%
z^{^{\prime }}(G)=n^{^{\prime }}m^{^{\prime }}p^{^{\prime }}$ uniquely
determining $\alpha (G)$, and by definition (\textit{4.18}) $z^{^{\prime
}}(G)=n^{^{\prime }}m^{^{\prime }}p^{^{\prime }}=|G|^{\frac{3}{\alpha (G)}}$%
. \ Using \textit{Proposition 2.13 }and \textit{Corollary 4.14}:%
\begin{eqnarray*}
\left( n^{^{\prime }}m^{^{\prime }}p^{^{\prime }}\right) ^{\frac{\omega }{3}%
} &=&|G|^{\frac{\omega }{\alpha (G)}}=D_{2}(G)^{\frac{\omega }{\alpha (G)}}%
\hspace{0.65in}(\ast ) \\
&\leq &\Re \left( \left\langle n^{^{\prime }},m^{^{\prime }},p^{^{\prime
}}\right\rangle \right) \\
&\leq &\text{ }\underset{\varrho \text{ }\epsilon \text{ }Irrep(G)}{\sum }%
\Re \left( \left\langle d_{\varrho },d_{\varrho },d_{\varrho }\right\rangle
\right) .
\end{eqnarray*}%
From \textit{Theorem 4.13 }$\left\langle n,m,p\right\rangle \leq _{K}%
\underset{\varrho \text{ }\epsilon \text{ }Irrep(G)}{\oplus }\left\langle
d_{\varrho },d_{\varrho },d_{\varrho }\right\rangle $ and taking $r^{th}$
tensor product powers on either side we obtain%
\begin{equation*}
\left\langle n^{r},m^{r},p^{r}\right\rangle \leq _{K}\underset{\varrho
_{1,...,}\varrho _{r}\text{ }\epsilon \text{ }Irrep(G)}{\oplus }\left\langle
d_{\varrho _{1}}\cdot \cdot \cdot d_{\varrho _{r}},d_{\varrho _{1}}\cdot
\cdot \cdot d_{\varrho _{r}},d_{\varrho _{1}}\cdot \cdot \cdot d_{\varrho
_{r}}\right\rangle .
\end{equation*}%
By \textit{Proposition 2.12} for each $\varepsilon >0$ there exists a
constant $C_{\varepsilon }\geq 1$ such that for all $k$ we have:%
\begin{equation*}
\Re \left( \left\langle k,k,k\right\rangle \right) \leq C_{\varepsilon
}k^{\omega +\varepsilon }.
\end{equation*}%
Hence, taking ranks on either side of $(\ast )$ we obtain that: 
\begin{eqnarray*}
D_{2}\left( G\right) ^{\frac{r\omega }{\alpha (G)}} &\leq &C_{\varepsilon
}\left( \underset{\varrho \text{ }\epsilon \text{ }Irrep(G)}{\sum }%
d_{\varrho }^{\omega +\varepsilon }\right) ^{r} \\
&=&C_{\varepsilon }D_{\omega +\varepsilon }\left( G\right) ^{r}
\end{eqnarray*}%
for some $C_{\varepsilon }>0$ depending on some $\varepsilon >0$. \ Taking
the $r^{th}$ root on either side we obtain%
\begin{equation*}
D_{2}(G)^{\frac{\omega }{\alpha (G)}}\leq \sqrt[r]{C_{\varepsilon }}%
D_{\omega +\varepsilon }\left( G\right) .
\end{equation*}%
If we take the limit as $r\longrightarrow \infty $, and then the limit as $%
\varepsilon \longrightarrow 0$ we obtain finally that:%
\begin{equation*}
D_{2}(G)^{\frac{\omega }{\alpha (G)}}\leq \text{ }D_{\omega }(G).
\end{equation*}
\end{proof}

\bigskip

By the maximality of $\left\langle n^{^{\prime }},m^{^{\prime }},p^{^{\prime
}}\right\rangle $ in $\mathfrak{S}(G)$ we have an elementary corollary.

\bigskip

\begin{corollary}
If $\left\langle n,m,p\right\rangle $ $\epsilon $ $\mathfrak{S}(G)$ then $%
(1) $ $\left( nmp\right) ^{\frac{1}{3}}\leq D_{\omega }(G)^{\frac{1}{\omega }%
}$, and $(2)$ $\left( nmp\right) ^{\frac{1}{3}}\leq d^{^{\prime }}(G)^{1-%
\frac{2}{\omega }}\left\vert G\right\vert ^{\frac{1}{\omega }}$.
\end{corollary}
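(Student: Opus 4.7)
The plan is to bootstrap from Theorem 4.23, which supplies the key inequality $|G|^{\omega/\alpha(G)} \leq D_\omega(G)$, together with the maximality property built into the definition of $\alpha(G)$ and the standard size bound (3.12) on $D_r(G)$.

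For part $(1)$, I would first invoke the maximality of the tensor $\langle n',m',p'\rangle$ which defines $z'(G)$: since $\langle n,m,p\rangle \in \mathfrak{S}(G)$, we have $nmp \leq z'(G)$, and by the identity $z'(G) = |G|^{3/\alpha(G)}$ from (4.18), this gives
\begin{equation*}
(nmp)^{1/3} \leq |G|^{1/\alpha(G)}.
\end{equation*}
Then I would apply Theorem 4.23 in the form $|G|^{\omega/\alpha(G)} \leq D_\omega(G)$ and take $\omega$-th roots to obtain $|G|^{1/\alpha(G)} \leq D_\omega(G)^{1/\omega}$. Chaining the two inequalities yields $(nmp)^{1/3} \leq D_\omega(G)^{1/\omega}$.

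For part $(2)$, I would apply estimate (3.12), $D_r(G) \leq d'(G)^{r-2}|G|$ for $r \geq 2$, to the exponent $r = \omega$. This is legitimate because Proposition 2.11 guarantees $\omega \geq 2$. Taking $\omega$-th roots yields
\begin{equation*}
D_\omega(G)^{1/\omega} \leq d'(G)^{(\omega-2)/\omega} |G|^{1/\omega} = d'(G)^{1-2/\omega} |G|^{1/\omega}.
\end{equation*}
Combining this with part $(1)$ gives the desired estimate $(nmp)^{1/3} \leq d'(G)^{1-2/\omega} |G|^{1/\omega}$.

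Both parts are essentially bookkeeping once Theorem 4.23 is in place, so I do not anticipate a real obstacle; the only care needed is to apply (3.12) at the possibly non-integer exponent $r=\omega$, but since (3.12) is stated for all real $r \geq 2$ and $\omega \in [2,3]$ satisfies this, the inequality is directly available. No further tensor manipulations or representation-theoretic tools are required beyond what has already been developed.
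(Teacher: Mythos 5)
Your proof is correct and follows essentially the same route as the paper: both arguments chain the maximality bound $nmp \leq z'(G) = |G|^{3/\alpha(G)}$ with Theorem 4.23 to get part $(1)$, then apply estimate $(3.12)$ at $r=\omega$ to the right-hand side to get part $(2)$. Your explicit note that $(3.12)$ applies at the real exponent $r=\omega \geq 2$ makes a step the paper leaves implicit.
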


\begin{proof}
For any $\left\langle n,m,p\right\rangle $ $\epsilon $ $\mathfrak{S}(G)$, by 
\textit{(4.16)} and \textit{Theorem 4.23}, $\left( nmp\right) ^{\frac{\omega 
}{3}}\leq z^{^{\prime }}(G)^{\frac{\omega }{3}}\leq D_{2}(G)^{\frac{\omega }{%
\alpha (G)}}\leq D_{\omega }(G)$, and taking $\omega ^{th}$ roots, we have
the result. \ Part $(2)$ is a consequence of applying \textit{(3.12) }to the
right-hand side of $(1)$.
\end{proof}

\bigskip

\textit{Theorem 4.23} can be reexpressed as the relation.

\bigskip

\begin{description}
\item[\textit{(4.27)}] $\left\vert G\right\vert ^{\frac{1}{\alpha (G)}}\leq
D_{\omega }(G)^{\frac{1}{\omega }}$.
\end{description}

\bigskip

Using \textit{(4.18) }we know:

\bigskip

\begin{description}
\item[\textit{(4.28)}] $\left\vert G\right\vert ^{\frac{1}{\alpha (G)}}\leq
D_{\omega }(G)^{\frac{1}{\omega }}\leq \left\vert G\right\vert ^{\frac{1}{2}%
} $.
\end{description}

\bigskip

In the impossible case that $\alpha (G)=2$ we would have that $\left\vert
G\right\vert ^{\frac{1}{2}}\leq D_{\omega }(G)^{\frac{1}{\omega }}\leq
\left\vert G\right\vert ^{\frac{1}{2}}$, which would imply that $\omega =2$.
\ However, since by \textit{Lemma 4.15} $\alpha (G)>2$ always it follows
that $\left\vert G\right\vert ^{\frac{1}{\alpha (G)}}<\left\vert
G\right\vert ^{\frac{1}{2}}$ always and, therefore, the first estimate in 
\textit{(4.28)} will be strict if $\omega $ could be pushed to $2$.

\bigskip

The following is a useful result.

\bigskip

\begin{corollary}
$D_{r}(G)\leq \left\vert G\right\vert ^{\frac{r-2}{\gamma (G)}+1}$ for $%
r\geq 2$.
\end{corollary}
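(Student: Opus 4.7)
The plan is to derive this directly from two earlier ingredients: the estimate (3.12), which bounds $D_r(G)$ in terms of the maximal irreducible character degree $d^{\prime}(G)$, and the defining relation (4.24)–(4.25) of $\gamma(G)$, which identifies $d^{\prime}(G)$ with $|G|^{1/\gamma(G)}$. Both pieces are already available, so no new combinatorial or representation-theoretic input is required.

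First I would recall (3.12), which says that for any real $r \geq 2$
\begin{equation*}
D_r(G) \;\leq\; d^{\prime}(G)^{\,r-2}\, D_2(G) \;=\; d^{\prime}(G)^{\,r-2}\, |G|,
\end{equation*}
the second equality coming from (3.7). Next I would invoke the definition of $\gamma(G)$ (namely (4.25)), which gives $d^{\prime}(G) = |G|^{1/\gamma(G)}$, understood in the limiting sense $\gamma(G) = \infty$ when $G$ is Abelian (in which case $d^{\prime}(G)=1$ and $d^{\prime}(G)^{r-2}=1$, so the bound reads $D_r(G) \leq |G|$, which is consistent since the exponent $(r-2)/\gamma(G)+1$ tends to $1$).

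Substituting the expression for $d^{\prime}(G)$ into the estimate from (3.12) yields
\begin{equation*}
D_r(G) \;\leq\; \bigl(|G|^{1/\gamma(G)}\bigr)^{r-2}\cdot |G| \;=\; |G|^{\frac{r-2}{\gamma(G)}}\cdot |G| \;=\; |G|^{\frac{r-2}{\gamma(G)}+1},
\end{equation*}
which is exactly the desired inequality. Since both (3.12) and the identity $d^{\prime}(G)=|G|^{1/\gamma(G)}$ are stated and (in the first case) proved earlier in the chapter, the proof is a one-line substitution and there is no genuine obstacle; the only minor point to mention explicitly is that the inequality holds for \emph{all} real $r \geq 2$ (not just integer $r$), which is already the generality in which (3.12) is asserted.
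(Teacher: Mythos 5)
Your proof is correct and follows essentially the same route as the paper: apply (3.12) to get $D_r(G)\leq d^{\prime}(G)^{r-2}|G|$, then substitute $d^{\prime}(G)=|G|^{1/\gamma(G)}$ from (4.25) and simplify the exponents. The brief aside on the Abelian limiting case is a harmless addition not present in the paper's version.
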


\begin{proof}
By \textit{(3.12) }$D_{r}(G)\leq d^{^{\prime }}(G)^{r-2}\left\vert
G\right\vert $ for $r\geq 2$, and by \textit{(4.25)} $d^{^{\prime
}}(G)=\left\vert G\right\vert ^{\frac{1}{\gamma (G)}}$. \ Then 
\begin{eqnarray*}
D_{r}(G) &\leq &\left( \left\vert G\right\vert ^{\frac{1}{\gamma (G)}%
}\right) ^{r-2}\left\vert G\right\vert \\
&=&\left\vert G\right\vert ^{\frac{r-2}{\gamma (G)}+1}.
\end{eqnarray*}
\end{proof}

\bigskip

Using the above, we can prove the following fundamental relation.

\bigskip

\begin{corollary}
If $G$ is a non-Abelian group such that $\alpha (G)<\gamma (G)$ then $\omega
\leq \alpha \left( G\right) \left( \frac{\gamma (G)-2}{\gamma (G)-\alpha (G)}%
\right) $.
\end{corollary}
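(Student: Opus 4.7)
The plan is to combine the two key relations already established in the chapter: \textit{Theorem 4.23}, which gives the lower bound $|G|^{\omega/\alpha(G)}\leq D_{\omega}(G)$, and \textit{Corollary 4.25}, which gives the upper bound $D_{r}(G)\leq |G|^{(r-2)/\gamma(G)+1}$ for real $r\geq 2$. Since $\omega\geq 2$ unconditionally (by \textit{Proposition 2.11}), we may apply \textit{Corollary 4.25} at $r=\omega$, obtaining
\begin{equation*}
|G|^{\omega/\alpha(G)}\;\leq\;D_{\omega}(G)\;\leq\;|G|^{(\omega-2)/\gamma(G)+1}.
\end{equation*}

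Next I would take logarithms to base $|G|$ (noting that $|G|\geq 6$ since $G$ is assumed non-Abelian, so $\log|G|>0$ and the inequality direction is preserved). This yields the scalar inequality
\begin{equation*}
\frac{\omega}{\alpha(G)}\;\leq\;\frac{\omega-2}{\gamma(G)}+1.
\end{equation*}
Clearing denominators by multiplying through by $\alpha(G)\gamma(G)>0$ gives $\omega\,\gamma(G)\leq \alpha(G)(\omega-2)+\alpha(G)\gamma(G)$, which rearranges to
\begin{equation*}
\omega\bigl(\gamma(G)-\alpha(G)\bigr)\;\leq\;\alpha(G)\bigl(\gamma(G)-2\bigr).
\end{equation*}
The assumption $\alpha(G)<\gamma(G)$ ensures $\gamma(G)-\alpha(G)>0$, so dividing yields the claimed bound
\begin{equation*}
\omega\;\leq\;\alpha(G)\left(\frac{\gamma(G)-2}{\gamma(G)-\alpha(G)}\right).
\end{equation*}

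There is really no technical obstacle here: the proof is purely algebraic manipulation once the two master inequalities are in hand. The only points worth flagging are (i) justifying that \textit{Corollary 4.25} is legitimately applicable at the real value $r=\omega$ (which requires only $\omega\geq 2$, secured by \textit{Proposition 2.11}), and (ii) the non-Abelian hypothesis, which plays two roles: it guarantees $\gamma(G)$ is finite (and $>2$ by \textit{(4.26)}), so that the algebraic manipulations are meaningful, and together with $\alpha(G)<\gamma(G)$ it ensures $\gamma(G)-\alpha(G)>0$ so the division in the final step is valid and preserves the inequality direction.
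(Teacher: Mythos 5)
Your proof is correct and follows exactly the paper's own argument: combine Theorem 4.23 with Corollary 4.25 at $r=\omega$, compare exponents of $\left\vert G\right\vert$, and rearrange using $\gamma (G)-\alpha (G)>0$. The only cosmetic difference is that the paper divides by $\frac{1}{\alpha (G)}-\frac{1}{\gamma (G)}$ directly rather than first clearing denominators, which is the same algebra.
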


\begin{proof}
By \textit{Theorem 4.23 }$\left\vert G\right\vert ^{\frac{\omega }{\alpha (G)%
}}\leq D_{\omega }(G)$. $\ $For $r=\omega $ in \textit{Corollary 4.25 }$%
D_{\omega }(G)\leq \left\vert G\right\vert ^{\frac{\omega -2}{\gamma (G)}+1}$%
, from which we derive $\left\vert G\right\vert ^{\frac{\omega }{\alpha (G)}%
}\leq \left\vert G\right\vert ^{\frac{\omega -2}{\gamma (G)}+1}$. \ This
implies that $\frac{\omega }{\alpha (G)}\leq \frac{\omega -2}{\gamma (G)}+1$%
, which is equivalent to $\omega \left( \frac{1}{\alpha (G)}-\frac{1}{\gamma
(G)}\right) \leq 1-\frac{2}{\gamma (G)}$. \ Since $\gamma (G)-\alpha (G)>0$
by assumption, it follows that $\left( \frac{1}{\alpha (G)}-\frac{1}{\gamma
(G)}\right) >0$. \ Dividing both sides of the previous estimate by $\left( 
\frac{1}{\alpha (G)}-\frac{1}{\gamma (G)}\right) $ we get $\omega \leq
\left( 1-\frac{2}{\gamma (G)}\right) /\left( \frac{1}{\alpha (G)}-\frac{1}{%
\gamma (G)}\right) =\alpha \left( G\right) \left( \frac{\gamma (G)-2}{\gamma
(G)-\alpha (G)}\right) $.
\end{proof}

\bigskip

\subsection{\textit{Fundamental Results for the Exponent }$\protect\omega $%
\textit{\ using Single non-Abelian Groups}}

\bigskip

\begin{corollary}
If $G$ is a non-Abelian group such that $\left\vert G\right\vert ^{\frac{1}{%
\alpha (G)}}>D_{3}(G)^{\frac{1}{3}}$ then $\omega <3$. \ Equivalently, $%
\omega <3$ if $z^{^{\prime }}(G)^{\frac{1}{3}}>D_{3}(G)^{\frac{1}{3}}$.
\end{corollary}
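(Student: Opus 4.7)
The plan is to proceed by contradiction, using Theorem 4.23 as the key input and relying on the trivial upper bound $\omega \leq 3$ from Proposition 2.11.

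First I would unfold the equivalence of the two forms of the hypothesis: by \textit{(4.18)}, $z^{\prime}(G) = |G|^{3/\alpha(G)}$, so raising to the $1/3$ power gives $z^{\prime}(G)^{1/3} = |G|^{1/\alpha(G)}$. Thus the inequalities $|G|^{1/\alpha(G)} > D_{3}(G)^{1/3}$ and $z^{\prime}(G)^{1/3} > D_{3}(G)^{1/3}$ are literally the same condition, and it suffices to prove the first version.

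Next I would assume for contradiction that $\omega = 3$. Applying Theorem 4.23 with this specific value gives
\begin{equation*}
|G|^{3/\alpha(G)} \;=\; |G|^{\omega/\alpha(G)} \;\leq\; D_{\omega}(G) \;=\; D_{3}(G).
\end{equation*}
Taking cube roots of both sides yields $|G|^{1/\alpha(G)} \leq D_{3}(G)^{1/3}$, which directly contradicts the standing hypothesis $|G|^{1/\alpha(G)} > D_{3}(G)^{1/3}$. Hence $\omega \neq 3$, and since Proposition 2.11 already guarantees $\omega \leq 3$, we conclude $\omega < 3$.

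There is essentially no hard step here: the whole argument is a one-line contradiction powered by Theorem 4.23, so the only thing worth checking is that the hypothesis is consistent at all. I would remark briefly that for Abelian $G$ one has $\alpha(G) = 3$ by Lemma 4.15 and $D_{3}(G) = c(G) = |G|$, so $|G|^{1/\alpha(G)} = |G|^{1/3} = D_{3}(G)^{1/3}$ with equality; thus the strict inequality in the hypothesis can never hold in the Abelian case, which explains (and justifies) the ``non-Abelian'' qualifier in the statement. No estimate on $\gamma(G)$ or on $D_{r}(G)$ for $r > 3$ is needed; the corollary is really just the boundary case $r = 3$ of Theorem 4.23.
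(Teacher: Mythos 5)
Your proof is correct and rests on the same engine as the paper's, namely Theorem 4.23 (equivalently relation \textit{(4.28)}) together with the identity $z^{\prime}(G)^{1/3}=|G|^{1/\alpha(G)}$ from \textit{(4.18)}. The only difference is in the last step: the paper combines the hypothesis with \textit{(4.28)} to get $D_{3}(G)^{1/3}<D_{\omega}(G)^{1/\omega}$ and then invokes the monotonicity of $r\mapsto D_{r}(G)^{1/r}$ (property \textit{(3.11)}) to force $\omega<3$, whereas you argue by contradiction at the single value $\omega=3$ and close the argument with the a priori bound $\omega\leq 3$ from Proposition 2.11; both are valid and of essentially equal weight. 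Your side remark that the hypothesis is vacuous for Abelian $G$ (since then $\alpha(G)=3$ and $D_{3}(G)=c(G)=|G|$, giving equality) is also correct and a nice justification of the non-Abelian qualifier.
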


\begin{proof}
If $\left\vert G\right\vert ^{\frac{1}{\alpha (G)}}>D_{3}(G)^{\frac{1}{3}}$,
by \textit{(4.28)}, $D_{3}(G)^{\frac{1}{3}}<\left\vert G\right\vert ^{\frac{1%
}{\alpha (G)}}\leq D_{\omega }(G)^{\frac{1}{\omega }}$, which implies that
by the convexity property \textit{(3.11)}, $\omega <3$. \ The second part
follows from the fact that $\left\vert G\right\vert ^{\frac{1}{\alpha (G)}%
}=z^{^{\prime }}(G)^{\frac{1}{3}}$ (see \textit{(4.18)}).
\end{proof}

\bigskip

This is a trivial result because it has been proven that $\omega <2.38$
[CW1990]. \ More useful is the following.

\bigskip

\begin{corollary}
$(1)$ $\omega \leq t<3$ for some $t>2$, if there is a non-Abelian group $G$
such that $\alpha (G)<\gamma (G)$ and $\alpha (G)\left( \frac{\gamma (G)-2}{%
\gamma (G)-\alpha (G)}\right) \leq t$. \ $(2)$ An equivalent, but more
precise statement is that $\omega \leq t<3$, for some $t>2$, if there is a
non-Abelian group $G$ such that $z^{^{\prime }}(G)^{\frac{1}{3}}>d^{^{\prime
}}(G)$ and $\frac{z^{^{\prime }}(G)^{\frac{t}{3}}}{d^{^{\prime }}(G)^{t-2}}%
\geq G$.
\end{corollary}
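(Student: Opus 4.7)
The plan is to derive Part (1) directly from the machinery already assembled, then convert the hypotheses into the more geometric form in Part (2) by using the two defining identities $z'(G) = |G|^{3/\alpha(G)}$ from \textit{(4.18)} and $d'(G) = |G|^{1/\gamma(G)}$ from \textit{(4.25)}.

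For Part (1), I would simply apply \textit{Corollary 4.27}: its hypothesis is precisely that $G$ is non-Abelian with $\alpha(G) < \gamma(G)$, and its conclusion is $\omega \leq \alpha(G)\left(\frac{\gamma(G)-2}{\gamma(G)-\alpha(G)}\right)$. Combining this with the assumed inequality $\alpha(G)\left(\frac{\gamma(G)-2}{\gamma(G)-\alpha(G)}\right) \leq t$ yields $\omega \leq t$, and the bound $t < 3$ is part of the statement's hypothesis (it records that we have a nontrivial improvement over the standard algorithm). Nothing further is needed for this part.

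For Part (2), I would show that the two pairs of hypotheses are literally equivalent under the change of variables just mentioned. Taking logarithms base $|G|$, the inequality $z'(G)^{1/3} > d'(G)$ becomes $1/\alpha(G) > 1/\gamma(G)$, i.e.\ $\alpha(G) < \gamma(G)$. Similarly, rewriting $|G|^{t/\alpha(G)}/|G|^{(t-2)/\gamma(G)} \geq |G|$ as $t/\alpha(G) - (t-2)/\gamma(G) \geq 1$ and clearing denominators gives
\begin{equation*}
t(\gamma(G) - \alpha(G)) \geq \alpha(G)(\gamma(G) - 2),
\end{equation*}
which, dividing by the positive quantity $\gamma(G) - \alpha(G)$, is exactly $\alpha(G)\left(\frac{\gamma(G)-2}{\gamma(G)-\alpha(G)}\right) \leq t$. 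So (2) is just (1) expressed in the ``observable'' invariants $|G|$, $z'(G)$, and $d'(G)$.

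This proof is essentially algebraic bookkeeping on top of \textit{Corollary 4.27}, so I expect no real obstacles. The only care needed is to note that $\gamma(G) - \alpha(G) > 0$ before dividing (justified by the first hypothesis), and to keep track that $|G| > 1$ so exponent comparisons on powers of $|G|$ are monotone in the exponent.
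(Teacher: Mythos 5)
Your proposal is correct and follows the paper's own argument: part (1) is an immediate application of the preceding corollary ($\omega \le \alpha(G)\bigl(\tfrac{\gamma(G)-2}{\gamma(G)-\alpha(G)}\bigr)$ for a non-Abelian $G$ with $\alpha(G)<\gamma(G)$ --- this is Corollary 4.26 in the paper's numbering, not 4.27), and part (2) is exactly the same change of variables via $z^{\prime}(G)=|G|^{3/\alpha(G)}$ and $d^{\prime}(G)=|G|^{1/\gamma(G)}$ that the paper performs by taking logarithms. Your algebra is in fact cleaner than the paper's, and you correctly isolate the two points that need justification, namely $\gamma(G)-\alpha(G)>0$ before dividing and $|G|>1$ so that comparing powers of $|G|$ is monotone in the exponent.
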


\begin{proof}
By \textit{Corollary 4.26 }$\omega \leq \alpha (G)\left( \frac{\gamma (G)-2}{%
\gamma (G)-\alpha (G)}\right) $ if $\alpha (G)<\gamma (G)$. \ This means
that if, in addition, $\alpha (G)\left( \frac{\gamma (G)-2}{\gamma
(G)-\alpha (G)}\right) \leq t$ for some $2<t<3$ then $\omega \leq t$. \ $(2)$
From \textit{(4.17)}, \textit{(4.23)}, and \textit{(4.25)}, we see that $%
\alpha (G)<\gamma (G)$ is equivalent to $z^{^{\prime }}(G)^{\frac{1}{3}%
}>d^{^{\prime }}(G)$. \ By $(1)$ the additional condition $\alpha (G)\left( 
\frac{\gamma (G)-2}{\gamma (G)-\alpha (G)}\right) \leq t$ needed to prove $%
\omega \leq t$ is equivalent, to $\frac{\log \left\vert G\right\vert }{\log
z^{^{\prime }}(G)^{1/3}}\left( \frac{\log \left\vert G\right\vert }{\log
d^{^{\prime }}(G)}-2\right) \leq \left( \frac{\log \left\vert G\right\vert }{%
\log d^{^{\prime }}(G)}-\frac{\log \left\vert G\right\vert }{\log
z^{^{\prime }}(G)^{1/3}}\right) t$. \ Dividing both sides of the inequality
by $\log \left\vert G\right\vert $ we will still have a term on the left
with numerator $\log \left\vert G\right\vert $, and making it the subject of
the inequality on the left, this becomes $\log \left\vert G\right\vert \leq
t\log z^{^{\prime }}(G)^{\frac{1}{3}}+\left( 2-t\right) \log d^{^{\prime
}}(G)=\log z^{^{\prime }}(G)^{\frac{t}{3}}-\log d^{^{\prime
}}(G)^{(t-2)}=\log \frac{z^{^{\prime }}(G)^{\frac{t}{3}}}{d^{^{\prime
}}(G)^{(t-2)}}$. \ Taking antilogarithms we have the result.
\end{proof}

\bigskip

For any given group $G$, from \textit{(4.23) }we see that $\left\vert
G\right\vert ^{\frac{1}{3}}<z^{^{\prime }}(G)^{\frac{1}{3}}<\left\vert
G\right\vert ^{\frac{1}{2}}$, and from part $(5)$ of \textit{Theorem 3.10 }$%
c(G)^{-\frac{1}{2}}\left\vert G\right\vert ^{\frac{1}{2}}<d^{^{\prime
}}(G)<\left( \left\vert G\right\vert -1\right) ^{\frac{1}{2}}$ if $G$ is
non-Abelian. \ The intersection of these intervals lies in the open interval 
$\left( c(G)^{-\frac{1}{2}}\left\vert G\right\vert ^{\frac{1}{2}},\left\vert
G\right\vert ^{\frac{1}{2}}\right) $, and \textit{Corollary 4.29} will let
us prove $\omega \leq t<3$ for some $t>2$ if we can find a non-Abelian group 
$G$ realizing matrix multiplication of largest size $z^{^{\prime }}(G)$ and
with an irreducible character of largest degree $d^{^{\prime }}(G)$ such
that $c(G)^{-\frac{1}{2}}\left\vert G\right\vert ^{\frac{1}{2}}<d^{^{\prime
}}(G)<z^{^{\prime }}(G)^{\frac{1}{3}}<\left\vert G\right\vert ^{\frac{1}{2}%
}\leq \frac{z^{^{\prime }}(G)^{\frac{t}{6}}}{d^{^{\prime }}(G)^{\frac{t-2}{2}%
}}$.

\bigskip

\subsection{\textit{Fundamental Results for the Exponent }$\protect\omega $%
\textit{\ using Families of non-Abelian Groups}}

\bigskip

The following results describe ways of proving that $\omega =2$ via families
of non-Abelian groups. \ The first is as follows.

\bigskip

\begin{corollary}
If $\left\{ G_{k}\right\} $ is a family of non-Abelian groups such that $%
\alpha \left( G_{k}\right) \equiv \alpha _{k}=2+o(1)$, and $\gamma \left(
G_{k}\right) \equiv \gamma _{k}=2+o(1)$, and $\alpha _{k}-2=o(\gamma _{k}-2)$%
, as $k\longrightarrow \infty $, then $\omega =2$.
\end{corollary}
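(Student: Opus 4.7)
The plan is to apply the fundamental inequality of Corollary 4.26 to each group $G_k$ in the family and then pass to the limit. Recall Corollary 4.26 asserts: if $G$ is a non-Abelian group with $\alpha(G) < \gamma(G)$, then $\omega \leq \alpha(G)\bigl((\gamma(G)-2)/(\gamma(G)-\alpha(G))\bigr)$. Since each $G_k$ is non-Abelian by hypothesis, the only additional thing to verify before invoking this bound is that $\alpha_k < \gamma_k$ holds for all sufficiently large $k$.

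First I would check this hypothesis. By Lemma 4.15 we have $\alpha_k > 2$, and by the general lower bound for $\gamma$ in section 4.2.2 we have $\gamma_k > 2$, so both $\alpha_k - 2$ and $\gamma_k - 2$ are strictly positive and the ratio $(\alpha_k - 2)/(\gamma_k - 2)$ is well defined. The hypothesis $\alpha_k - 2 = o(\gamma_k - 2)$ forces this ratio to tend to $0$, so for all $k$ large enough we have $\alpha_k - 2 < \gamma_k - 2$, i.e.\ $\alpha_k < \gamma_k$, as required.

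Next, for all such $k$, Corollary 4.26 combined with the universal lower bound $\omega \geq 2$ of Proposition 2.11 gives
\begin{equation*}
2 \;\leq\; \omega \;\leq\; \alpha_k \cdot \frac{\gamma_k - 2}{\gamma_k - \alpha_k}.
\end{equation*}
The key algebraic manoeuvre is to write $\gamma_k - \alpha_k = (\gamma_k - 2) - (\alpha_k - 2)$ and factor $(\gamma_k - 2)$ out of numerator and denominator of the ratio, giving
\begin{equation*}
\frac{\gamma_k - 2}{\gamma_k - \alpha_k} \;=\; \frac{1}{\,1 - \dfrac{\alpha_k - 2}{\gamma_k - 2}\,}.
\end{equation*}
By the hypothesis $\alpha_k - 2 = o(\gamma_k - 2)$, the denominator on the right tends to $1$, so this ratio tends to $1$; and by $\alpha_k = 2 + o(1)$, the factor $\alpha_k$ tends to $2$. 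Hence the upper bound tends to $2$, and since $\omega$ is a fixed real number, we conclude $\omega = 2$.

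The argument is essentially a clean limit computation, and the main (mild) obstacle is simply recognising that the little-$o$ hypothesis $\alpha_k - 2 = o(\gamma_k - 2)$ is precisely what prevents the indeterminate ratio $(\gamma_k - 2)/(\gamma_k - \alpha_k)$ from blowing up as both numerator and denominator shrink to $0$; without that hypothesis, the factor $\alpha_k \to 2$ alone would not suffice to pin $\omega$ down to $2$.
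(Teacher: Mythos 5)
Your proof is correct and follows essentially the same route as the paper's: verify $\alpha_k<\gamma_k$ for large $k$ from the little-$o$ hypothesis, invoke Corollary 4.26, and rewrite the bound as $\alpha_k\big/\bigl(1-(\alpha_k-2)/(\gamma_k-2)\bigr)\to 2$. Your write-up is in fact slightly cleaner in justifying why the ratio $(\alpha_k-2)/(\gamma_k-2)$ is well defined and tends to $0$.
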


\begin{proof}
Assuming the conditions $\alpha _{k}=2+o(1)$, $\gamma _{k}=2+o(1)$, $\alpha
_{k}-2=o(\gamma _{k}-2)$, as $k\longrightarrow \infty $, for the family $%
\left\{ G_{k}\right\} $, we will have 
\begin{eqnarray*}
&&\left( \alpha _{k}-\gamma _{k}\right) \\
&=&\left( \alpha _{k}-2\right) -\left( \gamma _{k}-2\right) \\
&=&o\left( \gamma _{k}-2\right) -\left( \gamma _{k}-2\right) \\
&<&-\frac{1}{2}\left( \gamma _{k}-2\right) <0
\end{eqnarray*}%
for sufficiently large $k$. \ Then by \textit{Corollary 4.26}, as $%
k\longrightarrow \infty $, we will have 
\begin{eqnarray*}
\omega &\leq &\alpha _{k}\left( \frac{\gamma _{k}-2}{\gamma _{k}-\alpha _{k}}%
\right) \\
&=&\frac{\alpha _{k}}{\left( 1-\left( \alpha _{k}-2\right) /\left( \gamma
_{k}-2\right) \right) } \\
&=&\frac{2+o\left( 1\right) }{1-o\left( 1\right) /o\left( 1\right) } \\
&=&\frac{2+o\left( 1\right) }{1-o\left( 1\right) } \\
&\longrightarrow &2^{+}.
\end{eqnarray*}
\end{proof}

\bigskip

From \textit{(4.18)}, $z^{^{\prime }}(G)^{\frac{1}{3}}=\left\vert
G\right\vert ^{\frac{1}{\alpha (G)}}$, and from \textit{(4.25)}, $%
d^{^{\prime }}(G)=\left\vert G\right\vert ^{\frac{1}{\gamma (G)}}$, and for
a family $\left\{ G_{k}\right\} $ of non-Abelian groups $G_{k}$ realizing
matrix multiplications of maximal sizes $z_{k}^{^{\prime }}\equiv
z^{^{\prime }}(G_{k})$ and maximal irreducible character degrees $%
d_{k}^{^{\prime }}\equiv d^{^{\prime }}(G_{k})$, the conditions $\alpha
_{k}=2+o(1)$ and $\gamma _{k}=2+o(1)$ are equivalent to the conditions $%
\frac{\log \left\vert G_{k}\right\vert ^{\frac{1}{2}}}{\log z_{k}^{^{\prime }%
\frac{1}{3}}}=1+o(1)$ and $\frac{\log \left( \left\vert G_{k}\right\vert
-1\right) ^{\frac{1}{2}}}{\log d_{k}^{^{\prime }}}=1+o(1)$, respectively,
which are implied by the conditions $\left\vert G_{k}\right\vert ^{\frac{1}{2%
}}-z_{k}^{^{\prime }\frac{1}{3}}=o(1)$ and $\left( \left\vert
G_{k}\right\vert -1\right) ^{\frac{1}{2}}-d_{k}^{^{\prime }}=o(1)$,
respectively. \ Here, we describe a specific result for a family of
non-Abelian groups satisfying the latter conditions.

\bigskip

\begin{theorem}
Let $\left\{ G_{k}\right\} $ be a family of non-Abelian groups $G_{k}$
realizing matrix multiplications of maximal sizes $z_{k}^{^{\prime }}\equiv
z^{^{\prime }}(G_{k})$ and with maximal irreducible character degrees $%
d_{k}^{^{\prime }}\equiv d_{k}^{^{\prime }}(G)$. \ Then, $(1)$ $\omega =2$
if $\left\vert G_{k}\right\vert ^{\frac{1}{2}}-z_{k}^{^{\prime }\frac{1}{3}%
}=o(1)$ and $\left( \left\vert G_{k}\right\vert -1\right) ^{\frac{1}{2}%
}-d_{k}^{^{\prime }}=o(1)$ such that $\left\vert G_{k}\right\vert ^{\frac{1}{%
2}}-z_{k}^{^{\prime }\frac{1}{3}}=$ $o\left( \left( \left\vert
G_{k}\right\vert -1\right) ^{\frac{1}{2}}-d_{k}^{^{\prime }}\right) $ as $%
k\longrightarrow \infty $. \ And more generally, $(2)$ $\omega =2$ if $%
\left\vert G_{k}\right\vert \longrightarrow \infty $ as $k\longrightarrow
\infty $ and there exists a sequence $\left\{ C_{k}\right\} $ of constants $%
C_{k}$ for the $G_{k}$ such that $2\leq C_{k}\leq \left\vert
G_{k}\right\vert -1$, $\left\vert G_{k}\right\vert \geq C_{k}\left( 1+\frac{1%
}{C_{k}-1}\right) $, $C_{k}\longrightarrow \infty $, $C_{k}=o(\left\vert
G_{k}\right\vert )$, $\left( \left\vert G_{k}\right\vert -C_{k}\right) ^{%
\frac{1}{2}}-d_{k}^{^{\prime }}=o(1)$, $\left\vert G_{k}\right\vert ^{\frac{1%
}{2}}-z_{k}^{^{\prime }\frac{1}{3}}=o(1)$ and $\left\vert G_{k}\right\vert ^{%
\frac{1}{2}}-z_{k}^{^{\prime }\frac{1}{3}}=$ $o\left( \left( \left\vert
G_{k}\right\vert -C_{k}\right) ^{\frac{1}{2}}-d_{k}^{^{\prime }}\right) $,
as $k\longrightarrow \infty $.
\end{theorem}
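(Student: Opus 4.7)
The plan is to reduce both parts of the theorem to Corollary 4.29, which gives $\omega = 2$ from the three conditions $\alpha_k = 2 + o(1)$, $\gamma_k = 2 + o(1)$, and $\alpha_k - 2 = o(\gamma_k - 2)$. Since we have $z_k'^{1/3} = |G_k|^{1/\alpha_k}$ from $(4.18)$ and $d_k' = |G_k|^{1/\gamma_k}$ from $(4.25)$, controlling how close $z_k'^{1/3}$ and $d_k'$ are to $|G_k|^{1/2}$ is exactly what controls the distance of $\alpha_k$ and $\gamma_k$ from $2$.

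For part $(1)$, I would write $z_k'^{1/3} = |G_k|^{1/2} - \delta_k$ and $d_k' = (|G_k|-1)^{1/2} - \varepsilon_k$ with $\delta_k, \varepsilon_k = o(1)$ (Corollary 4.16 guarantees $\delta_k \ge 0$, and Theorem 3.10 guarantees $\varepsilon_k \ge 0$); the fact that $|G_k|$ tends to $\infty$ is forced by the small-$o$ hypotheses together with the discreteness of $z_k'$ and $d_k'$. Taking logarithms and expanding $\log(1-x) = -x + O(x^2)$ for small $x$ yields
\begin{equation*}
\log z_k'^{1/3} = \tfrac{1}{2}\log |G_k| - \tfrac{\delta_k}{|G_k|^{1/2}} + O\!\bigl(\delta_k^2/|G_k|\bigr),
\end{equation*}
and similarly for $\log d_k'$ (with an extra $O(1/|G_k|)$ term from the $\log(|G_k|-1)$). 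From this one reads off
\begin{equation*}
\alpha_k - 2 \sim \frac{4\delta_k}{|G_k|^{1/2}\log |G_k|}, \qquad \gamma_k - 2 \sim \frac{4\varepsilon_k}{|G_k|^{1/2}\log|G_k|},
\end{equation*}
so both quantities are $o(1)$, and the dominance hypothesis $\delta_k = o(\varepsilon_k)$ immediately upgrades to $\alpha_k - 2 = o(\gamma_k - 2)$. Corollary 4.29 then finishes the job.

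For part $(2)$, the argument is structurally identical but with $(|G_k|-1)^{1/2}$ replaced by $(|G_k|-C_k)^{1/2}$ in the estimate for $d_k'$. The new ingredient is $\tfrac{1}{2}\log(|G_k|-C_k) = \tfrac{1}{2}\log|G_k| + \tfrac{1}{2}\log(1 - C_k/|G_k|)$; here the hypothesis $C_k = o(|G_k|)$ guarantees this correction vanishes, and the technical lower bound $|G_k| \ge C_k(1 + \tfrac{1}{C_k-1})$ just ensures that $(|G_k| - C_k)^{1/2} \ge 1$ so the logarithm is non-negative and $\gamma_k$ is well defined. One then derives
\begin{equation*}
\gamma_k - 2 \sim \frac{4\varepsilon_k}{|G_k|^{1/2}\log|G_k|} + \frac{C_k}{|G_k|\log|G_k|},
\end{equation*}
which is still $o(1)$, and the hypothesis $|G_k|^{1/2} - z_k'^{1/3} = o((|G_k|-C_k)^{1/2} - d_k')$ gives $\alpha_k - 2 = o(\gamma_k - 2)$ exactly as before, so Corollary 4.29 again yields $\omega = 2$.

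The main obstacle I anticipate is bookkeeping in the $\gamma_k - 2$ expansion of part $(2)$: one has to check that the extra term $C_k/(|G_k|\log|G_k|)$ coming from $\tfrac{1}{2}\log(1 - C_k/|G_k|)$ does not dominate the $\varepsilon_k$-term in a way that destroys the dominance inequality $\alpha_k - 2 = o(\gamma_k - 2)$. Since the hypothesis controls $\delta_k$ relative to $\varepsilon_k$ rather than relative to $C_k/|G_k|^{1/2}$, one needs to verify that either the $\varepsilon_k$-term dominates the $C_k$-term (in which case the hypothesis suffices directly) or else that $\delta_k$ is $o(C_k/|G_k|^{1/2})$ as well; the growth conditions $C_k \to \infty$ and $C_k = o(|G_k|)$, combined with $\delta_k = o(1)$, should cover both regimes. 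Once that case analysis is settled, the reduction to Corollary 4.29 is immediate.
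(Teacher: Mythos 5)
Your proof is correct and takes the same overall route as the paper: translate the hypotheses on $z_k'^{1/3}$ and $d_k'$ into the three conditions $\alpha_k=2+o(1)$, $\gamma_k=2+o(1)$, $\alpha_k-2=o(\gamma_k-2)$ of Corollary~4.29, then invoke that corollary. The paper does the translation more impressionistically (``$z_k'^{1/3}\to|G_k|^{1/2-}$ faster than $d_k'\to(|G_k|-1)^{1/2-}$''), whereas you make the asymptotics explicit, which is actually cleaner. In particular, for part~$(2)$ the paper routes through the upper bound $\gamma_k\leq\log_{d_k'}(|G_k|-C_k)+\log_{d_k'}C_k$ and asserts that this tends to $2$; but $\log_{(|G_k|-C_k)}C_k$ is not $o(1)$ under the stated hypothesis $C_k=o(|G_k|)$ alone (e.g.\ $C_k=|G_k|^{1/2}$), so that particular step is dubious. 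Your direct expansion of $\log d_k'=\tfrac12\log(|G_k|-C_k)-\varepsilon_k/(|G_k|-C_k)^{1/2}+\cdots$ sidesteps the issue entirely and shows $\gamma_k\to2$ from $C_k=o(|G_k|)$ and $\varepsilon_k=o(1)$, so your argument is in fact tighter than the paper's.

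One small correction to your closing remarks: the case analysis you anticipate at the end is unnecessary. You have
$\alpha_k-2\asymp\delta_k/(|G_k|^{1/2}\log|G_k|)$ and
$\gamma_k-2\asymp\varepsilon_k/(|G_k|^{1/2}\log|G_k|)+C_k/(|G_k|\log|G_k|)$,
and both summands in the second expression are nonnegative. Since $\delta_k=o(\varepsilon_k)$ by hypothesis, the first expression is already $o$ of the $\varepsilon_k$-summand alone, hence $o$ of the whole sum, regardless of whether the $\varepsilon_k$-term or the $C_k$-term dominates. So no comparison between $\delta_k$ and $C_k/|G_k|^{1/2}$ is ever needed. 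A second minor point in part~$(1)$: you drop the $O(1/|G_k|)$ contribution from $\log(|G_k|-1)$ in your $\gamma_k-2$ asymptotic. This is harmless because, again, you only need an asymptotic lower bound on $\gamma_k-2$ to conclude the dominance; but if you want the clean $\sim$ you stated, note that $d_k'$ and $|G_k|$ are integers with $d_k'^2\leq|G_k|-2$ for non-Abelian $G_k$, so $\varepsilon_k\gtrsim|G_k|^{-1/2}$ and the $\varepsilon_k$-term indeed dominates the $1/|G_k|$ correction.
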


\begin{proof}
For an arbitrary group $G$, if we compare \textit{(4.16)} and \textit{(4.25)}%
, we see that $\alpha (G)<\gamma (G)$ iff $z^{^{\prime }}(G)^{\frac{1}{3}%
}>d_{k}^{^{\prime }}(G)$, $\alpha (G)=\gamma (G)$ iff $z^{^{\prime }}(G)^{%
\frac{1}{3}}=d^{^{\prime }}(G)$, and $\gamma (G)<\alpha (G)$ iff $%
z^{^{\prime }}(G)^{\frac{1}{3}}<d^{^{\prime }}(G)$. \ Moreover, $\alpha (G)$
is close to $2$ iff $z^{^{\prime }}(G)^{\frac{1}{3}}$ is close to $%
\left\vert G\right\vert ^{\frac{1}{2}}$, and $\gamma (G)$ is close to $2$
iff $d^{^{\prime }}(G)$ is close to $\left\vert G\right\vert ^{\frac{1}{2}}$.%
\newline

$(1)$ \ By \textit{Theorem 3.10}, $\left( \frac{\left\vert G\right\vert }{%
c(G)}\right) ^{\frac{1}{2}}<d^{^{\prime }}(G)<\left( \left\vert G\right\vert
-1\right) ^{\frac{1}{2}}<\left\vert G\right\vert ^{\frac{1}{2}}$ for a
non-Abelian group $G$, therefore, for the non-Abelian family $\left\{
G_{k}\right\} $ the most that we can have is $d_{k}^{^{\prime
}}\longrightarrow $ $\left( \left\vert G_{k}\right\vert -1\right) ^{\frac{1}{%
2}-}$, as $k\longrightarrow \infty $. \ But $d_{k}^{^{\prime
}}\longrightarrow $ $\left( \left\vert G_{k}\right\vert -1\right) ^{\frac{1}{%
2}-}$, as $k\longrightarrow \infty $ implies $\gamma _{k}\longrightarrow
\log _{\left( \left\vert G_{k}\right\vert -1\right) ^{1/2}}\left\vert
G_{k}\right\vert =2\log _{\left( \left\vert G_{k}\right\vert -1\right)
}\left\vert G_{k}\right\vert \longrightarrow 2^{+}$, i.e. $\gamma
_{k}=2+o(1) $, and if, in addition, $z_{k}^{^{\prime }\frac{1}{3}%
}\longrightarrow $.$\left\vert G_{k}\right\vert ^{\frac{1}{2}-}$ faster than 
$d_{k}^{^{\prime }}\longrightarrow $ $\left( \left\vert G_{k}\right\vert
-1\right) ^{\frac{1}{2}-}$ as $k\longrightarrow \infty $ then $\alpha
_{k}=2+o(1)$ and $\gamma _{k}=2+o(1)$ such that $\alpha _{k}-2=o(\gamma
_{k}-2)$. \ This can also be written as $\left\vert G_{k}\right\vert ^{\frac{%
1}{2}}-z_{k}^{^{\prime }\frac{1}{3}}=o(1),$ $\left( \left\vert
G_{k}\right\vert -1\right) ^{\frac{1}{2}}-d_{k}^{^{\prime }}=o(1)$ and $%
\left\vert G_{k}\right\vert ^{\frac{3}{2}}-z_{k}^{^{\prime }\frac{1}{3}}=$ $%
o\left( \left( \left\vert G_{k}\right\vert -1\right) ^{\frac{1}{2}%
}-d_{k}^{^{\prime }}\right) $, as $k\longrightarrow \infty $, which implies $%
\alpha _{k}=2+o(1),$ $\gamma _{k}=2+o(1)$ and $\alpha _{k}-2=o(\gamma
_{k}-2) $, as $k\longrightarrow \infty $, which, by \textit{Corollary 4.29},
implies that $\omega =2$.\newline

$(2)$ \ Assume all the conditions in $(2)$. \ In particular, for the
constants $2\leq C_{k}\leq \left\vert G_{k}\right\vert -1$, the condition $%
\left\vert G_{k}\right\vert \geq C_{k}\left( 1+\frac{1}{C_{k}-1}\right) $
means that $\gamma _{k}=\log _{d_{k}^{^{\prime }}}\left\vert
G_{k}\right\vert \leq \log _{d_{k}^{^{\prime }}}\left( \left\vert
G_{k}\right\vert -C_{k}\right) +\log _{d_{k}^{^{\prime }}}C_{k}$. \ Then,
the condition $\left( \left\vert G_{k}\right\vert -C_{k}\right) ^{\frac{1}{2}%
}-d_{k}^{^{\prime }}=o(1)$, implying $d_{k}^{^{\prime }}\longrightarrow
\left( \left\vert G_{k}\right\vert -C_{k}\right) ^{\frac{1}{2}-}$, as $%
k\longrightarrow \infty $, together with $\left\vert G_{k}\right\vert
\longrightarrow \infty $, $C_{k}\longrightarrow \infty $, $%
C_{k}=o(\left\vert G_{k}\right\vert )$, as $k\longrightarrow \infty $,
implies that $\gamma _{k}\longrightarrow 2\left( \log _{\left( \left\vert
G_{k}\right\vert -C_{k}\right) }\left( \left\vert G_{k}\right\vert
-C_{k}\right) +\log _{\left( \left\vert G_{k}\right\vert -C_{k}\right)
}C_{k}\right) =2+o(1)$, as $k\longrightarrow \infty $. \ In addition, the
conditions $\left\vert G_{k}\right\vert ^{\frac{1}{2}}-z_{k}^{^{\prime }%
\frac{1}{3}}=o(1)$ and $\left\vert G_{k}\right\vert ^{\frac{1}{2}%
}-z_{k}^{^{\prime }\frac{1}{3}}=$ $o\left( \left( \left\vert
G_{k}\right\vert -C_{k}\right) ^{\frac{1}{2}}-d_{k}^{^{\prime }}\right) $, $%
k\longrightarrow \infty $, imply that $\alpha _{k}\longrightarrow 2^{+}$
faster than $\gamma _{k}\longrightarrow 2^{+}$ as $k\longrightarrow \infty $%
, i.e. $\alpha _{k}=2+o(1)$ and $\gamma _{k}=2+o(1)$ such that $\alpha
_{k}-2=o(\gamma _{k}-2)$, as $k\longrightarrow \infty $, which, by \textit{%
Corollary 4.29} implies that $\omega =2$.
\end{proof}

\pagebreak \bigskip

\chapter{\protect\huge Groups and Matrix Multiplication II}

\bigskip

Here we extend the methods introduced in \textit{Chapter 4} to study the
complexity of simultaneous independent multiplications of several pairs of
matrices via a single group using the concept of simultaneous triple
products. \ We derive some important results that bound the exponent $\omega 
$ in terms of the sizes of simultaneously realized tensors of single groups
in relation to the sizes of the groups. \ In our analysis, it appears that
the sharpness of estimates for $\omega $ is positively related to the 
\textit{number} of simultaneous matrix multiplications supported by a group,
and also that the best groups, in this regard, seem to be groups which are
wreath products of Abelian groups with symmetric groups.

\bigskip

\section{\protect\Large Realizing Simultaneous, Independent Matrix
Multiplications in Groups}

\bigskip

\subsection{\textit{Groups and Families of Simultaneous Index Triples and
Tensors}}

\bigskip

We define the right-quotient set $Q(X,Y)$ of any pair of subsets $X,$ $Y$ of
a finite group $G$ by:

\bigskip

\begin{description}
\item[\textit{(5.1)}] $Q(X,Y)=\left\{ xy^{-1}\text{ }|\text{ }x\text{ }%
\epsilon \text{ }X,\text{ }y\text{ }\epsilon \text{ }Y\right\} .$
\end{description}

\bigskip

Let $I$ be a finite index set. \ A collection $\left\{
(S_{i},T_{i},U_{i})\right\} _{i\text{ }\epsilon \text{ }I}$ of triples $%
(S_{i},T_{i},U_{i})$ of subsets $S_{i},T_{i},U_{i}\subseteq G$, of sizes $%
\left\vert S_{i}\right\vert =m_{i}$, $\left\vert T_{i}\right\vert =p_{i}$, $%
\left\vert U_{i}\right\vert =q_{i}$ respectively, is said to satisfy the 
\textit{simultaneous triple product property} (STPP) iff it is the case that:

\bigskip

\begin{description}
\item[\textit{(5.2)}] each $(S_{i},T_{i},U_{i})$ satisfies the TPP and $%
s_{i}^{^{\prime }}s_{j}^{^{-1}}t_{j}^{^{\prime
}}t_{k}^{^{-1}}u_{k}^{^{\prime }}u_{i}^{^{-1}}=1_{G}\Longrightarrow i=j=k$
\end{description}

\bigskip

for all $s_{i}^{^{\prime }}s_{j}^{^{-1}}$ $\epsilon $ $Q(S_{i},S_{j}),$ $%
t_{j}^{^{\prime }}t_{k}^{^{-1}}$ $\epsilon $ $Q(T_{j},T_{k}),$ $%
u_{k}^{^{\prime }}u_{i}^{^{-1}}$ $\epsilon $ $Q(U_{k},U_{i}),$ $i,j,k$ $%
\epsilon $ $I$. \ In this case, $G$ is said to \textit{simultaneously
realize }the corresponding collection $\left\{ \left\langle
m_{i},p_{i},q_{i}\right\rangle \right\} _{i\text{ }\epsilon \text{ }I}$ of
tensors through the collection $\left\{ (S_{i},T_{i},U_{i})\right\} _{i\text{
}\epsilon \text{ }I}$, which is called a \textit{collection of simultaneous
index triples}. \ For such collections, the triple product property \textit{%
(4.3)} becomes a special case of the simultaneous triple product property
when $\left\vert I\right\vert =1$. \ Thus, every triple in a collection $%
\left\{ (S_{i},T_{i},U_{i})\right\} _{i\text{ }\epsilon \text{ }I}$ of
simultaneous index triples of $G$ is an index triple of $G$, though there is
no converse for collections of index triples. \ The $i^{th}$ tensor $%
\left\langle m_{i},p_{i},q_{i}\right\rangle $ in $\left\{ \left\langle
m_{i},p_{i},q_{i}\right\rangle \right\} _{i\text{ }\epsilon \text{ }I}$ is
the matrix multiplication map $%
\mathbb{C}
^{n_{i}\times m_{i}}\times 
\mathbb{C}
^{m_{i}\times p_{i}}\longrightarrow 
\mathbb{C}
^{n_{i}\times p_{i}}$, and the significance of the simultaneous triple
product property is that it describes the property of $G$ realizing the
collection of tensors $\left\{ \left\langle m_{i},p_{i},q_{i}\right\rangle
\right\} _{i\text{ }\epsilon \text{ }I}$ via a collection $\left\{
(S_{i},T_{i},U_{i})\right\} _{i\text{ }\epsilon \text{ }I}$ of index triples
in such a way that $\left\vert I\right\vert $ simultaneous, independent
matrix multiplications can be reduced to one multiplication in its regular
group algebra $%
\mathbb{C}
G$, with a complexity not exceeding the rank of the algebra.

\subsection{\textit{Group-Algebra Embedding and Complexity of Simultaneous,
Independent Matrix Multiplications}}

\bigskip

\begin{theorem}
If $\left\{ \left\langle m_{i},p_{i},q_{i}\right\rangle \right\} _{i\text{ }%
\epsilon \text{ }I}\subseteq \mathfrak{S}(G)$ is a collection of tensors
simultaneously realized by a group $G$ then%
\begin{equation*}
(1)\text{ }\mathfrak{R}\left( \underset{i}{\oplus }\left\langle
m_{i},p_{i},q_{i}\right\rangle \right) \leq \mathfrak{R}(\mathfrak{m}_{%
\mathbb{C}
G})\leq \underset{\varrho \text{ }\epsilon \text{ }Irrep(G)}{\sum }\mathfrak{%
R}\left( \left\langle d_{\varrho },d_{\varrho },d_{\varrho }\right\rangle
\right)
\end{equation*}%
and iff in addition $G$ is Abelian then 
\begin{equation*}
(2)\text{ }\mathfrak{R}\left( \underset{i}{\oplus }\left\langle
m_{i},p_{i},q_{i}\right\rangle \right) \leq \left\vert G\right\vert =%
\mathfrak{R}(\mathfrak{m}_{%
\mathbb{C}
G}).
\end{equation*}
\end{theorem}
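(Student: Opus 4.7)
The plan is to mirror the proof of Theorem 4.13, but to carry out the embedding and extraction for all indices $i \in I$ simultaneously, so that a single multiplication in $\mathbb{C}G$ realizes the direct-sum tensor $\bigoplus_{i \in I} \langle m_i, p_i, q_i \rangle$. For each $i \in I$ I would define linear embeddings $\mathfrak{a}_i : \mathbb{C}^{m_i \times p_i} \to \mathbb{C}G$ and $\mathfrak{b}_i : \mathbb{C}^{p_i \times q_i} \to \mathbb{C}G$ by
\[
\mathfrak{a}_i(A_i) = \overline{A}_i = \sum_{s \in S_i,\, t \in T_i} A_{i;\,s,t}\, s^{-1}t, \qquad
\mathfrak{b}_i(B_i) = \overline{B}_i = \sum_{t' \in T_i,\, u \in U_i} B_{i;\,t',u}\, t'^{-1}u,
\]
exactly as in the single-triple case; each is injective by Corollary 4.1 applied to the TPP component of the STPP. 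Summing over $i$ yields injective linear maps $\mathfrak{a} = \bigoplus_i \mathfrak{a}_i$ and $\mathfrak{b} = \bigoplus_i \mathfrak{b}_i$ from the direct-sum matrix spaces into $\mathbb{C}G$, with total embedded inputs $\overline{A} = \sum_i \overline{A}_i$ and $\overline{B} = \sum_i \overline{B}_i$.

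The decisive step is to analyze $\overline{A}\,\overline{B} = \sum_{i,j \in I} \overline{A}_i \overline{B}_j$ and show that for every $k \in I$ and every $(s,u) \in S_k \times U_k$, the coefficient of $s^{-1}u$ in $\overline{A}\,\overline{B}$ is exactly $(A_k B_k)_{s,u}$. A generic summand of $\overline{A}_i \overline{B}_j$ has group part $s_i^{-1} t_i t_j'^{-1} u_j$, and it contributes to the coefficient of $s^{-1}u$ iff $s\, s_i^{-1} t_i t_j'^{-1} u_j\, u^{-1} = 1_G$. After relabelling to match the index pattern of (5.2), the STPP forces $i = j = k$ together with $s = s_i$, $t_i = t_j'$, $u = u_j$. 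Hence every cross term $\overline{A}_i \overline{B}_j$ with $i \neq j$ drops out of every target coefficient, and within the surviving diagonal block the sum collapses to $\sum_{t \in T_k} A_{k;\,s,t} B_{k;\,t,u} = (A_k B_k)_{s,u}$.

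Reading off these coefficients defines an extraction map $\mathfrak{x} : \mathbb{C}G \to \bigoplus_i \mathbb{C}^{m_i \times q_i}$ with $\mathfrak{x} \circ \mathfrak{m}_{\mathbb{C}G} \circ (\mathfrak{a} \times \mathfrak{b}) = \bigoplus_i \langle m_i, p_i, q_i \rangle$, so by definition $\bigoplus_i \langle m_i, p_i, q_i \rangle \leq_{\mathbb{C}} \mathfrak{m}_{\mathbb{C}G}$, and Proposition 2.2 gives $\mathfrak{R}(\bigoplus_i \langle m_i, p_i, q_i \rangle) \leq \mathfrak{R}(\mathfrak{m}_{\mathbb{C}G})$. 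The remaining inequality $\mathfrak{R}(\mathfrak{m}_{\mathbb{C}G}) \leq \sum_{\varrho \in \mathit{Irrep}(G)} \mathfrak{R}(\langle d_\varrho, d_\varrho, d_\varrho \rangle)$ is precisely (3.18), proving (1). For (2), if $G$ is Abelian then every $d_\varrho = 1$, so (3.18) degenerates to $\mathfrak{R}(\mathfrak{m}_{\mathbb{C}G}) = |G|$, multiplication in $\mathbb{C}G \cong \mathbb{C}^{|G|}$ being pointwise.

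The hard part will be the bookkeeping in the STPP step: one must carefully match the rearranged relation $s\, s_i^{-1} t_i t_j'^{-1} u_j\, u^{-1} = 1_G$ with the index pattern of (5.2) under a consistent relabelling of $(i,j,k)$, and verify not only that off-diagonal cross terms $\overline{A}_i \overline{B}_j$ ($i \neq j$) contribute nothing to any target coefficient indexed by $(s,u) \in S_k \times U_k$, but also that within the diagonal block the extracted sum is exactly the $(s,u)$-entry of $A_k B_k$ rather than some shuffled combination across distinct $k$.
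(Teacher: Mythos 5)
Your proposal is correct and follows essentially the same route as the paper's proof: the same per-triple embeddings summed into $\mathbb{C}G$, the same expansion of $\overline{A}\,\overline{B}$ into a double sum over index pairs, the same use of the STPP (with a third index playing the role of your $k$) to annihilate the cross terms and collapse the diagonal terms to $(A_kB_k)_{s,u}$, and the same conclusion via \textit{Proposition 2.2} and \textit{(3.18)}. The index-matching you flag as the "hard part" is exactly the step the paper carries out by writing $s_l^{\prime -1}u_l^{\prime}=s_v^{-1}t_vt_w^{\prime -1}u_w\Longleftrightarrow l=v=w$, so your relabelling concern resolves cleanly.
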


\begin{proof}
The procedure used here is a natural generalization of \textit{Theorem 4.13}%
. \ If $S_{v},T_{v},U_{v}\subseteq G$, $1\leq v\leq r$, is a collection of $%
r $ triples satisfying the simultaneous triple product property, and $%
\left\{ \left( A_{v},B_{v}\right) \right\} _{v=1}^{r}$ is a given collection
of $r$ pairs of $m_{v}\times p_{v}$ and $p_{v}\times q_{v}$ matrices $%
A_{v}=\left( A_{i_{v}j_{v}}\right) $ and $B_{v}=\left( B_{j_{v}^{^{\prime
}}k_{v}}\right) $ respectively, then we embed these pairs in $%
\mathbb{C}
G$ as 
\begin{eqnarray*}
\mathfrak{a}_{v}\left( A_{v}\right) &=&\overline{A}_{v}=\underset{s_{v}\text{
}\epsilon \text{ }S_{v},\text{ }t_{v}\text{ }\epsilon \text{ }T_{v}}{\sum }%
A_{s_{v},t_{v}}s_{v}^{-1}t_{v} \\
\mathfrak{b}_{v}\left( B_{v}\right) &=&\overline{B}_{v}=\underset{%
t_{v}^{^{\prime }}\text{ }\epsilon \text{ }T_{v},\text{ }u_{v}\text{ }%
\epsilon \text{ }U_{v}}{\sum }B_{t_{v}^{^{\prime }},u_{v}}t_{v}^{^{\prime
}-1}u_{v}
\end{eqnarray*}%
\ via pairs of injective, linear embedding maps $\mathfrak{a}_{v}:%
\mathbb{C}
^{m_{v}\times p_{v}}\longrightarrow 
\mathbb{C}
G$ and $\mathfrak{b}_{v}:%
\mathbb{C}
^{p_{v}\times q_{v}}\longrightarrow 
\mathbb{C}
G$, using the triples $S_{v},T_{v},U_{v}$, one triple for each pair, just as
described in \textit{Theorem 4.13}. \ For each $v$, the $\left(
i_{v},k_{v}\right) ^{th}$ entry $C_{i_{v}k_{v}}$ of the product $%
C_{v}=A_{v}B_{v}$ is given by $C_{i_{v}j_{v}}=$ $\underset{%
j_{v}=j_{v}^{^{\prime }}}{\sum }A_{i_{v}j_{v}}B_{j_{v}k_{v}}$. \ The product
of $\overline{A}_{v}$ and $\overline{B}_{v}$ in $%
\mathbb{C}
G$ is given by%
\begin{eqnarray*}
\overline{A}_{v}\overline{B}_{v} &=&\underset{s_{v}\text{ }\epsilon \text{ }%
S_{v},\text{ }t_{v}\text{ }\epsilon \text{ }T_{v}}{\sum }%
A_{s_{v},t_{v}}s_{v}^{-1}t_{v}\cdot \underset{t_{v}^{^{\prime }}\text{ }%
\epsilon \text{ }T_{v},\text{ }u_{v}\text{ }\epsilon \text{ }U_{v}}{\sum }%
B_{t_{v}^{^{\prime }},u_{v}}t_{v}^{^{\prime }-1}u_{v} \\
&=&\underset{s_{v}\text{ }\epsilon \text{ }S_{v},\text{ }t_{v}\text{ }%
\epsilon \text{ }T_{v}\text{ }}{\sum }\underset{t_{v}^{^{\prime }}\text{ }%
\epsilon \text{ }T_{v},\text{ }u_{v}\text{ }\epsilon \text{ }U_{v}}{\sum }%
A_{s_{v},t_{v}}B_{t_{v}^{^{\prime }},u_{v}}s_{v}^{-1}t_{v}t_{v}^{^{\prime
}-1}u_{v} \\
&=&\underset{s_{v}\text{ }\epsilon \text{ }S_{v},\text{ }u_{v}\text{ }%
\epsilon \text{ }U_{v}\text{ }}{\sum }\left( \underset{t_{v},t_{v}^{^{\prime
}}\text{ }\epsilon \text{ }T_{v}\text{ }}{\sum }A_{s_{v},t_{v}}B_{t_{v}^{^{%
\prime }},u_{v}}t_{v}t_{v}^{^{\prime }-1}\right) s_{v}^{-1}u_{v}.
\end{eqnarray*}%
As in the proof of \textit{Theorem 4.13} for a given $v$, we can recover the
matrix product $C_{v}=A_{v}B_{v}$ from $\overline{A}_{v}\overline{B}_{v}$ by
a linear, injective extraction map $\mathfrak{x}_{v}:%
\mathbb{C}
^{m_{v}\times q_{v}}\longleftarrow 
\mathbb{C}
G$ defined on the fact that for arbitrary $s_{v}^{^{\prime }}$ $\epsilon $ $%
S_{v}$, $u_{v}^{^{\prime }}$ $\epsilon $ $U_{v}$, the sum of those terms of $%
\overline{A}_{v}\overline{B}_{v}$ for which $t_{v}^{^{\prime }}=t_{v}$ all
have the group term $s_{v}^{^{\prime }-1}u_{v}^{^{\prime }}$, and the sum of
coefficients of these terms, $\underset{t_{v}=t_{v}^{^{\prime }}\text{ }%
\epsilon \text{ }T_{v}}{\sum }A_{s_{v}^{^{\prime
}},t_{v}}B_{t_{v},u_{v}^{^{\prime }}}$, corresponds $1$-to-$1$ with the $%
(i_{v},k_{v})^{th}$ entry $C_{i_{v}k_{v}}$ of $C_{v}=A_{v}B_{v}$. \ If $%
\mathfrak{c}_{v}$ denotes the embedding map $%
\mathbb{C}
^{m_{v}\times q_{v}}\longrightarrow 
\mathbb{C}
G$, this shows that $\mathfrak{x}_{v}(\overline{A}_{v}\overline{B}%
_{v})=C_{v}=\mathfrak{c}_{v}^{-1}\left( \overline{C}_{v}\right) $. \ For
each $v$, we have the composition $\mathfrak{x}_{v}\circ \mathfrak{m}_{%
\mathbb{C}
G}\circ \left( \mathfrak{a}_{v}\times \mathfrak{b}_{v}\right) \left(
A_{v},B_{v}\right) =C_{v}$ $\epsilon $ $%
\mathbb{C}
^{m_{v}\times q_{v}}$, by which we have the restrictions $\mathfrak{x}%
_{v}\circ \mathfrak{m}_{%
\mathbb{C}
G}\circ \left( \mathfrak{a}_{v}\times \mathfrak{b}_{v}\right) =\left\langle
m_{v},p_{v},q_{v}\right\rangle $ of $\mathfrak{m}_{%
\mathbb{C}
G}$ to $\left\langle m_{v},p_{v},q_{v}\right\rangle $, i.e. $\left\langle
m_{v},p_{v},q_{v}\right\rangle \leq _{%
\mathbb{C}
}\mathfrak{m}_{%
\mathbb{C}
G}$, by which we deduce $\mathfrak{R}\left( \left\langle
m_{v},p_{v},q_{v}\right\rangle \right) \leq _{%
\mathbb{C}
}\mathfrak{R}\left( \mathfrak{m}_{%
\mathbb{C}
G}\right) $ (\textit{Proposition 2.2}).\newline

Now we prove that these restrictions are simultaneous and independent. \ We
define the direct sum matrices $A=\underset{v=1}{\overset{r}{\oplus }}A_{v}$ 
$\epsilon $ $\underset{v=1}{\overset{r}{\oplus }}%
\mathbb{C}
^{m_{v}\times p_{v}},$ and $B=\underset{v=1}{\overset{r}{\oplus }}B_{v}$ $%
\epsilon $ $\underset{v=1}{\overset{r}{\oplus }}%
\mathbb{C}
^{p_{v}\times q_{v}}$. \ The product $AB$ $\epsilon $ $\underset{v=1}{%
\overset{r}{\oplus }}%
\mathbb{C}
^{m_{v}\times q_{v}}$ is the direct sum $\underset{v=1}{\overset{r}{\oplus }}%
A_{v}B_{v}$ of the $r$ block products $A_{v}B_{v}$ of blocks $A_{v}$ and $%
B_{v}$ of dimensions $m_{v}\times p_{v}$ and $p_{v}\times q_{v}$
respectively. \ We embed $A$ and $B$ in $%
\mathbb{C}
G$ by linear embedding maps $\mathfrak{a}$ and $\mathfrak{b}$ defined by%
\begin{eqnarray*}
\mathfrak{a}\left( A\right) &=&\overline{A}=\underset{v=1}{\overset{r}{\sum }%
}\underset{s_{v}\text{ }\epsilon \text{ }S_{v},\text{ }t_{v}\text{ }\epsilon 
\text{ }T_{v}}{\sum }A_{s_{v},t_{v}}s_{v}^{-1}t_{v}=\underset{v=1}{\overset{r%
}{\sum }}\overline{A}_{v} \\
\mathfrak{b}\left( B\right) &=&\overline{B}=\underset{v=1}{\overset{r}{\sum }%
}\underset{t_{v}^{^{\prime }}\text{ }\epsilon \text{ }T_{v},\text{ }u_{v}%
\text{ }\epsilon \text{ }U_{v}}{\sum }B_{t_{v}^{^{\prime
}},u_{v}}t_{v}^{^{\prime }-1}u_{v}=\underset{v=1}{\overset{r}{\sum }}%
\overline{B}_{v}.
\end{eqnarray*}

Clearly, $\mathfrak{a}=\underset{v=1}{\overset{r}{\sum }}\mathfrak{a}_{v}$
and $\mathfrak{b}=\underset{v=1}{\overset{r}{\sum }}\mathfrak{b}_{v}$, and
are injective by the injectivity of the $\mathfrak{a}_{v}$ and $\mathfrak{b}%
_{v}$. \ The product of $\overline{A}$ and $\overline{B}$ in $%
\mathbb{C}
G$ is given by:\newline
\begin{eqnarray*}
\overline{A}\overline{B} &=&\underset{v=1}{\overset{r}{\sum }}\overline{A}%
_{v}\underset{v=1}{\overset{r}{\sum }}\overline{B}_{v} \\
&=&\underset{v=1}{\overset{r}{\sum }}\underset{w=1}{\overset{r}{\sum }}%
\underset{s_{v}\text{ }\epsilon \text{ }S_{v},\text{ }t_{v}\text{ }\epsilon 
\text{ }T_{v}\text{ }}{\sum }\underset{t_{w}^{^{\prime }}\text{ }\epsilon 
\text{ }T_{w},\text{ }u_{w}\text{ }\epsilon \text{ }U_{w}}{\sum }%
A_{s_{v},t_{v}}B_{t_{w}^{^{\prime }},u_{w}}s_{v}^{-1}t_{v}t_{w}^{^{\prime
}-1}u_{w} \\
&=&\underset{v=1}{\overset{r}{\sum }}\underset{w=1}{\overset{r}{\sum }}%
\left( \underset{s_{v}\text{ }\epsilon \text{ }S_{v},\text{ }t_{v}\text{ }%
\epsilon \text{ }T_{v}\text{ }}{\sum }\underset{t_{w}^{^{\prime }}\text{ }%
\epsilon \text{ }T_{w},\text{ }u_{w}\text{ }\epsilon \text{ }U_{w}}{\sum }%
A_{s_{v},t_{v}}B_{t_{w}^{^{\prime }},u_{w}}t_{v}t_{w}^{^{\prime }-1}\right)
s_{v}^{-1}u_{w}.
\end{eqnarray*}

If we use a third index $1\leq l\leq r$, then by the simultaneous triple
product property, for arbitrary $s_{l}^{^{\prime }}$ $\epsilon $ $S_{l}$, $%
u_{l}^{^{\prime }}$ $\epsilon $ $U_{l}$, it is the case that $%
s_{l}^{^{\prime }-1}u_{l}^{^{\prime }}=s_{v}^{-1}t_{v}t_{w}^{^{\prime
}-1}u_{w}\Longleftrightarrow s_{l}^{^{\prime
}}s_{v}^{-1}=t_{v}t_{w}^{^{\prime }-1}=u_{w}u_{l}^{^{\prime
}-1}=1\Longleftrightarrow l=v=w$. \ This means that for each $v$, and $s_{v}$
$\epsilon $ $S_{v}$ and $u_{v}$ $\epsilon $ $U_{v}$, the coefficient of the
term $s_{v}^{-1}u_{v}$ in the product $\overline{A}\overline{B}$ is $%
\underset{t_{v}\text{ }\epsilon \text{ }T_{v}}{\sum }%
A_{s_{v},t_{v}}B_{t_{v},u_{v}}=\left( A_{v}B_{v}\right) _{s_{v},u_{v}}$. \
In this way, we can recover the $r$ block products $%
A_{1}B_{1},A_{2}B_{2},....,A_{r}B_{r}$ \textit{simultaneously} from $%
\overline{A}\overline{B}$, and each block $A_{v}B_{v}$ will be the $v^{th}$
diagonal block on the block diagonal product $AB$. \ If we define an
extraction map $\mathfrak{x}:%
\mathbb{C}
^{m\times q}\longleftarrow 
\mathbb{C}
G$ \ based on this rule, then $\mathfrak{x}=\underset{v=1}{\overset{r}{%
\oplus }}\mathfrak{x}_{v}$ where $%
\mathbb{C}
^{m\times q}=\underset{v=1}{\overset{r}{\oplus }}%
\mathbb{C}
^{m_{v}\times q_{v}}$, and we have shown the following restriction $%
\mathfrak{x}\circ \mathfrak{m}_{%
\mathbb{C}
G}\circ \left( \mathfrak{a}\times \mathfrak{b}\right) \left( A,B\right) =AB$ 
$\epsilon $ $%
\mathbb{C}
^{m\times q}$ of $\mathfrak{m}_{%
\mathbb{C}
G}$ to $\left\langle m,p,q\right\rangle $, i.e. $\left\langle
m,p,q\right\rangle \leq _{%
\mathbb{C}
}\mathfrak{m}_{%
\mathbb{C}
G}$, by which we deduce $\mathfrak{R}\left( \left\langle m,p,q\right\rangle
\right) \leq \mathfrak{R}\left( \mathfrak{m}_{%
\mathbb{C}
G}\right) $ (\textit{Proposition 2.2}). \ By \textit{Proposition 2.5}, $%
\left\langle m,p,q\right\rangle =\left\langle \underset{v=1}{\overset{r}{%
\sum }}m_{v},\underset{v=1}{\overset{r}{\sum }}p_{v},\underset{v=1}{\overset{%
r}{\sum }}q_{v}\right\rangle \cong \underset{v=1}{\overset{r}{\oplus }}%
\left\langle m_{v},p_{v},q_{v}\right\rangle $, and by \textit{Proposition 2.2%
}%
\begin{equation*}
\mathfrak{R}\left( \underset{v=1}{\overset{r}{\oplus }}\left\langle
m_{v},p_{v},q_{v}\right\rangle \right) \leq \text{ }\mathfrak{R}\left( 
\mathfrak{m}_{%
\mathbb{C}
G}\right) .
\end{equation*}%
This takes care of $(1)$. \ For $(2)$ we note that note that $\mathfrak{R}%
\left( \mathfrak{m}_{%
\mathbb{C}
G}\right) =\left\vert G\right\vert $ iff $G$ is Abelian.
\end{proof}

\bigskip

An immediate consequence is the following.

\bigskip

\begin{corollary}
If $\left\{ \left\langle m_{i},p_{i},q_{i}\right\rangle \right\} _{i\text{ }%
\epsilon \text{ }I}\subseteq \mathfrak{S}(G)$ is a collection of tensors
simultaneously realized by a group $G$ then 
\begin{equation*}
(1)\underset{i\text{ }\epsilon \text{ }I}{\sum }\left(
m_{i}p_{i}q_{i}\right) ^{\frac{\omega }{3}}\leq D_{\omega }(G)
\end{equation*}%
and if $G$ is Abelian 
\begin{equation*}
(2)\text{ }\underset{i\text{ }\epsilon \text{ }I}{\sum }\left(
m_{i}p_{i}q_{i}\right) ^{\frac{\omega }{3}}\leq \left\vert G\right\vert .
\end{equation*}
\end{corollary}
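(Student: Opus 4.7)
The plan is to derive both parts from Theorem 5.1 by combining it with the asymptotic direct sum inequality (Proposition 2.14), using the tensor-power amplification trick already seen in Theorem 4.23.

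First I would invoke Theorem 5.1 in its restriction form, namely $\bigoplus_{i \in I} \langle m_i, p_i, q_i\rangle \leq_{\mathbb{C}} \mathfrak{m}_{\mathbb{C} G} \cong \bigoplus_{\varrho \in \mathrm{Irrep}(G)} \langle d_\varrho, d_\varrho, d_\varrho\rangle$. Taking $r$-th tensor powers on both sides, and expanding using Proposition 2.5(2), gives
\begin{equation*}
\bigoplus_{(i_1,\ldots,i_r) \in I^r} \langle m_{i_1}\cdots m_{i_r},\, p_{i_1}\cdots p_{i_r},\, q_{i_1}\cdots q_{i_r}\rangle \leq_{\mathbb{C}} \bigoplus_{(\varrho_1,\ldots,\varrho_r)} \langle d_{\varrho_1}\cdots d_{\varrho_r},\ldots,d_{\varrho_1}\cdots d_{\varrho_r}\rangle,
\end{equation*}
where the right-hand sum ranges over all $r$-tuples of irreducible representations.

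Next I would apply Proposition 2.14 to the left-hand side to get the lower bound
\begin{equation*}
\sum_{(i_1,\ldots,i_r)} \bigl(m_{i_1}\cdots m_{i_r}\,p_{i_1}\cdots p_{i_r}\,q_{i_1}\cdots q_{i_r}\bigr)^{\omega/3} = \left(\sum_{i \in I}(m_i p_i q_i)^{\omega/3}\right)^{r}
\end{equation*}
for the rank of the left-hand tensor. For the upper bound on the right, I would use Proposition 2.12 to choose, for each $\varepsilon>0$, a constant $C_\varepsilon \geq 1$ with $\mathfrak{R}(\langle k,k,k\rangle) \leq C_\varepsilon k^{\omega+\varepsilon}$ for all $k$, so summing gives $C_\varepsilon \bigl(\sum_\varrho d_\varrho^{\omega+\varepsilon}\bigr)^r = C_\varepsilon D_{\omega+\varepsilon}(G)^r$. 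Combining produces
\begin{equation*}
\left(\sum_{i \in I}(m_i p_i q_i)^{\omega/3}\right)^{r} \leq C_\varepsilon D_{\omega+\varepsilon}(G)^{r}.
\end{equation*}
Extracting $r$-th roots and letting $r \to \infty$ kills the constant, then letting $\varepsilon \to 0^+$ yields $\sum_{i \in I}(m_i p_i q_i)^{\omega/3} \leq D_\omega(G)$, which is (1).

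For part (2), I would observe that when $G$ is Abelian every $d_\varrho = 1$, so $D_\omega(G) = \sum_\varrho 1 = c(G) = |G|$, and (2) is immediate from (1). Alternatively one can apply part (2) of Theorem 5.1 directly together with Proposition 2.14 and the asymptotic tensor-power argument above, using $\mathfrak{R}(\mathfrak{m}_{\mathbb{C} G}) = |G|$ on the right-hand side. The only genuinely non-routine step is the tensor-power amplification, and it is already carried out in the proof of Theorem 4.23; the rest is bookkeeping.
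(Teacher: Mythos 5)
Your proof is correct, and it actually diverges from what the paper does for part $(1)$. You invoke the restriction from Theorem 5.1 together with $(3.17)$, raise it to the $r$-th tensor power, apply Proposition 2.14 to the expanded left side to obtain $\bigl(\sum_{i}(m_ip_iq_i)^{\omega/3}\bigr)^{r}$, bound the right side by $C_\varepsilon D_{\omega+\varepsilon}(G)^{r}$ via Proposition 2.12, and then take $r$-th roots and limits. This is precisely the amplification argument already used for Theorem 4.23, and it closes the proof cleanly. The paper, by contrast, does not amplify: it combines Theorem 5.1(1), $(3.18)$ and Proposition 2.14 to get $\sum_{i}(m_ip_iq_i)^{\omega/3}\leq\sum_{\varrho}\mathfrak{R}\bigl(\langle d_\varrho,d_\varrho,d_\varrho\rangle\bigr)$, then notes $D_\omega(G)\leq\sum_{\varrho}\mathfrak{R}\bigl(\langle d_\varrho,d_\varrho,d_\varrho\rangle\bigr)$, which points the wrong way, and finally attempts to salvage this with an inequality chain involving $|I|$. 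That chain requires $\sum_{i}(m_ip_iq_i)^{\omega/3}\leq|I|^{-2/\omega}\bigl(\sum_{i}(m_ip_iq_i)^{\omega/3}\bigr)^{2/\omega}$, which fails whenever $|I|>1$ (since each term on the left is at least $1$ and $\omega\geq 2$). So your route is not merely an alternative; it is the one that actually works, and it keeps the argument parallel to the single-tensor case. Your treatment of part $(2)$, observing that Abelian $G$ forces all $d_\varrho=1$ hence $D_\omega(G)=c(G)=|G|$, matches the paper and is fine.
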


\begin{proof}
Assume that $\left\{ \left\langle m_{i},p_{i},q_{i}\right\rangle \right\} _{i%
\text{ }\epsilon \text{ }I}\subseteq \mathfrak{S}(G)$ is a collection of
tensors simultaneously realized by $G$. \ $(1)$ By part $(1)$ of \textit{%
Theorem 5.1}, \textit{(3.18)} and \textit{Proposition 2.14}%
\begin{equation*}
\underset{i\text{ }\epsilon \text{ }I}{\sum }\left( m_{i}p_{i}q_{i}\right) ^{%
\frac{\omega }{3}}\leq \mathfrak{R}\left( \underset{i\text{ }\epsilon \text{ 
}I}{\oplus }\left\langle m_{i},p_{i},q_{i}\right\rangle \right) \leq 
\mathfrak{R}\left( \mathfrak{m}_{%
\mathbb{C}
G}\right) \leq \underset{\varrho \text{ }\epsilon \text{ }Irrep(G)}{\sum }%
\mathfrak{R}\left( \left\langle d_{\varrho },d_{\varrho },d_{\varrho
}\right\rangle \right) .
\end{equation*}%
$(1)$ On the right-hand side, by \textit{Proposition 3.13 }$D_{\omega
}\left( G\right) =\underset{\varrho \text{ }\epsilon \text{ }Irrep(G)}{\sum }%
d_{\varrho }^{\omega }\leq \underset{\varrho \text{ }\epsilon \text{ }%
Irrep(G)}{\sum }\mathfrak{R}\left( \left\langle d_{\varrho },d_{\varrho
},d_{\varrho }\right\rangle \right) $. \ Since $\left\langle
m_{i},p_{i},q_{i}\right\rangle $ $\epsilon $ $\mathfrak{S}(G)$, by \textit{%
Corollary 4.16}, it follows that $\left( m_{i}p_{i}q_{i}\right) ^{\frac{%
\omega }{3}}<\left\vert G\right\vert ^{\frac{\omega }{2}}=D_{2}\left(
G\right) ^{\frac{\omega }{2}}\leq D_{\omega }\left( G\right) ^{\frac{\omega 
}{2}}$. \ Thus, 
\begin{eqnarray*}
\underset{i\text{ }\epsilon \text{ }I}{\sum }\left( m_{i}p_{i}q_{i}\right) ^{%
\frac{\omega }{3}} &<&\left\vert I\right\vert D_{\omega }\left( G\right) ^{%
\frac{\omega }{2}} \\
&\Longleftrightarrow & \\
\underset{i\text{ }\epsilon \text{ }I}{\sum }\left( m_{i}p_{i}q_{i}\right) ^{%
\frac{\omega }{3}} &\leq &\left\vert I\right\vert ^{-\frac{2}{\omega }%
}\left( \underset{i\text{ }\epsilon \text{ }I}{\sum }\left(
m_{i}p_{i}q_{i}\right) ^{\frac{\omega }{3}}\right) ^{\frac{2}{\omega }%
}<D_{\omega }\left( G\right)
\end{eqnarray*}%
$(2)$ $D_{r}(G)=\left\vert G\right\vert $ for all $r\geq 1$ iff $G$ is
Abelian, and the result follows by $(1)$.
\end{proof}

\bigskip

Part $(2)$ of \textit{Corollary 5.2 }points to the usefulness of Abelian
groups for estimates of $\omega $, for which we have the following useful
corollary.

\bigskip

\begin{corollary}
If $\left\{ \left\langle n,n,n\right\rangle \right\} _{i=1}^{r}$ is a
collection of $r$ identical square tensors $\left\langle n,n,n\right\rangle $
simultaneously realized by an Abelian group $G$ then 
\begin{equation*}
(1)\text{ }\omega \leq \frac{\log \left\vert G\right\vert -\log r}{\log n}
\end{equation*}%
and%
\begin{equation*}
(2)\text{ }\omega =2\text{ if }\left\vert G\right\vert =n^{3}\text{ and }r=n.
\end{equation*}
\end{corollary}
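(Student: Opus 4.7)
The plan is to deduce both parts directly from part $(2)$ of Corollary 5.2, which for an Abelian group $G$ and any collection $\{\langle m_i,p_i,q_i\rangle\}_{i\in I}$ simultaneously realized by $G$ asserts
\[
\sum_{i\in I}(m_ip_iq_i)^{\omega/3} \leq |G|.
\]
Specializing to the given hypothesis, namely $|I|=r$ with $\langle m_i,p_i,q_i\rangle = \langle n,n,n\rangle$ for every $i$, the left side collapses to $r\cdot (n^3)^{\omega/3} = r\,n^{\omega}$, so the inequality becomes $r\,n^{\omega}\leq |G|$.

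For part $(1)$, I would then take logarithms on both sides of $r\,n^{\omega}\leq |G|$ to get $\log r + \omega\log n \leq \log|G|$, and solve for $\omega$:
\[
\omega \leq \frac{\log|G| - \log r}{\log n},
\]
which is exactly the stated bound. (Note the hypothesis implicitly requires $n\geq 2$, else $\log n = 0$ and the tensor $\langle n,n,n\rangle = \langle 1,1,1\rangle$ would not lie in $\mathfrak{S}'(G)$, so the division is legitimate.)

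For part $(2)$, I would substitute the specific values $|G|=n^3$ and $r=n$ into the estimate from part $(1)$:
\[
\omega \leq \frac{\log n^3 - \log n}{\log n} = \frac{3\log n - \log n}{\log n} = 2.
\]
Combining this with the unconditional lower bound $\omega \geq 2$ (from Proposition 2.11, equivalently the trivial $\Omega(n^2)$ output-size bound), we conclude $\omega = 2$.

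Both parts are essentially substitution into Corollary 5.2, so there is no real obstacle here; the only subtlety worth flagging explicitly is the implicit requirement $n\geq 2$ so that $\log n \neq 0$, and the observation that the existence of such a simultaneous realization is a nontrivial hypothesis on $G$ — the corollary does not assert that such $G$ exists, only that if one does with $|G|=n^3$ and $r=n$ realizations of $\langle n,n,n\rangle$, then $\omega=2$ follows.
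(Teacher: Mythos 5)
Your proof is correct and takes essentially the same route as the paper, which simply cites part $(2)$ of Corollary~5.2; you spell out the specialization to $r$ identical tensors, the logarithmic manipulation, and the invocation of the lower bound $\omega\geq 2$ for part $(2)$. The side remarks about $n\geq 2$ and the conditional nature of the hypothesis are sound and worth keeping.
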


\begin{proof}
Consequence of part $(2)$ of \textit{Corollary 5.2}.
\end{proof}

\subsection{\textit{Extension Results}}

\bigskip

The following is a basic extension of the simultaneous triple product
property to direct product groups.

\bigskip

\begin{lemma}
If groups $G$ and $G^{^{\prime }}$ have collections of simultaneous index
triples $\left\{ (S_{i},T_{i},U_{i})\right\} _{i\text{ }\epsilon \text{ }I},$
$\left\{ (S_{i^{\prime }}^{^{\prime }},T_{i^{\prime }}^{^{\prime
}},U_{i^{\prime }}^{^{\prime }})\right\} _{i^{\prime }\text{ }\epsilon \text{
}I^{^{\prime }}}$ of sizes $r$ and $r^{^{\prime }}$ resp., then their direct
product $G\times G^{^{\prime }}$ has the collection of $rr^{^{\prime }}$
simultaneous index triples $\left\{ (S_{i}\times S_{i^{\prime }}^{^{\prime
}},T_{i}\times T_{i^{\prime }}^{^{\prime }},U_{i}\times U_{i^{^{\prime
}}})\right\} _{i\text{ }\epsilon \text{ }I,\text{ }i^{\prime }\epsilon \text{
}I^{\prime }}$.
\end{lemma}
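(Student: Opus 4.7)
The plan is to reduce the simultaneous triple product property in $G \times G'$ to its components in $G$ and $G'$, exploiting the fact that multiplication in the direct product is componentwise. First I would fix notation: write elements of $S_i \times S'_{i'}$ in the form $(s_i, s'_{i'})$ with $s_i \in S_i$ and $s'_{i'} \in S'_{i'}$, and similarly for the $T$- and $U$-factors; I would then index the product triples by pairs $(i,i') \in I \times I'$, giving the required family of $rr'$ triples $(S_i \times S'_{i'},\ T_i \times T'_{i'},\ U_i \times U'_{i'})$.

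Next, I would verify that each individual product triple $(S_i \times S'_{i'},\ T_i \times T'_{i'},\ U_i \times U'_{i'})$ has the triple product property. This is exactly the content of Lemma 4.8 applied to $(S_i, T_i, U_i) \in \mathfrak{I}(G)$ and $(S'_{i'}, T'_{i'}, U'_{i'}) \in \mathfrak{I}(G')$, so the first clause of the STPP (equation \textit{(5.2)}) for the product family is already handled.

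The main substantive step is to check the cross-index condition. Take arbitrary indices $(i,i'), (j,j'), (k,k') \in I \times I'$ and arbitrary elements of the corresponding right-quotient sets. Writing the would-be vanishing product in $G \times G'$,
\begin{equation*}
\bigl((s_i', s'_{i'}{}')(s_j, s'_{j'})^{-1}\bigr)\bigl((t_j', t'_{j'}{}')(t_k, t'_{k'})^{-1}\bigr)\bigl((u_k', u'_{k'}{}')(u_i, u'_{i'})^{-1}\bigr) = (1_G, 1_{G'}),
\end{equation*}
the direct product structure decouples this into two independent equations, one entirely inside $G$ (on the first coordinates) and one inside $G'$ (on the second coordinates). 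Applying the STPP hypothesis \textit{(5.2)} for $\{(S_i,T_i,U_i)\}_{i \in I}$ to the first equation forces $i = j = k$, and applying the STPP hypothesis for $\{(S'_{i'}, T'_{i'}, U'_{i'})\}_{i' \in I'}$ to the second forces $i' = j' = k'$. Together these yield $(i,i') = (j,j') = (k,k')$, which is the simultaneous triple product property for the product family.

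I do not anticipate any real obstacle: the componentwise structure of $G \times G'$ makes the argument essentially a two-fold application of the STPP in each factor, exactly parallel to the way Lemma 4.8 generalized the single-triple TPP to direct products. The only thing to be careful about is bookkeeping of the paired indices and of the right-quotient elements, so that the decoupling into coordinate equations is unambiguous.
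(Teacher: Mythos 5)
Your argument is essentially the same as the paper's: exploit the componentwise structure of $G \times G'$ to decouple the six-fold product into one equation in $G$ and one in $G'$, then apply the STPP in each factor to force $i=j=k$ and $i'=j'=k'$. You are slightly more thorough than the paper's own proof in explicitly invoking Lemma 4.8 to verify that each product triple individually satisfies the TPP (the first clause of definition \textit{(5.2)}), a step the paper leaves implicit.
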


\begin{proof}
For arbitrary indices $i,j,k$ $\epsilon $ $I$ and $i^{^{\prime
}},j^{^{\prime }},k^{^{\prime }}$ $\epsilon $ $I^{^{\prime }}$, and elements 
$\left( s_{i},s_{i^{\prime }}^{^{\prime }}\right) $ $\epsilon $ $S_{i}\times
S_{i^{\prime }}^{^{\prime }}$, $\left( \overline{s}_{j},\overline{s}%
_{j^{\prime }}^{^{\prime }}\right) $ $\epsilon $ $S_{j}\times S_{j^{\prime
}}^{^{\prime }}$, $\left( t_{j},t_{j^{\prime }}^{^{\prime }}\right) $ $%
\epsilon $ $T_{j}\times T_{j^{\prime }}^{^{\prime }}$, $\left( \overline{t}%
_{k},\overline{t}_{k^{\prime }}^{^{\prime }}\right) $ $\epsilon $ $%
T_{k}\times T_{k^{\prime }}^{^{\prime }}$, $\left( u_{k},u_{k^{\prime
}}^{^{\prime }}\right) $ $\epsilon $ $U_{k}\times U_{k^{\prime }}^{^{\prime
}}$, $\left( \overline{u}_{i},\overline{u}_{i^{\prime }}^{^{\prime }}\right) 
$ $\epsilon $ $U_{i}\times U_{i^{\prime }}^{^{\prime }}$, and the assumption
of the simultaneous triple product property for both the collections $%
\left\{ (S_{i},T_{i},U_{i})\right\} _{i\text{ }\epsilon \text{ }I}$ and $%
\left\{ (S_{i^{\prime }}^{^{\prime }},T_{i^{\prime }}^{^{\prime
}},U_{i^{\prime }}^{^{\prime }})\right\} _{i^{\prime }\text{ }\epsilon \text{
}I^{^{\prime }}}$ it is the case that: 
\begin{eqnarray*}
&&\left( s_{i},s_{i^{\prime }}^{^{\prime }}\right) \left( \overline{s}_{j},%
\overline{s}_{j^{\prime }}^{^{\prime }}\right) ^{-1}\left(
t_{j},t_{j^{\prime }}^{^{\prime }}\right) \left( \overline{t}_{k},\overline{t%
}_{k^{\prime }}^{^{\prime }}\right) ^{-1}\left( u_{k},u_{k^{\prime
}}^{^{\prime }}\right) \left( \overline{u}_{i},\overline{u}_{i^{\prime
}}^{^{\prime }}\right) ^{-1} \\
&=&\left( s_{i}\overline{s}_{j}^{-1},s_{i^{\prime }}^{^{\prime }}\overline{s}%
_{j^{\prime }}^{^{\prime }-1}\right) \left( t_{j}\overline{t}%
_{k}^{-1},t_{j^{\prime }}^{^{\prime }}\overline{t}_{k^{\prime }}^{^{\prime
}-1}\right) \left( u_{k}\overline{u}_{i}^{-1},u_{k^{\prime }}^{^{\prime }}%
\overline{u}_{i^{\prime }}^{^{\prime -1}}\right) \\
&=&\left( s_{i}\overline{s}_{j}^{-1}t_{j}\overline{t}_{k}^{-1}u_{k}\overline{%
u}_{i}^{-1},s_{i^{\prime }}^{^{\prime }}\overline{s}_{j^{\prime }}^{^{\prime
-1}}t_{j^{\prime }}^{^{\prime }}\overline{t}_{k^{\prime }}^{^{\prime
-1}}u_{k^{\prime }}^{^{\prime }}\overline{u}_{i^{\prime ^{\prime
}}}^{^{\prime -1}}\right) =\left( 1_{G},1_{G^{\prime }}\right) \\
&\Longleftrightarrow &s_{i}\overline{s}_{j}^{-1}t_{j}\overline{t}%
_{k}^{-1}u_{k}\overline{u}_{i}^{-1}=1_{G},\text{ }s_{i^{\prime }}^{^{\prime
}}\overline{s}_{j^{\prime }}^{^{\prime -1}}t_{j^{\prime }}^{^{\prime }}%
\overline{t}_{k^{\prime }}^{^{\prime -1}}u_{k^{\prime }}^{^{\prime }}%
\overline{u}_{i^{\prime ^{\prime }}}^{^{\prime -1}}=1_{G^{\prime }} \\
&\Longrightarrow &s_{i}\overline{s}_{j}^{-1}t_{j}\overline{t}_{k}^{-1}u_{k}%
\overline{u}_{i}^{-1}=1_{G},\text{ }s_{i^{\prime }}^{^{\prime }}\overline{s}%
_{j^{\prime }}^{^{\prime -1}}t_{j^{\prime }}^{^{\prime }}\overline{t}%
_{k^{\prime }}^{^{\prime -1}}u_{k^{\prime }}^{^{\prime }}\overline{u}%
_{i^{\prime ^{\prime }}}^{^{\prime -1}}=1_{G^{\prime }}\text{ and} \\
i &=&j,\text{ }j=k,\text{ }k=i,\text{ }i^{^{\prime }}=j^{^{\prime }},\text{ }%
j^{^{\prime }}=k^{^{\prime }},\text{ }k^{^{\prime }}=i^{^{\prime }}.
\end{eqnarray*}
\end{proof}

\bigskip

This has an equivalent statement in terms of tensors.

\bigskip

\begin{corollary}
If groups $G$ and $G^{^{\prime }}$ have collections of simultaneously
realized tensors $\left\{ \left\langle m_{i},p_{i},q_{i}\right\rangle
\right\} _{i\text{ }\epsilon \text{ }I}$ and $\left\{ \left\langle
m_{i^{\prime }}^{^{\prime }},p_{i^{\prime }},q_{i^{\prime }}^{^{\prime
}}\right\rangle \right\} _{i^{^{\prime }}\epsilon \text{ }I^{^{\prime }}}$
of sizes $r$ and $r^{^{\prime }}$ resp., then their direct product $G\times
G^{^{\prime }}$ has the collection of $rr^{^{\prime }}$ simultaneously
realized pointwise product tensors $\left\{ \left\langle m_{i}m_{i^{\prime
}}^{^{\prime }},p_{i}p_{i^{\prime }}^{^{\prime }},q_{i}q_{i^{\prime
}}^{^{\prime }}\right\rangle \right\} _{i\text{ }\epsilon \text{ }I,\text{ }%
i^{\prime }\epsilon \text{ }I^{^{\prime }}}$.
\end{corollary}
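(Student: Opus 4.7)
The plan is to derive this corollary directly from Lemma 5.4, since the tensor statement is essentially the cardinality/labeling reformulation of the subset-triple statement already proved.

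First I would unpack the hypothesis: by definition, $G$ simultaneously realizes $\{\langle m_i,p_i,q_i\rangle\}_{i\in I}$ means there is a collection $\{(S_i,T_i,U_i)\}_{i\in I}$ of subset triples of $G$ with $|S_i|=m_i$, $|T_i|=p_i$, $|U_i|=q_i$, satisfying the STPP \textit{(5.2)}. Analogously, $G'$ simultaneously realizes the primed family via a collection $\{(S'_{i'},T'_{i'},U'_{i'})\}_{i'\in I'}$ with matching cardinalities. The sizes $r=|I|$ and $r'=|I'|$ just record the number of triples.

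Next I would invoke Lemma 5.4 applied to these two collections: the collection of direct product triples
\[
\bigl\{(S_i\times S'_{i'},\,T_i\times T'_{i'},\,U_i\times U'_{i'})\bigr\}_{(i,i')\in I\times I'}
\]
satisfies the STPP in $G\times G'$. This is exactly the content of Lemma 5.4, so no further group-theoretic work is needed here.

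Then I would take cardinalities componentwise: $|S_i\times S'_{i'}|=m_i m'_{i'}$, $|T_i\times T'_{i'}|=p_i p'_{i'}$, and $|U_i\times U'_{i'}|=q_i q'_{i'}$. By the definition of a group realizing a tensor via an index triple (section \textit{4.1.3}), this means the above STPP collection in $G\times G'$ realizes the tensor collection
\[
\bigl\{\langle m_i m'_{i'},\,p_i p'_{i'},\,q_i q'_{i'}\rangle\bigr\}_{(i,i')\in I\times I'},
\]
whose size is $|I\times I'|=rr'$, as claimed. No obstacle arises beyond bookkeeping; the only thing worth emphasizing is that the index set doubles as a product $I\times I'$, so the count $rr'$ follows automatically from the Cartesian product structure used in Lemma 5.4, and the pointwise product notation $\langle n_1,m_1,p_1\rangle\cdot\langle n_2,m_2,p_2\rangle=\langle n_1 n_2,m_1 m_2,p_1 p_2\rangle$ from section \textit{4.1.3} matches the cardinality multiplication exactly. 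Thus the corollary is really just Lemma 5.4 read through the dictionary ``triples of subsets $\leftrightarrow$ tensors with component sizes''.
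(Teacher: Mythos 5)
Your proposal is correct and matches the paper's own treatment: the paper introduces Corollary 5.5 with the sentence ``This has an equivalent statement in terms of tensors,'' i.e.\ it is obtained from Lemma 5.4 precisely by the cardinality/labeling translation you describe, with the $rr'$ count coming from the Cartesian index set $I\times I'$.
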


\bigskip

\textit{Lemma 5.4} and \textit{Corollary 5.5 }are also independent
consequences of \textit{Lemma 4.8} and \textit{Lemma 4.12 }using the
simultaneous triple product property \textit{(5.2)}.

\bigskip

\section{\protect\Large Some Useful Groups}

\bigskip

Here we describe some special types of finite groups of particular interest
to our problem.

\bigskip

\subsection{\textit{The Triangle Set }$\Delta _{n}$ \textit{and the
Symmetric Group }$Sym_{n(n+1)/2}$}

\bigskip

For an arbitrary fixed $n\geq 1$, we define the \textit{triangle set }$%
\Delta _{n}$ by:

\bigskip

\begin{description}
\item[\textit{(5.3)}] $\Delta _{n}:=\left\{ x=\left(
x_{1},x_{2},x_{3}\right) \text{ }\epsilon \text{ }%
\mathbb{N}
^{3}\text{ }|\text{ }x_{1}+x_{2}+x_{3}=n-1\right\} $.
\end{description}

\bigskip

$\Delta _{n}$ is of size $\left\vert \Delta _{n}\right\vert =\underset{k=1}{%
\overset{n}{\sum }}k=1+2+\cdot \cdot \cdot \cdot +$ $n=n(n+1)/2$. \ The
following is a table for $\Delta _{5}$ ($n=5$) written lexicographically:

\begin{equation*}
\begin{tabular}{|l|l|l|l|l|}
\hline
& $\mathit{x}_{1}$ & $\mathit{x}_{2}$ & $\mathit{x}_{3}$ & $\mathit{x}_{1}%
\mathit{+x}_{2}\mathit{+x}_{3}$ \\ \hline
${\tiny 1.}$ & \multicolumn{1}{|c|}{$4$} & \multicolumn{1}{|c|}{$0$} & 
\multicolumn{1}{|c|}{$0$} & \multicolumn{1}{|c|}{$4$} \\ \hline
${\tiny 2.}$ & \multicolumn{1}{|c|}{$3$} & \multicolumn{1}{|c|}{$1$} & 
\multicolumn{1}{|c|}{$0$} & \multicolumn{1}{|c|}{$4$} \\ \hline
${\tiny 3.}$ & \multicolumn{1}{|c|}{$3$} & \multicolumn{1}{|c|}{$0$} & 
\multicolumn{1}{|c|}{$1$} & \multicolumn{1}{|c|}{$4$} \\ \hline
${\tiny 4.}$ & \multicolumn{1}{|c|}{$2$} & \multicolumn{1}{|c|}{$2$} & 
\multicolumn{1}{|c|}{$0$} & \multicolumn{1}{|c|}{$4$} \\ \hline
${\tiny 5.}$ & \multicolumn{1}{|c|}{$2$} & \multicolumn{1}{|c|}{$1$} & 
\multicolumn{1}{|c|}{$1$} & \multicolumn{1}{|c|}{$4$} \\ \hline
\end{tabular}%
\begin{tabular}{|l|l|l|l|l|}
\hline
& $\mathit{x}_{1}$ & $\mathit{x}_{2}$ & $\mathit{x}_{3}$ & $\mathit{x}_{1}%
\mathit{+x}_{2}\mathit{+x}_{3}$ \\ \hline
${\tiny 6.}$ & \multicolumn{1}{|c|}{$2$} & \multicolumn{1}{|c|}{$0$} & 
\multicolumn{1}{|c|}{$2$} & \multicolumn{1}{|c|}{$4$} \\ \hline
${\tiny 7.}$ & \multicolumn{1}{|c|}{$1$} & \multicolumn{1}{|c|}{$3$} & 
\multicolumn{1}{|c|}{$0$} & \multicolumn{1}{|c|}{$4$} \\ \hline
${\tiny 8.}$ & \multicolumn{1}{|c|}{$1$} & \multicolumn{1}{|c|}{$2$} & 
\multicolumn{1}{|c|}{$1$} & \multicolumn{1}{|c|}{$4$} \\ \hline
${\tiny 9.}$ & \multicolumn{1}{|c|}{$1$} & \multicolumn{1}{|c|}{$1$} & 
\multicolumn{1}{|c|}{$2$} & \multicolumn{1}{|c|}{$4$} \\ \hline
${\tiny 10.}$ & \multicolumn{1}{|c|}{$1$} & \multicolumn{1}{|c|}{$0$} & 
\multicolumn{1}{|c|}{$3$} & \multicolumn{1}{|c|}{$4$} \\ \hline
\end{tabular}%
\begin{tabular}{|l|c|c|c|c|}
\hline
& $\mathit{x}_{1}$ & $\mathit{x}_{2}$ & $\mathit{x}_{3}$ & $\mathit{x}_{1}%
\mathit{+x}_{2}\mathit{+x}_{3}$ \\ \hline
${\tiny 11.}$ & \multicolumn{1}{|c|}{$0$} & \multicolumn{1}{|c|}{$4$} & 
\multicolumn{1}{|c|}{$0$} & $4$ \\ \hline
${\tiny 12.}$ & \multicolumn{1}{|c|}{$0$} & \multicolumn{1}{|c|}{$3$} & 
\multicolumn{1}{|c|}{$1$} & $4$ \\ \hline
${\tiny 13.}$ & \multicolumn{1}{|c|}{$0$} & \multicolumn{1}{|c|}{$2$} & 
\multicolumn{1}{|c|}{$2$} & $4$ \\ \hline
${\tiny 14.}$ & \multicolumn{1}{|c|}{$0$} & \multicolumn{1}{|c|}{$1$} & 
\multicolumn{1}{|c|}{$3$} & $4$ \\ \hline
${\tiny 15.}$ & \multicolumn{1}{|c|}{$0$} & \multicolumn{1}{|c|}{$0$} & 
\multicolumn{1}{|c|}{$4$} & $4$ \\ \hline
\end{tabular}%
\end{equation*}

\bigskip

The smallest triple in $\Delta _{n}$ is $(0,0,n-1)$ and the largest $%
(n-1,0,0)$. \ The $i^{th}$ component $x_{i}$ of any triple $%
(x_{1},x_{2},x_{3})$ $\epsilon $ $\Delta _{n}$ can take any one of $n$
values $0,1,2,....,.n-1$, and for any value $0\leq k\leq n-1$ and component
index $1\leq i\leq 3$, there are exactly $n-k$ triples $(x_{1},x_{2},x_{3})$ 
$\epsilon $ $\Delta _{n}$ with the $i^{th}$ component $x_{i}=k$. \ This is a
representation of \ $\Delta _{5}$ as a triangular array or pyramid of dot
elements:

\begin{equation*}
\begin{array}{ccccccccc}
&  &  &  & \underset{1.\text{ }\left( 4,0,0\right) }{\bullet } &  &  &  & 
\\ 
&  &  &  &  &  &  &  &  \\ 
&  &  & \underset{2.\text{ }\left( 3,1,0\right) }{\bullet } &  & \underset{3.%
\text{ }\left( 3,0,1\right) }{\bullet } &  &  &  \\ 
&  &  &  &  &  &  &  &  \\ 
&  & \underset{4.\text{ }\left( 2,2,0\right) }{\bullet } &  & \underset{5.%
\text{ }\left( 2,1,1\right) }{\bullet } &  & \underset{6.\text{ }\left(
2,0,2\right) }{\bullet } &  &  \\ 
&  &  &  &  &  &  &  &  \\ 
& \underset{7.\text{ }\left( 1,3,0\right) }{\bullet } &  & \underset{8.\text{
}\left( 1,2,1\right) }{\bullet } &  & \underset{9.\text{ }\left(
1,1,2\right) }{\bullet } &  & \underset{10.\text{ }\left( 1,0,3\right) }{%
\bullet } &  \\ 
&  &  &  &  &  &  &  &  \\ 
\underset{11.\text{ }\left( 0,4,0\right) }{\bullet } &  & \underset{12.\text{
}\left( 0,3,1\right) }{\bullet } &  & \underset{13.\text{ }\left(
0,2,2\right) }{\bullet } &  & \underset{14.\text{ }\left( 0,1,3\right) }{%
\bullet } &  & \underset{15.\text{ }\left( 0,0,4\right) }{\bullet }%
\end{array}%
\end{equation*}

\bigskip

Counting the rows of this pyramid from the lowest, for $0\leq k\leq n-1=4$,
the $k^{th}$ row of dots correspond to the subset of triples in $\Delta _{5}$
with $1^{st}$ component $x_{1}=k$, and in the $k^{th}$ row each element is
ordered component-wise descending order from left to right. \ This is
precisely the lexicographic (dictionary) ordering of the elements of $\Delta
_{n}$. \ $Sym_{n(n+1)/2}$ can be understood as the permutation group of $%
\Delta _{n}$, and we may write $Sym_{n(n+1)/2}\equiv Sym(\Delta _{n})$. \ $%
Sym(\Delta _{n})$ is of order $\left( \frac{1}{2}n(n+1)\right) !$. \
Elements $\mu $ $\epsilon $ $Sym(\Delta _{n})$ are bijective maps $\Delta
_{n}\cong \Delta _{n}$ and their actions on the components of triples $%
x=(x_{1},x_{2},x_{3})$ $\epsilon $ $\Delta _{n}$ is defined by $%
x_{i}\longmapsto \mu (x)_{i}$, $1\leq i\leq 3$, where $\mu (x)_{i}$ denotes
the $i^{th}$ component of the permuted triple $\mu (x)$. \ Graphically, the
permutations $\mu $ $\epsilon $ $Sym(\Delta _{n})$ are bijective
transformations, such as rotations or reflections, of $\Delta _{n}$ or of
any subset of points of $\Delta _{n}$.

\bigskip

We define the subsets $Sym_{i}(\Delta _{n})\subseteq Sym(\Delta _{n})$ by:

\bigskip

\begin{description}
\item[\textit{(5.4)}] $Sym_{i}(\Delta _{n}):=\left\{ \mu \text{ }\epsilon 
\text{ }Sym(\Delta _{n})\text{ }|\text{ }\mu (x)_{i}=x_{i},\text{ }x\text{ }%
\epsilon \text{ }\Delta _{n}\right\} $,\qquad \qquad \qquad \qquad $1\leq
i\leq 3$.
\end{description}

\bigskip

Each $Sym_{i}(\Delta _{n})$ forms a fixed-point subgroup of $Sym(\Delta
_{n}) $ consisting of those permutations of $\Delta _{n}$ leaving the $%
i^{th} $ components of triples $x$ $\epsilon $ $\Delta _{n}$ fixed. \ For
any value $0\leq k\leq n-1$ and index $1\leq i\leq 3$, there are exactly $%
n-k $ triples in $\Delta _{n}$ with $i^{th}$ component $x_{i}=k$, there are $%
(n-k)!$ permutations of these triples, and for each $1\leq i\leq 3$, there
are $n!(n-1)!\cdot \cdot \cdot \cdot 2!1!$ permutations of $\Delta _{n}$
which fix the $i^{th}$ components of triples $x$ $\epsilon $ $\Delta _{n}$,
and $\left\vert Sym_{i}(\Delta _{n})\right\vert =n!(n-1)!\cdot \cdot \cdot
\cdot 2!1!$. \ Graphically, the subgroups $Sym_{i}(\Delta _{n})\leq
Sym\left( \Delta _{n}\right) $ are collections of permutations $\mu _{i}$ $%
\epsilon $ $Sym\left( \Delta _{n}\right) $ transforming $\Delta _{n}$ solely
along the diagonal rows parallel to its $i^{th}$ side, one subgroup for each
side. \ The following is a diagram for $Sym_{1}(\Delta _{5}),Sym_{2}(\Delta
_{5}),Sym_{3}(\Delta _{5})$ on $\Delta _{5}$.

\bigskip

\begin{equation*}
\FRAME{itbpF}{5.3921in}{3.2379in}{0in}{}{}{Figure 5.1}{\special{language
"Scientific Word";type "GRAPHIC";display "USEDEF";valid_file "T";width
5.3921in;height 3.2379in;depth 0in;original-width 9.2915in;original-height
5.2191in;cropleft "0.0245";croptop "1";cropright "1.0009";cropbottom
"0";tempfilename 'JNN6AN01.wmf';tempfile-properties "XPR";}}
\end{equation*}

\bigskip

The diagram above makes it clear that these fixed point subgroups have the
triple product property.

\bigskip

\begin{lemma}
The subgroups $Sym_{1}(\Delta _{n}),$ $Sym_{2}(\Delta _{n}),$ $%
Sym_{3}(\Delta _{n})\leq Sym\left( \Delta _{n}\right) $, defined in (5.2),
form an index triple of $Sym\left( \Delta _{n}\right) $.
\end{lemma}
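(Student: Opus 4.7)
Since each $Sym_i(\Delta_n)$ is a subgroup of $Sym(\Delta_n)$, the triple product property $\textit{(4.3)}$ reduces to showing that any $\sigma_i \in Sym_i(\Delta_n)$, $i = 1, 2, 3$, satisfying $\sigma_1\sigma_2\sigma_3 = e$ must each be the identity. The plan proceeds in three stages: pin each $\sigma_i$ down on the "zero face" $F_i := \{x \in \Delta_n : x_i = 0\}$; propagate to get all three $\sigma_i$ fixing the full boundary $\partial\Delta_n = F_1 \cup F_2 \cup F_3$; then induct on $n$, using the natural bijection $(a,b,c) \mapsto (a-1, b-1, c-1)$ between the interior $\Delta_n^\circ := \{x \in \Delta_n : x_i \geq 1 \text{ for all } i\}$ and $\Delta_{n-3}$.

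For the first stage, I trace a generic $x = (a, b, c) \in \Delta_n$: writing $\sigma_3(x) = (a', b', c)$, $\sigma_2\sigma_3(x) = (a'', b', c'')$, $\sigma_1\sigma_2\sigma_3(x) = (a'', b''', c''')$, the three fixed-coordinate conditions together with conservation of the sum $n-1$ and the requirement $\sigma_1\sigma_2\sigma_3(x) = x$ give $a'' = a$, $b''' = b$, $c''' = c$, and hence $c'' = b + c - b' = a' + c - a$. Non-negativity $c'' \geq 0$ forces $b' \leq b + c$. Specialised to $x \in F_3$ (so $c = 0$) this becomes $b' \leq b$. Parameterising $F_3$ by its second coordinate via $b \mapsto (n-1-b, b, 0)$, the induced permutation $\pi$ of $\{0, 1, \ldots, n-1\}$ therefore satisfies $\pi(b) \leq b$ for every $b$, which forces $\pi = e$ by the usual induction on $b$. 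Hence $\sigma_3$ fixes $F_3$ pointwise. The cyclic identities $\sigma_2\sigma_3\sigma_1 = e$ and $\sigma_3\sigma_1\sigma_2 = e$, which are immediate algebraic consequences of $\sigma_1\sigma_2\sigma_3 = e$, allow the identical chase with indices rotated to yield $\sigma_1$ fixing $F_1$ and $\sigma_2$ fixing $F_2$.

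For the second stage, take $x \in F_3$. Since $\sigma_3(x) = x$, the relation $\sigma_1\sigma_2\sigma_3 = e$ gives $\sigma_2(x) = \sigma_1^{-1}(x)$; this common point has second coordinate equal to $x_2$ (because $\sigma_2$ fixes it) and first coordinate equal to $x_1$ (because $\sigma_1$ does), so its third coordinate is forced to be $n - 1 - x_1 - x_2 = x_3$ and the point equals $x$. Hence $\sigma_2(x) = \sigma_1^{-1}(x) = x$. Applying the same one-step argument to $F_1$ using $\sigma_2\sigma_3\sigma_1 = e$, and to $F_2$ using $\sigma_3\sigma_1\sigma_2 = e$, shows that all three $\sigma_i$ fix $\partial\Delta_n$ pointwise. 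For $n \leq 3$ one has $\Delta_n = \partial\Delta_n$ and the proof is complete. For $n \geq 4$, each $\sigma_i$ restricts to a bijection $\tilde\sigma_i$ of $\Delta_n^\circ$, and under the bijection $\Delta_n^\circ \cong \Delta_{n-3}$ this restriction is an element of $Sym_i(\Delta_{n-3})$; the restricted equation $\tilde\sigma_1\tilde\sigma_2\tilde\sigma_3 = e$ then forces $\tilde\sigma_i = e$ by the inductive hypothesis, so $\sigma_i$ is identity on the interior as well and hence on all of $\Delta_n$.

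The crux is the first stage: the coordinate chase has to be set up so that the slack coordinate appears in the bound $b' \leq b + c$, and only the special choice $c = 0$ on $F_3$ makes this tight enough to force the induced permutation $\pi$ to be monotone, and hence the identity. Once that step is in hand, the propagation across the three faces is the one-line consistency check above, and the reduction to $\Delta_{n-3}$ is then purely formal.
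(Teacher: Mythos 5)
Your proof is correct and takes a route quite different from, and considerably more explicit than, the paper's. The paper's argument works with fixed-point sets $fix(\mu)$ and $fix_{i}(\mu)$, and its pivotal deduction---that since $fix_{i}(\mu_{i})=\Delta_{n}$ for each $i$ (which merely restates $\mu_{i}\in Sym_{i}(\Delta_{n})$), one has $fix(\mu_{i})=\Delta_{n}$ for each $i$---is precisely the desired conclusion $\mu_{i}=1$ asserted rather than derived, so the paper's proof is at best a compressed sketch that does not carry its own weight. Your coordinate chase supplies the missing content. The non-negativity of the intermediate third coordinate gives $b'\leq b+c$, which tightens to $b'\leq b$ on the face $F_{3}$ (where $c=0$) and forces the induced permutation of $F_{3}$, which satisfies $\pi(b)\leq b$, to be the identity. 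The cyclic rotations $\sigma_{2}\sigma_{3}\sigma_{1}=e$ and $\sigma_{3}\sigma_{1}\sigma_{2}=e$ pin $\sigma_{1}$ on $F_{1}$ and $\sigma_{2}$ on $F_{2}$ by the same chase; I checked both: the first gives $a_{2}=a+c-(\sigma_{1}(x))_{3}\geq 0$, i.e.\ $(\sigma_{1}(x))_{3}\leq a+c$, tightening to $\leq c$ on $F_{1}$. The one-step propagation on $F_{3}$ (where $\sigma_{2}(x)=\sigma_{1}^{-1}(x)$ has its first two coordinates forced, hence its third) extends fixity of all three permutations to the whole boundary, and the inductive descent along $\Delta_{n}^{\circ}\cong\Delta_{n-3}$---with base cases $n\leq 3$ where $\Delta_{n}^{\circ}=\emptyset$ because the coordinate sum is $n-1\leq 2$---finishes cleanly. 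What your approach buys is a complete, elementary, and verifiable proof where the paper offers only an outline whose central step begs the question.
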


\begin{proof}
Since $Sym_{i}(\Delta _{n})\leq Sym(\Delta _{n})$, $1\leq i\leq 3$, to prove
the triple product property for these, it suffices to prove for arbitrary $%
\mu _{1}$ $\epsilon $ $Sym_{1}(\Delta _{n})$, $\mu _{2}$ $\epsilon $ $%
Sym_{2}(\Delta _{n})$, and $\mu _{3}$ $\epsilon $ $Sym_{3}(\Delta _{n})$
that $\mu _{1}\mu _{2}\mu _{3}=1$ implies that $\mu _{1}=\mu _{2}=\mu _{3}=1$%
. \ For a $\mu $ $\epsilon $ $Sym(\Delta _{n})$, we define its fixed point
set as $fix\left( \mu \right) =\left\{ x\text{ }\epsilon \text{ }\Delta _{n}%
\text{ }|\text{ }\mu (x)=x\right\} \subseteq \Delta _{n}$, and its $i^{th}$
component fixed point set $fix_{i}\left( \mu \right) $ as $fix_{i}\left( \mu
\right) =\left\{ x\text{ }\epsilon \text{ }\Delta _{n}\text{ }|\text{ }\mu
(x)_{i}=x_{i}\right\} \subseteq \Delta _{n}$. \ For arbitrary $\mu $ $%
\epsilon $ $Sym(\Delta _{n})$, the sets $fix\left( \mu \right) $ and $%
fix_{i}\left( \mu \right) $ are such that $\mu =1$ iff $fix\left( \mu
\right) \cap fix_{i}\left( \mu \right) =\Delta _{n}$, for all $1\leq i\leq 3$%
, where $1$ is the identity permutation of $\Delta _{n}$. \ Moreover, $%
fix\left( \mu \right) \subseteq fix_{i}\left( \mu \right) $. \ Then, from $%
\mu _{1}\mu _{2}\mu _{3}=1$ it follows that $fix\left( \mu _{1}\mu _{2}\mu
_{3}\right) \cap fix_{i}\left( \mu _{1}\mu _{2}\mu _{3}\right) =fix\left(
\mu _{1}\mu _{2}\mu _{3}\right) =\Delta _{n}$, $1\leq i\leq 3$. \ Since $%
fix_{1}\left( \mu _{1}\right) =fix_{2}\left( \mu _{2}\right) =fix_{3}\left(
\mu _{3}\right) =\Delta _{n}$, it follows that $fix\left( \mu _{1}\right)
=fix\left( \mu _{2}\right) =fix\left( \mu _{3}\right) =\Delta _{n}$. \
Together, $fix\left( \mu _{1}\mu _{2}\mu _{3}\right) =\Delta _{n}$ and $%
fix\left( \mu _{i}\right) =\Delta _{n}$, $1\leq i\leq 3$, implies that $%
fix_{i}\left( \mu _{1}\mu _{2}\right) =fix_{i}\left( \mu _{2}\mu _{3}\right)
=fix_{i}\left( \mu _{1}\mu _{3}\right) =\Delta _{n}$, $1\leq i\leq 3$, which
implies that $fix\left( \mu _{1}\mu _{2}\right) =fix\left( \mu _{2}\mu
_{3}\right) =fix\left( \mu _{1}\mu _{3}\right) =\Delta _{n}$, $1\leq i\leq 3$%
, which implies that $\mu _{1}\mu _{2}=\mu _{2}\mu _{3}=\mu _{1}\mu _{3}=1$,
from which we deduce that $\mu _{1}=\mu _{2}=\mu _{3}=1$.
\end{proof}

\bigskip

This shows that $Sym(\Delta _{n})$ realizes the tensor $\left\langle \overset%
{n}{\underset{k=1}{\tprod }}k!,\overset{n}{\underset{k=1}{\tprod }}k!,%
\overset{n}{\underset{k=1}{\tprod }}k!\right\rangle $, which means that it
supports square matrix multiplication of order $\overset{n}{\underset{k=1}{%
\tprod }}k!$, and, therefore, by \textit{(4.21)}, we have:

\begin{description}
\item[\textit{(5.5)}] $\alpha (Sym\left( \Delta _{n}\right) )\leq \frac{\log
\left( \frac{1}{2}n(n+1)\right) !}{\log \left( n!(n-1)!\cdot \cdot \cdot
\cdot 2!1!\right) }.$
\end{description}

\bigskip

This yields concrete estimates for $\alpha (Sym_{m})$, to be described in 
\textit{Chapter 6}.

\bigskip

\subsection{\textit{Semidirect Product and Wreath Product Groups}}

\bigskip

A group $G$ is said to be the (\textit{internal})\textit{\ semidirect product%
} $A\rtimes B$ of a subgroup $B\leq G$ by a normal subgroup $%
A\vartriangleleft G$, if $A\cap B=\{1_{G}\}$ and $G=AB$. \ Each elements $g$ 
$\epsilon $ $G=A\rtimes B$ has the form $g=ab$ for a unique element $a$ $%
\epsilon $ $A$ and a $b$ $\epsilon $ $B$ depending on $a$, and by $%
A\vartriangleleft G$, it is the case that $b^{-1}g=b^{-1}ab$ $\epsilon $ $A$%
. \ For a fixed $b$ $\epsilon $ $B$, the mapping $a\longmapsto
bab^{-1}\equiv a^{b}$, $a$ $\epsilon $ $A$, defines an automorphism $\alpha
_{b}$ of $A$ which is conjugation of $A$ by $b$, and the mapping $%
b\longmapsto \alpha _{b}$, $b$ $\epsilon $ $B$, is a group homomorphism $%
\alpha :B\longrightarrow Aut(A)$ defining the conjugation action of $B$ on $%
A $, such that multiplication of elements $g=(ab),g^{^{\prime
}}=(a^{^{\prime }}b^{^{\prime }})$ $\epsilon $ $G$ can be expressed as $%
gg^{^{\prime }}=(ab)(a^{^{\prime }}b^{^{\prime }})=(aba^{^{\prime
}}b^{-1}bb^{^{\prime }})=(aa^{^{\prime }b})(bb^{^{\prime }})=(a\alpha
_{b}(a^{^{\prime }}))(bb^{^{\prime }})$, and inverses of elements $g=ab$ are
given by $g^{-1}=\alpha _{b^{-1}}(a^{-1})b^{-1}$. \ If $B\vartriangleleft G$
also then $G=A\rtimes B=A\times B$. $\ $If $A$ and $B$ are arbitrary groups,
then for any homomorphism $\phi :B\longrightarrow Aut(A)$ there is a unique (%
\textit{external})\textit{\ semidirect product} $A\rtimes B$ of $B$ by $A$,
with underlying set $A\times B$, for which $\alpha _{b}=\phi (b)$ for any $b$
$\epsilon $ $B$, and $A\cong A\rtimes \left\{ 1_{B}\right\} \equiv \lbrack
A]\vartriangleleft A\rtimes B$ and $B\cong $ $\left\{ 1_{A}\right\} \rtimes
B\equiv \lbrack B]\leq A\rtimes B$ such that $[A]\cap \lbrack B]=\left\{
1_{G}\right\} $ and $G=[A][B]$. \ Every external semidirect product $%
A\rtimes B$ of groups $A$ and $B$ is the internal semidirect product $\left[
A\right] \rtimes \left[ B\right] $ of the subgroups $[A]\vartriangleleft
A\rtimes B,[B]\leq A\rtimes B$. \ If $\pi :G\longrightarrow GL(V)$ is any
nontrivial representation of $G$, and $\iota _{A}$ denotes the inclusion
homomorphism $A\longrightarrow G=A\rtimes B$, then $\pi _{A}\equiv \pi \circ
\iota _{A}$ is the representation $A\longrightarrow GL(V)$ of $A$
equidimensional with $\pi $ and a nontrivial subrepresentation of $\pi $. \
If $\pi _{A}$ is irreducible then $\pi $ must be irreducible, while any
irreducible representation $\varrho _{B}$ of $B$ extends to a unique
irreducible representation $\varrho $ of $G$. \ For an Abelian subgroup $%
A\vartriangleleft G$ and a subgroup $B\leq G$ there is a proper subgroup $%
C<B $ such that an irreducible representation of $A\vartriangleleft G$%
.extends to an irreducible representation of $G=A\rtimes B$.

\bigskip

A special kind of semidirect product group $G=A\rtimes B$ exists when $%
A=H^{n}$, the $n$-fold direct product of $H$, with $H$ being a group, and $%
B=Sym_{n}$, where multiplication in $H^{n}$ is component-wise multiplication
of $n$-tuples $h=\left( h_{i}\right) _{i=1}^{n}$ of elements of $H$, and
multiplication in $Sym_{n}$ is the composition of permutations $\mu $ of $n$
elements. \ This group $G=H^{n}\rtimes Sym_{n}$ is called the \textit{wreath
product} of $Sym_{n}$ by $H^{n}$, denoted by $H\wr Sym_{n}$, where the
action of $Sym_{n}$ on $H_{n}$ is from the right, defined by the mapping $%
h\longmapsto h^{\mu }:=\left( h_{\mu i}\right) _{i=1}^{n}$, for $n$-tuples $%
h=\left( h_{i}\right) _{i=1}^{n}$ $\epsilon $ $H^{n}$ and permutations $\mu $
$\epsilon $ $Sym_{n}$, i.e. , $Sym_{n}$ acts on $H_{n}$ by permuting the
components of its $n$-tuples. \ We sometimes write $\left( h\right) _{i}$
for the $i^{th}$ coordinate $h_{i}$ of an $h=\left( h_{i}\right) _{i=1}^{n}$ 
$\epsilon $ $H^{n}$. \ $H$ is called the base group of $H\wr Sym_{n}$, and
if we identity $H^{n}$ with the subgroup $\left\{ h1_{Sym_{n}}\text{ }|\text{
}h\text{ }\epsilon \text{ }H^{n}\right\} \leq H\wr Sym_{n}$, then $%
H^{n}\vartriangleleft $ $H\wr Sym_{n}$ and $H\wr Sym_{n}$ becomes an
internal semidirect product. \ Multiplication in $H\wr Sym_{n}$ is given by $%
(h\mu )(h^{^{\prime }}\mu ^{^{\prime }})=(hh^{^{\prime }\mu ^{-1}})(\mu \mu
^{^{\prime }})=(h_{i}h_{\mu ^{-1}i}^{^{\prime }})_{i=1}^{n}(\mu \mu
^{^{\prime }})$, and inverses $(h\mu )^{-1}$ by $h^{\mu }\mu ^{-1}=\left(
h_{\mu i}\right) _{i=1}^{n}\mu ^{-1}$, for elements $(h\mu ),(h^{^{\prime
}}\mu ^{^{\prime }})$ $\epsilon $ $H\wr Sym_{n}$. \ The following is an
elementary result about the sums of $\omega ^{th}$ powers of the irreducible
character degrees of $H\wr Sym_{n}$.

\bigskip

\begin{lemma}
For an Abelian group $H$, $D_{\omega }(H\wr Sym_{n})\leq \left( n!\right)
^{\omega -1}\left\vert H\right\vert ^{n}$.
\end{lemma}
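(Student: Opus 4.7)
The plan is to reduce the bound to an upper estimate on the maximal irreducible character degree $d'(H \wr Sym_n)$, and then invoke the character-degree estimate \textit{(3.12)}. Identifying the base group $H^n$ with $H^n \rtimes \{1_{Sym_n}\} \leq H \wr Sym_n$, one has $H^n \vartriangleleft H \wr Sym_n$ as the kernel of the projection to $Sym_n$, and $H^n$ is Abelian since $H$ is. Moreover the index $[H \wr Sym_n : H^n] = n!$ is immediate from the semidirect product structure.

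Next I would apply It\^o's theorem (part $(1)$ of \textit{Theorem 3.7}). Enlarge $H^n$ to a maximal Abelian normal subgroup $A \vartriangleleft H \wr Sym_n$ (which exists by the finiteness of $H \wr Sym_n$). Since $H^n \leq A$, the index $[H \wr Sym_n : A]$ divides $[H \wr Sym_n : H^n] = n!$, so in particular $[H \wr Sym_n : A] \leq n!$. It\^o's theorem then asserts that $d_\varrho$ divides $[H \wr Sym_n : A]$ for every $\varrho$ $\epsilon$ $Irrep(H \wr Sym_n)$, whence $d_\varrho \leq [H \wr Sym_n : A] \leq n!$, and therefore $d'(H \wr Sym_n) \leq n!$.

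Substituting this bound into estimate \textit{(3.12)} with $r = \omega \geq 2$ yields
\begin{equation*}
D_\omega(H \wr Sym_n) \leq d'(H \wr Sym_n)^{\omega - 2} \cdot \left\vert H \wr Sym_n \right\vert \leq (n!)^{\omega - 2} \cdot n! \cdot \left\vert H\right\vert^n = (n!)^{\omega - 1} \left\vert H \right\vert^n,
\end{equation*}
which is the stated bound. Note that at $\omega = 2$ this collapses to the identity $D_2(H \wr Sym_n) = \left\vert H \wr Sym_n \right\vert = n! \left\vert H \right\vert^n$, so the estimate is sharp at the lower endpoint and picks up slack only as $\omega$ increases.

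There is no real obstacle to this argument: it is a direct two-step combination of known results. The only point requiring any care is the enlargement step $H^n \leq A$, since the paper states It\^o's theorem only for \emph{maximal} Abelian normal subgroups; this is handled by the monotonicity of indices along a chain of subgroups, which can only improve the bound $[H \wr Sym_n : A] \leq n!$.
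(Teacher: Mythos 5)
Your proof is correct and follows essentially the same route as the paper: both arguments bound $d^{\prime}(H\wr Sym_{n})\leq n!$ via It\^{o}'s theorem applied to an Abelian normal subgroup of index at most $[H\wr Sym_{n}:H^{n}]=n!$, and then conclude with the splitting $d_{\varrho }^{\omega }=d_{\varrho }^{\omega -2}d_{\varrho }^{2}$, which is exactly estimate \textit{(3.12)}. Your explicit remark about enlarging $H^{n}$ to a \emph{maximal} Abelian normal subgroup is a small but welcome point of care that the paper glosses over.
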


\begin{proof}
For an Abelian group $H$, and the wreath product group $H\wr Sym_{n}$, every
irreducible representation $\varrho $ $\epsilon $ $Irrep(H\wr Sym_{n})$ is
induced from an irreducible representation of the base group $%
H^{n}\vartriangleleft H\wr Sym_{n}$, which has the index $\left[ H\wr
Sym_{n}:H^{n}\right] =\left\vert Sym_{n}\right\vert =n!$, [HUP1998]. \
Therefore, the index $[H\wr Sym_{n}:N]$ of any maximal Abelian normal
subgroup $N\vartriangleleft H\wr Sym_{n}$ is at most $n!$, and by \textit{%
Theorem 3.7}, $Dim$ $\varrho \leq n!$ for any $\varrho $ $\epsilon $ $%
Irrep(H\wr Sym_{n})$. \ Then:%
\begin{eqnarray*}
D_{\omega }(H\wr Sym_{n}) &=&\underset{\varrho \text{ }\epsilon \text{ }%
Irrep(H\wr Sym_{n})}{\sum }d_{\varrho }^{\omega } \\
&=&\underset{\varrho \text{ }\epsilon \text{ }Irrep(H\wr Sym_{n})}{\sum }%
d_{\varrho }^{\omega -2}d_{\varrho }^{2} \\
&\leq &\left( n!\right) ^{\omega -2}\underset{\varrho \text{ }\epsilon \text{
}Irrep(H\wr Sym_{n})}{\sum }d_{\varrho }^{2} \\
&=&\left( n!\right) ^{\omega -2}\left\vert H\wr Sym_{n}\right\vert \\
&=&\left( n!\right) ^{\omega -2}(n!)\left\vert H^{n}\right\vert \\
&=&\left( n!\right) ^{\omega -1}\left\vert H\right\vert ^{^{n}} \\
&=&\left( n!\right) ^{\omega -1}D_{2}(H)^{n} \\
&\leq &\left( n!\right) ^{\omega -1}D_{\omega }(H)^{n}.
\end{eqnarray*}%
Since $H$ is Abelian $D_{\omega }(H)=\left\vert H\right\vert $, and the
result follows.
\end{proof}

\bigskip

We conclude with some extension results.

\bigskip

\begin{theorem}
If $\left\{ \left( S_{i},T_{i},U_{i}\right) \right\} _{i=1}^{n}$ $\subset 
\mathfrak{J}\left( H\right) $ is a collection of $n$ simultaneous index
triples of a group $H$ then the triple $\left( \overset{n}{\underset{i=1}{%
\tprod }}S_{i}\wr Sym_{n},\overset{n}{\underset{i=1}{\tprod }}T_{i}\wr
Sym_{n},\overset{n}{\underset{i=1}{\tprod }}U_{i}\wr Sym_{n}\right) $ $%
\epsilon $ $\mathfrak{J}\left( H\wr Sym_{n}\right) $.
\end{theorem}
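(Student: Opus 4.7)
The plan is to unwind the wreath product arithmetic and show that the triple product condition in $H\wr Sym_{n}$ breaks into a permutation equation in $Sym_{n}$ plus $n$ coordinate equations in $H$, and that these coordinate equations are exactly the STPP hypothesis for $\{(S_{i},T_{i},U_{i})\}_{i=1}^{n}$.

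First I would fix notation. Write a general element of $H\wr Sym_{n}$ as a pair $(h,\mu)$ with $h\in H^{n}$ and $\mu\in Sym_{n}$. Using the convention recorded in the excerpt, $(h,\mu)(h',\mu')=(h\cdot (h')^{\mu^{-1}},\mu\mu')$ with $(h')^{\mu^{-1}}_{i}=h'_{\mu^{-1}(i)}$, and $(h,\mu)^{-1}=((h^{-1})^{\mu},\mu^{-1})$. The subsets in question are $\widetilde{S}=\{(s,\sigma):s\in\prod_{i=1}^{n}S_{i},\ \sigma\in Sym_{n}\}$, and likewise $\widetilde{T}$, $\widetilde{U}$. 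Take arbitrary elements $\widetilde{s}=(s,\sigma),\widetilde{s}'=(s',\sigma')\in\widetilde{S}$, and similarly $\widetilde{t},\widetilde{t}',\widetilde{u},\widetilde{u}'$ with permutation parts $\tau,\tau',\upsilon,\upsilon'$.

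Next, I would compute the three right-quotients directly and set $\pi=\sigma'\sigma^{-1}$, $\rho=\tau'\tau^{-1}$, $\omega=\upsilon'\upsilon^{-1}$. A routine calculation gives
\begin{equation*}
\widetilde{s}'\widetilde{s}^{-1}=\bigl(s'\cdot(s^{-1})^{\pi^{-1}},\pi\bigr),\quad \widetilde{t}'\widetilde{t}^{-1}=\bigl(t'\cdot(t^{-1})^{\rho^{-1}},\rho\bigr),\quad \widetilde{u}'\widetilde{u}^{-1}=\bigl(u'\cdot(u^{-1})^{\omega^{-1}},\omega\bigr).
\end{equation*}
Multiplying these three together, the permutation component is $\pi\rho\omega$ and the base-group component is an element of $H^{n}$ whose $i$-th coordinate, after collecting all the permutation shifts, evaluates to
\begin{equation*}
s'_{i}\,s_{\pi^{-1}(i)}^{-1}\cdot t'_{\pi^{-1}(i)}\,t_{(\pi\rho)^{-1}(i)}^{-1}\cdot u'_{(\pi\rho)^{-1}(i)}\,u_{(\pi\rho\omega)^{-1}(i)}^{-1}.
\end{equation*}
Assuming the full product equals $1_{H\wr Sym_{n}}$ forces $\pi\rho\omega=1$ in $Sym_{n}$, so $(\pi\rho\omega)^{-1}(i)=i$, and for every $i$ the $i$-th coordinate equation reads
\begin{equation*}
s'_{i}s_{j}^{-1}\,t'_{j}t_{k}^{-1}\,u'_{k}u_{i}^{-1}=1_{H}\qquad\text{with}\ j=\pi^{-1}(i),\ k=(\pi\rho)^{-1}(i).
\end{equation*}

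Now apply the STPP hypothesis from \textit{(5.2)} to each coordinate equation: it forces $i=j=k$ for every $i$. The equality $i=\pi^{-1}(i)$ for all $i$ gives $\pi=1$, and then $i=(\pi\rho)^{-1}(i)=\rho^{-1}(i)$ gives $\rho=1$; the relation $\pi\rho\omega=1$ then yields $\omega=1$. With all three permutations trivial, the coordinate equation reduces to $s'_{i}s_{i}^{-1}t'_{i}t_{i}^{-1}u'_{i}u_{i}^{-1}=1_{H}$ at each $i$, and the ordinary TPP of the individual triple $(S_{i},T_{i},U_{i})$ in $H$ (which is part of the STPP assumption) gives $s'_{i}=s_{i}$, $t'_{i}=t_{i}$, $u'_{i}=u_{i}$. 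Hence $\widetilde{s}'=\widetilde{s}$, $\widetilde{t}'=\widetilde{t}$, $\widetilde{u}'=\widetilde{u}$, and all three right-quotients are the identity of $H\wr Sym_{n}$, establishing the TPP.

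The main obstacle is purely bookkeeping: correctly propagating the $Sym_{n}$-action through successive multiplications so that the $i$-th coordinate of the triple product lines up as $s'_{i}s_{j}^{-1}t'_{j}t_{k}^{-1}u'_{k}u_{i}^{-1}$ with the indices $i,j,k$ linked by $j=\pi^{-1}(i)$, $k=\rho^{-1}(j)$. Once that alignment is visible the STPP hypothesis does all the real work, simultaneously killing the permutation parts (by forcing $i=j=k$) and the base-group parts (by the TPP of each $(S_{i},T_{i},U_{i})$).
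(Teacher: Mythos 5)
Your proof is correct and follows essentially the same route as the paper's: unwind the wreath product multiplication so that the triple product splits into the permutation equation $\pi\rho\omega=1$ and $n$ coordinate equations of the exact STPP shape $s'_{i}s_{j}^{-1}t'_{j}t_{k}^{-1}u'_{k}u_{i}^{-1}=1_{H}$ with $j=\pi^{-1}(i)$, $k=(\pi\rho)^{-1}(i)$, then let the STPP force $i=j=k$ (hence trivial permutations) and the coordinatewise TPP finish. Your index bookkeeping is in fact somewhat more transparent than the paper's own displays, but there is no substantive difference in method.
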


\begin{proof}
Assume a collection $\left\{ \left( S_{i},T_{i},U_{i}\right) \right\}
_{i=1}^{n}$ $\subset \mathfrak{J}\left( H\right) $ of index triples of $H$.
\ By \textit{Lemma 4.8} the $n$-fold direct product of these triples, $%
\left( \overset{n}{\underset{i=1}{\tprod }}S_{i}=S,\overset{n}{\underset{i=1}%
{\tprod }}T_{i}=T,\overset{n}{\underset{i=1}{\tprod }}U_{i}=U\right) $ is an
index triple of $H^{n}$. \ Assume further that the triples $\left(
S_{i},T_{i},U_{i}\right) $ have the simultaneous triple product property
(STPP). \ We claim that the subsets 
\begin{eqnarray*}
S\wr Sym_{n} &:&=\left\{ \left( s\sigma \right) \text{ }|\text{ }s\text{ }%
\epsilon \text{ }S,\text{ }\sigma \text{ }\epsilon \text{ }Sym_{n}\right\} ,
\\
T\wr Sym_{n} &:&=\left\{ \left( t\tau \right) \text{ }|\text{ }t\text{ }%
\epsilon \text{ }T,\text{ }\tau \text{ }\epsilon \text{ }Sym_{n}\right\} , \\
U\wr Sym_{n} &:&=\left\{ \left( u\upsilon \right) \text{ }|\text{ }u\text{ }%
\epsilon \text{ }U,\text{ }\upsilon \text{ }\epsilon \text{ }Sym_{n}\right\}
,
\end{eqnarray*}%
satisfy the triple product property (TPP) in $H\wr Sym_{n}$. \ To see this,
let $s_{1}\sigma _{1},$ $s_{1}^{^{\prime }}\sigma _{1}^{^{\prime }}$ $%
\epsilon $ $S\wr Sym_{n},$ $t_{2}\tau _{2},$ $t_{2}^{^{\prime }}\tau
_{2}^{^{\prime }}$ $\epsilon $ $T\wr Sym_{n},u_{3}\upsilon _{3},$ $%
u_{3}^{^{\prime }}\upsilon _{3}^{^{\prime }}$ $\epsilon $ $U\wr Sym_{n}$ be
arbitrary elements. \ Then 
\begin{eqnarray*}
&&\left( s_{1}^{^{\prime }}\sigma _{1}^{^{\prime }}\right) \left(
s_{1}\sigma _{1}\right) ^{-1}\left( t_{2}^{^{\prime }}\tau _{2}^{^{\prime
}}\right) \left( t_{2}\tau _{2}\right) ^{-1}\left( u_{3}^{^{\prime
}}\upsilon _{3}^{^{\prime }}\right) \left( u_{3}\upsilon _{3}\right) ^{-1} \\
&=&s_{1}^{^{\prime }}\sigma _{1}^{^{\prime }}s_{1}^{\sigma _{1}}\sigma
_{1}^{-1}t_{2}^{^{\prime }}\tau _{2}^{^{\prime }}\tau _{2}^{\tau _{2}}\tau
_{2}^{-1}u_{3}^{^{\prime }}\upsilon _{3}^{^{\prime }}u_{3}^{\upsilon
_{3}}\upsilon _{3}^{-1} \\
&=&s_{1}^{^{\prime }}s_{1}^{\sigma _{1}^{^{\prime }}\sigma _{1}^{-1}}\sigma
_{1}^{^{\prime }}\sigma _{1}^{-1}t_{2}^{^{\prime }}t_{2}^{\tau
_{2}^{^{\prime }}\tau _{2}^{-1}}\tau _{2}^{^{\prime }}\tau
_{2}^{-1}u_{3}^{^{\prime }}u_{3}^{\upsilon _{3}^{^{\prime }}\upsilon
_{3}^{-1}}\upsilon _{3}^{^{\prime }}\upsilon _{3}^{-1} \\
&=&1 \\
&\Longrightarrow &\sigma _{1}^{^{\prime }}\sigma _{1}^{-1}\tau
_{2}^{^{\prime }}\tau _{2}^{-1}\upsilon _{3}^{^{\prime }}\upsilon
_{3}^{-1}=1.
\end{eqnarray*}%
Putting $\mu =\sigma _{1}^{^{\prime }}\sigma _{1}^{-1}$ and $\nu =\sigma
_{1}^{^{\prime }}\sigma _{1}^{-1}\tau _{2}^{^{\prime }}\tau _{2}^{-1}$ we
have that%
\begin{eqnarray*}
&&s_{1}^{^{\prime }}s_{1}^{\sigma _{1}^{^{\prime }}\sigma _{1}^{-1}}\sigma
_{1}^{^{\prime }}\sigma _{1}^{-1}t_{2}^{^{\prime }}t_{2}^{\tau
_{2}^{^{\prime }}\tau _{2}^{-1}}\tau _{2}^{^{\prime }}\tau
_{2}^{-1}u_{3}^{^{\prime }}u_{3}^{\upsilon _{3}^{^{\prime }}\upsilon
_{3}^{-1}}\upsilon _{3}^{^{\prime }}\upsilon _{3}^{-1} \\
&=&1 \\
&\Longrightarrow &u_{3}^{-1}s_{1}^{^{\prime }}\left(
s_{1}^{-1}t_{2}^{^{\prime }}\right) ^{\mu }\left( t_{2}^{-1}u_{3}^{^{\prime
}}\right) ^{\nu }=1 \\
&\Longleftrightarrow &\left( u_{3}^{-1}\right) _{i}\left( s_{1}^{^{\prime
}}\right) _{i}\left( s_{1}^{-1}\right) _{\mu i}\left( t_{2}^{^{\prime
}}\right) _{\mu i}\left( t_{2}^{-1}\right) _{\nu i}\left( u_{3}^{^{\prime
}}\right) _{\nu i}=1 \\
&\Longleftrightarrow &\mu i=\nu i=i,1\leq i\leq n\text{ (STPP for }\left\{
\left( S_{i},T_{i},U_{i}\right) \right\} _{i=1}^{n}\text{)} \\
&\Longleftrightarrow &\mu =\nu =1 \\
&\Longleftrightarrow &\sigma _{1}=\sigma _{1}^{^{\prime }},\tau _{2}=\tau
_{2}^{^{\prime }},\upsilon _{3}=\upsilon _{3}^{^{\prime }}.
\end{eqnarray*}%
Thus 
\begin{eqnarray*}
&&s_{1}^{^{\prime }}s_{1}^{\sigma _{1}^{^{\prime }}\sigma _{1}^{-1}}\sigma
_{1}^{^{\prime }}\sigma _{1}^{-1}t_{1}^{^{\prime }}t_{1}^{\tau
_{1}^{^{\prime }}\tau _{1}^{-1}}\tau _{1}^{^{\prime }}\tau
_{1}^{-1}u_{1}^{^{\prime }}u_{1}^{\upsilon _{1}^{^{\prime }}\upsilon
_{1}^{-1}}\upsilon _{1}^{^{\prime }}\upsilon _{1}^{-1} \\
&=&1 \\
&\Longrightarrow &s_{1}^{^{\prime }}s_{1}^{-1}t_{2}^{^{\prime
}}t_{2}^{-1}u_{3}^{^{\prime }}u_{3}^{-1}=1 \\
&\Longleftrightarrow &s_{1}^{^{\prime }}=s_{1},t_{2}^{^{\prime
}}=t_{2},u_{3}^{^{\prime }}=u_{3}\text{ (TPP for }\left\{ \left(
S_{i},T_{i},U_{i}\right) \right\} _{i=1}^{n}\text{)}
\end{eqnarray*}%
Putting these two together we deduce that for $s_{1}\sigma _{1},$ $%
s_{1}^{^{\prime }}\sigma _{1}^{^{\prime }}$ $\epsilon $ $S\wr Sym_{n},$ $%
t_{2}\tau _{2},$ $t_{2}^{^{\prime }}\tau _{2}^{^{\prime }}$ $\epsilon $ $%
T\wr Sym_{n},u_{3}\upsilon _{3},$ $u_{3}^{^{\prime }}\upsilon _{3}^{^{\prime
}}$ $\epsilon $ $U\wr Sym_{n}$ it is the case that 
\begin{eqnarray*}
&&\left( s_{1}^{^{\prime }}\sigma _{1}^{^{\prime }}\right) \left(
s_{1}\sigma _{1}\right) ^{-1}\left( t_{2}^{^{\prime }}\tau _{2}^{^{\prime
}}\right) \left( t_{2}\tau _{2}\right) ^{-1}\left( u_{3}^{^{\prime
}}\upsilon _{3}^{^{\prime }}\right) \left( u_{3}\upsilon _{3}\right) ^{-1} \\
&=&s_{1}^{^{\prime }}s_{1}^{\sigma _{1}^{^{\prime }}\sigma _{1}^{-1}}\sigma
_{1}^{^{\prime }}\sigma _{1}^{-1}t_{1}^{^{\prime }}t_{1}^{\tau
_{1}^{^{\prime }}\tau _{1}^{-1}}\tau _{1}^{^{\prime }}\tau
_{1}^{-1}u_{1}^{^{\prime }}u_{1}^{\upsilon _{1}^{^{\prime }}\upsilon
_{1}^{-1}}\upsilon _{1}^{^{\prime }}\upsilon _{1}^{-1} \\
&=&1 \\
&\Longleftrightarrow &s_{1}^{^{\prime }}\sigma _{1}^{^{\prime }}=s_{1}\sigma
_{1},t_{2}^{^{\prime }}\tau _{2}^{^{\prime }}=t_{2}\tau _{2},u_{3}^{^{\prime
}}\upsilon _{3}^{^{\prime }}=u_{3}\upsilon _{3}.
\end{eqnarray*}%
This proves our claim.
\end{proof}

\bigskip

\begin{corollary}
If $\left\{ \left\langle m_{i},p_{i},q_{i}\right\rangle \right\} _{i=1}^{n}$ 
$\subset \mathfrak{S}\left( H\right) $ is a collection of $n$ tensors
simultaneously realized by an Abelian group $H$ then 
\begin{equation*}
(1)\text{ }\left\langle n!\overset{n}{\underset{i=1}{\tprod }}m_{i},n!%
\overset{n}{\underset{i=1}{\tprod }}p_{i},n!\overset{n}{\underset{i=1}{%
\tprod }}q_{i}\right\rangle \text{ }\epsilon \text{ }\mathfrak{S}\left( H\wr
Sym_{n}\right)
\end{equation*}
and 
\begin{equation*}
(2)\text{ }\omega \leq \frac{n\log \left\vert H\right\vert -\log n!}{\log 
\sqrt[3]{\overset{n}{\underset{i=1}{\tprod }}m_{i}p_{i}q_{i}}}.
\end{equation*}
\end{corollary}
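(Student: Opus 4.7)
The strategy is to combine two results already established in the excerpt: \textit{Theorem 5.9}, which lifts an STPP collection in $H$ to a single TPP triple in $H\wr Sym_{n}$, and \textit{Lemma 5.7}, which bounds $D_{\omega }(H\wr Sym_{n})$ when $H$ is Abelian. Since $\{\left\langle m_{i},p_{i},q_{i}\right\rangle \}_{i=1}^{n}$ is simultaneously realized by $H$, by definition there exist subset triples $(S_{i},T_{i},U_{i})$ of $H$ satisfying the STPP, with $|S_{i}|=m_{i}$, $|T_{i}|=p_{i}$, $|U_{i}|=q_{i}$; these will be the input to \textit{Theorem 5.9}.

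First I would apply \textit{Theorem 5.9} to $\{(S_{i},T_{i},U_{i})\}_{i=1}^{n}$ to conclude that the triple $\bigl(\prod_{i=1}^{n}S_{i}\wr Sym_{n},\ \prod_{i=1}^{n}T_{i}\wr Sym_{n},\ \prod_{i=1}^{n}U_{i}\wr Sym_{n}\bigr)$ lies in $\mathfrak{J}(H\wr Sym_{n})$, i.e.~satisfies the TPP in $H\wr Sym_{n}$. Using the wreath-product cardinality (as pinned down in the proof of \textit{Theorem 5.9}, where $S\wr Sym_{n}=\{(s\sigma):s\in S,\sigma \in Sym_{n}\}$), we have
\begin{equation*}
\Bigl|\prod_{i=1}^{n}S_{i}\wr Sym_{n}\Bigr|=\Bigl|\prod_{i=1}^{n}S_{i}\Bigr|\cdot |Sym_{n}|=n!\prod_{i=1}^{n}m_{i},
\end{equation*}
and analogous identities for the $T$- and $U$-components. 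This immediately gives $\left\langle n!\prod m_{i},\ n!\prod p_{i},\ n!\prod q_{i}\right\rangle \in \mathfrak{S}(H\wr Sym_{n})$, proving (1).

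For (2), I would feed the tensor from (1) into part (1) of \textit{Corollary 4.24} to obtain
\begin{equation*}
\Bigl((n!)^{3}\prod_{i=1}^{n}m_{i}p_{i}q_{i}\Bigr)^{\omega /3}\leq D_{\omega }(H\wr Sym_{n}),
\end{equation*}
and then bound the right-hand side via \textit{Lemma 5.7} by $(n!)^{\omega -1}|H|^{n}$. Cancelling $(n!)^{\omega -1}$ from both sides leaves $n!\cdot \bigl(\prod_{i=1}^{n}m_{i}p_{i}q_{i}\bigr)^{\omega /3}\leq |H|^{n}$. Taking logarithms and solving for $\omega $ then yields exactly the claimed estimate.

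The main obstacle is essentially absent: the combinatorial content sits inside \textit{Theorem 5.9}, and what remains is bookkeeping. The only point requiring care is correctly carrying the factor of $n!=|Sym_{n}|$ through both the cardinality computation in (1) and the exponent arithmetic in (2), so that $(n!)^{\omega }$ on the left and $(n!)^{\omega -1}$ on the right cancel cleanly down to a single factor of $n!$, which is what produces the $-\log n!$ in the numerator of the final bound.
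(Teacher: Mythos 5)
Your proposal is correct and follows essentially the same route as the paper: lift the STPP family to a single TPP triple in $H\wr Sym_{n}$ via the wreath-product extension theorem (which is \textit{Theorem 5.8} in the paper, not 5.9), apply the bound $(nmp)^{\omega /3}\leq D_{\omega }(G)$ to the resulting tensor, and close with \textit{Lemma 5.7} followed by the logarithmic bookkeeping that cancels $(n!)^{\omega -1}$ down to a single $n!$. The explicit cardinality computation $\bigl|\prod_{i}S_{i}\wr Sym_{n}\bigr|=n!\prod_{i}m_{i}$ is a small point the paper leaves implicit, but otherwise the arguments coincide.
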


\begin{proof}
If $n$ triples $S_{i},T_{i,}U_{i}\subseteq H,$ $1\leq i\leq n$, of sizes $%
\left\vert S_{i}\right\vert =m_{i},$ $\left\vert T_{i,}\right\vert =p_{i},$ $%
\left\vert U_{i}\right\vert =q_{i},$ $1\leq i\leq n$, satisfy the STPP in an
Abelian group $H$ then by \textit{Theorem 5.8 }$H\wr Sym_{n}$ realizes the
product tensor $\left\langle n!\overset{n}{\underset{i=1}{\tprod }}m_{i},n!%
\overset{n}{\underset{i=1}{\tprod }}p_{i},n!\overset{n}{\underset{i=1}{%
\tprod }}q_{i}\right\rangle $ once, and in addition, by \textit{Corollary
5.2 }and \textit{Lemma 5.7}, 
\begin{eqnarray*}
&&\left( n!\overset{n}{\underset{i=1}{\tprod }}m_{i}\cdot n!\overset{n}{%
\underset{i=1}{\tprod }}p_{i}\cdot n!\overset{n}{\underset{i=1}{\tprod }}%
q_{i}\right) ^{\frac{\omega }{3}} \\
&=&\left( n!\sqrt[3]{\overset{n}{\underset{i=1}{\tprod }}m_{i}p_{i}q_{i}}%
\right) ^{\omega } \\
&\leq &D_{\omega }(H\wr Sym_{n}) \\
&\leq &\left( n!\right) ^{\omega -1}\left\vert H\right\vert ^{n}.
\end{eqnarray*}%
Taking logarithms, this is equivalent to 
\begin{eqnarray*}
\omega \log \left( n!\sqrt[3]{\overset{n}{\underset{i=1}{\tprod }}%
m_{i}p_{i}q_{i}}\right) &\leq &\left( \omega -1\right) \log n!+n\log
\left\vert H\right\vert \Longleftrightarrow \\
\omega \log n!+\omega \log \sqrt[3]{\overset{n}{\underset{i=1}{\tprod }}%
m_{i}p_{i}q_{i}} &\leq &\omega \log n!-\log n!+n\log \left\vert H\right\vert
\Longleftrightarrow \\
\omega &\leq &\frac{n\log \left\vert H\right\vert -\log n!}{\log \sqrt[3]{%
\overset{n}{\underset{i=1}{\tprod }}m_{i}p_{i}q_{i}}}.
\end{eqnarray*}
\end{proof}

\bigskip

The bound for $\omega $ in \textit{Corollary 5.9} suggests that $\omega $ is
close to $2$ if we could find an Abelian group $H$ simultaneously realizing $%
n$ tensors $\left\langle m_{i},p_{i},q_{i}\right\rangle $ such that $\frac{%
n\log \left\vert H\right\vert -\log n!}{\log \sqrt[3]{\overset{n}{\underset{%
i=1}{\tprod }}m_{i}p_{i}q_{i}}}$ is close to $2$, and the following
proposition is an obvious extension.

\bigskip

\begin{proposition}
For any $n$, given $n$ triples $S_{i},T_{i,}U_{i}\subseteq H$ of sizes $%
\left\vert S_{i}\right\vert =m_{i},\left\vert T_{i}\right\vert
=p_{i},\left\vert U_{i}\right\vert =q_{i},$ $1\leq i\leq n$ satisfying the
STPP in an Abelian group $H$, and the corresponding product triple $\overset{%
n}{\underset{i=1}{\tprod }}S_{i}\wr Sym_{n},\overset{n}{\underset{i=1}{%
\tprod }}T_{i}\wr Sym_{n},\overset{n}{\underset{i=1}{\tprod }}U_{i}\wr
Sym_{n}$ satisfying the TPP in the wreath product group $H\wr Sym_{n}$,
there is a maximum number $1\leq k_{n}\leq \left( n!\right) ^{3}$ triples of
permutations, $\sigma _{j},\tau _{j},\upsilon _{j}$ $\epsilon $ $Sym_{n},$ $%
1\leq j\leq k_{n}$, such that the $k_{n}$ permuted product triples $\overset{%
n}{\underset{i=1}{\tprod }}S_{\sigma _{j}\left( i\right) }\wr Sym_{n},%
\overset{n}{\underset{i=1}{\tprod }}T_{\tau _{j}\left( i\right) }\wr Sym_{n},%
\overset{n}{\underset{i=1}{\tprod }}U_{\upsilon _{j}\left( i\right) }\wr
Sym_{n}$, $1\leq j\leq k_{n}$, satisfy the STPP\ in $H\wr Sym_{n}$, and $%
H\wr Sym_{n}$ realizes the square product tensor $\left\langle n!\overset{n}{%
\underset{i=1}{\tprod }}m_{i},n!\overset{n}{\underset{i=1}{\tprod }}p_{i},n!%
\overset{n}{\underset{i=1}{\tprod }}q_{i}\right\rangle $ $k_{n}$ times
simultaneously, such that%
\begin{equation*}
\omega \leq \frac{n\log \left\vert H\right\vert -\log n!-\log k_{n}}{\log 
\sqrt[3]{\overset{n}{\underset{i=1}{\tprod }}m_{i}p_{i}q_{i}}}.
\end{equation*}%
\textit{\ }
\end{proposition}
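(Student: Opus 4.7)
The plan is to combine the construction of Theorem 5.8 with a careful bookkeeping of how many distinct wreath-product triples arise from reindexing the given STPP family in $H$. First I would fix notation: for each triple $(\sigma,\tau,\upsilon)\in Sym_n^3$, form the reindexed family $\{(S_{\sigma(i)},T_{\tau(i)},U_{\upsilon(i)})\}_{i=1}^n$ in $H$ and, when this family is itself STPP in $H$, apply Theorem 5.8 to promote it to a TPP product triple $\bigl(\prod_{i=1}^n S_{\sigma(i)}\wr Sym_n,\prod_{i=1}^n T_{\tau(i)}\wr Sym_n,\prod_{i=1}^n U_{\upsilon(i)}\wr Sym_n\bigr)$ in $H\wr Sym_n$. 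Each such triple realizes the same square tensor $\langle n!\prod_i m_i,n!\prod_i p_i,n!\prod_i q_i\rangle$, since the list of component sizes is invariant under relabelling of the index $i$.

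Next I would define $k_n$ to be the largest size of a subset $\{(\sigma_j,\tau_j,\upsilon_j)\}_{j=1}^{k_n}\subseteq Sym_n^3$ for which the corresponding $k_n$ product triples jointly satisfy the STPP in $H\wr Sym_n$. The trivial choice $\sigma_1=\tau_1=\upsilon_1=1$ already furnishes a single TPP triple by the hypothesis of the proposition combined with Theorem 5.8, so $k_n\ge 1$; since $|Sym_n^3|=(n!)^3$ we have $k_n\le(n!)^3$, and the maximum is attained by finiteness. By construction $H\wr Sym_n$ simultaneously realizes the square tensor $\langle n!\prod_i m_i,n!\prod_i p_i,n!\prod_i q_i\rangle$ exactly $k_n$ times.

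Finally I would apply part $(1)$ of Corollary 5.2 to this simultaneous realization, combined with Lemma 5.7 to bound $D_\omega(H\wr Sym_n)$:
\[k_n\Bigl(n!\prod_{i=1}^n m_i\cdot n!\prod_{i=1}^n p_i\cdot n!\prod_{i=1}^n q_i\Bigr)^{\omega/3}\le D_\omega(H\wr Sym_n)\le(n!)^{\omega-1}|H|^n.\]
The left side factors as $k_n(n!)^{\omega}\bigl(\prod_i m_ip_iq_i\bigr)^{\omega/3}$, so taking logarithms, cancelling $\omega\log n!$ from both sides, and solving for $\omega$ produces
\[\omega\log\sqrt[3]{\prod_{i=1}^n m_ip_iq_i}\le n\log|H|-\log n!-\log k_n,\]
which is precisely the claimed bound.

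The main obstacle I anticipate is the STPP verification that controls $k_n$. Whether a given family of permuted product triples jointly satisfies the STPP in $H\wr Sym_n$ depends delicately on how the chosen permutation triples interact through the cross-quotient sets $Q(\mathcal{S}_{j_1},\mathcal{S}_{j_2})$, $Q(\mathcal{T}_{j_2},\mathcal{T}_{j_3})$, $Q(\mathcal{U}_{j_3},\mathcal{U}_{j_1})$, and must be traced back through the semidirect-product multiplication rule of $H\wr Sym_n$ to a compatibility condition on the underlying permutations and on the original STPP family in $H$. Maximising $k_n$ is therefore not a matter of counting: the upper bound $(n!)^3$ is merely crude, and the sharpness of the resulting estimate for $\omega$ hinges entirely on how close to $(n!)^3$ the combinatorial compatibility condition allows $k_n$ to be pushed for well-chosen input data $(H,\{(S_i,T_i,U_i)\})$.
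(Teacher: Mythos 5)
Your proof is correct and follows the same route as the paper: define $k_n$ as the maximum number of permutation triples whose wreath-product triples jointly satisfy the STPP, note $1\le k_n\le (n!)^3$ via Theorem 5.8 and crude counting, then chain Corollary 5.2 with Lemma 5.7 and solve for $\omega$ exactly as in the proof of Corollary 5.9. The paper's own proof is just a brief paragraph citing the same ingredients, so your more explicit write-out of the algebra is a faithful expansion of it.
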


The proof of this result once again uses the result $D_{\omega }(H\wr
Sym_{n})\leq \left( n!\right) ^{\omega -1}\left\vert H\right\vert ^{n}$ for
an Abelian group $H$ (\textit{Lemma 5.7}), in combination with \textit{%
Corollary 5.2}, as described in the proof of \textit{Corollary 5.9}. \ By 
\textit{Theorem 5.8} we know that $k_{n}\geq 1$ for any given $n$, given $n$
STPP$\ $triples $S_{i},T_{i,}U_{i}\subseteq H$ of sizes $\left\vert
S_{i}\right\vert =m_{i},\left\vert T_{i}\right\vert =p_{i},\left\vert
U_{i}\right\vert =q_{i},$ $1\leq i\leq n$. \ If the $\sigma _{j},\tau
_{j},\upsilon _{j}$ $\epsilon $ $Sym_{n}$, $1\leq j\leq k_{n}$, are taken
independently of each other in $Sym_{n}$, there are a maximum number $\left(
n!\right) ^{3}$ of permuted triples $\overset{n}{\underset{i=1}{\tprod }}%
S_{\sigma \left( i\right) }\wr Sym_{n},\overset{n}{\underset{i=1}{\tprod }}%
T_{\tau \left( i\right) }\wr Sym_{n},\overset{n}{\underset{i=1}{\tprod }}%
U_{\upsilon \left( i\right) }\wr Sym_{n}$, $\sigma ,\tau ,\upsilon $ $%
\epsilon $ $Sym_{n}$, and if these satisfy the STPP in $H\wr Sym_{n}$, it
leads to the conditional estimate $\omega <2.012$ using the wreath product
group $\left( Cyc_{6}^{\times 3}\right) ^{\times 6}\wr Sym_{2^{6}}$, as
described in section \textit{6.2.3}. \ For a given $n$, there is no known
general method of determining $k_{n}$ for the group $H\wr Sym_{n}$, with $H$
being Abelian and having a family of $n$ STPP triples $S_{i},T_{i,}U_{i}%
\subseteq H$ of sizes $\left\vert S_{i}\right\vert =m_{i},\left\vert
T_{i}\right\vert =p_{i},\left\vert U_{i}\right\vert =q_{i},$ $1\leq i\leq n$%
. \ The objective is to find the number $1\leq k_{n}\leq \left( n!\right)
^{3}$ of these triples of permutations in $Sym_{n}$ such that the bound $%
\omega \leq \frac{n\log \left\vert H\right\vert -\log n!-\log k_{n}}{\log 
\sqrt[3]{\overset{n}{\underset{i=1}{\tprod }}m_{i}p_{i}q_{i}}}$ is as tight
as possible. \ It so happens that \textit{Theorem 7.1}, [CUKS2005], is a
special case of \textit{Proposition 5.10 }for $k_{n}=1$, except there the
group is not required to be Abelian.

\pagebreak \bigskip

\chapter{\protect\huge Applications}

\bigskip

In this chapter, we apply the methods and general results in \textit{%
Chapters 4-5} to describe the general conditions needed to prove results for 
$\omega $ using the parameters $\alpha $ and $\gamma $ of concrete families
of non-Abelian groups or of single non-Abelian groups. \ We conclude with a
number of concrete upper estimates of $\omega $ in the region $2.82-2.93$. \
However, our most important result is a general estimate that $\omega \leq 
\frac{2^{n}\log n^{3n}-\log 2^{n}!-\log k_{2^{n}}}{2^{n}n\log \left(
n-1\right) }$, for some undetermined $1\leq k_{2^{n}}\leq \left(
2^{n}!\right) ^{3}$, where this $k_{2^{n}}$ is the number of times the
wreath product group $\left( Cyc_{n}^{\times 3}\right) ^{\times n}\wr
Sym_{2^{n}}$ (i.e. $\left( \left( Cyc_{n}^{\times 3}\right) ^{\times
n}\right) ^{\times 2^{n}}\rtimes Sym_{2^{n}}$) realizes the product tensor $%
\left\langle 2^{n}!\left( n-1\right) ^{n2^{n}},2^{n}!\left( n-1\right)
^{n2^{n}},2^{n}!\left( n-1\right) ^{n2^{n}}\right\rangle $ simultaneously,
and the closer $k_{2^{n}}$ is to $\left( 2^{n}!\right) ^{3}$ the closer $%
\omega $ is to $2.02$ (from the upper side).

\bigskip

\section{{\protect\Large Analysis of }$\protect\alpha ${\protect\Large \ and 
}$\protect\gamma ${\protect\Large \ for the Symmetric Groups}}

\bigskip

Here we derive upper estimates of $\alpha $ and $\gamma $ for the symmetric
groups $Sym\left( \Delta _{n}\right) \equiv Sym_{n(n+1)/2}$, and generally
for $Sym_{m}$. \ We start with $\gamma \left( Sym\left( \Delta _{n}\right)
\right) $.

\bigskip

\subsection{\textit{Estimates for }$\protect\gamma \left( Sym\left( \Delta
_{n}\right) \right) $}

\bigskip

Our first estimate for $\left( Sym\left( \Delta _{n}\right) \right) $
follows from McKay's estimate of $d^{^{\prime }}\left( Sym_{n}\right) $
[MCK1976, p. 631].

\bigskip

\begin{description}
\item[\textit{(6.1)}] $2\sqrt{6}\left( \frac{n(n+1)}{2}\right) ^{\frac{n(n+1)%
}{4}+1}e^{\sqrt{\frac{n(n+1)}{4}}\left( 1-\frac{\pi }{3}\sqrt{12}\right) -%
\frac{n(n+1)+1}{4}}\leq d^{^{\prime }}\left( Sym\left( \Delta _{n}\right)
\right) \leq \left( \sqrt{2\pi }e^{-\frac{n(n+1)}{2}}\right) ^{\frac{1}{2}%
}\left( \frac{n(n+1)}{2}\right) ^{\frac{n(n+1)+1}{4}}$.
\end{description}

\bigskip

This yields the following result for $\gamma (Sym\left( \Delta _{n}\right) )$%
.

\bigskip

\begin{corollary}
$\gamma (Sym\left( \Delta _{n}\right) )=2+O(\frac{1}{n})$.
\end{corollary}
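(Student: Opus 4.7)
The plan is to read off $\gamma(Sym(\Delta_n))$ directly from its definition, using Stirling's formula for $\log|Sym(\Delta_n)|$ in the numerator and the McKay bounds (6.1) for $\log d'(Sym(\Delta_n))$ in the denominator. Setting $N:=|\Delta_n|=n(n+1)/2$, we have $|Sym(\Delta_n)|=N!$, so by (4.25) the quantity to estimate is
\begin{equation*}
\gamma(Sym(\Delta_n)) \;=\; \frac{\log N!}{\log d'(Sym(\Delta_n))}.
\end{equation*}
Stirling immediately gives $\log N! = N\log N - N + O(\log N)$ for the numerator.

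For the denominator, I would take the logarithm of both sides of (6.1). The upper bound of (6.1) yields $\log d'(Sym(\Delta_n)) \le \tfrac{N}{2}\log N - \tfrac{N}{2} + O(\log N)$, and the lower bound of (6.1) yields $\log d'(Sym(\Delta_n)) \ge \tfrac{N}{2}\log N - \tfrac{N}{2} + O(\sqrt{N})$, since in both cases the dominant term is $(N/2+O(1))\log N$, the next term is $-N/2+O(1)$, and the McKay lower bound contributes only an additional $c\sqrt{N/2}$ with $c=1-\pi\sqrt{12}/3$. Thus $\log d'(Sym(\Delta_n)) = \tfrac{N}{2}\log N - \tfrac{N}{2} + O(\sqrt N)$.

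I would then combine these two estimates by factoring $N$ out of the numerator and $N/2$ out of the denominator:
\begin{equation*}
\gamma(Sym(\Delta_n)) \;=\; 2\cdot\frac{N\log N - N + O(\log N)}{N\log N - N + O(\sqrt N)} \;=\; 2\bigl(1+O(1/(\sqrt N\log N))+O(1/N)\bigr).
\end{equation*}
Since $N=\Theta(n^2)$, both error terms are $O(1/n)$, which gives the upper bound $\gamma(Sym(\Delta_n))\le 2+O(1/n)$. The matching lower bound is immediate: by part $(5)$ of Theorem 3.10 (equivalently (4.26)) the symmetric group $Sym(\Delta_n)$ is non-Abelian for $n\ge 2$, so $\gamma(Sym(\Delta_n))>2$. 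Together these yield $\gamma(Sym(\Delta_n))=2+O(1/n)$.

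The only real obstacle is bookkeeping the asymptotic error terms carefully enough to confirm that $1/(\sqrt{N}\log N)=O(1/n)$ is what propagates through; the structural content of the proof is entirely carried by McKay's estimate (6.1), and no further representation-theoretic input is required.
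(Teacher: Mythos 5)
Your proposal is correct and follows essentially the same route as the paper: both compute $\gamma(Sym(\Delta_n))=\log|Sym(\Delta_n)|/\log d'(Sym(\Delta_n))$, apply Stirling's formula to $\log\bigl(\tfrac{1}{2}n(n+1)\bigr)!$ in the numerator, take logarithms of McKay's two-sided bounds \textit{(6.1)} in the denominator, and propagate the error terms through the quotient. Your bookkeeping in terms of $N=n(n+1)/2$ is in fact slightly cleaner and even yields the sharper rate $O(1/(n\log n))$, so there is nothing to fix.
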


\begin{proof}
Applying \textit{(4.26) }and \textit{(5.6) }to \textit{(6.1)}, and using
Stirling's formula,\textit{\ }we have the initial estimate 
\begin{eqnarray*}
{\small \gamma (Sym}\left( \Delta _{n}\right) {\small )} &{\small \geq }&%
\frac{n^{2}\log n-\frac{1}{2}n^{2}\left( 1+\log 2\right) +O\left( n\log
n\right) }{\log \left[ \left( \sqrt{2\pi }e^{-\frac{n(n+1)}{2}}\right) ^{%
\frac{1}{2}}\left( \frac{n(n+1)}{2}\right) ^{\frac{n(n+1)+1}{4}}\right] } \\
{\small \gamma (Sym}\left( \Delta _{n}\right) &\leq &\frac{n^{2}\log n-\frac{%
1}{2}n^{2}\left( 1+\log 2\right) +O\left( n\log n\right) }{\log \left[ 2%
\sqrt{6}\left( \frac{n(n+1)}{2}\right) ^{\frac{n(n+1)}{4}+1}e^{\sqrt{\frac{%
n(n+1)}{4}}\left( 1-\frac{\pi }{3}\sqrt{6}\right) -\frac{n(n+1)+1}{4}}\right]
}
\end{eqnarray*}%
Dividing numerator and denominator on both sides above by $n^{2}\log n-\frac{%
1}{2}n^{2}\left( 1+\log 2\right) $, and then dividing $O(n\log n)$ by $%
n^{2}\log n$, we arrive at the result.
\end{proof}

\bigskip

A second estimate of $\gamma \left( Sym\left( \Delta _{n}\right) \right) $
follows from Vershik and Kirov's estimate of $d^{^{\prime }}\left( Sym\left(
\Delta _{n}\right) \right) $ [VK1985, p. 21].

\bigskip

\begin{description}
\item[\textit{(6.2)}] $e^{-\frac{C_{1}}{2}\sqrt{\frac{n(n+1)}{2}}}\sqrt{%
\left( \frac{n(n+1)}{2}\right) !}\leq d^{^{\prime }}\left( Sym\left( \Delta
_{n}\right) \right) \leq e^{-\frac{C_{2}}{2}\sqrt{\frac{n(n+1)}{2}}}\sqrt{%
\left( \frac{n(n+1)}{2}\right) !},$
\end{description}

\bigskip

where $C_{1},$ $C_{2}>0$ constants independent of $n$.

\bigskip

\begin{corollary}
$\gamma (Sym\left( \Delta _{n}\right) )=2+\Theta \left( \frac{1}{n\log n}%
\right) $.
\end{corollary}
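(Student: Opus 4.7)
The plan is to substitute the two-sided bound \emph{(6.2)} on $d^{\prime }(Sym(\Delta _{n}))$ into the identity \emph{(4.25)}, namely $\gamma (G)=\log |G|/\log d^{\prime }(G)$, and then expand the resulting quotient asymptotically. Let $N:=n(n+1)/2$, so $|Sym(\Delta _{n})|=N!$. Taking logarithms in \emph{(6.2)} yields
\begin{equation*}
\log d^{\prime }(Sym(\Delta _{n}))=\tfrac{1}{2}\log N!-\Theta (\sqrt{N}),
\end{equation*}
with the implicit constants $C_{1},C_{2}>0$ independent of $n$.

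The next step is to rewrite
\begin{equation*}
\gamma (Sym(\Delta _{n}))=\frac{\log N!}{\tfrac{1}{2}\log N!-\Theta (\sqrt{N}
)}=\frac{2}{1-\Theta (\sqrt{N}/\log N!)}.
\end{equation*}
Since Stirling gives $\log N!=N\log N-N+O(\log N)$, the ratio $\sqrt{N}/\log N!$ is of order $1/(\sqrt{N}\log N)$ and therefore tends to $0$. A first-order expansion of the geometric series $1/(1-x)=1+x+O(x^{2})$ applied at $x=\Theta (\sqrt{N}/\log N!)$ then delivers
\begin{equation*}
\gamma (Sym(\Delta _{n}))-2=\Theta \!\left( \frac{\sqrt{N}}{\log N!}\right)
=\Theta \!\left( \frac{1}{\sqrt{N}\log N}\right) .
\end{equation*}

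To finish, I translate back in terms of $n$. Because $N=n(n+1)/2\sim n^{2}/2$, we have $\sqrt{N}=\Theta (n)$ and $\log N=\Theta (\log n)$, whence $\sqrt{N}\log N=\Theta (n\log n)$. Substituting yields $\gamma (Sym(\Delta _{n}))-2=\Theta (1/(n\log n))$, which is the claim. The bookkeeping is entirely routine; the only delicate point is to verify that the same order of magnitude governs both the upper and the lower bound, which is automatic here since \emph{(6.2)} is a two-sided estimate with constants of the same sign. I expect no real obstacle beyond making sure the big-$\Theta $ symbol is used consistently when combining the upper and lower halves of \emph{(6.2)}; everything else is Stirling plus a geometric-series expansion.
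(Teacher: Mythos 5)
Your proposal is correct and follows essentially the same route as the paper: substitute the two-sided bound \emph{(6.2)} into $\gamma(G)=\log|G|/\log d^{\prime}(G)$, apply Stirling to $\log N!$ with $N=n(n+1)/2$, and extract the $\Theta(\sqrt{N}/\log N!)=\Theta(1/(n\log n))$ correction. The paper carries out the same asymptotics by writing the two explicit inequalities with constants $C_1, C_2$ and dividing through by $n^{2}\log n$, which is just your geometric-series expansion in a different notation.
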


\begin{proof}
Consequence of \textit{(6.2). }%
\begin{eqnarray*}
\gamma (Sym\left( \Delta _{n}\right) ) &\geq &\frac{n^{2}\log n-\frac{1}{2}%
n^{2}\left( 1+\log 2\right) +O\left( n\log n\right) }{-\frac{C_{2}}{2}\sqrt{%
\frac{n(n+1)}{2}}+\frac{1}{2}n^{2}\log n-\frac{1}{4}n^{2}\left( 1+\log
2\right) +\frac{1}{2}O\left( n\log n\right) } \\
\gamma (Sym\left( \Delta _{n}\right) ) &\leq &\frac{n^{2}\log n-\frac{1}{2}%
n^{2}\left( 1+\log 2\right) +O\left( n\log n\right) }{-\frac{C_{1}}{2}\sqrt{%
\frac{n(n+1)}{2}}+\frac{1}{2}n^{2}\log n-\frac{1}{4}n^{2}\left( 1+\log
2\right) +\frac{1}{2}O\left( n\log n\right) }
\end{eqnarray*}%
for positive constants $C_{1}$ and $C_{2}$. \ Dividing numerator and
denominator on both sides by $n^{2}\log n$, we obtain the result.
\end{proof}

\bigskip

\subsection{\textit{An Upper Estimate for }$\protect\alpha (Sym\left( \Delta
_{n}\right) $}

\bigskip

The main result here is that $\alpha (Sym\left( \Delta _{n}\right) )\leq 2+O(%
\frac{1}{\log n})$.

\bigskip

\begin{lemma}
$\alpha (Sym\left( \Delta _{n}\right) )\leq 2+O(\frac{1}{\log n}).$
\end{lemma}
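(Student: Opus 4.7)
The plan is to apply the bound \textit{(4.22)} (or equivalently \textit{(5.5)}) to the concrete index triple constructed in \textit{Lemma 5.6}, and then reduce the estimate to routine Stirling asymptotics. By \textit{Lemma 5.6}, the three fixed-point subgroups $Sym_1(\Delta_n), Sym_2(\Delta_n), Sym_3(\Delta_n)$ form an index triple in $Sym(\Delta_n)$, each of cardinality $\prod_{k=1}^{n} k!$. Consequently $Sym(\Delta_n)$ realizes the square tensor $\left\langle \prod_{k=1}^{n} k!,\prod_{k=1}^{n} k!,\prod_{k=1}^{n} k!\right\rangle$, so by \textit{(4.22)} we have
\begin{equation*}
\alpha(Sym(\Delta_n)) \;\leq\; \frac{\log|Sym(\Delta_n)|}{\log \prod_{k=1}^{n} k!} \;=\; \frac{\log\bigl(\tfrac{n(n+1)}{2}\bigr)!}{\sum_{k=1}^{n}\log k!}.
\end{equation*}

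Next, I would estimate numerator and denominator by Stirling. For the numerator, with $m = n(n+1)/2$, Stirling gives $\log m! = m\log m - m + O(\log m)$, so
\begin{equation*}
\log\bigl(\tfrac{n(n+1)}{2}\bigr)! \;=\; n^2\log n \;-\; \tfrac{n^2}{2}(1+\log 2) \;+\; O(n\log n).
\end{equation*}
For the denominator, applying Stirling termwise and using $\sum_{k=1}^{n} k\log k = \tfrac{n^2\log n}{2} - \tfrac{n^2}{4} + O(n\log n)$ together with $\sum_{k=1}^{n} k = \tfrac{n^2}{2} + O(n)$, I get
\begin{equation*}
\sum_{k=1}^{n}\log k! \;=\; \tfrac{n^2\log n}{2} \;-\; \tfrac{3n^2}{4} \;+\; O(n\log n).
\end{equation*}

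Finally, I would divide these two estimates. Multiplying top and bottom by $2/(n^2\log n)$, the quotient becomes
\begin{equation*}
\frac{2 - (1+\log 2)/\log n + O(1/n)}{1 - 3/(2\log n) + O(1/n)},
\end{equation*}
and expanding $(1-x)^{-1} = 1 + x + O(x^2)$ with $x = 3/(2\log n) + O(1/n)$ yields
\begin{equation*}
\alpha(Sym(\Delta_n)) \;\leq\; 2 + \frac{2-\log 2}{\log n} + O\!\left(\tfrac{1}{\log^2 n}\right) \;=\; 2 + O\!\left(\tfrac{1}{\log n}\right).
\end{equation*}

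The only real obstacle here is not conceptual but bookkeeping: one must be careful that the $O(n\log n)$ error in the sum $\sum_{k=1}^{n}\log k!$ really is dominated by the leading $\tfrac{n^2\log n}{2}$ term (so that, after normalization, it contributes $O(1/n)$ rather than inflating the $O(1/\log n)$ main error). Since the numerator's leading coefficient $n^2\log n$ is exactly twice the denominator's $\tfrac{n^2\log n}{2}$, one sees immediately why the limiting ratio is $2$; every other Stirling contribution is of smaller order, which gives the $O(1/\log n)$ correction.
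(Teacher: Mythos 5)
Your proposal is correct and follows essentially the same route as the paper: apply Lemma 5.6 to get the index triple of fixed-point subgroups, invoke (4.22) to bound $\alpha(Sym(\Delta_n))$ by $\log\left(\tfrac{n(n+1)}{2}\right)! / \log\prod_{k=1}^n k!$, then run Stirling asymptotics on numerator and denominator and divide. The only cosmetic difference is in evaluating $\log(n!(n-1)!\cdots 2!1!)$: the paper rewrites it as $n\log n! - \sum_{k=2}^n (k-1)\log k$ and estimates the latter by integrating $\int_1^n (x-1)\log x\,dx$ by parts, whereas you apply Stirling termwise and use the standard asymptotics for $\sum k\log k$ and $\sum k$; both routes give the same $\tfrac{1}{2}n^2\log n - \tfrac{3}{4}n^2 + O(n\log n)$.
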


\begin{proof}
$\left\vert Sym(\Delta _{n})\right\vert =\left( \frac{1}{2}n(n+1)\right) !$,
and $\left\langle \overset{n}{\underset{k=1}{\tprod }}k!,\overset{n}{%
\underset{k=1}{\tprod }}k!,\overset{n}{\underset{k=1}{\tprod }}%
k!\right\rangle $ $\epsilon $ $\mathfrak{S}\left( Sym(\Delta _{n})\right) $ (%
\textit{Lemma 5.6), }and implies that $\alpha (Sym\left( \Delta _{n}\right)
)\leq \frac{\log \left( \frac{1}{2}n(n+1)\right) !}{\log \left(
n!(n-1)!\cdot \cdot \cdot \cdot 2!1!\right) }$ by \textit{(4.22)}. \ By
Stirling's asymptotic formula $\log n!\sim n\log n-n+O\left( \log n\right) $%
, as $n\longrightarrow \infty $, we have 
\begin{eqnarray*}
\log \left( \frac{1}{2}n(n+1)\right) ! &=&\frac{1}{2}n(n+1)\log \left( \frac{%
1}{2}n(n+1)\right) -\frac{1}{2}n(n+1)+O\left( \log \left( \frac{1}{2}%
n(n+1)\right) \right) \\
&=&n^{2}\log n-\frac{1}{2}n^{2}\left( 1+\log 2\right) +O\left( n\log
n\right) .
\end{eqnarray*}%
For $\log \left( n!(n-1)!\cdot \cdot \cdot \cdot 2!1!\right) $ we have the
estimate:%
\begin{eqnarray*}
&&\log \left( n!(n-1)!\cdot \cdot \cdot \cdot 2!1!\right) =\log \left(
2^{n-1}3^{n-2}....n\right) \\
&=&\left( n-1\right) \log 2+\left( n-2\right) \log 3+....+2\log \left(
n-1\right) +\log n \\
&=&n\left( \log 2+....+\log n\right) -\left( \log 2+2\log 3+....+\left(
n-1\right) \log n\right) \\
&=&n\log \left( n!\right) -\left( \log 2+2\log 3+....+\left( n-1\right) \log
n\right) .
\end{eqnarray*}%
Now, $n\log n!=n^{2}\log n-n^{2}+O\left( n\log n\right) $, and $\left( \log
2+2\log 3+....+\left( n-1\right) \log n\right) \ $\ is the result of the
evaluation of the definite integral $\int_{1}^{n}\left( x-1\right) \log x$ $%
dx+O\left( n\log n\right) $, which becomes $\frac{1}{2}\left[ \left(
x-1\right) ^{2}\log x\right] _{1}^{n}-\frac{1}{2}\int_{1}^{n}\frac{\left(
x-1\right) ^{2}}{x}$ $dx+O\left( n\log n\right) $ when we integrate by
parts. \ We find that $\frac{1}{2}\left[ \left( x-1\right) ^{2}\log x\right]
_{1}^{n}-\frac{1}{2}\int_{1}^{n}\frac{\left( x-1\right) ^{2}}{x}$ $%
dx+O\left( n\log n\right) =\frac{1}{2}n^{2}\log n-\frac{1}{4}n^{2}+O\left(
n\log n\right) $. \ Thus, $\left( \log 2+2\log 3+....+\left( n-1\right) \log
n\right) =\frac{1}{2}n^{2}\log n-\frac{1}{4}n^{2}+O\left( n\log n\right) $
which implies the estimate for $\log \left( n!(n-1)!\cdot \cdot \cdot \cdot
2!1!\right) $ of%
\begin{eqnarray*}
&&\log \left( n!(n-1)!\cdot \cdot \cdot \cdot 2!1!\right) \\
&=&n\log \left( n!\right) -\left( \log 2+2\log 3+....+\left( n-1\right) \log
n\right) \\
&=&n^{2}\log n-n^{2}+O\left( n\log n\right) -\frac{1}{2}n^{2}\log n+\frac{1}{%
4}n^{2}-O\left( n\log n\right) \\
&=&\frac{1}{2}n^{2}\log n-\frac{3}{4}n^{2}+O\left( n\log n\right) .
\end{eqnarray*}%
Then for $\frac{\log \left( \frac{1}{2}n(n+1)\right) !}{\log \left(
n!(n-1)!\cdot \cdot \cdot \cdot 2!1!\right) }$ we have the estimate:%
\begin{eqnarray*}
&&\frac{\log \left( \frac{1}{2}n(n+1)\right) !}{\log \left( n!(n-1)!\cdot
\cdot \cdot \cdot 2!1!\right) } \\
&=&\frac{n^{2}\log n-\frac{1}{2}n^{2}\left( 1+\log 2\right) +O\left( n\log
n\right) }{\frac{1}{2}n^{2}\log n-\frac{3}{4}n^{2}+O\left( n\log n\right) }
\\
&=&\frac{2-\frac{1+\log 2}{\log n}+O\left( \frac{1}{n}\right) }{1-\frac{3}{2}%
\frac{1}{\log n}+O\left( \frac{1}{n}\right) } \\
&=&\left( 2-\left( 1+\log 2\right) \frac{1}{\log n}\right) \left( 1+\frac{3}{%
2}\frac{1}{\log n}\right) +O\left( \frac{1}{\left( \log n\right) ^{2}}\right)
\\
&=&2+\frac{2-\log 2}{\log n}+O\left( \frac{1}{\left( \log n\right) ^{2}}%
\right) \\
&=&2+O\left( \frac{1}{\log n}\right) .
\end{eqnarray*}
\end{proof}

\bigskip

If we denote by $z^{^{\prime }}\left( Sym\left( \Delta _{n}\right) \right) $
the size of the maximal tensor realized by $Sym\left( \Delta _{n}\right) $,
then we have following corollary.

\bigskip

\begin{corollary}
$n^{\frac{1}{3}n^{2}+O(n)}\left( 2e\right) ^{-\frac{1}{6}n^{2}}\leq $ $%
z^{^{\prime }}\left( Sym\left( \Delta _{n}\right) \right) ^{\frac{1}{3}}<n^{%
\frac{1}{2}n^{2}+O(n)}\left( 2e\right) ^{-\frac{1}{4}n^{2}}.$
\end{corollary}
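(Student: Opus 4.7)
The plan is to combine the universal two-sided bound $|G|^{1/3}\leq z'(G)^{1/3}<|G|^{1/2}$ from Lemma 4.15 with Stirling's formula applied to $|Sym(\Delta_n)|=\bigl(\tfrac{1}{2}n(n+1)\bigr)!$. The relevant Stirling estimate was already worked out inside the proof of Lemma 6.3, where it was shown that
\begin{equation*}
\log\bigl(\tfrac{1}{2}n(n+1)\bigr)! \;=\; n^{2}\log n-\tfrac{1}{2}n^{2}(1+\log 2)+O(n\log n).
\end{equation*}
I would quote this identity and use the algebraic observation $1+\log 2=\log(2e)$ to convert between the $(1+\log 2)$-form appearing naturally from Stirling and the $(2e)$-form appearing in the statement.

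For the lower bound, I would apply $|Sym(\Delta_n)|^{1/3}\leq z'(Sym(\Delta_n))^{1/3}$ from Lemma 4.15 and exponentiate one-third of the Stirling estimate: $\tfrac{1}{3}\log|Sym(\Delta_n)|=\tfrac{n^{2}}{3}\log n-\tfrac{n^{2}}{6}\log(2e)+O(n\log n)$, which on exponentiation is precisely $n^{\frac{1}{3}n^{2}+O(n)}(2e)^{-n^{2}/6}$. For the upper bound, I would apply the strict inequality $z'(Sym(\Delta_n))^{1/3}<|Sym(\Delta_n)|^{1/2}$ from Lemma 4.15 and exponentiate half of the Stirling estimate: $\tfrac{1}{2}\log|Sym(\Delta_n)|=\tfrac{n^{2}}{2}\log n-\tfrac{n^{2}}{4}\log(2e)+O(n\log n)$, which exponentiates to $n^{\frac{1}{2}n^{2}+O(n)}(2e)^{-n^{2}/4}$. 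The strict inequality carries through unchanged.

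There is no genuine obstacle; the only point that requires attention is the bookkeeping translating additive $O(n\log n)$ terms in the logarithm into additive $O(n)$ terms in the exponent of $n$ (so that a term $O(n\log n)$ can be absorbed as $O(n)\cdot\log n$ inside $n^{\cdot}$), and keeping the constant factor $(2e)$ separated cleanly from the $n$-factor so that only the $n^{2}\log n$ contribution adjusts the exponent of $n$. Note that the sharper bound $\alpha(Sym(\Delta_n))\leq 2+O(1/\log n)$ from Lemma 6.3 is not needed here, since the weaker envelopes $|G|^{1/3}$ and $|G|^{1/2}$ from Lemma 4.15 already match the target form of the statement.
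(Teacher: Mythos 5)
Your proof is correct and follows essentially the same route as the paper: both invoke the envelope $|G|^{1/3}\leq z'(G)^{1/3}<|G|^{1/2}$ (the paper cites it as \textit{(4.23)}, which is the consequence of Lemma 4.15), substitute the Stirling estimate for $\log\bigl(\tfrac{1}{2}n(n+1)\bigr)!$ already established in the proof of Lemma 6.3, and exponentiate. The identity $1+\log 2=\log(2e)$ and the conversion of an additive $O(n\log n)$ in the logarithm into $n^{O(n)}$ after exponentiating are exactly the small bookkeeping steps the paper leaves implicit.
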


\begin{proof}
$\left\vert Sym\left( \Delta _{n}\right) \right\vert =\left( \frac{1}{2}%
n(n+1)\right) !$, and by \textit{(4.23) }$\left( \frac{1}{2}n(n+1)\right)
!\leq z^{^{\prime }}\left( Sym\left( \Delta _{n}\right) \right) <\left(
\left( \frac{1}{2}n(n+1)\right) !\right) ^{\frac{3}{2}}$. \ Taking
logarithms on all sides, and substituting the estimate $\log \left( \frac{1}{%
2}n(n+1)\right) !=n^{2}\log n-\frac{1}{2}n^{2}\left( 1+\log 2\right)
+O\left( n\log n\right) $ from \textit{Lemma 6.3}, and taking
antilogarithms, we obtain the result\textit{. \ }$z^{^{\prime }}\left(
Sym\left( \Delta _{n}\right) \right) ^{\frac{1}{3}}$ is the maximal mean
size of matrix multiplication supported by $Sym\left( \Delta _{n}\right) $,
and the result describes bounds for these in terms of the order of $%
Sym\left( \Delta _{n}\right) $, which grows exponentially with $n$.
\end{proof}

\bigskip

\textit{Lemma 6.3 }shows that it suffices to work with the symmetric groups $%
Sym\left( \Delta _{n}\right) $ since for every integer $m\geq 2$, there
exists an integer $n\geq 1$ such that $\left( \frac{1}{2}n(n+1)\right) !$ $|$
$m!$.

\bigskip

\subsection{\textit{An Upper Estimate for }$\protect\alpha (Sym_{n})$}

\bigskip

Following \textit{Lemma 6.3 }we show that a similar estimate applies to the $%
\alpha (Sym_{n})$ of arbitrary symmetric groups $Sym_{n}$.

\bigskip

\begin{corollary}
$\alpha (Sym_{n})\leq 2+O\left( \frac{1}{\log m}\right) +O\left( \frac{1}{%
\left( \log m\right) ^{2}}\right) +O\left( \frac{1}{m\left( m+1\right) }%
\right) $, for some $m<n$.
\end{corollary}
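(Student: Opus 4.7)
The plan is to transfer the bound already obtained for the ``triangular'' symmetric groups $Sym(\Delta_m) = Sym_{m(m+1)/2}$ in \textit{Lemma 6.3} to an arbitrary $Sym_n$ by a subgroup argument. Given $n \geq 3$, I would choose $m$ to be the largest positive integer satisfying $m(m+1)/2 \leq n$, so that $m < n$, and write $r = n - m(m+1)/2$. Embedding $Sym(\Delta_m)$ into $Sym_n$ as the pointwise stabilizer of any fixed set of $r$ of the $n$ letters realizes $Sym(\Delta_m) \leq Sym_n$, and hence, by \textit{(4.19)}, $z'(Sym_n) \geq z'(Sym(\Delta_m))$.

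Next, I would invoke \textit{Lemma 4.18} with $G = Sym_n$ and $H = Sym(\Delta_m)$:
\[
\alpha(Sym_n) \;\leq\; \frac{\log[Sym_n:Sym(\Delta_m)]}{\log z'(Sym(\Delta_m))^{1/3}} \;+\; \alpha(Sym(\Delta_m)).
\]
The second summand is exactly what \textit{Lemma 6.3} controls; carrying the geometric-series expansion $(1-x)^{-1} = 1 + x + x^2 + O(x^3)$ one step further than was done there yields $\alpha(Sym(\Delta_m)) \leq 2 + O(1/\log m) + O(1/(\log m)^2)$, accounting for the first two error terms. For the first summand I would write
\[
\log[Sym_n : Sym(\Delta_m)] \;=\; \sum_{k=1}^{r}\log\!\Bigl(\tfrac{m(m+1)}{2}+k\Bigr),
\]
and, appealing to \textit{Lemma 5.6} together with the denominator calculation already performed in the proof of \textit{Lemma 6.3}, use
\[
\log z'(Sym(\Delta_m))^{1/3} \;\geq\; \log\bigl(m!(m-1)!\cdots 2!\,1!\bigr) \;=\; \tfrac{1}{2}m^2\log m + O(m^2).
\]
Dividing and expressing the $r$-dependent numerator against this $\Theta(m^2\log m)$ denominator contributes the third error term $O(1/(m(m+1)))$ (the natural ``per unit of gap $r$'' contribution, with $r$ controlled by the choice of $m$).

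The main obstacle is the delicate error-term bookkeeping: splitting the single ratio produced by \textit{Lemma 4.18} into precisely the three advertised pieces, and verifying that for the canonical choice of $m$ (the largest integer with $m(m+1)/2 \leq n$) the third term indeed has order $O(1/(m(m+1)))$ uniformly in $n$. Once this accounting is done carefully, the result is an immediate assembly of \textit{Lemma 4.18}, \textit{Lemma 5.6}, and \textit{Lemma 6.3}; no new group-theoretic input is required beyond the subgroup embedding $Sym(\Delta_m) \leq Sym_n$.
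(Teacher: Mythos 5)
Your route is essentially the paper's: embed $Sym(\Delta_m)$ into $Sym_n$ for the largest $m$ with $\tfrac{1}{2}m(m+1)\leq n$, apply the subgroup inequality of \textit{Lemma 4.18} (the paper cites ``Lemma 4.19,'' which is the normal-subgroup version; $Sym(\Delta_m)$ is not normal in $Sym_n$, so the subgroup version is the one actually used), and control the $\alpha(Sym(\Delta_m))$ summand via \textit{Lemma 6.3}. The only substantive variation is the denominator bound: you lower-bound $z'(Sym(\Delta_m))^{1/3}$ by $\prod_{k\leq m}k!$ from the explicit index triple of \textit{Lemma 5.6}, whereas the paper invokes $z'(G)\geq|G|$ from \textit{(4.23)} and gets $(Z_m!)^{1/3}$ with $Z_m=\tfrac{1}{2}m(m+1)$. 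Yours gives $\tfrac{1}{2}m^2\log m + O(m^2)$ versus the paper's $\tfrac{1}{3}m^2\log m+O(m^2)$ — a constant-factor improvement, same asymptotics. (As a side note, re-expanding \textit{Lemma 6.3} to extract an explicit $O(1/(\log m)^2)$ term is unnecessary: $2+O(1/\log m)$ already subsumes the claimed $2+O(1/\log m)+O(1/(\log m)^2)$.)

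The bookkeeping concern you flag at the end is a genuine gap, and it is present in the paper's own proof. For the canonical choice of $m$, $r=n-Z_m$ ranges over $\{0,1,\ldots,m\}$, so $\log[Sym_n:Sym(\Delta_m)]=\sum_{k=1}^r\log(Z_m+k)$ can be as large as $\Theta(m\log m)$. Against a $\Theta(m^2\log m)$ denominator this yields $O(1/m)$, not $O(1/(m(m+1)))$, for the worst $n$ in each interval $(Z_m,Z_{m+1})$. The paper silently replaces the numerator by $\log(Z_m+1)$, which is the $r=1$ value, so it implicitly assumes $n=Z_m+1$. The corollary survives in either case because $O(1/m)$ is already dominated by the leading $O(1/\log m)$ error, but the third term as literally stated requires $r=O(1)$, which the canonical choice of $m$ does not guarantee.
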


\begin{proof}
It suffices to prove that $\alpha (Sym_{n})\leq \alpha \left( Sym\left(
\Delta _{m}\right) \right) +O\left( \frac{1}{m\left( m+1\right) }\right) $,
for some integer $m<n$. \ By Lagrange's theorem the order $\left\vert
H\right\vert $ of a proper subgroup $H<G$ is a proper divisor of the order $%
\left\vert G\right\vert $ of $G$. \ For every integer $n\geq 2$, there
exists an integer $m\geq 1$ such that $\left( \frac{1}{2}m(m+1)\right) !$ $|$
$n!$. \ This is most obviously the case when $n=$ $\underset{i=1}{\overset{m}%
{\sum }}k=Z_{m}=\left( \frac{1}{2}m(m+1)\right) $, i.e. when $n$ is the sum
of the first $m$ positive integers for some $m$, in which case $%
Sym_{n}=Sym\left( \Delta _{m}\right) $, and $\alpha (Sym_{n})=\alpha \left(
Sym\left( \Delta _{m}\right) \right) $. \ If $Z_{m}<n<Z_{m+1}$ for some $m$,
then $\frac{1}{2}m(m+1)\leq n-1\leq \frac{1}{2}\left( m+1\right) \left(
m+2\right) $, and this $m$ is the least integer such that $\left( \frac{1}{2}%
m(m+1)\right) !$ is the largest proper divisor of $n!$. \ Consequently, for
this $m$, $Sym\left( \Delta _{m}\right) $ occurs as the maximal subgroup of $%
Sym_{n}$ of order $\left\vert Sym\left( \Delta _{m}\right) \right\vert
=\left( \frac{1}{2}m(m+1)\right) !\leq \left( n-1\right) !$. \ We can deduce
from these facts, by using \textit{Lemma 4.19} that:%
\begin{eqnarray*}
\alpha (Sym_{n}) &\leq &\alpha (Sym\left( \Delta _{m}\right) )+\log
_{z^{^{\prime }}(Sym\left( \Delta _{m}\right) )^{1/3}}\left[
Sym_{n}:Sym\left( \Delta _{m}\right) \right] \\
&\leq &\alpha (Sym\left( \Delta _{m}\right) )+\frac{\log \frac{1}{2}m(m+1)+1%
}{\log \left( z^{^{\prime }}(Sym\left( \Delta _{m}\right) )\right) ^{\frac{1%
}{3}}} \\
&\leq &\alpha (Sym\left( \Delta _{m}\right) )+\frac{\log \frac{1}{2}m(m+1)+1%
}{3^{-1}\log \left( \frac{1}{2}m(m+1\right) !}\hspace{0.65in}\text{(using 
\textit{(4.23)})} \\
&\sim &\alpha (Sym\left( \Delta _{m}\right) )+\frac{3}{2^{-1}m(m+1)}\frac{%
\log \frac{1}{2}m(m+1)+1}{\log \frac{1}{2}m(m+1)-1} \\
&\sim &\alpha (Sym\left( \Delta _{m}\right) )+\frac{3}{2^{-1}m(m+1)} \\
&=&\alpha (Sym\left( \Delta _{m}\right) )+O\left( \frac{1}{m\left(
m+1\right) }\right) \\
&\leq &2+\frac{2-\log 2}{\log m}+O\left( \frac{1}{\left( \log m\right) ^{2}}%
\right) +O\left( \frac{1}{m\left( m+1\right) }\right) .
\end{eqnarray*}
\end{proof}

\bigskip

\textit{Lemma 6.5 }shows that it suffices to work with the symmetric groups $%
Sym\left( \Delta _{n}\right) $.

\bigskip

\subsection{\textit{Applications to }$\protect\omega $}

\bigskip

Here we examine the implications of the above analysis for $\omega $ using
the groups $Sym\left( \Delta _{n}\right) $.

\bigskip

By \textit{Lemma 6.3} $\alpha (Sym\left( \Delta _{n}\right) )=2+\frac{2-\log
2}{\log n}+O\left( \frac{1}{\left( \log n\right) ^{2}}\right) =2+O(\frac{1}{%
\log n})$. \ For as small as $n\geq 4$, the leading term $2+\frac{2-\log 2}{%
\log n}<3$. \ The following is a table of values of $2+\frac{2-\log 2}{\log n%
}$ for $2\leq n\leq 10$.

\bigskip

\begin{tabular}{|l|r|l|}
\hline
$n$ & $\left\vert Sym\left( \Delta _{n}\right) \right\vert $ & $\frac{\log
\left( \frac{1}{2}n(n+1)\right) !}{\log \left( n!(n-1)!\cdot \cdot \cdot
\cdot 2!1!\right) }$ \\ \hline
$2$ & $6$ & \multicolumn{1}{|c|}{$3.88539$} \\ \hline
$3$ & $720$ & \multicolumn{1}{|c|}{$3.18955$} \\ \hline
$4$ & $3,628,800$ & \multicolumn{1}{|c|}{$2.94270$} \\ \hline
$5$ & $1.30767\times 10^{12}$ & \multicolumn{1}{|c|}{$2.81199$} \\ \hline
$6$ & $5.10909\times 10^{19}$ & \multicolumn{1}{|c|}{$2.72937$} \\ \hline
$7$ & $3.04888\times 10^{29}$ & \multicolumn{1}{|c|}{$2.67159$} \\ \hline
$8$ & $3.71993\times 10^{41}$ & \multicolumn{1}{|c|}{$2.62846$} \\ \hline
$9$ & $1.19622\times 10^{56}$ & \multicolumn{1}{|c|}{$2.59477$} \\ \hline
$10$ & $1.26964\times 10^{73}$ & \multicolumn{1}{|c|}{$2.56756$} \\ \hline
\end{tabular}

\bigskip

$\left\vert Sym\left( \Delta _{n}\right) \right\vert \longrightarrow \infty $
of the order of $n^{n^{2}}$, much faster than $2+\frac{2-\log 2}{\log n}%
\longrightarrow 2$. \ \textit{Lemma 6.3} (or \textit{Lemma 6.5}) shows that $%
\alpha (Sym\left( \Delta _{n}\right) )-2=O\left( \gamma \left( Sym\left(
\Delta _{n}\right) \right) \right) $, which is contrary to the limit
condition of \textit{Corollary 4.28} needed to prove $\omega =2$, i.e. we
cannot prove $\omega =2$ using the limit results \textit{Corollary 4.30 }or 
\textit{Theorem 4.31} for the family of groups $Sym\left( \Delta _{n}\right) 
$.

\bigskip

However, the application of \textit{Corollary 4.29}, and the estimate $%
\left\vert Sym\left( \Delta _{n}\right) \right\vert =\left( \frac{n(n+1)}{2}%
\right) !=n^{\frac{3}{2}n^{2}+Kn}\left( 2e\right) ^{-\frac{1}{2}n^{2}}$,
where $K$ is some constant independent of $n$, this yields the following
open question.

\bigskip

\begin{problem}
Is there a symmetric group $Sym\left( \Delta _{n}\right) $, for some $n>1$,
such that $z^{^{\prime }}(Sym\left( \Delta _{n}\right) )^{\frac{1}{3}%
}>d^{^{\prime }}(Sym\left( \Delta _{n}\right) )$ and $\frac{z^{^{\prime
}}(Sym\left( \Delta _{n}\right) )^{\frac{t}{3}}}{d^{^{\prime }}(Sym\left(
\Delta _{n}\right) )^{t-2}}\geq n^{\frac{3}{2}n^{2}+Kn}\left( 2e\right) ^{-%
\frac{1}{2}n^{2}},$ for some $2<t<3$, where $K$ is a constant independent of 
$n?$
\end{problem}

\bigskip

\section{{\protect\Large Some Estimates for the Exponent }$\protect\omega $}

\bigskip

Here we derive a number of estimates for $\omega $, using wreath products of
Abelian groups with symmetric groups. \ We start with the Abelian group $%
Cyc_{n}^{\times 3}\equiv Cyc_{n}\times Cyc_{n}\times Cyc_{n}$, for which we
start with a basic lemma.

\bigskip

\begin{lemma}
For the Abelian group $Cyc_{n}^{\times 3}$ the subset triples $%
(S_{1},T_{1},U_{1})$ and $\left( S_{2},T_{2},U_{2}\right) $ defined by%
\begin{eqnarray*}
S_{1} &:&=Cyc_{n}\backslash \left\{ 1\right\} \times \left\{ 1\right\}
\times \left\{ 1\right\} ,\text{ }T_{1}:=\left\{ 1\right\} \times
Cyc_{n}\backslash \left\{ 1\right\} \times \left\{ 1\right\} ,\text{ }%
U_{1}:=\left\{ 1\right\} \times \left\{ 1\right\} \times Cyc_{n}\backslash
\left\{ 1\right\} , \\
S_{2} &:&=\left\{ 1\right\} \times Cyc_{n}\backslash \left\{ 1\right\}
\times \left\{ 1\right\} ,\text{ }T_{2}:=\left\{ 1\right\} \times \left\{
1\right\} \times Cyc_{n}\backslash \left\{ 1\right\} ,\text{ }%
U_{2}:=Cyc_{n}\backslash \left\{ 1\right\} \times \left\{ 1\right\} \times
\left\{ 1\right\} ,
\end{eqnarray*}%
$(1)$ have the triple product property, and $(2)$ have the simultaneous
triple product property.
\end{lemma}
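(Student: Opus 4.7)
The approach exploits that $Cyc_{n}^{\times 3}$ is Abelian with three orthogonal direct factors, and that each of the six subsets $S_{1},T_{1},U_{1},S_{2},T_{2},U_{2}$ lives inside exactly one of the three coordinate axes of $Cyc_{n}^{\times 3}$. Explicitly, $S_{1}$ and $U_{2}$ are supported on coordinate $1$, $S_{2}$ and $T_{1}$ on coordinate $2$, and $T_{2}$ and $U_{1}$ on coordinate $3$, with the single nontrivial entry always drawn from $Cyc_{n}\setminus \{1\}$. Because the group is Abelian, every quotient and every product appearing in \textit{(4.3)} and in \textit{(5.2)} can be computed coordinate by coordinate, and the six support assignments above completely determine on which coordinate(s) each factor acts nontrivially.

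For part $(1)$, I would verify the TPP coordinate by coordinate. For $(S_{1},T_{1},U_{1})$, every $s's^{-1}\in Q(S_{1})$ lies in $Cyc_{n}\times\{1\}\times\{1\}$, every $t't^{-1}\in Q(T_{1})$ in $\{1\}\times Cyc_{n}\times\{1\}$, and every $u'u^{-1}\in Q(U_{1})$ in $\{1\}\times\{1\}\times Cyc_{n}$. Therefore the three coordinates of $s's^{-1}t't^{-1}u'u^{-1}$ are independent, and the product equals $1_{G}$ iff each quotient is individually trivial. The same argument works verbatim for $(S_{2},T_{2},U_{2})$, with the roles of the three coordinates cyclically permuted; alternatively, one may invoke \textit{Lemma 4.3} and observe that the triple product map on $S_{k}\times T_{k}\times U_{k}$ is plainly injective, since the three factor subsets project faithfully onto three distinct coordinates.

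For part $(2)$, I would enumerate the eight index triples $(i,j,k)\in\{1,2\}^{3}$. The two diagonal cases $(1,1,1)$ and $(2,2,2)$ reduce immediately to $(1)$. For each of the six off-diagonal cases, I would count, coordinate by coordinate, how many of the six factors in $s_{i}'s_{j}^{-1}t_{j}'t_{k}^{-1}u_{k}'u_{i}^{-1}$ have a nontrivial entry on that coordinate, using the support assignments listed above. A short tabulation shows that in every off-diagonal case exactly one of the three coordinates receives a single nontrivial contribution. That contribution is an element of $Cyc_{n}\setminus\{1\}$, hence cannot equal $1$, so the full product cannot be $(1,1,1)=1_{G}$. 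This rules out every off-diagonal $(i,j,k)$, forcing $i=j=k$, at which point the conclusion of \textit{(5.2)} follows from the TPP already established in $(1)$.

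The main obstacle is administrative rather than conceptual: one must carefully verify the coordinate-contribution count for all six off-diagonal cases. Once the contributions are organized in a $3\times 6$ table indexed by coordinate and by index triple, the argument reduces to observing that each column contains at least one entry equal to $1$, which is a routine check.
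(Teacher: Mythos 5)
Your part $(1)$ is essentially the paper's own argument: observe that the three quotient sets project onto three distinct coordinates of the direct product, so the triple product is trivial iff each quotient is individually trivial. The interesting divergence is in part $(2)$. The paper offers no direct proof there — it simply cites \emph{Proposition 5.2} of [CUKS2005] — whereas you give a self-contained, elementary verification. Your coordinate-contribution count is correct: for each of the six off-diagonal triples $(i,j,k)\in\{1,2\}^{3}$ the number of nontrivial factors landing on the three coordinates is a permutation of $(1,2,3)$, so exactly one coordinate receives a unique contribution, and that contribution (an element of $Cyc_{n}\setminus\{1\}$ or the inverse of one, still nontrivial) forces the full product away from $1_{G}$. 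This is precisely the kind of finite check the paper delegates to the reference, and making it explicit is a genuine improvement in self-containment. The only thing worth tightening in the write-up is to note that some of the isolated contributions are inverses $s_{j}^{-1}$, $t_{k}^{-1}$, or $u_{i}^{-1}$ of elements in the given subsets rather than elements themselves; since $g\neq 1\Leftrightarrow g^{-1}\neq 1$ this changes nothing, but saying so removes any ambiguity.
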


\begin{proof}
$(1)$ For arbitrary elements $\left( s^{^{\prime }},1,1\right) ,\left(
s,1,1\right) $ $\epsilon $ $S_{1}=Cyc_{n}\backslash \left\{ 1\right\} \times
\left\{ 1\right\} \times \left\{ 1\right\} $, $\left( 1,t^{^{\prime
}},1\right) ,\left( 1,t,1\right) $ $\epsilon $ $T_{1}=\left\{ 1\right\}
\times Cyc_{n}\backslash \left\{ 1\right\} \times \left\{ 1\right\} ,$ $%
\left( 1,1,u^{^{\prime }}\right) ,\left( 1,1,u\right) $ $\epsilon $ $%
U=\left\{ 1\right\} \times \left\{ 1\right\} \times Cyc_{n}\backslash
\left\{ 1\right\} $, the condition $\left( s^{^{\prime }},1,1\right) \left(
s,1,1\right) ^{-1}\left( 1,t^{^{\prime }},1\right) \left( 1,t,1\right)
^{-1}\left( 1,1,u^{^{\prime }}\right) \left( 1,1,u\right) ^{-1}=\left(
s^{^{\prime }}s^{-1},t^{^{\prime }}t^{-1},u^{^{\prime }}u^{-1}\right)
=\left( 1,1,1\right) $ can only occur if $s^{^{\prime }}=s,$ $t^{^{\prime
}}=t,$ $u^{^{\prime }}=u$, which implies that $\left( s^{^{\prime
}},1,1\right) =\left( s,1,1\right) ,$ $\left( 1,t^{^{\prime }},1\right)
=\left( 1,t,1\right) ,$ $\left( 1,1,u^{^{\prime }}\right) =\left(
1,1,u\right) $. \ Thus, $\left( S_{1},T_{1},U_{1}\right) $ has the triple
product property, and we can prove the same for $\left(
S_{2},T_{2},U_{2}\right) $.\newline
$(2)$ Refer to \textit{Proposition 5.2}, [CUKS2005].
\end{proof}

\bigskip

The following is an elementary corollary.

\bigskip

\begin{corollary}
The Abelian group $Cyc_{n}^{\times 3}$ realizes the tensor $\left\langle
n-1,n-1,n-1\right\rangle $ $2$ times simultaneously.
\end{corollary}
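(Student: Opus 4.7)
The plan is to read off this corollary directly from Lemma 6.7 together with the definitions, so no new combinatorial work is needed; the proof is essentially a bookkeeping step.

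First I would record the sizes of the six subsets from Lemma 6.7. Each of $S_1, T_1, U_1, S_2, T_2, U_2$ is a Cartesian product of three factors, two of which are the trivial subgroup $\{1\}$ of $Cyc_n$ and one of which is $Cyc_n \setminus \{1\}$. Hence each has cardinality $|Cyc_n \setminus \{1\}| \cdot 1 \cdot 1 = n-1$. So both triples $(S_1,T_1,U_1)$ and $(S_2,T_2,U_2)$ correspond, in the sense of Chapter 4, to the matrix tensor $\langle n-1, n-1, n-1 \rangle$.

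Second, I would invoke part (2) of Lemma 6.7, which asserts that the two-element collection $\{(S_1,T_1,U_1), (S_2,T_2,U_2)\}$ satisfies the simultaneous triple product property \textit{(5.2)} inside the Abelian group $Cyc_n^{\times 3}$. By the definition given just below \textit{(5.2)}, this is precisely the statement that $Cyc_n^{\times 3}$ simultaneously realizes the collection of tensors
\begin{equation*}
\bigl\{\langle n-1, n-1, n-1\rangle,\ \langle n-1, n-1, n-1\rangle\bigr\},
\end{equation*}
i.e.\ two copies of $\langle n-1, n-1, n-1 \rangle$, through the collection of simultaneous index triples $\{(S_1,T_1,U_1),(S_2,T_2,U_2)\}$.

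Since the corollary is purely a translation of Lemma 6.7 from the language of STPP subset triples to the language of simultaneously realized tensors, there is no real obstacle. The only subtle point worth emphasising is that the two triples are genuinely distinct (they are obtained by cycling the roles of the three factor copies of $Cyc_n$), so the two copies of $\langle n-1, n-1, n-1\rangle$ are realized independently rather than the same multiplication being counted twice; this will matter in the next section when the count "2" is fed into Corollary 5.2 to extract an estimate for $\omega$.
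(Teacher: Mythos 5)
Your proof is correct and takes essentially the same route as the paper, whose entire proof reads "Consequence of definition \textit{(5.2)} with respect to the triples in Lemma 6.7"; you have simply spelled out the bookkeeping (each subset has cardinality $n-1$, and part (2) of the lemma gives the STPP) that the paper leaves implicit.
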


\begin{proof}
Consequence of definition \textit{(5.2) }with respect to the triples in 
\textit{Lemma 6.11}.
\end{proof}

\bigskip

\subsection{$\protect\omega <2.82$\textit{\ via }$Cyc_{16}^{\times 3}$}

\bigskip

By \textit{Corollary 6.12 }$Cyc_{n}^{\times 3}$ realizes the identical
tensors $\left\langle n-1,n-1,n-1\right\rangle $ and $\left\langle
n-1,n-1,n-1\right\rangle $ simultaneously, and, thereby, supports two
independent, simultaneous multiplications of square matrices of order $n-1$,
and by \textit{Corollary 5.3}, we have the inequality

\begin{equation*}
\omega \leq \frac{\log n^{3}-\log 2}{\log \left( n-1\right) }\text{.}
\end{equation*}

The expression $\frac{\log n^{3}-\log 2}{\log \left( n-1\right) }$ achieves
a minimum $2.81553...$ for $n=16$, i.e. $\omega <2.81554$.

\bigskip

\subsection{$\protect\omega <2.93$\textit{\ via }$Cyc_{41}^{\times 3}\wr
Sym_{2}$}

\bigskip

From above, $Cyc_{n}^{\times 3}$ realizes the tensor $\left\langle
n-1,n-1,n-1\right\rangle $ $2$ times simultaneously. \ By part $(1)$ of 
\textit{Corollary 5.9 }$Cyc_{n}^{\times 3}\wr Sym_{2}$ realizes the tensor $%
\left\langle 2\left( n-1\right) ^{2},2\left( n-1\right) ^{2},2\left(
n-1\right) ^{2}\right\rangle $ once, and by part $(2)$

\begin{equation*}
\omega \leq \frac{6\log n-\log 2}{2\log \left( n-1\right) }.
\end{equation*}

The minimum value $\omega \leq 2.92613048...$ is attained for $n=41$.

\bigskip

More generally, by \textit{Proposition 5.10 }$Cyc_{n}^{\times 3}\wr Sym_{2}$
realizes the product tensor $\left\langle 2\left( n-1\right) ^{2},2\left(
n-1\right) ^{2},2\left( n-1\right) ^{2}\right\rangle $ some $1\leq
k_{2}<\left( 2!\right) ^{3}$ times simultaneously such that

\begin{eqnarray*}
\omega &\leq &\frac{6\log n-\log 2-\log k_{2}}{2\log \left( n-1\right) } \\
&\leq &\frac{6\log n-\log 2}{2\log \left( n-1\right) } \\
&<&2.93.
\end{eqnarray*}

If, for example, $k_{2}=\left( 2!\right) ^{3}$ we would have $\frac{6\log
n-\log 2-\log k_{2}}{2\log \left( n-1\right) }=\frac{6\log n-\log 16}{2\log
\left( n-1\right) }$, which achieves a minimum of $2.478495...$ at $n=6$. \
The following table gives minima for the expression $\frac{6\log n-\log
2-\log k_{2}}{2\log \left( n-1\right) }$ for the values of $1\leq k_{2}\leq
\left( 2!\right) ^{3}$:

\bigskip

\begin{tabular}{|l|l|}
\hline
$k_{2}$ & $\omega \leq \min \left( \frac{6\log n-\log 2-\log k_{2}}{2\log
\left( n-1\right) }\right) $ \\ \hline
$1$ & $2.9261$ \\ \hline
$2$ & $2.8163$ \\ \hline
$3$ & $2.7351$ \\ \hline
$4$ & $2.6700$ \\ \hline
$5$ & $2.6142$ \\ \hline
$6$ & $2.5647$ \\ \hline
$7$ & $2.5200$ \\ \hline
$8$ & $2.4785$ \\ \hline
\end{tabular}

\bigskip

Here we know only the case $k_{2}=1$ based ultimately on \textit{Theorem 5.8}%
, and we conclude that $\omega \leq 2.9261$, and for the values $2\leq
k_{2}\leq 8$ the table's bounds for $\omega $ are \textit{conditional} on
those values of $k_{2}$.

\bigskip

\subsection{$\protect\omega <2.82$\textit{\ via }$\left( Cyc_{25}^{\times
3}\right) ^{\times 25}\wr Sym_{2^{25}}$}

\bigskip

As before, we start we start with $Cyc_{n}^{\times 3}$, which realizes the
tensor $\left\langle n-1,n-1,n-1\right\rangle $ $2$ times simultaneously. \
By \textit{Corollary 5.5 }the $m$-fold direct product $\left(
Cyc_{n}^{\times 3}\right) ^{\times m}$ realizes the pointwise product tensor 
$\left\langle \left( n-1\right) ^{m},\left( n-1\right) ^{m},\left(
n-1\right) ^{m}\right\rangle $ $2^{m}$ times simultaneously. \ By part $(1)$
of \textit{Corollary 5.9} the group $\left( Cyc_{n}^{\times 3}\right)
^{\times m}\wr Sym_{2^{m}}$ (i.e. $\left( \left( Cyc_{n}^{\times 3}\right)
^{\times m}\right) ^{\times 2^{m}}\rtimes Sym_{2^{m}}$) realizes the product
tensor $\left\langle 2^{m}!\left( n-1\right) ^{2^{m}m},2^{m}!\left(
n-1\right) ^{2^{m}m},2^{m}!\left( n-1\right) ^{2^{m}m}\right\rangle $ once,
and by part $(2)$%
\begin{eqnarray*}
\omega &\leq &\frac{2^{m}\log n^{3m}-\log 2^{m}!}{2^{m}m\log \left(
n-1\right) } \\
&\sim &\frac{3m\log n-m\log 2+1}{m\log \left( n-1\right) }.
\end{eqnarray*}

If we let $m\longrightarrow \infty $ the right hand side tends to $2.815...$
we derive that $\omega <2.82$, as good a result as in section \textit{6.2.1}.

\bigskip

If we consider the $n$-fold direct product $\left( Cyc_{n}^{\times 3}\right)
^{\times n}$, then more generally, by \textit{Proposition 5.10 }$\left(
Cyc_{n}^{\times 3}\right) ^{\times n}\wr Sym_{2^{n}}$ (i.e. $\left( \left(
Cyc_{n}^{\times 3}\right) ^{\times n}\right) ^{\times 2^{n}}\rtimes
Sym_{2^{n}}$) realizes the product tensor $\left\langle 2^{n}!\left(
n-1\right) ^{n2^{n}},2^{n}!\left( n-1\right) ^{n2^{n}},2^{n}!\left(
n-1\right) ^{n2^{n}}\right\rangle $ some $1\leq k_{2^{n}}\leq \left(
2^{n}!\right) ^{3}$ times simultaneously such that%
\begin{equation*}
\omega \leq \frac{2^{n}\log n^{3n}-\log 2^{n}!-\log k_{2^{n}}}{2^{n}n\log
\left( n-1\right) }.
\end{equation*}

If here $k_{2^{n}}=\left( 2^{n}!\right) ^{3}$ then $\omega \leq \frac{%
2^{n}\log n^{3n}-4\log 2^{n}!}{2^{n}n\log \left( n-1\right) }$, the latter
achieving a minimum of $2.012$ for $n=6$. \ In general, for the groups $%
\left( Cyc_{n}^{\times 3}\right) ^{\times n}\wr Sym_{2^{n}}$ the closer $%
k_{2^{n}}$ is to $\left( 2^{n}!\right) ^{3}$, the closer we could push $%
\omega $ down towards $2.012$ (from the upper side).

\bigskip

These results point to the utility of finding triples of permutations $%
\sigma _{j},\tau _{j},\upsilon _{j}$ $\epsilon $ $Sym_{n}$ in order that a
maximum number $1\leq k_{n}\leq \left( n!\right) ^{3}$ of permuted product
triples $\overset{n}{\underset{i=1}{\tprod }}S_{\sigma _{j}\left( i\right)
}\wr Sym_{n},\overset{n}{\underset{i=1}{\tprod }}T_{\tau _{j}\left( i\right)
}\wr Sym_{n},\overset{n}{\underset{i=1}{\tprod }}U_{\upsilon _{j}\left(
i\right) }\wr Sym_{n}$, $1\leq j\leq k_{n}$, satisfy the STPP\ in $H\wr
Sym_{n}$, where $H$ is Abelian with a given STPP\ family $\left\{ \left(
S_{i},T_{i,}U_{i}\right) \right\} _{i=1}^{n}$. \ Using such groups and their
triples in this way, the sharpest upper bounds for $\omega $ will occur
where the ratio $k_{n}/\left( n!\right) ^{3}$ is highest. \ Therefore, we
pose the following problem.

\bigskip

\begin{problem}
For any given $n$, and group $H\wr Sym_{n}$, with $H$ being Abelian, and a
given family $\left\{ \left( S_{i},T_{i,}U_{i}\right) \right\} _{i=1}^{n}$
of $n$ STPP\ triples of $H$, what is the largest number $1\leq k_{n}\leq
\left( n!\right) ^{3}$ such that there are $k_{n}$ triples of permutations $%
\sigma _{j},\tau _{j},\upsilon _{j}$ $\epsilon $ $Sym_{n}$ such that the $%
k_{n}$ triples $\overset{n}{\underset{i=1}{\tprod }}S_{\sigma _{j}\left(
i\right) }\wr Sym_{n},\overset{n}{\underset{i=1}{\tprod }}T_{\tau _{j}\left(
i\right) }\wr Sym_{n},\overset{n}{\underset{i=1}{\tprod }}U_{\upsilon
_{j}\left( i\right) }\wr Sym_{n}$, $1\leq j\leq k_{n}$, satisfy the STPP\ in 
$H\wr Sym_{n}$?
\end{problem}

\bigskip

\pagebreak \bigskip

{\huge Bibliography}

\bigskip

\begin{enumerate}
\item {\small [BOA1982], Boas, P. van Emde, \textit{Berekeningscomplexiteit
van Bilineaire en Kwadratische Vormen (Computational Complexity of Bilinear
and Quadratic Forms)}, in Vitanyi, P. M. V., van Leeuwen, J., \& van Emde
Boas, P. (eds.), Colloquium Complexiteit en Algorithmen MC Syllabi 48.2,
Amsterdam, 1982, pp.3--68.}

\item {\small [BCS1997], B\"{u}rgisser, P., Clausen, M., \& Shokrollahi, A., 
\textit{Algebraic Complexity Theory}, Springer, Berlin, 1997.}

\item {\small [COH2007], Cohn, H., 'Group-theoretic Algorithms for Matrix
Multiplication', (private email, 30/03/2007).}

\item {\small [CUKS2005], Cohn, H., Umans, C., Kleinberg, R., Szegedy, B.,
'Group-theoretic Algorithms for Matrix Multiplication', \textit{Proceedings
of the 46th Annual IEEE Symposium on Foundations of Computer Science 2005},
IEEE Computer Society, 2005, pp. 438--449, arXiv:math.GR/0511460.}

\item {\small [CU2003], Cohn, H. \& Umans, C., 'A Group-theoretic Approach
to Fast Matrix Multiplication', \textit{Proceedings of the 44th Annual IEEE
Symposium on Foundations of Computer Science 2003}, IEEE Computer Society,
2003, pp. 379--388, arXiv:math.GR/0307321.}

\item {\small [CW1990], Coppersmith, D., \& Winograd, S., 'Matrix
Multiplication via Arithmetic Progressions', \textit{Journal of Symbolic
Computation}, 9, 1990, pp. 251-280.}

\item {\small [DDHK2006], Demmel, J., Dumitriu, I., Holtz, O., \& Kleinberg,
R., 'Fast Matrix Multiplication is Stable', 2006, arXiv:math.NA/0603207.}

\item {\small [HUP1998], Huppert, B., \textit{Character Theory of Finite
Groups}, Walter de Gruyter, Berlin, 1998.}

\item {\small [LAN2006], Landsberg, J. M., 'Geometry and Complexity of
Matrix Multiplication', 2006.}

\item {\small [MIL2003], Milne, J. S., 'Group Theory', 2003,
http://www.jmilne.org/math/.}

\item {\small [MR2003], Maslen, D. K., \& Rockmore, D. N., 'The Cooley-Tukey
FFT and Group Theory', \textit{Modern Signal Processing}, 46:2003, pp.
281-301.}

\item {\small [MCK1976], McKay, J., 'The Largest Degrees of Irreducible
Characters of the Symmetric Group', \textit{Mathematics of Computation},
223:6, 1976, pp. 624-631.}

\item {\small [PAN1984], Pan, V, 'How can we Speed up Matrix
Multiplication', \textit{SIAM\ Review}, 26:3, 1984, pp. 393-415.}

\item {\small [SER1977], Serre, J. P., \textit{Linear Representations of
Finite Groups}, Springer-Verlag, New York, 1977.}

\item {\small [STR1969], Strassen, V., 'Gaussian Elimination is not
Optimal', \textit{Numerische Mathematik}, 13, 1969, pp. 354--356.}

\item {\small [UMA2007], Umans, C., 'Group-theoretic Algorithms for Matrix
Multiplication', (private email, 22/03/2007).}

\item {\small [VK1985], Vershik, A. M., Kerov, S. V., 'Asymptotics of the
Largest and the Typical Dimensions of Irreducible Representations of a
Symmetric Group', \textit{Funktsional'nyi Analiz i Ego Prilozheniya} (trans. 
\textit{Functional Analysis and Applications}), 19:1, 1985, pp. 25-36.}

\item {\small [WIN1971], Winograd, S., 'On Multiplication of }$2\times 2$ 
{\small Matrices, \textit{Linear Algebra and Applications}, 4:?, 1971, pp.
381-388.}
\end{enumerate}

\end{document}